\DeclareMathOperator{\F}{\mathbb{F}}
\address{Nikita Geldhauser, Ludwig-Maximilians-Universit\"at M\"unchen, Theresienstr. 39, 80333 M\"unchen, Germany}
\email{geldhauser@math.lmu.de}
\address{Andrei Lavrenov, Ludwig-Maximilians-Universit\"at M\"unchen, Theresienstr. 39, 80333 M\"unchen, Germany}
\email{avlavrenov@gmail.com}
\address{Victor Petrov,
Laboratory of Modern Algebra and Applications, St. Petersburg State University, 14th Line, 29b, Saint Petersburg, 199178 Russia\\
and
PDMI RAS, Fontanka 27, Saint Petersburg, 191023 Russia}
\email{victorapetrov@googlemail.com}
\address{Pavel Sechin, Universität Regensburg, Universitätsstr. 31, 93053 Regensburg, Germany}
\email{pavel.sechin@ur.de}
\begin{document}

\newcommand{\Sm}{\mathcal{S}^{\!}\mathsf{m}_k}
\newcommand{\Rings}{\mathcal{R}^{\!}\mathsf{ings}^*}
\newcommand{\rings}{\mathcal{R}^{\!}\mathsf{ings}}
\newcommand{\SmE}[1]{\mathcal{S}^{\!}\mathsf{m}_{#1}}
\newcommand{\Mot}[1]{\mathcal M^{\!}\mathsf{ot}_{#1}}
\newcommand{\Corr}[1]{\mathcal C^{\!}\mathsf{orr}_{#1}}
\newcommand{\MotF}[1]{\mathcal M_{#1}}
\newcommand{\QG}{\mathbb Q\Gamma}
\newcommand{\Inv}{\mathrm{Inv}(\QG)}
\newcommand{\Gal}{\mathrm{Gal}(k^{\mathrm{sep}}/k)}
\newcommand{\End}{\mathrm{End}}
\newcommand{\M}[1]{\mathcal{M}_{#1}}
\newcommand{\EG}{\!\,_EG}
\newcommand{\EGP}{\!\,_E(G/P)}
\newcommand{\EX}{\!\,_EX}
\newcommand{\XG}{\!\,_{\xi}G}
\newcommand{\XGP}{\!\,_{\xi}(G/P)}
\newcommand{\KQ}[1]{\mathrm K(n)^*\big(#1;\,\mathbb Q[v_n^{\pm1}]\big)}
\newcommand{\KZ}[1]{\mathrm K(n)^*\big(#1;\,\mathbb Z_{(p)}[v_n^{\pm1}]\big)}
\newcommand{\KZp}[1]{\mathrm K(n)^*\big(#1;\,\mathbb Z_p[v_n^{\pm1}]\big)}
\newcommand{\KF}[1]{\mathrm K(n)^*\big(#1;\,\mathbb F_p[v_n^{\pm1}]\big)}
\newcommand{\CHQ}[1]{\mathrm{CH}^*\big(#1;\,\mathbb Q[v_n^{\pm1}]\big)}
\newcommand{\KXZ}[1]{\!\,^{\mathrm K(n)\!}#1_{\,\mathbb Z_{(p)}[v_n^{\pm1}]}}
\newcommand{\KMotQ}{\Mot{\,\mathrm K(n)}}
\newcommand{\CHMotQv}{\Mot{\,\mathrm{CH}}}
\newcommand{\KXQ}[1]{\mathcal M_{\mathrm K(n)}(#1)}
\newcommand{\KMQ}{\mathcal M_{\,\mathrm K(n)}}
\newcommand{\CHMQv}{\MotF{\,\mathrm{CH}}}
\newcommand{\KCorrQ}{\Corr{\,\mathrm K(n)}}
\newcommand{\CHCorrQv}{\Corr{\,\mathrm{CH}}}
\newcommand{\CHCorrQ}{\Corr{\,\mathrm{CH}}}
\newcommand{\AMot}{\Mot A}

\newcommand{\e}{\varepsilon}
\newcommand{\con}{\ensuremath{\triangledown}}
\newcommand{\ra}{\ensuremath{\rightarrow}}
\newcommand{\tp}{\ensuremath{\otimes}}
\newcommand{\pr}{\ensuremath{\partial}}
\newcommand{\trigd}{\ensuremath{\triangledown}}
\newcommand{\dAB}{\ensuremath{\Omega_{A/B}}}
\newcommand{\QQ}{\ensuremath{\mathbb{Q}}\xspace}
\newcommand{\CC}{\ensuremath{\mathbb{C}}\xspace}
\newcommand{\RR}{\ensuremath{\mathbb{R}}\xspace}
\newcommand{\ZZ}{\ensuremath{\mathbb{Z}}\xspace}
\newcommand{\Zp}{\ensuremath{\mathbb{Z}_{(p)}}\xspace}
\newcommand{\Z}[1]{\ensuremath{\mathbb{Z}_{(#1)}}\xspace}
\newcommand{\NN}{\ensuremath{\mathbb{N}}\xspace}
\newcommand{\LL}{\ensuremath{\mathbb{L}}\xspace}
\newcommand{\inN}{\ensuremath{\in\mathbb{N}}\xspace}
\newcommand{\inQ}{\ensuremath{\in\mathbb{Q}}\xspace}
\newcommand{\inR}{\ensuremath{\in\mathbb{R}}\xspace}
\newcommand{\inC}{\ensuremath{\in\mathbb{C}}\xspace}
\newcommand{\OO}{\ensuremath{\mathcal{O}}\xspace}
\newcommand{\rarr}{\rightarrow}
\newcommand{\Rarr}{\Rightarrow}
\newcommand{\xrarr}[1]{\xrightarrow{#1}}
\newcommand{\larr}{\leftarrow}
\newcommand{\lrarr}{\leftrightarrows}
\newcommand{\rlarr}{\rightleftarrows}
\newcommand{\rrarr}{\rightrightarrows}
\newcommand{\al}{\alpha}
\newcommand{\bt}{\beta}
\newcommand{\ld}{\lambda}
\newcommand{\om}{\omega}
\newcommand{\Kd}[1]{\ensuremath{\Omega^{#1}}}
\newcommand{\KKd}{\ensuremath{\Omega^2}}
\newcommand{\vd}{\partial}
\newcommand{\PC}{\ensuremath{\mathbb{P}_1(\mathbb{C})}}
\newcommand{\PPC}{\ensuremath{\mathbb{P}_2(\mathbb{C})}}
\newcommand{\derz}{\ensuremath{\frac{\partial}{\partial z}}}
\newcommand{\derw}{\ensuremath{\frac{\partial}{\partial w}}}
\newcommand{\mb}[1]{\ensuremath{\mathbb{#1}}}
\newcommand{\mf}[1]{\ensuremath{\mathfrak{#1}}}
\newcommand{\mc}[1]{\ensuremath{\mathcal{#1}}}
\newcommand{\id}{\ensuremath{\mbox{id}}}
\newcommand{\dd}{\ensuremath{\delta}}
\newcommand{\bu}{\bullet}
\newcommand{\ot}{\otimes}
\newcommand{\boxt}{\boxtimes}
\newcommand{\op}{\oplus}
\newcommand{\mt}{\times}
\newcommand{\Gm}{\mathbb{G}_m}
\newcommand{\Ext}{\ensuremath{\mathrm{Ext}}}
\newcommand{\Tor}{\ensuremath{\mathrm{Tor}}}

\newcommand{\kn}[1]{\mathrm K(n)^*(#1)}
\newcommand{\ckn}[1]{\mathrm{CK}(n)^*(#1)}
\newcommand{\grckn}[1]{\mathrm{gr}_\tau^{*}\,\mathrm{CK}(n)^{*}(#1)}
\newcommand{\so}{\mathrm{SO}_m}
\newcommand{\pt}{\mathrm{pt}}

\makeatletter
\newcommand{\colim@}[2]{%
  \vtop{\m@th\ialign{##\cr
    \hfil$#1\operator@font colim$\hfil\cr
    \noalign{\nointerlineskip\kern1.5\ex@}#2\cr
    \noalign{\nointerlineskip\kern-\ex@}\cr}}%
}
\newcommand{\colim}{%
  \mathop{\mathpalette\colim@{\rightarrowfill@\textstyle}}\nmlimits@
}
\makeatother

\newtheorem{lm}{Lemma}[section]
\newtheorem{lm*}{Lemma}
\newtheorem*{tm*}{Theorem}
\newtheorem*{tms*}{Satz}
\newtheorem{tm}[lm]{Theorem}
\newtheorem{prop}[lm]{Proposition}
\newtheorem*{prop*}{Proposition}
\newtheorem{cl}[lm]{Corollary}
\newtheorem*{cor*}{Corollary}
\theoremstyle{remark}
\newtheorem*{rk*}{Remark}
\newtheorem*{rm*}{Remark}
\newtheorem{rk}[lm]{Remark}
\newtheorem*{xm}{Example}
\theoremstyle{definition}
\newtheorem{df}{Definition}
\newtheorem*{nt}{Notation}
\newtheorem{Def}[lm]{Definition}%[section]
\newtheorem*{Def-intro}{Definition}
\newtheorem{Rk}[lm]{Remark}
\newtheorem{Ex}[lm]{Example}

\theoremstyle{plain}
\newtheorem{Th}[lm]{Theorem}
\newtheorem*{Th*}{Theorem}
\newtheorem*{Th-intro}{Theorem}
\newtheorem{Prop}[lm]{Proposition}
\newtheorem*{Prop*}{Proposition}
\newtheorem{Cr}[lm]{Corollary}
\newtheorem{Lm}[lm]{Lemma}
\newtheorem{Conj}{Conjecture}
\newtheorem*{BigTh}{Classification of Operations Theorem  (COT)}
\newtheorem*{BigTh-add}{Algebraic Classification of Additive Operations Theorem  (CAOT)}

\tikzcdset{
arrow style=tikz,
diagrams={>={Straight Barb[scale=0.8]}}
}

\title{Morava $J$-invariant}
\author{Nikita Geldhauser, Andrei Lavrenov,\\ Victor Petrov, Pavel Sechin}
\maketitle

\begin{abstract}
We compute the co-multiplication of the algebraic Morava K-theory for split orthogonal groups. This allows us to compute the decomposition of the Morava motives of generic maximal orthogonal Grassmannians and to compute a Morava K-theory analogue of the $J$-invariant in terms of the ordinary (Chow) $J$-invariant.
\end{abstract}

\section*{Introduction}

Motives were introduced by Alexander Grothendieck in the 1960s and since then they became a fundamental tool for investigating the structure of algebraic varieties. In particular, the study of Chow motives of twisted flag varieties has led to the solution of several classical problems in the theory of algebraic groups. More precisely, it was the $J$-invariant that played an important role in the progress on the Kaplansky problem about possible values of the $u$-invariant of fields by Vishik~\cite{Vuinv}, in the solution of a problem of Serre about groups of type $\mathsf E_8$~\cite{S16, GS10}, and a conjecture of Rost about groups of type $\mathsf{E}_7$~\cite{GPS16}.

The $J$-invariant is a discrete invariant which describes the motivic behavior of the variety of Borel subgroups of a semisimple linear algebraic group. It was first introduced by Vishik in~\cite{V-gras} in the case of orthogonal groups, and then generalized in~\cite{PSZjinv} to other algebraic groups.

Recently, the first and the third authors generalized in~\cite{PShopf} the $J$-invariant to other oriented cohomology theories in the sense of Levine--Morel~\cite{LM}. Their approach gives a ``categorified'' version of the invariant: for an oriented cohomology theory $A^*$ and for a split semisimple algebraic group $G$ the corresponding $J$-invariant is a certain quotient bi-algebra of $A^*(G)$. Using the Milnor--Moore Theorem on the classification of Hopf algebras one can show that the new definition coincides with the old one in the case $A^*=\mathrm{CH}^*(-;\,\mathbb F_p)$.

Thus, to understand the $J$-invariant for general oriented cohomology theories one has to analyze the structure of the cohomology $A^*(G)$ of a split group. In the present article we focus on the algebraic Morava K-theory of orthogonal groups. Note that the algebraic Morava K-theory and Morava motives have a deep connection with Galois cohomology as was already recognized by Voevodsky in his program of the proof of the Bloch--Kato conjecture~\cite{Vo95} and further investigated in~\cite{SeSe}. The algebraic Morava K-theory has also applications to the structure of torsion in Chow rings of twisted flag varieties (see \cite{SeSe}). Besides, the algebraic Morava K-theory plays an important role in the study of isotropic motives~\cite{Vi22,Vi24} and in particular the description of the Balmer spectrum of the Morel-Voevodsky $\mathbb A^1$-stable homotopy category~\cite{ViDu}.

The computation of the cohomology of Lie groups was a classical problem in algebraic topology dating back to Cartan, which gave rise to the theory of Hopf algebras. A uniform description of the cohomology rings of simple compact Lie groups, and of the Chow rings of corresponding algebraic groups can be found in~\cite{Kac}. The computation of the co-multiplication was finished in~\cite{IKT}.

The topological K-theory of simple Lie groups was computed by Atiyah and Hirzebruch, Hodgkin~\cite{Hod}, Araki~\cite{Ara}, and the (higher) algebraic K-theory of algebraic groups was computed by Levine~\cite{L-kt}, Merkurjev~\cite{M-kt} and others (also for twisted forms).

In turn, the topological Morava K-theory of simple compact Lie groups is not known in many cases. Some cases were computed by Yagita~\cite{Yag1,Yag2}, Rao~\cite{R90,R97,R12}, Nishimoto~\cite{Nis}, Mimura~\cite{MN}, and many others (see, e.g.,~\cite{HMNS}). In particular, the topological Morava K-theory of orthogonal (and spinor) groups is known only additively by~\cite{R90,Nis}, but the multiplication and co-multiplication is not known (see~\cite{R97,R08,R12} for partial results). 
 
However, the algebraic Morava K-theory seems to behave much better. In our previous article~\cite{LPSS} we described $\mathrm K(n)^*(\mathrm{SO}_m)$ {\it as an algebra}. Combining our technique with computations of Rao~\cite{R97} we prove the following

\begin{tm*}[Theorem~\ref{co-mult}]
The algebra structure of $\mathrm K(n)^*(\mathrm{SO}_m)$ is given by
$$
\mathbb F_2[v_n^{\pm1}][e_1,e_2,\ldots,e_s]/(e_i^2-e_{2i})
$$
where $s=\mathrm{min}\left(\lfloor\frac{m-1}{2}\rfloor,\,2^{n}-1\right)$ and $e_{2i}$ stands for $0$ if $2i>s$, cf.~\cite[Theorem~6.13]{LPSS}. The reduced co-multiplication $\widetilde\Delta(x)=\Delta(x)-x\otimes1-1\otimes x$ is given by 
$$
\widetilde\Delta(e_{\langle2k\rangle})=\sum_{i=0}^{\nu_2(k)}v_n^{i+1}\,e_{\langle k/2^{i}\rangle}\otimes e_{\langle k/2^{i}\rangle}\,\prod_{j=0}^{i-1}\left(e_{\langle k/2^{j}\rangle}\otimes1+1\otimes e_{\langle k/2^{j}\rangle}\right)
$$
where $\langle t\rangle$ stands for $2^n-1-t$, $k>0$, and $\nu_2(k)$ is the $2$-adic valuation of $k$, i.e., $k/2^{\,\nu_2(k)}$ is an odd integer, and
$$
\widetilde\Delta(e_{2^n-1})=v_n\,e_{2^{n}-1}\otimes e_{2^{n}-1}.
$$
\end{tm*}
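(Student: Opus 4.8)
The plan is to deduce the algebra and coalgebra structure of $\mathrm K(n)^*(\mathrm{SO}_m)$ by comparing it with the topological computations of Rao~\cite{R97} via a suitable realization/base-change argument, and then to rewrite Rao's answer in the multiplicative coordinates $e_i$ coming from~\cite{LPSS}. First I would recall from~\cite[Theorem~6.13]{LPSS} that, as an algebra, $\mathrm K(n)^*(\mathrm{SO}_m)$ is $\mathbb F_2[v_n^{\pm1}][e_1,\dots,e_s]/(e_i^2-e_{2i})$ with $s=\min(\lfloor(m-1)/2\rfloor,\,2^n-1)$; this already gives the first displayed formula, so the real content is the co-multiplication. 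Since the Morava $K$-theory of $\mathrm{SO}_m$ is generated by elements pulled back from the classifying space / from Chow-theoretic generators, and the co-multiplication is a ring map $\Delta\colon \mathrm K(n)^*(\mathrm{SO}_m)\to \mathrm K(n)^*(\mathrm{SO}_m)\otimes \mathrm K(n)^*(\mathrm{SO}_m)$, it suffices to compute $\widetilde\Delta$ on a set of algebra generators, namely on the $e_{\langle 2k\rangle}$ (equivalently on the $e_i$ for $i$ in the appropriate range) and on $e_{2^n-1}$.

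The key step is to transport Rao's topological formula for the coproduct in $K(n)^*(\mathrm{SO}_m;\mathbb F_2)$ to the algebraic setting. I would argue that the realization functor from algebraic to topological Morava $K$-theory induces an isomorphism of Hopf algebras here: both sides are free of the same (finite) rank over $\mathbb F_2[v_n^{\pm1}]$ by the additive computations of~\cite{R90,Nis} and~\cite{LPSS}, the realization map is a map of Hopf algebras, and it is surjective (the generators $e_i$ are realized from algebraic classes) hence an isomorphism by a rank count. Thus $\widetilde\Delta$ on the $e_i$ is determined by Rao's formula. Then I would carry out the bookkeeping that rewrites Rao's coproduct — originally phrased in terms of the ``standard'' generators with a shift — in terms of the $e_{\langle t\rangle}=e_{2^n-1-t}$ indexing: track how the relation $e_i^2=e_{2i}$ interacts with the $2$-adic filtration on the index, and verify that the telescoping product $\prod_{j=0}^{i-1}(e_{\langle k/2^j\rangle}\otimes 1+1\otimes e_{\langle k/2^j\rangle})$ together with the diagonal terms $v_n^{i+1}e_{\langle k/2^i\rangle}\otimes e_{\langle k/2^i\rangle}$ reproduces the known answer; the top generator $e_{2^n-1}$ is handled separately, where the formula collapses to the single term $v_n\,e_{2^n-1}\otimes e_{2^n-1}$ because $\langle 2^n-1\rangle=0$ forces all but the $i=0$ summand to vanish.

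The main obstacle I anticipate is the translation step, i.e.\ matching conventions: Rao works topologically with possibly different normalizations of the generators and of $v_n$, and with a grading shift, so one must pin down precisely which algebraic classes realize to his classes and check compatibility of the coproduct under these identifications — in particular that no $v_n$-torsion or extension-problem ambiguity is introduced and that the squaring relation is respected on the nose. A secondary point requiring care is the well-definedness of the right-hand side: one should check that $\widetilde\Delta(e_{\langle 2k\rangle})$ as written actually lands in the subring generated by the $e_i$ modulo $(e_i^2-e_{2i})$ and does not depend on the chosen representatives, and that co-associativity holds as a consistency check on the formula.
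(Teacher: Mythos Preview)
Your proposal has a genuine gap at its central step. You claim that the realization map
\[
\mathrm K(n)^*(\mathrm{SO}_m)\longrightarrow\mathrm K(n)^*_{\mathrm{top}}\big(\mathrm{SO}(m)\big)
\]
is an \emph{isomorphism} of Hopf algebras, arguing that both sides are free of the same rank and that the map is surjective. This is false: the topological side is strictly larger. For example, already for $\mathrm{SO}(3)$ one has $\dim_{\mathbb F_2}\mathrm H^*(\mathrm{SO}(3);\mathbb F_2)=4$ while $\dim_{\mathbb F_2}\mathrm{Ch}^*(\mathrm{SO}_3)=2$; since the Atiyah--Hirzebruch spectral sequence collapses in the relevant range (Nishimoto), the ranks of the Morava $\mathrm K$-theories differ accordingly. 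In general the algebraic part corresponds to the subring of squares in $\mathrm H^*(\mathrm{SO}(m);\mathbb F_2)$ (Kac), so the realization map is only \emph{injective}, not surjective. Your parenthetical ``the generators $e_i$ are realized from algebraic classes'' goes in the wrong direction: it says algebraic classes map to something, which is relevant to injectivity, not surjectivity.

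The paper's actual argument has an extra, non-trivial step that you are missing. One first proves injectivity of the realization map (the Yagita-type statement), and then one must \emph{identify the image}: dually, $\mathrm{CK}(n)^*(\mathrm{SO}_{2^{n+1}-1})^\vee$ is the quotient of $\mathrm{CK}(n)_*^{\mathrm{top}}(\mathrm{SO}(2^{n+1}-1))$ by the ideal generated by Rao's primitive classes $\beta_{2i-1}$. Establishing this requires a delicate argument showing that the images $\overline\beta_{2i-1}$ vanish in the algebraic dual, using primitivity, grading constraints, and Rao's squaring relations. Only after this identification can one read off the co-multiplication from (the dual of) Rao's formulas and then perform the translation into the $e_{\langle t\rangle}$ coordinates. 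Your ``bookkeeping'' step is roughly right in spirit, but it cannot begin until the image of the realization map has been pinned down.
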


In other words, for a fixed $n$ the algebra and the co-algebra structures of the $n$-th Morava K-theory of orthogonal groups  $\mathrm{SO}_m$ stabilize for $m\geq 2^{n+1}-1$. In the range $m\leq 2^{n+1}$ the algebra structure of the Morava K-theory coincides with the algebra structure of the Chow theory, but the co-algebra structure of the Morava K-theory coincides with the co-algebra structure of the Chow theory only for  $m\leq 2^{n}$. In the intermediate range $2^n< m\leq 2^{n+1}$ the co-algebra structure is given by the theorem above. 

An important step in our arguments is a comparison of the algebraic and topological Morava K-theories. Recall that for a compact connected Lie group $K$, and the corresponding reductive group $G=K_{\mathbb C}$ there is a natural map
$$
\Omega^*(G)\rightarrow\mathrm{MU}^*(K)
$$
from the algebraic cobordism of Levine--Morel of $G$ to the complex cobordism of $K$. Yagita conjectured in~\cite{Yag} that this map is injective and obtained partial results in this direction, however, his conjecture remains open. On the other hand, it is natural to consider an analogue of this conjecture for the Morava K-theory. %seems to be much easier.
Using our computations of $\mathrm K(n)^*(\mathrm{SO}_m)$ we show in Section~\ref{section-yagita} that the natural map
$$
\mathrm K(n)^*(\mathrm{SO}_m)\rightarrow\mathrm K(n)^*_{\mathrm{top}}\big(\mathrm{SO}(m)\big)
$$
is indeed injective. Next, in Section~\ref{topology} we use the computations of~\cite{R97} to deduce the co-algebra structure of $\mathrm K(n)^*(\mathrm{SO}_m)$ for all $n$ and $m$. 

In the last part of the article we apply our computations of the algebraic Morava K-theory of orthogonal groups and provide a complete motivic decomposition of connected components of all generic maximal orthogonal Grassmannians: 

\begin{tm*}[Corollary~\ref{application}]
Let $n\in\mathbb N\setminus0$, and let $q$ be a quadratic form of dimension $m$ with trivial discriminant and denote by $J(q)$ its {\rm(}classical Chow{\rm)} $J$-invariant~\cite[Definition~5.11]{V-gras}. Assume that 
$$
J(q)\cap\{1,\ldots,2^n-1\}=\emptyset
$$
{\rm(}e.g., this holds for a generic quadratic form $q${\rm)}. Let $X$ denote a connected component of its maximal orthogonal Grassmannian {\rm(}see Section~\ref{section-grass}{\rm)}. Then the following holds.
\begin{enumerate}
\item
The $\mathrm K(n)$-motive $\mathcal M_{\mathrm K(n)}(X)$ of $X$ is indecomposable for $m\leq2^{n+1}-2$.
\item
The $\mathrm K(n)$-motive $\mathcal M_{\mathrm K(n)}(X)$ of $X$ has $2^{\lfloor\frac{m-1}{2}\rfloor-2^n+2}$ indecomposable summands of rank $2^{2^n-2}$ for $m\geq2^{n+1}-1$.
\end{enumerate}
\end{tm*}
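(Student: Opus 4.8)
I would deduce this from the Hopf algebra structure of $\kn{\so}$ given by Theorem~\ref{co-mult} together with the categorified $J$-invariant machinery of~\cite{PShopf} for the free theory $A^*=\mathrm K(n)^*$. \emph{Reduction to a rank computation.} Since $X$ acquires a rational point over $k(X)$, the form $q$ splits there and the torsor $\xi$ attached to $q$ becomes trivial, so $X={}_\xi(\mathrm{SO}_m/P)$ (with $P$ the maximal ``spinor'' parabolic) is generically split. By the theory of upper motives of generically split varieties, extended to an arbitrary free theory in~\cite{PShopf}, $\KXQ{X}$ is a sum of Tate twists of one indecomposable motive $\mathcal U$; thus $N\cdot\operatorname{rank}\mathcal U=\operatorname{rank}_{\mathbb F_2[v_n^{\pm1}]}\kn{\bar X}=2^{\lfloor(m-1)/2\rfloor}$, where $N$ is the number of summands and the last equality is the number of Schubert cells of a connected component of a split maximal orthogonal Grassmannian. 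So everything reduces to computing $\operatorname{rank}\mathcal U$.

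\emph{Making $\mathcal U$ generic.} By Theorem~\ref{co-mult} the algebra $\kn{\so}$ is generated over $\mathbb F_2[v_n^{\pm1}]$ by $e_1,\dots,e_s$ with $s=\min(\lfloor(m-1)/2\rfloor,\,2^n-1)$, so only the rationality of the Schubert generators indexed by $1,\dots,2^n-1$ can enter the $\mathrm K(n)$-$J$-invariant of $\xi$. The hypothesis $J(q)\cap\{1,\dots,2^n-1\}=\emptyset$ (with $J(q)$ Vishik's $J$-invariant~\cite{V-gras}) says precisely that none of these is $k$-rational, i.e. the $\mathrm K(n)$-$J$-invariant of $\xi$ is the generic one; for a generic $q$ of dimension $m$ with trivial discriminant one has $J(q)=\emptyset$, so this holds automatically. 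As in the classical case, in the generic situation the rationally defined idempotents of $\operatorname{End}\KXQ{\bar X}$ are exactly those coming from the $\kn{\so}$-comodule structure together with the grouplike elements of $\kn{\so}$ (which yield rationally defined $\otimes$-invertible motives that split off automatically); hence $\operatorname{rank}\mathcal U=\operatorname{rank}_{\mathbb F_2[v_n^{\pm1}]}\kn{\so}\,/\,|\Gamma|$, with $\Gamma$ the group of grouplike elements of $\kn{\so}$.

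\emph{Reading off Theorem~\ref{co-mult}.} The relations $e_i^2=e_{2i}$ (with $e_j=0$ for $j>s$) give, as algebras, $\kn{\so}\cong\bigotimes_{m' \text{ odd}}\mathbb F_2[v_n^{\pm1}][e_{m'}]/(e_{m'}^{2^{k_{m'}}})$, of rank $2^{\sum k_{m'}}=2^{s}$ because every integer $t$ with $1\le t\le s$ is uniquely $2^a m'$ with $m'$ odd. From the reduced comultiplication, $g:=1+v_n e_{2^n-1}$ is a grouplike element of order $2$ exactly when $e_{2^n-1}$ is present, i.e. $s=2^n-1$, i.e. $m\ge2^{n+1}-1$: indeed $\widetilde\Delta(e_{2^n-1})=v_n e_{2^n-1}\otimes e_{2^n-1}$ and $e_{2^n-1}^2=e_{2^{n+1}-2}=0$, so $\Delta g=g\otimes g$ and $g^2=1$; and a direct check with the formulas of Theorem~\ref{co-mult} shows that $\Gamma=\{1,g\}$ in that range, whereas for $m\le2^{n+1}-2$ one has $s=\lfloor(m-1)/2\rfloor<2^n-1$, $e_{2^n-1}$ is absent, and $\Gamma$ is trivial. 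Therefore $\operatorname{rank}\mathcal U=2^{\lfloor(m-1)/2\rfloor}$ for $m\le2^{n+1}-2$, so $N=1$ and $\KXQ{X}$ is indecomposable, and $\operatorname{rank}\mathcal U=2^{2^n-1}/2=2^{2^n-2}$ for $m\ge2^{n+1}-1$, so $N=2^{\lfloor(m-1)/2\rfloor}/2^{2^n-2}=2^{\lfloor(m-1)/2\rfloor-2^n+2}$.

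\emph{The main obstacle.} The arithmetic is routine; the real work is the middle step — showing that the Chow hypothesis forces the Morava $J$-invariant to be generic, and that in the generic case $\operatorname{rank}\mathcal U=\operatorname{rank}\kn{\so}/|\Gamma|$. Concretely, one must prove that the only rationally defined correspondence on $\bar X$ in $\mathrm K(n)^*$ beyond those coming from the $\kn{\so}$-comodule structure is the one induced by the grouplike $g=1+v_n e_{2^n-1}$ — a genuinely Morava phenomenon, since over $k$ this $g$ defines a $\otimes$-invertible motive that is \emph{not} a Tate twist (it becomes one only over $\bar k$) and which exists precisely in the stable range $m\ge2^{n+1}-1$; this factor of $2$ is the source of the rank $2^{2^n-2}$. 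Matching Vishik's indexing of $J(q)$ with the generators $e_i$ is a further, purely bookkeeping, point.
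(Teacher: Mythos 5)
Your arithmetic is right and you have correctly identified the two mechanisms driving the answer (the truncation of the $J$-invariant at $2^n-1$, and the degeneracy coming from $e_{2^n-1}$), but the framework contains a factual error and leaves the two genuinely hard steps unproven. The factual error is the opening claim that $\mathcal M_{\mathrm K(n)}(X)$ is a sum of \emph{Tate twists of one indecomposable} motive $\mathcal U$: this is false precisely in the range $m\ge 2^{n+1}-1$. What~\cite[Theorem~5.7]{PShopf} gives (the paper's Theorem~\ref{two-layers}) is a decomposition $\mathcal M_{\mathrm K(n)}(X)\simeq\bigoplus_i\mathcal R\{i\}$ for a motive $\mathcal R$ with $\operatorname{rank}\mathcal R=\operatorname{rank}H^*_{\mathrm K(n)}(q)$, and $\mathcal R$ is not required to be indecomposable; Theorem~\ref{refined-layers} then shows that for $m\ge 2^{n+1}-1$ (and $2^n-1\notin J(q)$) it splits as a sum of two indecomposable motives that are \emph{not} isomorphic to each other even up to a Tate shift. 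You clearly understand this --- your last paragraph says the two summands differ by a $\otimes$-invertible motive that is not a Tate twist --- but that contradicts your first paragraph, and the equation $N\cdot\operatorname{rank}\mathcal U=2^{\lfloor(m-1)/2\rfloor}$ as you set it up is not literally the bookkeeping the paper uses.

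The two gaps: first, the passage from ``$J(q)\cap\{1,\dots,2^n-1\}=\emptyset$'' to ``$H^*_{\mathrm K(n)}(q)=\mathrm K(n)^*(\mathrm{SO}_m)$'' is Corollary~\ref{j-ch-k}, which rests on Theorem~\ref{j-ch-ck}; this is the heart of the paper and requires the classification of (images of) saturated bi-ideals of $\mathrm{CK}(n)^*(\mathrm{SO}_m)$ (Proposition~\ref{hopf-ck} and Theorem~\ref{j-classification}) together with the saturation of the actual $J$-ideal (Lemma~\ref{j-is-saturated}). You flag this as ``the real work'' --- true --- but nothing in your outline engages with why a class rational in $\mathrm K(n)$ must be rational in $\mathrm{Ch}$ modulo the $i\ge 2^n$ truncation, which is what the saturation argument buys. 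Second, your formula $\operatorname{rank}\mathcal U=\operatorname{rank}\mathrm K(n)^*(\mathrm{SO}_m)/|\Gamma|$ with $\Gamma$ the group of grouplikes is not a theorem you can cite. The decomposability of $\mathcal R$ is governed by the \emph{idempotents of the dual Hopf algebra} $H^*_{\mathrm K(n)}(q)^\vee$, again by~\cite[Theorem~5.7]{PShopf}, and the paper proves by explicit calculation (Proposition~\ref{idem-answer}) that the only nontrivial ones are $v_n^{-1}\alpha_{2^n-1}$ and its complement, and that they disappear once $e_{2^n-1}$ does. Your grouplike $g=1+v_ne_{2^n-1}$ is the element dual to that idempotent and nicely explains \emph{why} the splitting exists, but one still has to rule out further idempotents, and grouplikes of $H$ do not control idempotents of $H^\vee$ in general --- so the shortcut from $|\Gamma|$ to the rank of the indecomposable pieces needs a separate argument that you do not supply.
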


Note that our previous results describing the structure of Morava motives of maximal orthogonal Grassmannians~\cite{LPSS} did not use the co-algebra structure of $\mathrm K(n)^*({\so})$ and gave only a partial decomposition of the Morava motive.

\section{Morava K-theory and related (co)homology theories}

\subsection{Algebraic cohomology theories}

We work over a field $k$ with $\mathrm{char}(k)=0$, and denote by $\mathcal S\mathsf m_k$ the category of smooth quasi-projective varieties over $k$. We often denote $\mathrm{pt}=\mathrm{Spec}(k)$. 

We consider oriented cohomology theories on $\mathcal S\mathsf m_k$ as defined by Levine--Morel~\cite[Definition~1.1.2]{LM}. In order to distinguish these from generalized cohomology theories of topological spaces (discussed below) we sometimes call them {\it algebraic} cohomology theories. Let $\Omega^*$ denote the algebraic cobordism of Levine--Morel~\cite{LM}, i.e., the universal oriented cohomology theory~\cite[Theorem~1.2.6]{LM}. 

Let $\mathbb L$ be the Lazard ring classifying formal group laws (see~\cite[Definition~A2.1.1]{Rav}). In other words, for any commutative ring $R$ the formal group laws over $R$ are in one-to-one correspondence with the ring homomorphisms $\mathbb L\rightarrow R$~\cite[Theorem~A2.1.8]{Rav}. The Lazard ring $\mathbb L$ is isomorphic to a polynomial ring $\mathbb Z[x_1,x_2,\ldots]$ of infinitely many variables~\cite[Theorem~A2.1.10]{Rav}. By~\cite[Theorem~1.2.7]{LM}, $\Omega^*(\mathrm{pt})\cong\mathbb L$.
 
 We would like to warn the reader that $\mathbb L$ is graded in~\cite{Rav} in such a way that $|x_i|=2i$, however, using the grading induced from $\Omega^*(\mathrm{pt})$ we have $|x_i|=-i$. In the present paper we stick to the latter convention. If $R$ is a commutative graded ring, then the graded homomorphisms $\mathbb L\rightarrow R$ classify the formal group laws $F$ which are homogeneous of degree $1$ as elements of $R[\![x,y]\!]$.

Given a commutative, graded $\mathbb L$-algebra $R$ it is easy to see that $A^*=\Omega^*(-)\otimes_{\mathbb L}R$ also has a natural structure of an oriented cohomology theory. If $F$ is a formal group law classified by $\mathbb L\rightarrow R$, we will say that $A^*$ is a {\it free theory} corresponding to $F$ and sometimes denote $F_A=F$.

\begin{Ex}
\label{algebraic-theories}
\ 
\begin{enumerate}
\item
For the localization $\mathbb L_{(2)}=\mathbb L\otimes_{\mathbb Z}\mathbb Z_{(2)}$ at the prime ideal $(2)=2\mathbb Z\trianglelefteq\mathbb Z$ and for the natural map $\mathbb L\rightarrow\mathbb L_{(2)}$ we denote the corresponding free theory by 
$$
\Omega_{(2)}^*=\Omega^*(-)\otimes_{\mathbb L}\mathbb L_{(2)}=\Omega^*(-)\otimes_{\mathbb Z}\mathbb Z_{(2)}.
$$
\item
\label{alg-bp}
Let $V=\mathbb Z_{(2)}[v_1,v_2,\ldots]$ with $|v_i|=1-2^i$ and let $F$ denote the universal $2$-typical formal group law over $V$ of~\cite[Theorem~A2.1.25]{Rav} (we remark again that $v_i$ are graded in a different way in~\cite{Rav}). We remark that the map classifying $F$ induces a surjective map $\mathbb L_{(2)}\twoheadrightarrow V$ by~\cite[Theorem~A2.1.25]{Rav}. The corresponding free theory
$$
\mathrm{BP}^*=\Omega^*(-)\otimes_{\mathbb L}\mathbb Z_{(2)}[v_1,v_2,\ldots]=\Omega_{(2)}^*(-)\otimes_{\mathbb L_{(2)}}V
$$
is the {\it algebraic} Brown--Peterson cohomology theory. One can similarly define Brown--Peterson theories for odd primes $p$, however in this article we will only consider the case $p=2$.
\item\label{ex113}
For a fixed natural number $n>0$ let $I(2,\,n)=(2, v_1,\ldots, v_{n-1})$ be an ideal in $V$, and consider the composite map $\mathbb L\rightarrow V\rightarrow V/I(2,\,n)=\mathbb F_2[v_n,v_{n+1},\ldots]$. Then the algebraic $\mathrm{P}(n)^*$ theory is by definition 
$$
\mathrm P(n)^*=\Omega^*(-)\otimes_{\mathbb L}\mathbb F_2[v_n,v_{n+1},\ldots]=\mathrm{BP}^*(-)\otimes_{V}V/I(2,\,n).
$$
\item
Consider the composition of $\mathbb L\rightarrow\mathbb F_2[v_n,v_{n+1},\ldots]$ with the natural quotient map $\mathbb F_2[v_n,v_{n+1},\ldots]\rightarrow\mathbb F_2[v_n]$. Then the corresponding free theory
$$
\mathrm{CK}(n)^*=\Omega^*(-)\otimes_{\mathbb L}\mathbb F_2[v_n]=\mathrm P(n)^*(-)\otimes_{V/I(2,\,n)}\mathbb F_2[v_n]
$$
is the algebraic {\it connective} Morava K-theory.
\item
Consider the composition of $\mathbb L\rightarrow\mathbb F_2[v_n]$ with the natural localization map $\mathbb F_2[v_n]\rightarrow\mathbb F_2[v_n^{\pm1}]$. Then the corresponding free theory
$$
\mathrm{K}(n)^*=\Omega^*(-)\otimes_{\mathbb L}\mathbb F_2[v_n^{\pm1}]=\mathrm{CK}(n)^*(-)\otimes_{\mathbb F_2[v_n]}\mathbb F_2[v_n^{\pm1}]
$$ 
is the algebraic {\it periodic} Morava K-theory which we will usually call simply the {\it Morava K-theory}.
\item
Consider the composition of $\mathbb L\rightarrow\mathbb F_2[v_n]$ with the natural quotient map $\mathbb F_2[v_n]\rightarrow\mathbb F_2$. Then the corresponding free theory
$$
\mathrm{Ch}^*=\mathrm{CH}^*(-;\,\mathbb F_2)=\Omega^*(-)\otimes_{\mathbb L}\mathbb F_2=\mathrm{CK}(n)^*(-)\otimes_{\mathbb F_2[v_n]}\mathbb F_2
$$
is the usual Chow theory with $\mathbb F_2$-coefficients by~\cite[Theorem~1.2.19]{LM}.
\end{enumerate}
\end{Ex}

In the next subsection we consider topological versions of these theories.

\subsection{Topological (co)homology theories}

We refer the reader to~\cite{adams} for a general introduction to generalized (co)homology theories, and to~\cite[Chapter~4]{Rav} for a survey of different theories related to the Morava K-theory. A more detailed exposition of these topics can be found in~\cite{Rud}.

For a spectrum $E$ (CW-spectrum in terminology of~\cite[Part~III, Chapter~2]{adams}) and a CW-complex $X$ we consider (unreduced) generalized homology and cohomology theories
$$
E_*(X)=[\mathbb S,\,E\wedge\Sigma^\infty X_+]_*\quad\text{and}\quad E^*(X)=[\Sigma^\infty X_+,\,E]_{-*}
$$
represented by $E$ (see~\cite[Part~III, Chapter~6]{adams}), where $X_+=X\sqcup*$, $\Sigma^\infty$ denotes the suspension spectrum functor and $\mathbb S$ is the sphere spectrum.

\begin{Ex}\ 
\begin{enumerate}
\item
Let $\mathrm{MU}$ denote the complex cobordism spectrum (see, e.g.,~\cite[Chapter~IV, Section~1]{Rav} or~\cite[Chapter~VII]{Rud}). By Quillen's Theorem~\cite[Theorem~4.1.6]{Rav}, the complex cobordism of a point $*$ is canonically isomorphic to the Lazard ring $\mathbb L$, and the isomorphism doubles the grading, i.e., $\mathrm{MU}^{2k+1}(*)=0$ and $\mathrm{MU}^{2k}(*)=\mathbb L^k$. 
\item
We can consider the {\it $2$-local complex cobordism}, i.e., the spectrum 
$$
\mathrm{MU}_{(2)}=\mathrm{MU}\wedge\mathbb S_{(2)}
$$ 
where $\mathbb S_{(2)}$ is the Moore spectrum of type $\mathbb Z_{(2)}$ (see~\cite[Part~III, Example~6.6]{adams}). The spectrum $\mathrm{MU}_{(2)}$ inherits the canonical ring structure from $\mathrm{MU}$ (see, e.g.,~\cite[Chapter~II, Theorem~5.15]{Rud}). We also remark that for a finite CW-complex $X$ we have
$$
\mathrm{MU}_{(2)}^*(X)=\mathrm{MU}^*(X)\otimes_{\mathbb Z}\mathbb Z_{(2)}
$$
(see~\cite[Part~III, Proposition~6.9]{adams}), in particular, $\mathrm{MU}_{(2)}^*(*)\cong\mathbb L_{(2)}$.
\item
We will denote $\mathrm{BP}_{\mathrm{top}}$ (or also $\mathrm{BP}^{\mathrm{top}}$) the Brown--Peterson spectrum, see~\cite[Theorem~4.1.12]{Rav} or~\cite[Chapter~VII, Definition~3.20]{Rud}. It is an associative commutative ring spectrum and a natural map
\begin{equation}
\label{mubp}
\mathrm{MU}_{(2)}\rightarrow\mathrm{BP}_{\mathrm{top}}
\end{equation}
of ring spectra is a retraction. There is a natural isomorphism $V\cong\mathrm{BP}_{\mathrm{top}}^*(*)$ (which doubles the grading), and the map~(\ref{mubp}) evaluated on $*$ induces a natural map $\mathbb L_{(2)}\twoheadrightarrow V$, see Example~\ref{algebraic-theories}\,(\ref{alg-bp}). We will refer to $\mathrm{BP}^{\mathrm{top}}_*$ and $\mathrm{BP}_{\mathrm{top}}^*$ as the {\it topological} Brown--Peterson (co)homology theories.
\item
Let us also denote by $\mathrm P(n)_{\mathrm{top}}$ (or $\mathrm P(n)^{\mathrm{top}}$) the spectrum defined by killing $I(2,n)$ in the spectrum $\mathrm{BP}_{\mathrm{top}}$ (see Example~\ref{algebraic-theories}~(\ref{ex113})). In other words, we put $\mathrm P(0)_{\mathrm{top}}=\mathrm{BP}_{\mathrm{top}}$ and $\mathrm P(n+1)_{\mathrm{top}}$ is the cofiber of the map 
$$
\Sigma^{2(2^n-1)}\mathrm P(n)_{\mathrm{top}}\stackrel{v_n}{\longrightarrow}\mathrm P(n)_{\mathrm{top}}
$$ 
(see~\cite[Chapter~4, Section~2]{Rav} and also~\cite{nassau-diplom}). The ring $\mathrm P(n)_{\mathrm{top}}^*(*)$ is naturally isomorphic to $\mathbb F_2[v_n,v_{n+1},\ldots]$.
\item
Killing the ideal $(v_{n+1},v_{n+2},\ldots)$ in the spectrum $\mathrm P(n)_{\mathrm{top}}$ we obtain the connected Morava K-theory spectrum $\mathrm{k}(n)$, which we will prefer to denote $\mathrm{CK}(n)_{\mathrm{top}}$ or $\mathrm{CK}(n)^{\mathrm{top}}$ in this paper, see~\cite[Chapter~4, Section~2]{Rav}. The ring $\mathrm{CK}(n)_{\mathrm{top}}^*(*)$ is isomorphic to $\mathbb F_2[v_n]$. We refer to $\mathrm{CK}(n)^{\mathrm{top}}_*$ and $\mathrm{CK}(n)_{\mathrm{top}}^*$ as the {\it topological} connective Morava K-theory (co)homology.
\item
One defines the Morava K-theory spectrum $\mathrm K(n)_{\mathrm{top}}$ or $\mathrm K(n)^{\mathrm{top}}$ as the directed colimit of the sequence
$$
\mathrm{k}(n)\stackrel{v_n}{\longrightarrow}\Sigma^{2(1-2^n)}\mathrm{k}(n)\stackrel{v_n}{\longrightarrow}\Sigma^{4(1-2^n)}\mathrm{k}(n)\stackrel{v_n}{\longrightarrow}\ldots
$$
see~\cite[Chapter~4, Section~2]{Rav}. We refer to $\mathrm{K}(n)^{\mathrm{top}}_*$ and $\mathrm{K}(n)_{\mathrm{top}}^*$ as the {\it topological} (periodic) Morava K-theory (co)homology. We remark that $\mathrm K(n)^*_{\mathrm{top}}(X)$ coincides with the ring theoretic localization of $\mathrm{CK}(n)^*_{\mathrm{top}}$ by the powers of $v_n$ (cf.~\cite[Section~2]{JW}) for a finite CW-complex $X$, in particular $\mathrm K(n)^*_{\mathrm{top}}(*)\cong\mathbb F_2[v_n^{\pm1}]$.
\item
The Eilenberg--MacLane spectrum $\mathrm H\mathbb Z/2$ of type $\mathbb Z/2$ can in fact be obtained from $\mathrm{BP}_{\mathrm{top}}$ by killing $(2,v_1,v_2,\ldots)$ (see~\cite[Chapter~4]{Rav}). The corresponding (co)homology theories are the usual (co)homology $\mathrm H_*(-;\,\mathbb F_2)$ and $\mathrm H^*(-;\,\mathbb F_2)$ with $\mathrm{mod}\ 2$ coefficients.
\end{enumerate}
\end{Ex}

Observe that $\mathrm P(n)_{\mathrm{top}}$ does not admit a commutative multiplicative structure (for $p=2$), however it has exactly two multiplicative structures
\begin{equation}
\label{mu12}
\mu_1,\,\mu_2\colon\mathrm P(n)_{\mathrm{top}}\wedge\mathrm P(n)_{\mathrm{top}}\rightarrow\mathrm P(n)_{\mathrm{top}}
\end{equation}
for which $\mathrm{P}(n)_{\mathrm{top}}$ is a $\mathrm{BP}_{\mathrm{top}}$-algebra spectrum, and, moreover, $\mu_1=\mu_2\circ\tau$ where $\tau$ denotes the switch $\tau\colon X\wedge Y\rightarrow Y\wedge X$ (see~\cite[Proposition~2.4]{Wue} and~\cite{nassau-diplom,nassau-paper}). Therefore the algebra structure of $\mathrm P(n)^*_{\mathrm{top}}(X)$ with respect to any of $\mu_i$ determines the other one. The same is true for the multiplicative structures on $\mathrm{CK}(n)_{\mathrm{top}}$ and $\mathrm{K}(n)_{\mathrm{top}}$, cf.~\cite[Remark~2.6]{Wue}, see also~\cite{strickland}.

\subsection{Comparison of algebraic and topological cohomology theories}

Consider a field extension $k\subseteq\mathbb C$. Then there is a natural map from the discussed (algebraic) oriented cohomology theories to their topological versions. 

Observe that for $X\in\mathcal S\mathsf m_k$ we obtain an oriented cohomology theory 
$$
X\mapsto\mathrm{MU}^{2*}(X(\mathbb C)),
$$
see~\cite[Example~1.2.10]{LM}. Then by the universality of $\Omega^*$ we obtain a canonical map
$$
\Omega^*(X)\rightarrow\mathrm{MU}^{2*}(X(\mathbb C)).
$$
Since $\mathrm{MU}^{*}_{(2)}(X(\mathbb C))$ has the structure of an $\mathbb L_{(2)}$-module, localizing at $(2)$ we also obtain a morphism of oriented cohomology theories $\Omega^*_{(2)}\rightarrow\mathrm{MU}^*_{(2)}$.

For $A=\mathrm{BP},\,\mathrm{P}(n),\,\mathrm{CK}(n)$ consider the diagram
$$\xymatrix{
\Omega_{(2)}^*(X)\ar[r]^{}\ar[d]&\mathrm{MU}_{(2)}^*(X(\mathbb C))\ar[d]\\
A^*(X)&A^*_{\mathrm{top}}(X(\mathbb C)).
}$$
Since the generators of $\mathrm{Ker}(\mathbb L_{(2)}\twoheadrightarrow A^*(\mathrm{pt}))$ act as zero on $A^*_{\mathrm{top}}(X(\mathbb C))$, we conclude that the top horizontal map induces a natural map from $A^*(X)$ to $A^*_{\mathrm{top}}(X(\mathbb C))$. Passing to the localization, we also obtain a natural map from $\mathrm{K}(n)^*(X)$ to $\mathrm{K}(n)^*_{\mathrm{top}}(X(\mathbb C))$.

Moreover, since the natural map $\mathrm{BP}^*(X)\rightarrow\mathrm{BP}_{\mathrm{top}}^*(X(\mathbb C))$ respects the multiplication, and $\mu_i$ from~(\ref{mu12}) are compatible with the multiplicative structure on $\mathrm{BP}_{\mathrm{top}}$, we conclude that the natural map $\mathrm P(n)^*(X)\rightarrow\mathrm P(n)^*_{\mathrm{top}}(X(\mathbb C))$ is a ring homomorphism for any choice of $\mu_i$ (cf.~\cite[(2.3)\.(i)]{Wue} and~\cite{nassau-diplom,nassau-paper}). The same statement remains true for the multiplicative structures on $\mathrm{CK}(n)_{\mathrm{top}}$ and $\mathrm{K}(n)_{\mathrm{top}}$, cf.~\cite[Remark~2.6]{Wue}.

We say that $X\in\mathcal S\mathsf m_k$ is {\it cellular} if $X$ has a finite filtration 
$$
\emptyset = X_{-1}\subseteq X_0\subseteq X_1\subseteq\ldots\subseteq X_n = X
$$
by closed subvarieties such that the $X_i\setminus X_{i-1}$ are isomorphic to a disjoint union of affine spaces $\mathbb A^{d_i}$ for all $i = 0,\,\ldots, n$. We will need the following proposition.

\begin{prop}
\label{HK}
Let $X\in\mathcal S\mathsf m_k$ be a cellular variety, and let $A$ be one of the $\mathrm{BP}$, $\mathrm P(n)$, $\mathrm{CK}(n)$, $\mathrm{K}(n)$. Then the natural map
$$
A^*(X)\rightarrow A^*_{\mathrm{top}}(X(\mathbb C))
$$
is an isomorphism. 
\end{prop}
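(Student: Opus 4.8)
The plan is to show that, for a cellular $X$, both $A^*(X)$ and $A^*_{\mathrm{top}}(X(\mathbb{C}))$ are free modules over the coefficient ring $A^*(\mathrm{pt})$, bounded above in degrees, and that modulo the augmentation ideal of $A^*(\mathrm{pt})$ they reduce respectively to $\mathrm{CH}^*(X;\mathbb{F}_2)$ and to $\mathrm H^*(X(\mathbb{C});\mathbb{F}_2)$ in a way compatible with the comparison map; since that reduced map is the classical cycle class map, which is an isomorphism for cellular $X$, a graded Nakayama argument then gives the claim for $A=\mathrm{BP},\mathrm P(n),\mathrm{CK}(n)$, and the case $A=\mathrm K(n)$ follows by inverting $v_n$ on both sides.

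On the algebraic side I would first recall the standard fact that for cellular $X$ the localization sequence for $\Omega^*$ together with $\mathbb{A}^1$-invariance makes $\Omega^*(X)$ a free $\mathbb{L}$-module of finite rank, with basis the classes $[\widetilde{\bar Z}_\alpha\to X]$ of resolutions of the closures of the cells, so that $\mathrm{CH}^*(X)$ is free abelian of finite rank and $\Omega^*(X)\cong\mathrm{CH}^*(X)\otimes_{\mathbb Z}\mathbb{L}$ as graded $\mathbb{L}$-modules. Tensoring over $\mathbb{L}$ gives $A^*(X)\cong\mathrm{CH}^*(X)\otimes_{\mathbb Z}A^*(\mathrm{pt})$ for every free theory $A$; in particular $A^*(X)$ is $A^*(\mathrm{pt})$-free, it vanishes in degrees $>\dim X$ for $A=\mathrm{BP},\mathrm P(n),\mathrm{CK}(n)$ (whose coefficient rings live in non-positive degrees), its graded pieces are finitely generated over the ground ring, and $A^*(X)\otimes_{A^*(\mathrm{pt})}\mathbb{F}_2=\mathrm{CH}^*(X;\mathbb{F}_2)$ because $\mathbb{L}\to A^*(\mathrm{pt})\to\mathbb{F}_2$ classifies the additive formal group law mod $2$.

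On the topological side, the cell filtration of $X$ turns $X(\mathbb{C})$ into a finite complex built by attaching cells of even real dimension only, so $\mathrm H_*(X(\mathbb{C});\mathbb Z)$ is free abelian and even, and the classical cycle class map identifies $\mathrm{CH}^*(X)$ with $\mathrm H^{2*}(X(\mathbb{C});\mathbb Z)$. Since the coefficient rings of $E=\mathrm{MU}_{(2)},\mathrm{BP}_{\mathrm{top}},\mathrm P(n)_{\mathrm{top}},\mathrm{CK}(n)_{\mathrm{top}}$ are concentrated in even degrees, the Atiyah--Hirzebruch spectral sequence $\mathrm H^p(X(\mathbb{C});\pi_{-q}E)\Rightarrow E^{p+q}(X(\mathbb{C}))$ has all its nonzero terms in even $(p,q)$, whereas $d_r$ shifts $(p,q)\mapsto(p+r,q-r+1)$; the two parity conditions are incompatible, so the spectral sequence collapses at $E_2$. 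I would then upgrade this to genuine freeness of $E^*(X(\mathbb{C}))$ over $E^*(\mathrm{pt})$ (solving the extension problem by graded Nakayama, using for $\mathrm P(n)_{\mathrm{top}}$ only that it is a module ring spectrum for one of the structures $\mu_i$ of~(\ref{mu12})), with graded pieces finitely generated and vanishing above $\dim X$ for $E=\mathrm{BP}_{\mathrm{top}},\mathrm P(n)_{\mathrm{top}},\mathrm{CK}(n)_{\mathrm{top}}$; and, killing the augmentation generators of $E^*(\mathrm{pt})$ one at a time via the defining cofibre sequences, the freeness makes the relevant self-maps act injectively, so there are no error terms and $E^*(X(\mathbb{C}))\otimes_{E^*(\mathrm{pt})}\mathbb{F}_2\cong\mathrm H^*(X(\mathbb{C});\mathbb{F}_2)$.

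To conclude, the comparison map of Proposition~\ref{HK} is a morphism of oriented cohomology theories compatible with the quotient maps onto $\mathrm{CH}^*(-;\mathbb{F}_2)$ and $\mathrm H^*(-(\mathbb{C});\mathbb{F}_2)$, so for $A=\mathrm{BP},\mathrm P(n),\mathrm{CK}(n)$ it reduces modulo the augmentation ideal to the classical cycle class map, which is an isomorphism for cellular $X$; since both sides are $A^*(\mathrm{pt})$-free, bounded above and degreewise finitely generated, graded Nakayama promotes this to an isomorphism before reduction. For $A=\mathrm K(n)$ one has $\mathrm K(n)^*(X)=\mathrm{CK}(n)^*(X)\otimes_{\mathbb{F}_2[v_n]}\mathbb{F}_2[v_n^{\pm1}]$ by Example~\ref{algebraic-theories} and, since $X(\mathbb{C})$ is a finite CW complex, also $\mathrm K(n)^*_{\mathrm{top}}(X(\mathbb{C}))=\mathrm{CK}(n)^*_{\mathrm{top}}(X(\mathbb{C}))\otimes_{\mathbb{F}_2[v_n]}\mathbb{F}_2[v_n^{\pm1}]$, and the $\mathrm K(n)$-comparison map is obtained from the $\mathrm{CK}(n)$ one by this flat base change, hence an isomorphism. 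I expect the main obstacle to be the topological step: verifying that the degenerate spectral sequence really gives $E^*(X(\mathbb{C}))$ as a \emph{free} $E^*(\mathrm{pt})$-module and not merely a successive extension of free modules, and that the non-commutativity of $\mathrm P(n)_{\mathrm{top}}$ does not interfere with the module structures or with the successive quotients; everything else is formal.
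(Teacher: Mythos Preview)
Your proposal is correct and is essentially an explicit unpacking of the argument the paper defers to \cite[Theorem~6.1]{HK}: freeness of both sides over the coefficient ring for cellular $X$, reduction modulo the augmentation ideal to the classical cycle class isomorphism $\mathrm{CH}^*(X;\mathbb F_2)\cong\mathrm H^{2*}(X(\mathbb C);\mathbb F_2)$, and graded Nakayama; the passage from $\mathrm{CK}(n)$ to $\mathrm K(n)$ by inverting $v_n$ is exactly what the paper does. Your flagged ``obstacle'' is not one: a finite filtration with free graded quotients always splits (the quotient being free makes each short exact sequence split), so the collapsed Atiyah--Hirzebruch spectral sequence already gives $E^*(X(\mathbb C))$ free over $E^*(\mathrm{pt})$, and the non-commutativity of $\mathrm P(n)_{\mathrm{top}}$ is harmless here since you only use the module structure over the commutative coefficient ring and the paper has already noted that the comparison map is multiplicative for either choice of $\mu_i$.
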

\begin{proof}
The proof of~\cite[Theorem~6.1]{HK} can be repeated to obtain the claim for $A=\mathrm{BP},\,\mathrm P(n),\,\mathrm{CK}(n)$. Taking the localization by the powers of $v_n$ we conclude that
$$
\mathrm{K}(n)^*(X)\rightarrow\mathrm{K}(n)^*_{\mathrm{top}}(X(\mathbb C))
$$
is also an isomorphism.
\end{proof}

\subsection{Atiyah--Hirzebruch spectral sequences}

For a finite CW-complex $X$ its filtration by $r$-skeletons $X^r$
$$
\emptyset = X^{-1}\subseteq X^0\subseteq X^1\subseteq\ldots\subseteq X^n = X
$$
induces the Atiyah--Hirzebruch spectral sequences
$$
\mathrm H_p(X;\,E_q(*))\Longrightarrow E_{p+q}(X)\ \text{ and }\ \mathrm H^p(X;\,E_q(*))\Longrightarrow E^{p+q}(X)
$$
for any spectrum $E$, see~\cite[Part~III, Chapter~7]{adams}. 

We remark that the morphism of spectra 
$
\mathrm{CK}(n)_{\mathrm{top}}\rightarrow\mathrm K(n)_{\mathrm{top}}
$  
induces a morphism of the Atiyah--Hirzebruch spectral sequences for $\mathrm{CK}(n)_{\mathrm{top}}^*(X)$ and $\mathrm K(n)_{\mathrm{top}}^*(X)$. On the second page this map coincides with the localization, in particular, it is injective. As a result, if the Atiyah--Hirzebruch spectral sequence for $\mathrm{K}(n)_{\mathrm{top}}^*(X)$ collapses at the second page, we see that the Atiyah--Hirzebruch spectral sequence for $\mathrm{CK}(n)_{\mathrm{top}}^*(X)$ also collapses. In particular, the following holds.

\begin{prop}
\label{ahss-cohomological}
Let $X$ be a finite CW-complex, $n\in\mathbb N$, and assume that the Atiyah--Hirzebruch spectral sequence $\mathrm H^p\big(X;\,\mathrm K(n)^{\mathrm{top}}_q(*)\big)\Rightarrow \mathrm K(n)_{\mathrm{top}}^{p+q}(X)$ collapses at the second page. 

Then $\mathrm{CK}(n)_{\mathrm{top}}^*(X)$ is a free graded $\mathbb F_2[v_n]$-module of rank $\mathrm{dim}_{\mathbb F_2\,}\mathrm{H}^*(X;\,\mathbb F_2)$ and the natural map $\mathrm{CK}(n)_{\mathrm{top}}\rightarrow\mathrm H\mathbb Z/2$ induces an isomorphism
$$
\mathrm{CK}(n)_{\mathrm{top}}^*(X)\otimes_{\mathbb F_2[v_n]}\mathbb F_2=\mathrm{H}^*(X;\,\mathbb F_2).
$$
\end{prop}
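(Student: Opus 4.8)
The proof will be largely formal, since the essential input — collapse of the Atiyah--Hirzebruch spectral sequence for $\mathrm{CK}(n)_{\mathrm{top}}$ — is already available from the discussion preceding the statement: the comparison map $\mathrm{CK}(n)_{\mathrm{top}}\to\mathrm K(n)_{\mathrm{top}}$ induces a map of Atiyah--Hirzebruch spectral sequences which on the $E_2$-page is the localization by the powers of $v_n$, hence injective, so collapse of the $\mathrm K(n)$-spectral sequence forces collapse of the $\mathrm{CK}(n)$-one. I would therefore begin from the resulting description of the $E_\infty=E_2$ page,
\[
\bigoplus_q E_\infty^{p,q}=\bigoplus_q\mathrm H^p\big(X;\,\mathrm{CK}(n)^{\mathrm{top}}_q(*)\big)=\mathrm H^p(X;\,\mathbb F_2)\otimes_{\mathbb F_2}\mathbb F_2[v_n],
\]
which identifies the associated graded of $\mathrm{CK}(n)^*_{\mathrm{top}}(X)$ with respect to the skeletal filtration, as a graded $\mathbb F_2[v_n]$-module, with the free module $\mathrm H^*(X;\,\mathbb F_2)\otimes_{\mathbb F_2}\mathbb F_2[v_n]$ of rank $\dim_{\mathbb F_2}\mathrm H^*(X;\,\mathbb F_2)$. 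One should observe here that multiplication by $v_n$ is $\mathrm{CK}(n)^*_{\mathrm{top}}(*)$-linear and commutes with restriction to skeleta, so it preserves the skeletal filtration and induces on the associated graded the obvious $\mathbb F_2[v_n]$-module structure.

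Next I would promote this to $\mathrm{CK}(n)^*_{\mathrm{top}}(X)$ itself. Since $X$ is a finite CW-complex the skeletal filtration is finite, so the standard argument applies: a lift of a homogeneous $\mathbb F_2[v_n]$-basis of the associated graded to $\mathrm{CK}(n)^*_{\mathrm{top}}(X)$ is again a basis, because freeness of the associated graded makes each of the finitely many extensions split. This gives the first assertion, that $\mathrm{CK}(n)^*_{\mathrm{top}}(X)$ is a free graded $\mathbb F_2[v_n]$-module of rank $\dim_{\mathbb F_2}\mathrm H^*(X;\,\mathbb F_2)$. I expect this ``free-on-the-associated-graded $\Rightarrow$ free'' step to be the only point requiring genuine, if routine, care; in particular it is where finiteness of the CW-structure is really used.

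For the second assertion I would use that $\mathrm H\mathbb Z/2$ is obtained from $\mathrm{CK}(n)_{\mathrm{top}}$ by killing $v_n$, i.e., that the natural map $\mathrm{CK}(n)_{\mathrm{top}}\to\mathrm H\mathbb Z/2$ sits in a cofiber sequence $\Sigma^{2(2^n-1)}\mathrm{CK}(n)_{\mathrm{top}}\xrightarrow{v_n}\mathrm{CK}(n)_{\mathrm{top}}\to\mathrm H\mathbb Z/2$ (compare the analogous description of $\mathrm P(n+1)_{\mathrm{top}}$ in the examples above). Evaluating the associated long exact sequence in cohomology on $X$, the relevant map is multiplication by $v_n$ on $\mathrm{CK}(n)^*_{\mathrm{top}}(X)$; by the first part this map is injective, so all connecting homomorphisms vanish and the long exact sequence breaks into short exact sequences
\[
0\longrightarrow\mathrm{CK}(n)^*_{\mathrm{top}}(X)\xrightarrow{\cdot v_n}\mathrm{CK}(n)^*_{\mathrm{top}}(X)\longrightarrow\mathrm H^*(X;\,\mathbb F_2)\longrightarrow0
\]
(with the first map shifting the grading by $|v_n|=-2(2^n-1)$). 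Hence the natural map identifies $\mathrm H^*(X;\,\mathbb F_2)$ with $\mathrm{CK}(n)^*_{\mathrm{top}}(X)/v_n=\mathrm{CK}(n)^*_{\mathrm{top}}(X)\otimes_{\mathbb F_2[v_n]}\mathbb F_2$, which is the second assertion.
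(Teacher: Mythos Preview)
Your proposal is correct and follows the same approach as the paper: the paper states the proposition without a separate proof, relying entirely on the preceding remark that collapse of the $\mathrm K(n)$ Atiyah--Hirzebruch spectral sequence forces collapse of the $\mathrm{CK}(n)$ one, and treats the stated conclusions as standard consequences. Your argument simply spells out these standard consequences (freeness lifted from the associated graded, and the cofiber-sequence identification of the quotient by $v_n$) in more detail than the paper does.
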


We  will also need the following result proven in~\cite[Section~4]{JW}.

\begin{prop}
\label{ahss-homological}
Let $X$ be a finite CW-complex, $n\in\mathbb N\setminus0$. Then the following are equivalent.
\begin{enumerate}[{\rm (1)}]
\item
The Atiyah--Hirzebruch spectral sequence 
$$
\mathrm H_p(X;\,\mathrm P(n)^{\mathrm{top}}_q(*))\Longrightarrow \mathrm P(n)^{\mathrm{top}}_{p+q}(X)
$$ 
collapses {\rm(}at the second page{\rm)}.
\item
$\mathrm{P}(n)^{\mathrm{top}}_{*}(X)$ is a free $\mathbb F_2[v_n,v_{n+1},\ldots]$-module of rank $\mathrm{dim}_{\mathbb F_2\,}\mathrm H_*(X;\,\mathbb F_2)$ and the canonical map $\mathrm P(n)^{\mathrm{top}}\rightarrow\mathrm H\mathbb Z/2$ induces an isomorphism
$$
\mathrm P(n)^{\mathrm{top}}_{*}(X)\otimes_{\mathbb F_2[v_n,v_{n+1},\ldots]}\mathbb F_2\cong\mathrm H_*(X;\mathbb F_2).
$$
\item
The canonical map $\mathrm P(n)^{\mathrm{top}}_{*}(X)\rightarrow\mathrm H_*(X;\mathbb F_2)$ is surjective.
\item
The Atiyah--Hirzebruch spectral sequence 
$$
\mathrm H_p(X;\,\mathrm{CK}(n)^{\mathrm{top}}_q(*))\Longrightarrow \mathrm{CK}(n)^{\mathrm{top}}_{p+q}(X)
$$ collapses.
\item
$\mathrm{CK}(n)^{\mathrm{top}}_{*}(X)$ is a free $\mathbb F_2[v_n]$-module of rank $\mathrm{dim}_{\mathbb F_2\,}\mathrm H_*(X;\,\mathbb F_2)$ and the natural map $\mathrm{CK}(n)^{\mathrm{top}}\rightarrow\mathrm H\mathbb Z/2$ induces an isomorphism
$$
\mathrm{CK}(n)^{\mathrm{top}}_{*}(X)\otimes_{\mathbb F_2[v_n]}\mathbb F_2\cong\mathrm H_*(X;\mathbb F_2).
$$
\item
The canonical map $\mathrm{CK}(n)^{\mathrm{top}}_{*}(X)\rightarrow\mathrm H_*(X;\mathbb F_2)$ is surjective.
\end{enumerate}
If the equivalent conditions {\rm (1)--(6)} hold then one also has the following.
\begin{enumerate}[{\rm (1)}]
\setcounter{enumi}{6}
\item
The canonical map $\mathrm P(n)^{\mathrm{top}}\rightarrow\mathrm{CK}(n)^{\mathrm{top}}$ induces an isomorphism
$$
\mathrm P(n)^{\mathrm{top}}_{*}(X)\otimes_{\mathbb F_2[v_n,v_{n+1},\ldots]}\mathbb F_2[v_n]\cong\mathrm{CK}(n)^{\mathrm{top}}_*(X).
$$
\item
There is a canonical isomorphism
$$
\mathrm{CK}(n)^*_{\mathrm{top}}(X)\cong\mathrm{Hom}_{\mathbb F_2[v_n]}(\mathrm{CK}(n)^{\mathrm{top}}_*(X),\,\mathbb F_2[v_n]).
$$
\end{enumerate}

\end{prop}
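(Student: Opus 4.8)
The plan is to run the Atiyah--Hirzebruch spectral sequence (AHSS) for each of the connective ring spectra $E\in\{\mathrm P(n)^{\mathrm{top}},\mathrm{CK}(n)^{\mathrm{top}}\}$ and to exploit that $E_*(\mathrm{pt})$ is a polynomial $\mathbb F_2$-algebra on positive-degree classes $v_i$, each of which is a permanent cycle. Write $R_*=E_*(\mathrm{pt})$. The skeletal filtration of the finite complex $X$ produces a bounded, hence convergent, homological AHSS with $E^2_{*,*}=\mathrm H_*(X;\mathbb F_2)\otimes_{\mathbb F_2}R_*$; since the $v_i$ come from $\pi_*E$, the whole spectral sequence is a module over $R_*$ and every differential is $R_*$-linear, while the edge homomorphism is the canonical map $E_*(X)\to\mathrm H_*(X;\mathbb F_2)$, whose image is $E^\infty_{*,0}\subseteq E^2_{*,0}=\mathrm H_*(X;\mathbb F_2)$.

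The first task is the ``triangle'' for a single such $E$: (collapse at $E^2$) $\Leftrightarrow$ ($E_*(X)$ is free over $R_*$ of rank $\dim_{\mathbb F_2}\mathrm H_*(X;\mathbb F_2)$ with $E_*(X)\otimes_{R_*}\mathbb F_2\xrightarrow{\ \sim\ }\mathrm H_*(X;\mathbb F_2)$) $\Leftrightarrow$ (the canonical map $E_*(X)\to\mathrm H_*(X;\mathbb F_2)$ is surjective). For ``surjective $\Rightarrow$ collapse'', suppose some differential is nonzero and let $r$ be the first page on which this happens; there $E^r=E^2$ is generated as an $R_*$-module by the bottom row $\mathrm H_*(X;\mathbb F_2)\otimes 1$, so $R_*$-linearity forces that differential to be nonzero already on the bottom row, and since nothing ever enters the bottom row, $E^\infty_{*,0}$ becomes a proper subgroup of $\mathrm H_*(X;\mathbb F_2)$, contradicting surjectivity of the edge map. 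For ``collapse $\Rightarrow$ free'', $E^\infty=E^2$ is $R_*$-free and the skeletal filtration on $E_*(X)$ is finite with $R_*$-submodules as filtration steps whose subquotients are the free modules $E^\infty_{p,*}$; splitting the filtration step by step exhibits $E_*(X)$ as free of the stated rank, and the claim about $\mathrm H\mathbb Z/2$ then follows from surjectivity of $E_*(X)\to E_*(X)\otimes_{R_*}\mathbb F_2$ together with a rank count. ``Free $\Rightarrow$ surjective'' is clear. Applied to $E=\mathrm P(n)^{\mathrm{top}}$ this gives (1)$\Leftrightarrow$(2)$\Leftrightarrow$(3), and to $E=\mathrm{CK}(n)^{\mathrm{top}}$ it gives (4)$\Leftrightarrow$(5)$\Leftrightarrow$(6).

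It remains to bridge the $\mathrm P(n)$- and $\mathrm{CK}(n)$-conditions. The map of ring spectra $\mathrm P(n)^{\mathrm{top}}\to\mathrm{CK}(n)^{\mathrm{top}}$ that kills $v_{n+1},v_{n+2},\dots$ commutes with the maps to $\mathrm H\mathbb Z/2$, so (3) $\Rightarrow$ (6) is immediate by factoring $\mathrm P(n)^{\mathrm{top}}_*(X)\to\mathrm{CK}(n)^{\mathrm{top}}_*(X)\to\mathrm H_*(X;\mathbb F_2)$. The converse (6) $\Rightarrow$ (3) is the heart of the matter and the step I expect to be the main obstacle: \emph{a priori} the $\mathrm P(n)$-AHSS carries more differentials than the $\mathrm{CK}(n)$-AHSS, and a bottom-row differential whose target is a multiple of some $v_{n+j}$ with $j\ge1$ is invisible in the $\mathrm{CK}(n)$-AHSS. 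The way around this is that these differentials are governed by the mod-$2$ Milnor primitives: the leading $\mathrm P(n)$-differential $d_{2^{n+1}-1}$ is dual to $v_nQ_n$, the higher ones involve $Q_{n+1},Q_{n+2},\dots$, and the Milnor relation $Q_{i+1}=Sq^{2^{i+1}}Q_i+Q_iSq^{2^{i+1}}$ shows that if $Q_n$ acts trivially on $\mathrm H^*(X;\mathbb F_2)$ then so do all $Q_{n+j}$. Knowing from (6) (via the triangle applied to $\mathrm{CK}(n)^{\mathrm{top}}$) that the $\mathrm{CK}(n)$-AHSS collapses, hence that $Q_n$ acts trivially, one kills the $\mathrm P(n)$-differentials inductively: at each stage the relevant page is still $E^2$, hence $R_*$-generated by the bottom row, and the obstruction is assembled from the (secondary operations attached to the) $Q_{n+j}$, which vanish. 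Thus the $\mathrm P(n)$-AHSS collapses, giving (1); pinning down exactly which higher operations the successive differentials realize is where the real work lies.

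Finally, assume (1)--(6). By the triangle, $\mathrm P(n)^{\mathrm{top}}_*(X)$ and $\mathrm{CK}(n)^{\mathrm{top}}_*(X)$ are free of rank $\dim_{\mathbb F_2}\mathrm H_*(X;\mathbb F_2)$ over $\mathbb F_2[v_n,v_{n+1},\dots]$ and $\mathbb F_2[v_n]$ respectively, compatibly with the reductions to $\mathrm H_*(X;\mathbb F_2)$; hence the natural map $\mathrm P(n)^{\mathrm{top}}_*(X)\otimes_{\mathbb F_2[v_n,v_{n+1},\dots]}\mathbb F_2[v_n]\to\mathrm{CK}(n)^{\mathrm{top}}_*(X)$ is a graded homomorphism of free $\mathbb F_2[v_n]$-modules that becomes an isomorphism after $\otimes_{\mathbb F_2[v_n]}\mathbb F_2$ (both sides then being $\mathrm H_*(X;\mathbb F_2)$), hence is an isomorphism by graded Nakayama; this is (7). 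For (8), the coefficient ring $\mathbb F_2[v_n]$ has global dimension $1$, so there is a universal-coefficient short exact sequence $0\to\mathrm{Ext}^1_{\mathbb F_2[v_n]}\bigl(\mathrm{CK}(n)^{\mathrm{top}}_{*-1}(X),\mathbb F_2[v_n]\bigr)\to\mathrm{CK}(n)^*_{\mathrm{top}}(X)\to\mathrm{Hom}_{\mathbb F_2[v_n]}\bigl(\mathrm{CK}(n)^{\mathrm{top}}_*(X),\mathbb F_2[v_n]\bigr)\to0$, and the freeness of $\mathrm{CK}(n)^{\mathrm{top}}_*(X)$ makes the $\mathrm{Ext}$-term vanish, giving the claimed isomorphism.
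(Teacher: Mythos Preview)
Your triangle argument (collapse $\Leftrightarrow$ free $\Leftrightarrow$ edge map surjective) for a single connective $E$ is correct and more explicit than the paper, which simply calls these implications straightforward and cites~\cite[Proposition~1.1]{R93} for the one nontrivial direction. Your treatment of (7) and (8) is likewise fine and matches the paper's (surjection plus rank count for (7); duality via~\cite[Lemma~13.9]{adams} for (8), which is exactly your universal-coefficient argument).

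The gap is in $(6)\Rightarrow(3)$. Your plan is: from collapse of the $\mathrm{CK}(n)$-AHSS deduce $Q_n=0$ on $\mathrm H^*(X;\mathbb F_2)$, then use $Q_{m+1}=[Sq^{2^{m+1}},Q_m]$ to kill all $Q_m$ for $m\ge n$, and conclude that the $\mathrm P(n)$-AHSS collapses. The first two steps are correct, but the last inference is not: vanishing of all primary Milnor operations $Q_m$ controls only the \emph{first} possible differentials $d_{2^{m+1}-1}$ on the $E^2$-page. Once those vanish, the next differentials are higher (secondary, tertiary, $\ldots$) operations built from the $Q_m$ and other pieces of the Steenrod algebra, and there is no formal reason these must vanish just because the $Q_m$ do. You flag this yourself (``pinning down exactly which higher operations the successive differentials realize is where the real work lies''), but it is not a detail---it is the entire content of the implication.

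The paper sidesteps this analysis completely. Instead of comparing differentials, it invokes~\cite[Theorem~4.8 and Remark~4.9]{JW}: assuming $(6)$, the natural map $\mathrm P(n)^{\mathrm{top}}_*(X)\to\mathrm{CK}(n)^{\mathrm{top}}_*(X)$ is surjective. Composing with the surjection $\mathrm{CK}(n)^{\mathrm{top}}_*(X)\to\mathrm H_*(X;\mathbb F_2)$ from $(6)$ immediately gives $(3)$. This is both shorter and avoids any discussion of higher differentials; the Johnson--Wilson input is a structural comparison of $\mathrm P(n)$ and $\mathrm k(n)$ rather than an identification of AHSS differentials. If you want to keep your approach, you would have to reproduce a substantial part of that Johnson--Wilson machinery.
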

\begin{proof}
The implications 
$$
(1)\Rightarrow(2)\Rightarrow(3),\quad(1)\Rightarrow(4)\Rightarrow(5)\Rightarrow(6)
$$
are straightforward. For $(3)\Rightarrow(1)$ see~\cite[Proposition~1.1]{R93} (cf.~\cite[Proof of Theorem~4.16]{JW}). 

Assume that $(6)$ holds. Then by~\cite[Theorem~4.8 and Remark~4.9]{JW} one has that the natural map $\mathrm P(n)^{\mathrm{top}}_*(X)\rightarrow\mathrm{CK}(n)^{\mathrm{top}}_*(X)$ is surjective. In particular, $(6)\Rightarrow(3)$. This finishes the proof of equivalence (1)--(6). Moreover, we see that the induced surjection 
$$
\mathrm P(n)^{\mathrm{top}}_{*}(X)\otimes_{\mathbb F_2[v_n,v_{n+1},\ldots]}\mathbb F_2[v_n]\twoheadrightarrow\mathrm{CK}(n)^{\mathrm{top}}_*(X)
$$
is in fact an isomorphism by~(2) and~(5) comparing the dimensions. Therefore $(6)\Rightarrow(7)$. Finally, for $(4)\Rightarrow(8)$ see~\cite[Lemma~13.9]{adams}.
\end{proof}

We remark that for the {\it periodic} topological Morava K-theory there is a canonical isomorphism
$$
\mathrm K(n)^*_{\mathrm{top}}(X)\cong\mathrm{Hom}_{\mathbb F_2[v_n^{\pm1}]}\big(\mathrm K(n)^{\mathrm{top}}_*(X),\,\mathbb F_2[v_n^{\pm1}]\big)
$$
for any finite CW-complex $X$, see, e.g.,~\cite[Section~8]{Wue-odd}.

\section{An analogue of Yagita's conjecture for Morava K-theory}
\label{section-yagita}

\subsection{Yagita's conjecture}
\label{yagita-conjecture}

Let $K$ be a compact connected Lie group, $T$ its maximal torus, and $\pi\colon K\rightarrow K/T$ the natural projection. Denote by $G=K_{\mathbb C}$ the corresponding (split) reductive group over $\mathbb C$. Kac observes in~\cite{Kac} that the results of Grothendieck~\cite{Gro} imply that
$$
\mathrm{CH}^*(G;\,\mathbb F_p)\cong\pi^*\big(\mathrm H^*(K/T;\,\mathbb F_p)\big)
$$
where the latter is a subring in $\mathrm H^*(K;\,\mathbb F_p)$ (and the isomorphism doubles the grading).

In the article~\cite{Yag} Yagita conjectured that the same is true for the cobordism theories. Recall that $\mathrm{MU}^*$ denotes the complex cobordism theory, and $\Omega^*$ denotes the algebraic cobordism theory of Levine--Morel. Yagita shows in~\cite[Theorem~1.1]{Yag} that for the ideal $I=(p,x_{p-1},x_{p^2-1},\ldots)$ in $\mathbb L\cong\mathbb Z[x_1,x_2,\ldots]$ and for a simply-connected compact Lie group $K$ one has an isomorphism
$$
\Omega^*(G)/I^2\cong\pi^*\big(\mathrm{MU}^*(K/T)\big)/I^2.
$$
Then he remarks that the same seems to be true without ``$/I^2$''.

In the present article we consider the following analogue of the above statements for the Morava K-theory.
\begin{Conj}
\label{YC}
Let $K$ be a compact connected Lie group, $T$ its maximal torus, and $\pi\colon K\rightarrow K/T$ the natural projection. Denote by $G=K_{\mathbb C}$ the corresponding (split) reductive group over $\mathbb C$. Then for $A=\mathrm K(n)$ or $A=\mathrm{CK}(n)$ one has
$$
A^*(G)\cong\pi^*\big(A^*_{\mathrm{top}}(K/T)\big).
$$
\end{Conj}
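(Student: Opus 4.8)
The plan is to reduce Conjecture~\ref{YC} to the injectivity of the realization map $\rho\colon A^*(G)\to A^*_{\mathrm{top}}(K)$ --- a Morava analogue of Yagita's conjecture --- and then to establish that injectivity in the cases at hand, notably $G=\mathrm{SO}_m$, using the explicit computation of $\mathrm K(n)^*(\mathrm{SO}_m)$ and of its topological counterpart. First I would compute the algebraic side. Write $B\subseteq G$ for a Borel subgroup with unipotent radical $U$ and maximal torus $T$, so $B=T\ltimes U$. Since $G\to G/U$ is an iterated $\mathbb{G}_a$-torsor, homotopy invariance gives $A^*(G)\cong A^*(G/U)$; since $G/U\to G/B$ is a $T$-torsor, it is the complement of the zero sections in a fibre product over $G/B$ of $r=\dim T$ line bundles $M_1,\dots,M_r$ (one for each element of a basis of $X^*(T)$), and iterating the localization sequence for $M_i\setminus(\text{zero section})$ yields
$$
A^*(G)\ \cong\ A^*(G/B)\big/\big(c_1^A(M_1),\dots,c_1^A(M_r)\big)\ =\ A^*(G/B)\otimes_{A^*(BT)}A^*(\mathrm{pt});
$$
in particular $\pi^*\colon A^*(G/B)\twoheadrightarrow A^*(G)$ is surjective. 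By Proposition~\ref{HK} the realization map $A^*(G/B)\xrightarrow{\sim}A^*_{\mathrm{top}}(K/T)$ is an isomorphism, and by naturality it fits into a commutative square with the pullbacks $\pi^*_{\mathrm{alg}}$, $\pi^*_{\mathrm{top}}$ and with $\rho$. As $\pi^*_{\mathrm{alg}}$ is surjective, the image of $\rho$ is automatically $\pi^*_{\mathrm{top}}(A^*_{\mathrm{top}}(K/T))$; hence Conjecture~\ref{YC} is equivalent to the injectivity of $\rho$.

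It remains to prove $\rho$ injective, and for $G=\mathrm{SO}_m$ this is where the computations enter. The algebra generators $e_i$ of $\mathrm K(n)^*(\mathrm{SO}_m)$ from Theorem~\ref{co-mult} are pullbacks along $\pi$ of Chern classes of explicit bundles on $\mathrm{SO}_m/B$; by Proposition~\ref{HK} their images in $\mathrm K(n)^*_{\mathrm{top}}(\mathrm{SO}(m)/T)$ are the corresponding topological Chern classes, and one matches these against Rao's generators of $\mathrm K(n)^*_{\mathrm{top}}(\mathrm{SO}(m))$ from~\cite{R97}. It then suffices to check that the monomial $\mathbb F_2[v_n^{\pm1}]$-basis $\{\prod_i e_i^{\alpha_i}\}$ of $\mathrm K(n)^*(\mathrm{SO}_m)$ is sent to an $\mathbb F_2[v_n^{\pm1}]$-linearly independent family in $\mathrm K(n)^*_{\mathrm{top}}(\mathrm{SO}(m))$, which one reads off from Rao's explicit presentation. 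A conceptually cleaner variant, when available, is the reduction modulo $v_n$: $\mathrm{CK}(n)^*(\mathrm{SO}_m)$ is a free $\mathbb F_2[v_n]$-module with $\mathrm{CK}(n)^*(\mathrm{SO}_m)/v_n\cong\mathrm{Ch}^*(\mathrm{SO}_m)$ by~\cite{LPSS}, and the composite $\mathrm{Ch}^*(\mathrm{SO}_m)\to\mathrm{CK}(n)^*_{\mathrm{top}}(\mathrm{SO}(m))/v_n\to\mathrm H^*(\mathrm{SO}(m);\mathbb F_2)$ is the classical realization, which is injective by the theorem of Grothendieck~\cite{Gro} and Kac~\cite{Kac} quoted above; so $\rho$ is injective modulo $v_n$, and as soon as one knows that $\mathrm{CK}(n)^*_{\mathrm{top}}(\mathrm{SO}(m))$ is $v_n$-torsion-free (which Rao's computation provides) this upgrades to injectivity of $\rho$ for $A=\mathrm{CK}(n)$; inverting $v_n$ then settles $A=\mathrm K(n)$.

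The whole difficulty is concentrated in this last injectivity. In general it is the Morava analogue of Yagita's still-open conjecture; its precise content is that the Eilenberg--Moore spectral sequence computing $A^*_{\mathrm{top}}(K)$ from the pullback of $K/T\to BT$ along $\mathrm{pt}\to BT$ carries no differential landing in its bottom filtration $A^*_{\mathrm{top}}(K/T)\otimes_{A^*_{\mathrm{top}}(BT)}A^*_{\mathrm{top}}(\mathrm{pt})$. The higher-$\mathrm{Tor}$ classes accounting for the difference between $A^*_{\mathrm{top}}(K)$ and this bottom piece are genuinely present (being a torsion module over the regular ring $A^*_{\mathrm{top}}(BT)$, $A^*_{\mathrm{top}}(K/T)$ is very far from flat), so the point is the non-formal statement that these classes do not shrink the subring $\pi^*(A^*_{\mathrm{top}}(K/T))$. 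Even in the case $\mathrm{SO}_m$, where all objects are known explicitly, the real work lies in identifying the algebraic generators $e_i$ of $A^*(\mathrm{SO}_m)$ --- given there via Chern classes on the flag variety --- with classes in Rao's presentation, and in tracking the relations $e_i^2=e_{2i}$ faithfully on the topological side.
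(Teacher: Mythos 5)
Your reduction of Conjecture~\ref{YC} to injectivity of the realization map $\rho\colon A^*(G)\to A^*_{\mathrm{top}}(K)$ is correct and essentially mirrors the paper's Proposition~\ref{yagita-restatement} and Corollary~\ref{yagita-restatement-cl}: you derive the surjectivity of $A^*(G/B)\twoheadrightarrow A^*(G)$ via homotopy invariance and the line-bundle localization sequence, whereas the paper invokes the Iwasawa decomposition to identify $G/B\cong K/T$, then uses cellularity of $G/B$ and Proposition~\ref{HK}; the two are interchangeable for this purpose. Your ``conceptually cleaner variant'' of the injectivity step --- showing $\rho$ is injective modulo $v_n$ via the Grothendieck--Kac injectivity of $\mathrm{Ch}^*(G)\hookrightarrow\mathrm H^*(K;\mathbb F_2)$, then upgrading using $v_n$-torsion-freeness of the target --- is exactly what the paper does in Lemma~\ref{ahss-injective}. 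So for $m\le 2^{n+1}$ your argument is correct (the paper sources the AHSS collapse, hence torsion-freeness, from Nishimoto's Theorems~2.4 and~3.2, not directly from Rao).

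The genuine gap is the case $m>2^{n+1}$, which your proposal does not actually handle. Two of its supporting claims fail there: first, $\mathrm{CK}(n)^*(\mathrm{SO}_m)$ is \emph{not} a free $\mathbb F_2[v_n]$-module for $m>2^{n+1}$ --- by Theorem~\ref{ck-algebra} the relations $v_n e_i=0$ for $i\ge 2^n$ introduce genuine $v_n$-torsion; second, and decisively, $\mathrm{CK}(n)^*_{\mathrm{top}}(\mathrm{SO}(m))$ is \emph{not} $v_n$-torsion-free for $m>2^{n+1}$ (the AHSS for $\mathrm K(n)_{\mathrm{top}}$ does not collapse there, and indeed if the injectivity theorem is to hold the torsion on the algebraic side must be mirrored topologically), so Rao's computations do not ``provide'' the torsion-freeness you assert. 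Your variant therefore only establishes injectivity on classes not divisible by $v_n$; it cannot by itself rule out a $v_n$-divisible element in the kernel of $\rho$. The paper closes this gap by a separate argument: for $A=\mathrm K(n)$ it uses the stabilization isomorphism $\mathrm K(n)^*(\mathrm{SO}_m)\cong\mathrm K(n)^*(\mathrm{SO}_{m_0})$ with $m_0\in\{2^{n+1}-1,2^{n+1}\}$ from~\cite[Theorem~5.1]{LPSS} and a diagram chase against the already-established injectivity for $m_0$; for $A=\mathrm{CK}(n)$ it then observes that any element $x$ in the kernel has $x=v_n y$ with $y$ a $v_n^{\mathbb Z}$-torsion element, and by the explicit algebra structure of Theorem~\ref{ck-algebra} every such $y$ already satisfies $v_n y=0$. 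Without this stabilization step (or some substitute) your proposal does not prove the statement in the range where it is hardest.
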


We will prove the above conjecture for groups $K=\mathrm{SO}(m)$ and $K=\mathrm{Spin}(m)$.

\subsection{Iwasawa decomposition} As above, let $K$ be a compact connected Lie group, $T$ its maximal torus, $G=K_{\mathbb C}$ the corresponding reductive group with split maximal torus $T_{\mathbb C}$, and $B$ a Borel subgroup of $G$ containing $T_{\mathbb C}$. 

Using the Iwasawa decomposition $G=KAN$, see~\cite[Theorem~26.3]{iwasawa}, where $A\cong(\mathbb R_{>0})^{l}$, and $N$ is a unipotent radical of $B$ (in particular $A$ and $N$ are contractible), we conclude that $E^*(G)=E^*(K)$ for any (topological) cohomology theory $E^*$. Since $B\cap K=T$, we also conclude that
$$
G/B\cong K/T.
$$
The Bruhat decomposition gives a cellular decomposition for $G/B$, therefore the natural map $A^*(G/B)\rightarrow A^*_{\mathrm{top}}(K/T)$ is an isomorphism for $A=\mathrm K(n),\,\mathrm{CK}(n)$ by Proposition~\ref{HK}.

We can summarize the above discussion with the following result.

\begin{prop}
\label{yagita-restatement}
In the above notation, let $A=\mathrm K(n)$ or $A=\mathrm{CK}(n)$. Let $\pi\colon K\rightarrow K/T$ denote the natural projection, and consider the diagram
$$\xymatrix{
A^*(G/B)\ar@{->>}[r]^{(\pi_{\mathbb C})^{A}}\ar@{=}[d]&A^*(G)\ar[d]\\
A^*_{\mathrm{top}}(K/T)\ar[r]^{\pi^*}&A^*_{\mathrm{top}}(K).
}$$
Then $\pi^*\big(A^*_{\mathrm{top}}(K/T)\big)$ is isomorphic to the image of $A^*(G)$ in $A^*_{\mathrm{top}}(K)$.
\end{prop}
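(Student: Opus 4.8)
The plan is to reduce the statement to a short diagram chase; what must be in place is that the square commutes, that its left vertical arrow is bijective, and that its top arrow is surjective. The left arrow is already understood — it is the comparison map for the cellular variety $G/B$ (Bruhat decomposition) composed with the isomorphism induced by the $K$-equivariant diffeomorphism $(G/B)(\mathbb C)=G(\mathbb C)/B(\mathbb C)\cong K/T$, hence an isomorphism by Proposition~\ref{HK}, exactly as recorded above. The top arrow $(\pi_{\mathbb C})^A$ is surjective (this is what the double arrow in the diagram records): pulling back along $G\to G/B$ identifies $A^*(G)$ with the quotient of $A^*(G/B)$ by the ideal generated by the $c_1^A$ of the line bundles attached to a basis of characters of $T_{\mathbb C}$ — the free-theory analogue of Grothendieck's description of $\mathrm{CH}^*(G)$, obtained by factoring $\pi_{\mathbb C}$ through $G/U$ ($U$ the unipotent radical of $B$) and iterating the localisation sequence of a $\mathbb G_m$-torsor.

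The substantive point is the commutativity of the square, and for this the key observation is that the assignment $Y\mapsto\big(A^*(Y)\to A^*_{\mathrm{top}}(Y(\mathbb C))\big)$ is a morphism of oriented cohomology theories on $\mathcal S\mathsf m_k$ and in particular natural in $Y$. Applying it to $\pi_{\mathbb C}\colon G\to G/B$ gives a commutative square whose top row is $(\pi_{\mathbb C})^A$ and whose vertical maps are the comparison maps; it then remains only to identify its two bottom corners and its bottom row with the data in the statement. On the source this is the diffeomorphism $(G/B)(\mathbb C)\cong K/T$ above; on the target one uses that, via the Iwasawa decomposition (as recorded above), the inclusion $K\hookrightarrow G(\mathbb C)$ is a homotopy equivalence, so that $A^*_{\mathrm{top}}(G(\mathbb C))=A^*_{\mathrm{top}}(K)$. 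Under these identifications the realisation of $\pi_{\mathbb C}$ restricted to $K$ sends $k$ to $kB(\mathbb C)$, which corresponds to $kT\in K/T$; that is, it is $\pi$. So the bottom row becomes $\pi^*$, and the square is exactly the one in the statement, with right vertical arrow the natural map $A^*(G)\to A^*_{\mathrm{top}}(K)$.

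Granting this, write $\varphi$ for the (bijective) left arrow and $\iota$ for the right one. Then $\mathrm{im}(\iota)=\iota\big((\pi_{\mathbb C})^A\big(A^*(G/B)\big)\big)$ by surjectivity of the top row, $=\pi^*\big(\varphi\big(A^*(G/B)\big)\big)$ by commutativity, $=\pi^*\big(A^*_{\mathrm{top}}(K/T)\big)$ by surjectivity of $\varphi$; this is precisely the asserted equality of subgroups of $A^*_{\mathrm{top}}(K)$.

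I do not expect a real obstacle: the proposition is in essence a reformulation, and each ingredient is either already established above (Proposition~\ref{HK}, the Iwasawa discussion) or standard (surjectivity of $(\pi_{\mathbb C})^A$). The genuine work — identifying the subring $\pi^*\big(A^*_{\mathrm{top}}(K/T)\big)$ of $A^*_{\mathrm{top}}(K)$ for $K=\mathrm{SO}(m)$ and $K=\mathrm{Spin}(m)$ — is taken up in the sections that follow.
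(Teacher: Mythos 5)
Your proposal is correct and follows the paper's intended route: the paper presents this proposition as a ``summary of the above discussion,'' and your three ingredients (the comparison isomorphism on $G/B$ via Proposition~\ref{HK} and the Bruhat cell structure, the Iwasawa identification $A^*_{\mathrm{top}}(G(\mathbb C))=A^*_{\mathrm{top}}(K)$, and naturality of the realisation map applied to $\pi_{\mathbb C}\colon G\to G/B$) are exactly what the surrounding text establishes, with the diagram chase then doing the rest. The only difference is that you spell out the surjectivity of $(\pi_{\mathbb C})^A$ and the commutativity of the square in more detail than the paper, which merely records the former by a double-headed arrow and leaves both implicit.
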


\begin{cl}
\label{yagita-restatement-cl}
Conjecture~\ref{YC} is equivalent to the injectivity of the natural map $A^*(G)\rightarrow A^*_{\mathrm{top}}(K)$.
\end{cl}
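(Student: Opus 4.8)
My plan is to reduce the statement entirely to Proposition~\ref{yagita-restatement}. That proposition identifies $\pi^*\big(A^*_{\mathrm{top}}(K/T)\big)$ with the image of the natural comparison map $f\colon A^*(G)\to A^*_{\mathrm{top}}(K)$ (the right vertical arrow of its diagram). Consequently $f$ factors canonically as
$$
A^*(G)\xrightarrow{\ \bar f\ }\pi^*\big(A^*_{\mathrm{top}}(K/T)\big)\hookrightarrow A^*_{\mathrm{top}}(K),
$$
where $\bar f$ is surjective (it is $f$ with its codomain restricted to the image) and the second arrow is injective. Hence $\ker f=\ker\bar f$, so $f$ is injective if and only if $\bar f$ is injective, i.e.\ if and only if the surjection $\bar f$ is an isomorphism. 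Since $\bar f$ is exactly the natural map appearing in Conjecture~\ref{YC}, this already yields the equivalence once the conjecture is read — as intended — as the assertion that $\bar f$ is an isomorphism. In particular one direction is immediate: if $f$ is injective then $\bar f$ is an injective surjection, hence an isomorphism $A^*(G)\cong\pi^*\big(A^*_{\mathrm{top}}(K/T)\big)$.

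The one point I would spell out is that the conclusion is insensitive to reading Conjecture~\ref{YC} as a merely abstract isomorphism $A^*(G)\cong\pi^*\big(A^*_{\mathrm{top}}(K/T)\big)$. Given such an isomorphism, composing it with the canonical surjection $\bar f$ produces a surjective endomorphism of $A^*(G)$, which must be an isomorphism by the standard determinant (Cayley--Hamilton) argument, provided $A^*(G)$ is finitely generated as a module over $A^*(\mathrm{pt})$. This finiteness holds: the surjection $A^*(G/B)\twoheadrightarrow A^*(G)$ of Proposition~\ref{yagita-restatement} exhibits $A^*(G)$ as a quotient of $A^*(G/B)\cong A^*_{\mathrm{top}}(K/T)$, which is finitely generated over $A^*(\mathrm{pt})$ — equal to $\mathbb F_2[v_n]$ or $\mathbb F_2[v_n^{\pm1}]$, both Noetherian — because $G/B\cong K/T$ is cellular with finitely many cells (cf.\ Proposition~\ref{HK}); and then $\pi^*\big(A^*_{\mathrm{top}}(K/T)\big)$, being itself a quotient of $A^*(G)$ via $\bar f$, is finitely generated as well.

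In short, the corollary is a brief diagram chase on top of Proposition~\ref{yagita-restatement}, and I do not anticipate any genuine obstacle; the only place requiring a sentence of care is the abstract-versus-canonical isomorphism issue handled above.
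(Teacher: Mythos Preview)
Your proposal is correct and follows essentially the same approach as the paper: the corollary is stated immediately after Proposition~\ref{yagita-restatement} with no separate proof, so the intended argument is precisely the factorization $A^*(G)\twoheadrightarrow\pi^*\big(A^*_{\mathrm{top}}(K/T)\big)\hookrightarrow A^*_{\mathrm{top}}(K)$ you describe. Your additional paragraph addressing the abstract-versus-canonical reading of Conjecture~\ref{YC} via the Noetherian surjective-endomorphism trick is more care than the paper itself takes, but it is sound and harmless.
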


\subsection{Connective Morava K-theory of orthogonal groups}

In~\cite[Theorem~6.13]{LPSS} we computed the algebraic {\it periodic} Morava K-theory of special orthogonal groups $\mathrm K(n)^*(\mathrm{SO}_m)$. We can also compute the {\it connective} Morava K-theory of $\mathrm{SO}_m$ as a simple corollary of the techniques of~\cite{LPSS}.

Recall that
$$
\mathrm{Ch}(\mathrm{SO}_m)=\mathbb F_2[e_1,e_2,\ldots,e_s]/(e_i^2=e_{2i})
$$
for $s=\lfloor\frac{m-1}{2}\rfloor$, and $e_k=0$ for $k>s$. We will also need the following results.
\begin{lm}
\label{recall-lpss}
Let $m\in\mathbb N\setminus0$ and let $Q$ denote a split projective quadric of dimension $m-2$. Denote by $l\in\mathrm{Ch}(Q)$ the class of the maximal isotropic subspace in $Q$. Then the following results hold.
\begin{enumerate}
\item
The pullback
$$
\mathrm{Ch}(Q)\rightarrow\mathrm{Ch}(\mathrm{SO}_m)
$$
along the canonical map $\mathrm{SO}_m\rightarrow Q$ sends $l$ to $e_s$.
\item
Let $n\in\mathbb N\setminus0$ be such that $m\geq2^{n+1}+1$. Then the canonical map
$$
\mathrm{CK}(n)(Q)\rightarrow\mathrm{CK}(n)(\mathrm{SO}_m)
$$
sends $v_nl$ and $l^2$ to $0$.
\item
For any $n\in\mathbb N\setminus0$ let $x$ denote the image of $l$ under the canonical map
$$
\mathrm{CK}(n)(Q)\rightarrow\mathrm{CK}(n)(\mathrm{SO}_m).
$$
Then $\mathrm{CK}(n)(\mathrm{SO}_m)/x\cong\mathrm{CK}(n)(\mathrm{SO}_{m-2}).$
\end{enumerate}
\end{lm}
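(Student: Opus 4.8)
All three assertions can be extracted from the machinery used to compute $\mathrm K(n)^*(\mathrm{SO}_m)$ in~\cite{LPSS}; I would organise everything around the projection $\pi\colon\mathrm{SO}_m\to Q$ and prove~(2) and~(3) together by induction on $m$.

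\emph{Part~(1)} lives in $\mathrm{Ch}(-)=\mathrm{CH}(-;\mathbb F_2)$. The canonical map $\pi$ is the quotient $\mathrm{SO}_m\to\mathrm{SO}_m/P$ by the maximal parabolic $P$ fixing a chosen isotropic line $\ell_0$ of $Q$, hence a Zariski-locally trivial $P$-bundle; in particular $\pi^*$ is flat, so $l=[L]$ (for $L\subset Q$ a maximal isotropic linear subspace representing $l$) pulls back to $[\pi^{-1}(L)]\in\mathrm{Ch}^s(\mathrm{SO}_m)$. It remains to match $[\pi^{-1}(L)]$ with $e_s$, which is built into the description of the generators $e_i$ in~\cite{LPSS} (following~\cite{Kac}): $e_s$ is precisely the pullback along $\pi$ of the top Schubert class $l$ of $Q$.

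\emph{Part~(3).} Realise $\mathrm{SO}_{m-2}=\mathrm{SO}(H^\perp)\hookrightarrow\mathrm{SO}_m$ as the pointwise stabilizer of a hyperbolic plane $H$ chosen so that $\ell_0\subset H$. Then $\mathrm{SO}_{m-2}$ fixes $\ell_0$, so the composite $\mathrm{SO}_{m-2}\hookrightarrow\mathrm{SO}_m\xrightarrow{\pi}Q$ is constant; since $l$ has positive codimension and $\mathrm{CK}(n)^{>0}(\mathrm{pt})=0$, the image $x$ of $l$ restricts to $0$ in $\mathrm{CK}(n)(\mathrm{SO}_{m-2})$, and the restriction map descends to $\overline r\colon\mathrm{CK}(n)(\mathrm{SO}_m)/(x)\to\mathrm{CK}(n)(\mathrm{SO}_{m-2})$. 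Modulo $v_n$ this is the restriction $\mathrm{Ch}(\mathrm{SO}_m)\to\mathrm{Ch}(\mathrm{SO}_{m-2})$, which by the explicit presentation of $\mathrm{Ch}(\mathrm{SO}_m)$ is surjective with kernel $(e_s)$; since $\mathrm{CK}(n)^*(\mathrm{SO}_{m-2})$ vanishes above degree $\dim\mathrm{SO}_{m-2}$, a graded Nakayama argument makes $\overline r$ itself surjective. For injectivity I would invert $v_n$: by~(2) the class $x$ dies in $\mathrm K(n)(\mathrm{SO}_m)$, so $\overline r[v_n^{-1}]$ is a surjection $\mathrm K(n)(\mathrm{SO}_m)\twoheadrightarrow\mathrm K(n)(\mathrm{SO}_{m-2})$ of free $\mathbb F_2[v_n^{\pm1}]$-modules which, by~\cite[Theorem~6.13]{LPSS}, have the same rank $2^{2^n-1}$ once $m\geq2^{n+1}+1$ — hence an isomorphism; so $\ker\overline r$ is $v_n$-torsion, and it must be ruled out by comparing the $v_n$-torsion of $\mathrm{CK}(n)(\mathrm{SO}_m)/(x)$ with that of $\mathrm{CK}(n)(\mathrm{SO}_{m-2})$, which one controls through the quadric fibration $\pi$ (whose fibre $P$ has Levi $\mathbb G_m\times\mathrm{SO}_{m-2}$). \emph{This last comparison is the main obstacle}: it is where the detailed bookkeeping of the tower of quadric fibrations from~\cite{LPSS} is genuinely needed, not merely formal module theory.

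\emph{Part~(2).} For $m\geq2^{n+1}+1$ we have $s=\lfloor\tfrac{m-1}{2}\rfloor\geq2^n$. Composing $\mathrm{CK}(n)(Q)\to\mathrm{CK}(n)(\mathrm{SO}_m)$ with the localization to $\mathrm K(n)$ and using~(1) together with~\cite[Theorem~6.13]{LPSS} (where the generator indexed by $s\geq2^n$ is set to $0$), one obtains that $l$ maps to $0$ in $\mathrm K(n)(\mathrm{SO}_m)$, whence $v_n^Nl=v_n^Nl^2=0$ in $\mathrm{CK}(n)(\mathrm{SO}_m)$ for some $N$; and by~(1) the image of $l^2$ in $\mathrm{Ch}(\mathrm{SO}_m)$ is $e_s^2=e_{2s}=0$ (as $2s>s$). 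To sharpen these to $v_nl=0$ and $l^2=0$ I would feed in the inductive hypothesis: the structure of $\mathrm{CK}(n)(\mathrm{SO}_{m-2})$ together with the exact sequence $0\to(x)\to\mathrm{CK}(n)(\mathrm{SO}_m)\to\mathrm{CK}(n)(\mathrm{SO}_{m-2})\to0$ from~(3) pins down $\mathrm{Ann}(x)$, hence the $v_n$-orders of $x$ and $x^2$. The base case $m\in\{2^{n+1}+1,\,2^{n+1}+2\}$ (so $\dim Q\in\{2^{n+1}-1,\,2^{n+1}\}$) is checked directly from the known structure of $\mathrm{CK}(n)(Q)$. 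As in~(3), the real work sits in the~\cite{LPSS} induction; the formal steps above only locate it.
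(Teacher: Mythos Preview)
The paper does not give an independent proof of this lemma at all: it simply cites \cite[Lemma~7.2]{PShopf} for~(1), says the proof of \cite[Theorem~5.1]{LPSS} carries over verbatim for~(2), and cites \cite[Proposition~5.2]{LPSS} for~(3). So what you are attempting is a genuine re-proof rather than a variant of the paper's argument, and you should be explicit that this lemma is a recall of results already established in \cite{LPSS,PShopf}.

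On the substance, two concrete problems. First, your argument for~(3) invokes~(2) to kill $x$ after inverting $v_n$, but~(2) is only stated for $m\geq 2^{n+1}+1$, while~(3) is claimed for \emph{all} $m$. For $m\leq 2^{n+1}$ the element $x=e_s$ is not $v_n$-torsion, so your localisation step does not apply; that range needs a separate (easy) argument using the known freeness of $\mathrm{CK}(n)^*(\mathrm{SO}_m)$ over $\mathbb F_2[v_n]$. Second, and more seriously, you invoke \cite[Theorem~6.13]{LPSS} (the final structure theorem for $\mathrm K(n)^*(\mathrm{SO}_m)$) to prove what are, in \cite{LPSS}, intermediate inputs to that very theorem. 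That is not formally circular once you grant that \cite{LPSS} stands on its own, but it means your argument is not an alternative proof of~(2) and~(3); it is a consistency check. You acknowledge this yourself: in both~(2) and~(3) you say the ``real work'' lies in the \cite{LPSS} induction, and you do not carry out the injectivity step in~(3) or the sharpening from $v_n^Nx=0$ to $v_nx=0$ in~(2). As written, then, this is an outline with the essential step deferred to the reference, which is effectively what the paper does in one line.
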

\begin{proof}
For~(1) see~\cite[Lemma~7.2]{PShopf}. The proof of~\cite[Theorem~5.1]{LPSS} can be repeated verbatim to obtain~(2). For~(3) see~\cite[Proposition~5.2]{LPSS}.
\end{proof}

Next we prove the following theorem.

\begin{tm}
\label{ck-algebra}
For all $n$, $m\in\mathbb N\setminus0$ the ring $\mathrm{CK}(n)(\mathrm{SO}_m)$ is {\rm(}non-canonically{\rm)} isomorphic to
$$
R:=\mathbb F_2[v_n][e_1,e_2,\ldots,e_s]/(e_i^2=e_{2i}\ \forall i, v_ne_i=0\text{ for }i\geq2^n),
$$
where $s=\lfloor\frac{m-1}{2}\rfloor$, and $e_k=0$ for $k>s$.
\end{tm}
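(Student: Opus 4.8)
The plan is to prove the theorem by induction on $m$, using the three facts collected in Lemma~\ref{recall-lpss} as the engine of the induction step. The base case $m=1$ (or $m=2$) is trivial, since then $s=0$ and $\mathrm{SO}_m$ is a point or $\mathbb{G}_m$-like, so $\mathrm{CK}(n)(\mathrm{SO}_m)=\mathbb F_2[v_n]$, which agrees with $R$. For the induction step I would pass from $m-2$ to $m$: write $Q$ for the split quadric of dimension $m-2$, let $l\in\mathrm{Ch}(Q)$ be the class of a maximal isotropic subspace, and let $x\in\mathrm{CK}(n)(\mathrm{SO}_m)$ be its image under the pullback along $\mathrm{SO}_m\to Q$. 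By Lemma~\ref{recall-lpss}(3) we have $\mathrm{CK}(n)(\mathrm{SO}_m)/x\cong\mathrm{CK}(n)(\mathrm{SO}_{m-2})$, which by induction is $R_{m-2}:=\mathbb F_2[v_n][e_1,\ldots,e_{s-1}]/(e_i^2=e_{2i},\,v_ne_i=0\text{ for }i\ge2^n)$. So it remains to identify $x$ with $e_s$ and to pin down the relations that $e_s$ satisfies, namely $e_s^2=e_{2s}$ and $v_ne_s=0$ when $s\ge2^n$.

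The key structural input is that everything is controlled by the Chow theory: by Lemma~\ref{recall-lpss}(1) the image of $l$ in $\mathrm{Ch}(\mathrm{SO}_m)$ is $e_s$, so $x$ is a $v_n$-lift of $e_s$, and I would fix the name $e_s:=x$ accordingly. For the relation $e_s^2=e_{2s}$: modulo $x$ this relation holds (it holds in $R_{m-2}$, where $e_s=0$ and $e_{2s}=0$ since $2s>s-1$ trivially — more carefully, if $2s\le s-1$ is impossible so $e_{2s}$ is already $0$ in $\mathrm{CK}(n)(\mathrm{SO}_{m-2})$); but one must be careful because $e_{2s}$ need not be literally zero in $\mathrm{CK}(n)(\mathrm{SO}_m)$ — rather $e_{2s}$ is defined to be $0$ when $2s>s$, which always holds, so in fact the relation to establish is $e_s^2=0$ in the periodic theory... wait, no: in $R$ one has $e_i^2=e_{2i}$ and $e_{2s}=0$ since $2s>s$, so the relation is $e_s^2=0$. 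Here I would use the degree/grading: $e_s^2$ lands in the same graded piece as $e_{2s}$ which vanishes for dimension reasons, together with the fact (Lemma~\ref{recall-lpss}(2), applicable once $m\ge2^{n+1}+1$, equivalently $s\ge2^n$) that $l^2\mapsto0$, so $e_s^2=0$ exactly in the range $s\ge2^n$; for $s<2^n$ (i.e. $m\le2^{n+1}$) one instead argues that $\mathrm{CK}(n)(\mathrm{SO}_m)$ agrees with the Chow algebra base-changed to $\mathbb F_2[v_n]$ because $v_ne_i$ plays no role, again by comparing with the Chow computation and counting ranks. Similarly $v_ne_s=0$ for $s\ge2^n$ is precisely Lemma~\ref{recall-lpss}(2), which says $v_nl\mapsto0$.

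To turn these relations into an isomorphism rather than just a surjection, I would argue by comparing $\mathbb F_2[v_n]$-module ranks. The natural map $R\twoheadrightarrow\mathrm{CK}(n)(\mathrm{SO}_m)$ is surjective: the $e_i$ generate because they generate $\mathrm{Ch}(\mathrm{SO}_m)$ and $\mathrm{CK}(n)$ is $v_n$-complete/free over $\mathbb F_2[v_n]$ in the relevant sense (or: use that $\mathrm{CK}(n)(\mathrm{SO}_m)\otimes_{\mathbb F_2[v_n]}\mathbb F_2=\mathrm{Ch}(\mathrm{SO}_m)$, cf.\ Proposition~\ref{ahss-cohomological} and Proposition~\ref{HK}, plus Nakayama over the graded local ring $\mathbb F_2[v_n]$). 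Then I would compute $\mathrm{rank}_{\mathbb F_2[v_n]}$ of both sides: on the left $R$ is a free $\mathbb F_2[v_n]$-module once one writes a monomial basis (the basis is indexed by subsets of $\{1,\ldots,s\}$ avoiding the relations $e_i^2=e_{2i}$, exactly as for the Chow algebra, with the caveat that some basis elements are $v_n$-torsion and hence contribute $\mathbb F_2$ rather than $\mathbb F_2[v_n]$ — so $R$ is not free but its structure is explicit), and on the right one uses the short exact sequence coming from multiplication by $x$ together with the inductive description of $\mathrm{CK}(n)(\mathrm{SO}_{m-2})$. Matching these, the surjection $R\to\mathrm{CK}(n)(\mathrm{SO}_m)$ is forced to be an isomorphism. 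The main obstacle is precisely this bookkeeping at the boundary $s=2^n$, where the module is genuinely not free over $\mathbb F_2[v_n]$ (the generators $e_i$ with $i\ge2^n$ are $v_n$-torsion): one must set up the $x$-multiplication sequence $0\to\mathrm{(ann)}\to\mathrm{CK}(n)(\mathrm{SO}_m)\xrightarrow{x}\mathrm{CK}(n)(\mathrm{SO}_m)\to\mathrm{CK}(n)(\mathrm{SO}_{m-2})\to0$ carefully and track whether $x$ is a zero-divisor, which is where Lemma~\ref{recall-lpss}(2)'s vanishing $x^2=0$, $v_nx=0$ (for $s\ge2^n$) does the real work — it says the annihilator of $x$ contains $x$ and $v_n$, matching the relation structure of $R$ exactly.
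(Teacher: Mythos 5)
Your proposal has the right skeleton — induction on $m$ in steps of two, Lemma~\ref{recall-lpss} as the engine, identifying the new generator $e_s$ with the image $x$ of the class $l$, and using part~(2) of that lemma for the relations $x^2=0$, $v_nx=0$ when $s\ge 2^n$. This matches the paper's approach up to that point. But there is a genuine gap in the middle, and it is precisely the part where the real work happens.

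You verify the relations only for the \emph{new} generator $e_s=x$, and then implicitly treat the map $R\to\mathrm{CK}(n)(\mathrm{SO}_m)$ as already well defined. It is not: the isomorphism $\mathrm{CK}(n)(\mathrm{SO}_m)/x\cong\mathrm{CK}(n)(\mathrm{SO}_{m-2})$ only gives you the relations among the lifts $\widetilde e_1,\dots,\widetilde e_{s-1}$ \emph{modulo} $x$. A priori, one could have $v_n\widetilde e_i=xz\ne 0$ for some $2^n\le i<s$, or $\widetilde e_i^{\,2}-\widetilde e_{2i}=xz\ne 0$ for $2i<s$, or $\widetilde e_{s/2}^{\,2}\ne x$ when $s$ is even. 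Establishing that these discrepancies vanish is exactly what the paper spends most of its proof on: decompose $z$ into monomials in the $\widetilde e_i$ and $x$, use $v_nx=0$ and $x^2=0$ to reduce to monomials in $\widetilde e_i$ with $\mathbb F_2$-coefficients, then reduce modulo $v_n$ to $\mathrm{Ch}(\mathrm{SO}_m)$ and use that $e_s$ is a nonzerodivisor on the ideal generated by $e_1,\dots,e_{s-1}$ there to conclude $z$ is divisible by $v_n$, hence $xz=0$. Your proposal skips this entirely, which means the map from $R$ is never actually constructed.

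The final step also has a soft spot. You propose to finish by comparing $\mathbb F_2[v_n]$-module ranks via a four-term exact sequence in $x$, but since $\mathrm{CK}(n)(\mathrm{SO}_m)$ has genuine $v_n$-torsion for $m>2^{n+1}$, ``rank over $\mathbb F_2[v_n]$'' does not separate $R$ from a proper quotient, and the $x$-multiplication sequence you invoke is not exact in the naive way you suggest (the annihilator of $x$ is not simply $(x,v_n)$ without first knowing the answer). The paper sidesteps all of this: given surjectivity, any extra relation $f$ has image zero in $\mathrm{CK}(n)(\mathrm{SO}_{m-2})$, so $f=e_sg$, and the same reduction-mod-$v_n$ argument forces $g$ to be divisible by $v_n$, hence $f=0$. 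That argument is both cleaner and actually closes the loop; you should replace your rank-counting sketch by it.
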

\begin{proof}
For $m\leq2^{n+1}$ see~\cite[Theorem~6.10]{LPSS}. For $m\geq2^{n+1}+1$ we will argue by induction.

In the notation of Lemma~\ref{recall-lpss} consider arbitrary lifts $\widetilde e_i$ of the elements ${e_i\in\mathrm{CK}(n)(\mathrm{SO}_{m-2})}$ to $\mathrm{CK}(n)(\mathrm{SO}_m)$. These lifts together with $x$ generate $\mathrm{CK}(n)(\mathrm{SO}_m)$ as an algebra.

First we will construct a map from $R$ to $\mathrm{CK}(n)(\mathrm{SO}_m)$ sending $e_i$ to $\widetilde e_i$ for $i<s$, and sending $e_s$ to $x$.

Using Lemma~\ref{recall-lpss}\,(2) we have $v_nx=0$ and $x^2=0$. 

We claim that $v_n\widetilde e_i=0$ for $2^n\leq i<s$. Using the inductive assumption we obtain $v_n\widetilde e_i=xz$ for some $z\in\mathrm{CK}(n)(\mathrm{SO}_m)$. Decompose $z$ as a linear combination of monomials in $\widetilde e_i$ and $x$. Since $v_nx=0$, we can assume that this linear combination has $\mathbb F_2$-coefficients, rather than just $\mathbb F_2[v_n]$-. Moreover, since $x^2=0$ we can assume that this linear combination consists only of monomials in $\widetilde e_i$ (and not in $x$). However, the image of $xz$ in $\mathrm{Ch}(\mathrm{SO}_m)$ is $0$. In other words, the image of $z$ in $\mathrm{Ch}(\mathrm{SO}_m)$ is the linear combination in $e_1,\ldots,e_{s-1}$ which is annihilated by $e_s$. This can only happen if the image of $z$ in $\mathrm{Ch}(\mathrm{SO}_m)$ is zero. In other words, $z$ is divisible by $v_n$, but this implies that $xz=0$. 

Similarly, one shows that $\widetilde e_i^2-\widetilde e_{2i}=0$ for $2i<s$. Finally, we have to show that $\widetilde e_{s/2}^{\,2}=x$ for $s$ even. By induction we know that $\widetilde e_{s/2}^{\,2}=xz$ for some $z$. Then $\widetilde e_{s/2}^{\,2}-x=x(z-1)$ and we can argue for $z-1$ as above to obtain the claim.

In other words, the map from $R$ to $\mathrm{CK}(n)(\mathrm{SO}_m)$ is well-defined (and surjective). Then $\mathrm{CK}(n)(\mathrm{SO}_m)=R/(f_i)$ for some $f_1,\ldots,f_k\in R$. However, the images of $f_i$ in $\mathrm{CK}(n)(\mathrm{SO}_{m-2})$ should also be $0$, therefore $f_i=e_sg_i$ for some $g_i$ by induction. We can again assume that $g_i$ are polynomials in $e_i$ for $i<s$ with $\mathbb F_2$-coefficients. However, $f_i$ should equal $0$ modulo $v_n$, since we can specialize $\mathrm{CK}(n)(\mathrm{SO}_m)$ to $\mathrm{Ch}(\mathrm{SO}_m)$. This again implies that $g_i$ are divisible by $v_n$, and therefore $f_i=0$.
\end{proof}

\subsection{Injectivity Theorem}

In this subsection we will prove Conjecture~\ref{YC} for special othogonal and spinor groups. 

\begin{lm}
\label{ahss-injective}
Assume that for a compact connected Lie group $K$ the Atiyah--Hirzebruch spectral sequence $\mathrm H^p\big(K;\,\mathrm K(n)_q^{\mathrm{top}}(*)\big)\Rightarrow \mathrm K(n)_{\mathrm{top}}^{p+q}(K)$ collapses at the second page. 
Then the natural maps 
$$
\mathrm{CK}(n)^*(K_{\mathbb C})\rightarrow\mathrm{CK}(n)_{\mathrm{top}}^*(K)\ \text{ and }\ \mathrm{K}(n)^*(K_{\mathbb C})\rightarrow\mathrm{K}(n)_{\mathrm{top}}^*(K)
$$
are injective.
\end{lm}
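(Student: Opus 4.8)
The plan is to deduce injectivity from the collapse of the Atiyah--Hirzebruch spectral sequence via a rank count, using the connective theory as an intermediary and reducing the periodic case to it by localization. First I would invoke the Iwasawa decomposition (as set up in Proposition~\ref{yagita-restatement}): the composite $K \hookrightarrow G \to G/B \cong K/T$ identifies the natural map $\mathrm{CK}(n)^*(K_{\mathbb C}) \to \mathrm{CK}(n)_{\mathrm{top}}^*(K)$ with the restriction of the surjection $\mathrm{CK}(n)^*(G/B) \twoheadrightarrow \mathrm{CK}(n)^*(G)$ to a map landing in $\mathrm{CK}(n)_{\mathrm{top}}^*(K)$, where by Proposition~\ref{HK} the left-hand side is canonically $\mathrm{CK}(n)_{\mathrm{top}}^*(K/T)$. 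So it suffices to show the topological pullback $\pi^*\colon \mathrm{CK}(n)_{\mathrm{top}}^*(K/T) \to \mathrm{CK}(n)_{\mathrm{top}}^*(K)$ has image of the ``expected'' size, or more directly, that the algebraic map $\mathrm{CK}(n)^*(G) \to \mathrm{CK}(n)_{\mathrm{top}}^*(K)$ is injective.

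The key input is that the hypothesis on the AHSS for $\mathrm{K}(n)_{\mathrm{top}}^*(K)$ forces, by Proposition~\ref{ahss-cohomological}, that $\mathrm{CK}(n)_{\mathrm{top}}^*(K)$ is a free $\mathbb F_2[v_n]$-module of rank $\dim_{\mathbb F_2} \mathrm H^*(K;\mathbb F_2)$ with $\mathrm{CK}(n)_{\mathrm{top}}^*(K) \otimes_{\mathbb F_2[v_n]} \mathbb F_2 \cong \mathrm H^*(K;\mathbb F_2)$. On the algebraic side, $\mathrm{CK}(n)^*(K_{\mathbb C})$ is also a free $\mathbb F_2[v_n]$-module (this follows from Theorem~\ref{ck-algebra} for $\mathrm{SO}_m$ in any case, and more generally from the cellular/AHSS structure), and its specialization modulo $v_n$ is $\mathrm{Ch}(G)$, which by the Kac/Grothendieck result quoted in Section~\ref{yagita-conjecture} injects into $\mathrm H^*(K;\mathbb F_2)$. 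So I would consider the commutative square obtained by tensoring the map $\mathrm{CK}(n)^*(K_{\mathbb C}) \to \mathrm{CK}(n)_{\mathrm{top}}^*(K)$ with $-\otimes_{\mathbb F_2[v_n]}\mathbb F_2$; the bottom map becomes $\mathrm{Ch}(G) \to \mathrm H^*(K;\mathbb F_2)$, which is injective. A standard argument with the $v_n$-adic (or grading) filtration on free $\mathbb F_2[v_n]$-modules then upgrades injectivity mod $v_n$ to injectivity over $\mathbb F_2[v_n]$: if $x$ lies in the kernel, write $x = v_n^k x'$ with $x' \notin v_n \cdot \mathrm{CK}(n)^*(K_{\mathbb C})$; then $x' $ also maps into the kernel (since $\mathrm{CK}(n)_{\mathrm{top}}^*(K)$ is torsion-free over $\mathbb F_2[v_n]$), so its image mod $v_n$ vanishes, contradicting injectivity of $\mathrm{Ch}(G) \to \mathrm H^*(K;\mathbb F_2)$ together with the fact that the reduction of $x'$ is nonzero.

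Finally, for the periodic statement I would simply observe that $\mathrm{K}(n)^*(K_{\mathbb C}) = \mathrm{CK}(n)^*(K_{\mathbb C})[v_n^{-1}]$ and $\mathrm{K}(n)_{\mathrm{top}}^*(K) = \mathrm{CK}(n)_{\mathrm{top}}^*(K)[v_n^{-1}]$ (the latter by the localization remark in the Atiyah--Hirzebruch subsection, valid since $K$ is a finite CW-complex), and that localization of an injective map of $v_n$-torsion-free modules at the multiplicative set $\{v_n^k\}$ stays injective; since $\mathrm{CK}(n)^*(K_{\mathbb C})$ is $v_n$-torsion-free, no kernel is created. The main obstacle I expect is making the reduction-mod-$v_n$ square genuinely commutative and identifying the bottom arrow with the classical map $\mathrm{Ch}(G) \to \mathrm H^*(K;\mathbb F_2)$: one must check that the natural comparison $\mathrm{CK}(n)^* \to \mathrm{Ch}^*$ on the algebraic side and $\mathrm{CK}(n)_{\mathrm{top}}^* \to \mathrm H^*(-;\mathbb F_2)$ on the topological side are compatible with the realization maps, and that under Proposition~\ref{ahss-cohomological} the induced map on associated graded is exactly the one appearing in Kac's theorem. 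Everything else is a formal rank-counting and torsion-freeness argument.
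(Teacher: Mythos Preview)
Your core argument is the same as the paper's: invoke Proposition~\ref{ahss-cohomological} to see that $\mathrm{CK}(n)_{\mathrm{top}}^*(K)$ is a free (hence $v_n$-torsion-free) $\mathbb F_2[v_n]$-module, set up the commutative square whose bottom row is the injective map $\mathrm{Ch}^*(G)\hookrightarrow\mathrm H^*(K;\mathbb F_2)$ from Kac's theorem, and lift injectivity by factoring out the top power of $v_n$; then obtain the periodic statement by localizing. The detour through Proposition~\ref{yagita-restatement} is unnecessary but harmless.

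One point needs correction: your claim that $\mathrm{CK}(n)^*(K_{\mathbb C})$ is a free $\mathbb F_2[v_n]$-module is false in general. The group $G$ is \emph{not} cellular, so no ``cellular/AHSS'' argument applies, and Theorem~\ref{ck-algebra} in fact exhibits $v_n$-torsion in $\mathrm{CK}(n)^*(\mathrm{SO}_m)$ for $m>2^{n+1}$. Fortunately your argument never actually uses freeness of the source: all that is needed is that any nonzero $x$ can be written as $v_n^k x'$ with $x'\notin v_n\cdot\mathrm{CK}(n)^*(G)$, which holds because $\mathrm{CK}(n)^*(G)$ is a finitely generated graded $\mathbb F_2[v_n]$-module (being a quotient of the free module $\mathrm{CK}(n)^*(G/B)$), together with $v_n$-torsion-freeness of the \emph{target}. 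Likewise, the periodic step only requires exactness of localization, not torsion-freeness of the source; drop that hypothesis and the argument goes through unchanged.
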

\begin{proof}
Let $G=K_{\mathbb C}$ denote the corresponding reductive group. We conclude by Proposition~\ref{ahss-cohomological} that $\mathrm{CK}(n)_{\mathrm{top}}^*(K)$ is a free $\mathbb F_2[v_n]$-module. Consider the diagram
$$\xymatrix{
\mathrm{CK}(n)^*(G)\ar[r]^{}\ar[d]^{}&\mathrm{CK}(n)_{\mathrm{top}}^*(K)\ar[d]^{}\\
\mathrm{Ch}^*(G)\,\ar@{>->}[r]&\mathrm H^*(K;\,\mathbb F_2).
}$$
Take an element $x\in\mathrm{CK}(n)^*(G)$ which is not divisible by $v_n$. Then the injectivity of the bottom horizontal arrow implies that the image of $x$ in $\mathrm{CK}(n)_{\mathrm{top}}^*(K)$ cannot be zero. Since $\mathrm{CK}(n)_{\mathrm{top}}^*(K)$ cannot have a $v_n$-torsion, we conclude that the top horizontal arrow is injective. 

Finally, since $\mathrm{CK}(n)_{\mathrm{top}}^*(K)$ cannot have a $v_n$-torsion, and the natural map $$\mathrm{CK}(n)_{\mathrm{top}}^*(K)\rightarrow\mathrm{K}(n)_{\mathrm{top}}^*(K)$$ coincides with the localization, we conclude that this map is injective. This implies that 
$$
\mathrm{K}(n)^*(G)\rightarrow\mathrm{K}(n)_{\mathrm{top}}^*(K)
$$
is also injective.
\end{proof}

\begin{rk}
In this paper we are only interested in orthogonal groups, and therefore only work with Morava K-theories for $p=2$. However, we remark that the above argument clearly remains valid for odd $p$. 
\end{rk}

\begin{tm}
\label{injectivity-theorem}
Let $K=\mathrm{SO}(m)$ and $K=\mathrm{Spin}(m)$ {\rm(}for any $m\in\mathbb N${\rm)}, and $G=K_{\mathbb C}$. Then the natural maps 
$$
\mathrm{K}(n)^*(G)\rightarrow\mathrm{K}(n)_{\mathrm{top}}^*(K)\ \text{ and }\ \mathrm{CK}(n)^*(G)\rightarrow\mathrm{CK}(n)_{\mathrm{top}}^*(K)
$$
are injective.
\end{tm}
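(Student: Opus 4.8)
\emph{Strategy.} By Lemma~\ref{ahss-injective} it is enough, for $K=\mathrm{SO}(m)$ or $K=\mathrm{Spin}(m)$, to know that the Atiyah--Hirzebruch spectral sequence $\mathrm H^p\big(K;\mathrm K(n)_q^{\mathrm{top}}(*)\big)\Rightarrow\mathrm K(n)^{p+q}_{\mathrm{top}}(K)$ collapses at $E_2$; I would establish this for $m\le 2^{n+1}$ and then propagate the conclusion to all $m$ by means of the stabilization in Theorems~\ref{co-mult} and~\ref{ck-algebra}. For $m\le 2^{n+1}$ the collapse goes as follows: the first possibly nonzero differential is $v_nQ_n$ up to a unit, and on the standard generators of $\mathrm H^*(\mathrm{SO}(m);\mathbb F_2)$ one has $Q_n\beta_i=\beta_{i+2^{n+1}-1}$; since $\beta_j$ vanishes in $\mathrm H^*(\mathrm{SO}(m);\mathbb F_2)$ as soon as $j\ge m$, while $i+2^{n+1}-1\ge 2^{n+1}\ge m$, the operation $Q_n$ acts as zero, and similarly, by a computation on the extra spinorial generator, for $\mathrm{Spin}(m)$. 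That all the remaining differentials vanish in this range is precisely the content of the additive computations of Rao~\cite{R90} and Nishimoto~\cite{Nis}; equivalently, for $m\le 2^{n+1}$ the $\mathbb F_2[v_n^{\pm1}]$-rank of $\mathrm K(n)^*_{\mathrm{top}}(K)$ equals $\dim_{\mathbb F_2}\mathrm H^*(K;\mathbb F_2)$. Hence $\mathrm K(n)^*(K_{\mathbb C})\to\mathrm K(n)^*_{\mathrm{top}}(K)$ and $\mathrm{CK}(n)^*(K_{\mathbb C})\to\mathrm{CK}(n)^*_{\mathrm{top}}(K)$ are injective for all $m\le 2^{n+1}$.

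\emph{Periodic theory for all $m$.} By Theorem~\ref{co-mult} the ring $\mathrm K(n)^*(\mathrm{SO}_m)$ is independent of $m$ for $m\ge 2^{n+1}-1$, and the restriction $\mathrm K(n)^*(\mathrm{SO}_m)\to\mathrm K(n)^*(\mathrm{SO}_{m-2})$ is an isomorphism once $m\ge 2^{n+1}+1$ (in the surjection of Lemma~\ref{recall-lpss}(3) the class $x$ one quotients by is $v_n$-torsion by Lemma~\ref{recall-lpss}(1)--(2), hence dies after inverting $v_n$). Given $m\ge 2^{n+1}+1$, pick $m_0\in\{2^{n+1}-1,2^{n+1}\}$ with $m_0\equiv m\pmod 2$ and consider the naturality square
\[
\begin{CD}
\mathrm K(n)^*(\mathrm{SO}_m) @>>> \mathrm K(n)^*_{\mathrm{top}}(\mathrm{SO}(m))\\
@V{\cong}VV @VVV\\
\mathrm K(n)^*(\mathrm{SO}_{m_0}) @>>> \mathrm K(n)^*_{\mathrm{top}}(\mathrm{SO}(m_0)).
\end{CD}
\]
The lower horizontal map is injective by the previous step, so the upper one is too; the case $K=\mathrm{Spin}(m)$ is identical, using the corresponding stabilization of $\mathrm K(n)^*(\mathrm{Spin}(m)_{\mathbb C})$ and the embeddings $\mathrm{Spin}(m_0)\hookrightarrow\mathrm{Spin}(m)$.

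\emph{Connective theory.} Finally I would deduce the $\mathrm{CK}(n)$-statement from the periodic one. Let $T\subseteq\mathrm{CK}(n)^*(\mathrm{SO}_m)$ be the $v_n$-torsion submodule; by Theorem~\ref{ck-algebra} the quotient $\mathrm{CK}(n)^*(\mathrm{SO}_m)/T$ is $v_n$-torsion-free, hence free over $\mathbb F_2[v_n]$, so the extension splits as $\mathrm{CK}(n)^*(\mathrm{SO}_m)=F\oplus T$ with $F$ free. Localization embeds $F$ faithfully into $\mathrm K(n)^*(\mathrm{SO}_m)$, and since $T\cap v_nF=0$, reduction modulo $v_n$ embeds $T$ into $\mathrm{Ch}(\mathrm{SO}_m)$, hence (Kac--Grothendieck) into $\mathrm H^*(\mathrm{SO}(m);\mathbb F_2)$. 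Now suppose $f+t$, with $f\in F$ and $t\in T$, maps to $0$ in $\mathrm{CK}(n)^*_{\mathrm{top}}(\mathrm{SO}(m))$; applying the localization map $\mathrm{CK}(n)^*_{\mathrm{top}}(\mathrm{SO}(m))\to\mathrm K(n)^*_{\mathrm{top}}(\mathrm{SO}(m))$, under which the image of $t$ vanishes, and using the injectivity of $\mathrm K(n)^*(\mathrm{SO}_m)\to\mathrm K(n)^*_{\mathrm{top}}(\mathrm{SO}(m))$ from the previous step, we get $f=0$; then $t$ itself maps to $0$, and commutativity with the canonical map to $\mathrm H^*(\mathrm{SO}(m);\mathbb F_2)$ forces $t=0$. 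The same argument applies to $\mathrm{Spin}(m)$.

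The main obstacle is the first step: the vanishing of the first Atiyah--Hirzebruch differential is elementary, but excluding all the higher differentials throughout the range $m\le 2^{n+1}$ genuinely rests on the delicate topological computations of~\cite{R90,Nis}, and one must be sure that $2^{n+1}$ is exactly the bound up to which their additive answer has the size of $\mathrm H^*(-;\mathbb F_2)$.
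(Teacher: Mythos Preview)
Your argument is correct and follows the same three-step route as the paper: AHSS collapse from~\cite{Nis} plus Lemma~\ref{ahss-injective} for $m\le 2^{n+1}$; stabilization plus a naturality square for the periodic theory when $m>2^{n+1}$; and then the connective case via the explicit structure of $\mathrm{CK}(n)^*(\mathrm{SO}_m)$ from Theorem~\ref{ck-algebra} (your $F\oplus T$ splitting is a harmless repackaging of the paper's direct argument that any $x$ in the kernel is both $v_n$-divisible and $v_n$-torsion, hence zero). One caution on step~(2): you invoke Theorem~\ref{co-mult}, but the proof of that theorem (specifically its comultiplication statement, via Proposition~\ref{quotient}) uses Theorem~\ref{injectivity-theorem}; to avoid the appearance of circularity you should cite the stabilization directly from~\cite[Theorem~5.1 or~6.13]{LPSS}, as the paper does---the algebra-structure fact you actually need is already established there.
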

\begin{proof}
First, assume that $m\leq2^{n+1}$. Under this condition, the Atiyah--Hirzebruch spectral sequence $\mathrm H^p\big(K;\,\mathrm K(n)_q^{\mathrm{top}}(*)\big)\Rightarrow \mathrm K(n)_{\mathrm{top}}^{p+q}(K)$ collapses by~\cite[Theorems~2.4 and~3.2]{Nis}. 
Therefore the claim follows from Lemma~\ref{ahss-injective}.

Next, consider the case $m>2^{n+1}$, and assume that the natural map from the algebraic Morava K-theory $\mathrm K(n)^*(G)$ to the topological one $\mathrm K(n)^*_{\mathrm{top}}(K)$ is not injective. Consider the natural inclusion of $K_0=\mathrm{SO}(2^{n+1})$ for $m$ even or $K_0=\mathrm{SO}(2^{n+1}-1)$ for $m$ odd into $K$, and denote $G_0=(K_0)_{\mathbb C}$. Consider the following diagram
$$
\xymatrix{
\mathrm K(n)^*(G)\ar[d]^{\cong}\ar[r]&\mathrm K(n)^*_{\mathrm{top}}(K)\ar[d]\\
\mathrm K(n)^*(G_0)\ar@{^(->}[r]&\mathrm K(n)^*_{\mathrm{top}}(K_0)
}
$$
where the left vertical arrow is an isomorphism by~\cite[Theorem~5.1]{LPSS}. Then a simple diagram chase provides a contradiction.

Finally, assume that the natural map from $\mathrm{CK}(n)^*(G)$ to $\mathrm{CK}(n)^*_{\mathrm{top}}(K)$ is not injective, and take an element $x$ in the kernel. Since $x$ maps to $0$ in $\mathrm{Ch}^*(G)$, we conclude that $x=v_ny$ for some $y$. On the other hand, since the image of $x$ in $\mathrm{K}(n)^*(G)$ is also $0$, we conclude that $x$ (and therefore $y$) is a $v_n^{\mathbb Z}$-torsion. However, using Theorem~\ref{ck-algebra} we conclude that if $y$ is a $v_n^{\mathbb Z}$-torsion, then in fact $v_ny=0$. This finishes the proof.
\end{proof}

Conjecture~\ref{YC} for special othogonal and spinor groups now follows from Corollary~\ref{yagita-restatement-cl}.

\begin{rk}
Observe that in the case $m>2^{n+1}$ the proof of Theorem~\ref{injectivity-theorem} relies on the results of~\cite{LPSS}. We do not know how to prove Conjecture~\ref{YC} without actually computing $\mathrm K(n)^*(G)$ (as a module).
\end{rk}

\section{Computation of the co-multiplication}
\label{topology}

\subsection{Statement of results}

In~\cite[Theorem~6.13]{LPSS} the algebra structure of $\mathrm K(n)^*(\mathrm{SO}_m)$ is described. As an application of the Yagita conjecture, we can also deduce the co-algebra structure using results in topology.

\begin{tm}
\label{co-mult}
The algebra structure of $\mathrm K(n)^*(\mathrm{SO}_m)$ is given by
$$
\mathbb F_2[v_n^{\pm1}][e_1,e_2,\ldots,e_s]/(e_i^2-e_{2i})
$$
where $s=\mathrm{min}\left(\lfloor\frac{m-1}{2}\rfloor,\,2^{n}-1\right)$ and $e_{2i}$ stands for $0$ if $2i>s$. The reduced co-multiplication $\widetilde\Delta(x)=\Delta(x)-x\otimes1-1\otimes x$ is given by
$$
\widetilde\Delta(e_{\langle 2k\rangle})=\sum_{i=0}^{\nu_2(k)}v_n^{i+1}\,e_{\langle k/2^{i}\rangle}\otimes e_{\langle k/2^{i}\rangle}\,\prod_{j=0}^{i-1}\left(e_{\langle k/2^{j}\rangle}\otimes1+1\otimes e_{\langle k/2^{j}\rangle}\right)
$$
where $\langle t\rangle$ stands for $2^n-1-t$, $k>0$ and $\nu_2(k)$ is a $2$-adic valuation of $k$, i.e., $k/2^{\,\nu_2(k)}$ is an odd integer, and
$$
\widetilde\Delta(e_{2^n-1})=v_n\,e_{2^{n}-1}\otimes e_{2^{n}-1}.
$$
Moreover, $e_1$ coincides with the first Chern class $c_1^{\mathrm{K}(n)}(e_1^{\mathrm{CH}})$ of the generator $e_1^{\mathrm{CH}}$ of $\mathrm{CH}^1(\mathrm{SO}_m;\mathbb Z)=\mathrm{Pic}(\mathrm{SO}_m)$.
\end{tm}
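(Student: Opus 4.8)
The plan is to reduce the determination of the co-multiplication to topology using the injectivity result of Theorem~\ref{injectivity-theorem}, and then to read it off from Rao's computations~\cite{R97}; the algebra isomorphism is nothing but~\cite[Theorem~6.13]{LPSS} (the periodic counterpart of Theorem~\ref{ck-algebra}), so what remains is the co-multiplication formula and the last assertion. First I would set up the Hopf-algebraic formalism. Since $\so$ is a split algebraic group, the multiplication $\mu\colon\so\times\so\to\so$ together with the Künneth isomorphism $\kn{\so\times\so}\cong\kn{\so}\otimes_{\mathbb F_2[v_n^{\pm1}]}\kn{\so}$ (see~\cite{PShopf}, or Proposition~\ref{HK} via the cellular $G/B$-picture) makes $\kn{\so}$ into a graded co-commutative Hopf algebra over $\mathbb F_2[v_n^{\pm1}]$, and similarly $\mathrm K(n)^*_{\mathrm{top}}(\mathrm{SO}(m))$ is a Hopf algebra (Künneth being automatic for the field spectrum $\mathrm K(n)_{\mathrm{top}}$). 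The comparison map of Section~\ref{section-yagita} is natural, multiplicative, and compatible with pullback along $\mu$ and with the Künneth isomorphisms, hence a morphism of Hopf algebras; by Theorem~\ref{injectivity-theorem} it is injective. So it suffices to compute $\Delta$ on the generators $e_i$ inside $\mathrm K(n)^*_{\mathrm{top}}(\mathrm{SO}(m))$. Moreover, as $\Delta$ is a ring map and $e_i^2=e_{2i}$, in characteristic $2$ one has $\widetilde\Delta(e_{2i})=\widetilde\Delta(e_i)^2$, so only the $\widetilde\Delta(e_t)$ with $t$ odd — the classes $e_{2^n-1}$ and $e_{\langle2k\rangle}$ — have to be found, after which the displayed formulas determine $\Delta$ completely.

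Next I would pin down the generator $e_1$ and the last assertion directly. For a semisimple group every line bundle is primitive: $\mu^*L\cong\mathrm{pr}_1^*L\otimes\mathrm{pr}_2^*L$ for $L\in\mathrm{Pic}(\so)$, as one checks by restricting both sides along the two identity sections $\so\hookrightarrow\so\times\so$ and using $\mathrm{Pic}(\so\times\so)=\mathrm{Pic}(\so)\times\mathrm{Pic}(\so)$. Taking $L$ to be the generator of $\mathrm{Pic}(\so)=\mathrm{CH}^1(\so;\mathbb Z)=\mathbb Z/2$ and applying $c_1^{\mathrm K(n)}$ gives
$$
\Delta\big(c_1^{\mathrm K(n)}(L)\big)=\big(c_1^{\mathrm K(n)}(L)\otimes1\big)+_F\big(1\otimes c_1^{\mathrm K(n)}(L)\big),
$$
where $F$ is the Honda formal group law of $\mathrm K(n)$. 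Since $[2]_F(x)=v_nx^{2^n}$ and $c_1^{\mathrm K(n)}(L^{\otimes2})=0$, we get $c_1^{\mathrm K(n)}(L)^{2^n}=0$; combining this with $F(x,y)\equiv x+y+v_nx^{2^{n-1}}y^{2^{n-1}}$ modulo $v_n^2$ — which kills every term of $v_n$-degree $\ge2$ — we obtain $\widetilde\Delta\big(c_1^{\mathrm K(n)}(L)\big)=v_n\,c_1^{\mathrm K(n)}(L)^{2^{n-1}}\otimes c_1^{\mathrm K(n)}(L)^{2^{n-1}}$. On the other hand $c_1^{\mathrm{CK}(n)}(e_1^{\mathrm{CH}})\in\mathrm{CK}(n)^1(\so)$ reduces under $\mathrm{CK}(n)\to\mathrm{Ch}$ to $e_1^{\mathrm{CH}}\bmod 2$, which is the generator $e_1$ of $\mathrm{Ch}^1(\so)$; by Theorem~\ref{ck-algebra} this forces $c_1^{\mathrm K(n)}(e_1^{\mathrm{CH}})$ to differ from a polynomial generator of the presentation only by a $v_n$-multiple, so we may normalise $e_1:=c_1^{\mathrm K(n)}(e_1^{\mathrm{CH}})$. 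With this choice $e_1^{2^{n-1}}=e_{2^{n-1}}$, so the computation just made is exactly the $t=1$ case of the stated formula, namely $\widetilde\Delta(e_1)=v_n\,e_{2^{n-1}}\otimes e_{2^{n-1}}$ (for $n=1$ this is $\widetilde\Delta(e_{2^n-1})$); note it correctly degenerates to $0$ exactly when $m\le2^n$, where $e_{2^{n-1}}=0$.

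For the remaining odd generators $e_t$ and for $e_{2^n-1}$ with $n\ge2$, I would transport the computation to topology: identify the $e_i$ with suitable lifts of the standard generators $a_i$ of $\mathrm H^*(\mathrm{SO}(m);\mathbb F_2)$ (on which $a_i^2=a_{2i}$), using Lemma~\ref{recall-lpss}\,(1) (the pullback along $\so\to Q$ sends $l\mapsto e_s$), the reduction $\mathrm{CK}(n)\to\mathrm{Ch}$, and multiplicativity, and then read off $\widetilde\Delta$ from Rao's description~\cite{R97} of the Hopf algebra $\mathrm K(n)^*_{\mathrm{top}}(\mathrm{SO}(m))$; the bracket $\langle t\rangle=2^n-1-t$ is simply a reindexing of the generators adapted to the resulting formula. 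Conceptually, the co-product is a $v_n$-adic deformation of the Chow co-multiplication of $\mathrm{Ch}(\so)$ (on which the $e_i$ with $i$ odd are primitive), and the powers of $v_n$ are the images of the iterated $2$-series of $F$: each ``doubling'' $e_j\mapsto e_{2j}$ built into the Hopf structure contributes one factor of $v_n$, which is the combinatorial origin of the sum over $0\le i\le\nu_2(k)$.

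I expect the matching with~\cite{R97} to be the main obstacle: the topological Morava K-theory Hopf algebra of $\mathrm{SO}(m)$ is only partially recorded in the literature, so in practice one has to combine Rao's results with the available algebraic constraints — co-associativity and co-commutativity of $\widetilde\Delta$, compatibility with the Chow co-multiplication under $\mathrm{CK}(n)\to\mathrm{Ch}$, and the finite $v_n$-adic filtration supplied by Theorem~\ref{ck-algebra} — to pin the formula down, and then to reconcile the three ranges $m\le2^n$, $2^n<m\le2^{n+1}$ and $m\ge2^{n+1}$ into the single uniform statement via the conventions $s=\mathrm{min}(\lfloor\frac{m-1}{2}\rfloor,\,2^n-1)$ and $e_{2i}=0$ for $2i>s$; in the stabilised range $m\ge2^{n+1}$ the isomorphism $\kn{\so}\cong\kn{\mathrm{SO}_{2^{n+1}}}$ (respectively $\kn{\mathrm{SO}_{2^{n+1}-1}}$) of~\cite[Theorem~5.1]{LPSS} reduces everything to a single group.
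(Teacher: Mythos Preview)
Your overall architecture---inject $\kn{\so}$ into $\mathrm K(n)^*_{\mathrm{top}}(\mathrm{SO}(m))$ via Theorem~\ref{injectivity-theorem}, then extract the co-multiplication from Rao---is exactly what the paper does. But the step you label ``the main obstacle'' is where the actual content of the proof lies, and your proposal does not supply the mechanism the paper uses to resolve it.

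The point is that Rao's computation in~\cite{R97} is of the \emph{homology} $\mathrm P(n)^{\mathrm{top}}_*\big(\mathrm{SO}(2^{n+1}-1)\big)$, and in particular of its \emph{multiplication}. What you need is the co-multiplication in cohomology. The paper obtains this by dualising: it identifies $\mathrm{CK}(n)^*(\mathrm{SO}_{2^{n+1}-1})^\vee$ with a specific quotient of $\mathrm{CK}(n)^{\mathrm{top}}_*\big(\mathrm{SO}(2^{n+1}-1)\big)$, namely the quotient by the ideal generated by the $\beta_{2i-1}$ (Proposition~\ref{quotient}). This identification is not formal---its proof uses Rao's relations together with a delicate degree-and-primitive argument to force all $\overline\beta_{2i-1}=0$ in the algebraic dual. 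Only after this can Rao's multiplication formulae (Theorem~\ref{rao}\,(3),(4)) be read off in the quotient (Proposition~\ref{answer-dual}) and then dualised back to give the stated $\widetilde\Delta$ (Proposition~\ref{prop-answer}). Your plan to ``identify the $e_i$ with suitable lifts of the $a_i$ and read off $\widetilde\Delta$'' bypasses this, but there is no direct description of the cohomological co-product in~\cite{R97} to read off, and at $p=2$ the topological $\mathrm P(n)$ is not even a Hopf algebra, so the passage through duality and the quotient is essential.

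Two smaller points. First, Rao's results are only for $m=2^{n+1}-1$; the paper then has to push the formula down to smaller $m$ (Proposition~\ref{co-mult-ck}) and up to larger $m$ (Theorem~\ref{co-mult-ck-big}) by separate arguments showing that the images of the extra $e_k$ vanish exactly, not merely modulo $v_n$---this again uses primitive-element and degree arguments rather than just the stabilisation isomorphism. Second, your treatment of $e_1$ is not quite right: you cannot simply ``normalise'' $e_1:=c_1^{\mathrm K(n)}(e_1^{\mathrm{CH}})$, because the theorem asserts equality for the $e_1$ already fixed by the algebra presentation and the co-multiplication formula. The paper proves $e_1'-e_1=0$ a posteriori by writing $e_1'-e_1=v_n^aP+\cdots$, showing $P$ is primitive modulo $v_n$, and ruling this out by a degree count (end of the proof of Proposition~\ref{co-mult-ck}).
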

For the computation of $\mathrm{CH}^1(\mathrm{SO}_m;\mathbb Z)$ see~\cite[Th\'eor\`eme~7]{marlin}, and $c_1^{\mathrm{K}(n)}$ is defined in~\cite[Chapter~I, Subsection~1.1]{LM}).

 We remark, that as a corollary, we also obtain the Hopf algebra structure of 
$$
\mathrm K(n)^*(\mathrm{Spin}_m)=\mathrm K(n)^*(\mathrm{SO}_m)/e_1,
$$
cf.~\cite[Corollary~6.15]{LPSS}. Below we provide a few examples.
\begin{Ex}
\label{primitive}
It is easy to see that all $e_{2j-1}^2=e_{4j-2}\in \mathrm{K}(n)^*(\mathrm{SO}_m)$ are primitive. Indeed, let $2j-1=\langle 2k\rangle=2^n-1-2k$ for some $0\leq k\leq 2^{n-1}-1$. Then $2\langle k/2^i\rangle\geq2^{n}$ for all $i\geq0$, and therefore
$$
e_{\langle k/2^i\rangle}^2=e_{2\langle k/2^i\rangle}=0.
$$
Similarly, for $m\leq2^n$ all $e_{2i-1}$ are primitive.
\end{Ex}
\begin{Ex}
Since $e_1^{2^n}=0$ in $\mathrm K(n)^*(\mathrm{SO}_m)$, we conclude that
$$
\Delta(e_{1})=e_1\otimes1+1\otimes e_1+v_n\,e_1^{2^{n-1}}\otimes e_1^{2^{n-1}}=\mathrm{FGL}_{\mathrm K(n)}(e_1\otimes1,\,1\otimes e_1),
$$
by, e.g.,~\cite[Subsection~2.3]{PSlog} (cf. also~\cite[Lemma~6.12]{LPSS}). Similarly, for odd $j$ we have
$$
\Delta(e_{2j-1})=e_{2j-1}\otimes1+1\otimes e_{2j-1}+v_n\,e_{2^{n-1}-1+j}\otimes e_{2^{n-1}-1+j}.
$$
\end{Ex}

\subsection{Homology of special orthogonal groups}
\label{top-morava}

We remark that the topological Morava K-theory of the special orthogonal group is computed as an {\it $\mathbb F_2[v_n^{\pm1}]$-module} in~\cite[Theorem~1]{R90} and~\cite[Theorem~2.10]{Nis}. The algebra and co-algebra structure of $\mathrm K(n)^*_{\mathrm{top}}\big(\mathrm{SO}(m)\big)$ are not known for an arbitrary $m$, see~\cite{R08,R12} for partial results. 

However, for $m=2^{n+1}-1$ both algebra and co-algebra structure of the topological Morava K-theory $\mathrm K(n)^*_{\mathrm{top}}\big(\mathrm{SO}(2^{n+1}-1)\big)$ are known by~\cite{R97}. Due to the stabilization of the {\it algebraic} Morava K-theory~\cite[Theorem~5.1]{LPSS}, this is already sufficient for the computation of the co-multiplication of $\mathrm K(n)^*\big(\mathrm{SO}_m\big)$ for all $m$.

First, we recall the Hopf algebra structure of $\mathrm H_*\big(\mathrm{SO}(2^{n+1}-1);\,\mathbb F_2\big)$. 

For a Hopf algebra $H=\mathbb F_2[x]/x^{2^k}$ with $x$ primitive we denote the dual basis of the dual Hopf algebra $H^\vee$ by $\gamma_j$, i.e., $\langle x^i,\,\gamma_j\rangle=\delta_{ij}$. One easily checks that the multiplication and the co-multiplication in $H^\vee$ are given by 
$$
\gamma_i\cdot\gamma_j=\frac{(i+j)!}{i!j!}\gamma_{i+j}\  \text{ and }\ \Delta(\gamma_k)=\sum_{t=0}^k\gamma_t\otimes\gamma_{k-t}.
$$ 
Such an algebra $H^\vee$ is called the algebra of divided powers, and we denote it by $\Gamma_k$.
By~\cite[Theorem~6\,(d)]{Kac} we know that
$$
\mathrm H^*\big(\mathrm{SO}(2^{n+1}-1);\,\mathbb F_2\big)=\Lambda_2(x_{2^n+1},\ldots,x_{2^{n+1}-3})\otimes\mathbb F_2[x_1,x_3,\ldots,x_{2^n-1}]\left/\left(x_{2i-1}^{2^{(k_i+1)}}\right)\right.\!,
$$
where 
\begin{align}
\label{ki}
k_i=\left\lfloor\mathrm{log}_2\left(\frac{2^{n+1}-2}{2i-1}\right)\right\rfloor
\end{align}
for $i\leq2^{n-1}$. Set additionally $k_i=0$ for $i>2^{n-1}$ (in fact, formula~(\ref{ki}) also gives $k_i=0$ for $i>2^{n-1}$). Then
$$
\mathrm H^*\big(\mathrm{SO}(2^{n+1}-1);\,\mathbb F_2\big)\cong\bigotimes_{i=1}^{2^n-1}\mathbb F_2[x_{2i-1}]/x_{2i-1}^{2^{(k_i+1)}},
$$
and dualizing the above formula we obtain
\begin{equation}
\label{homology-so}
\mathrm H_*\big(\mathrm{SO}(2^{n+1}-1);\,\,\mathbb F_2\big)\cong\bigotimes_{i=1}^{2^n-1}\Gamma_{k_i+1}(\beta_{2i-1}),
\end{equation}
where $\Gamma_{k_i+1}(\beta_{2i-1})$ denotes the dual Hopf algebra to $\mathbb F_2[x_{2i-1}]/x_{2i-1}^{2^{(k_i+1)}}$, and the basis elements of $\Gamma_{k_i+1}(\beta_{2i-1})$ are denoted by $\gamma_t(\beta_{2i-1})$. We write $\beta_{2i-1}$ for $\gamma_1(\beta_{2i-1})$.

\subsection{Rao's Theorem}

In fact, the multiplication and the co-multiplication of 
$$
\mathrm P(n)_*^{\mathrm{top}}\big(\mathrm{SO}(2^{n+1}-1)\big)
$$ 
are computed in~\cite{R97}. We will reproduce this result in this subsection. 

Observe also that $\mathrm P(n)^*_{\mathrm{top}}\big(\mathrm{SO}(2^{n+1}-1)\big)$ admits a canonical co-algebra structure induced by the multiplication in $\mathrm{SO}(2^{n+1}-1)$, however, it is {\it not} a Hopf algebra. More precisely, the co-multiplication map $\Delta$ does not preserve the multiplication, $\Delta(xy)\neq\Delta(x)\Delta(y)$. The reason for this is that the multiplicative structure on $\mathrm P(n)^{\mathrm{top}}$ does not commute with a twist $\tau\colon X\wedge Y\rightarrow Y\wedge X$ (see~\cite[Corollary~1.6]{R93}).

It is proven in~\cite[Theorem~1.1]{R89} that there exists a filtration on the topological homology theory $\mathrm P(n)_*^{\mathrm{top}}\big(\mathrm{SO}(2^{n+1}-1)\big)$ (which respects the multiplication and the co-multiplication) such that the graded (co-)algebra associated with this filtration is isomorphic to $\mathrm H_*\big(\mathrm{SO}(2^{n+1}-1);\,\,\mathbb F_2[v_n,v_{n+1},\ldots]\big)$.

Moreover, in~\cite{R97} the lifts of $\gamma_t(\beta_{2i-1})$ from~(\ref{homology-so}) to $\mathrm P(n)_*^{\mathrm{top}}\big(\mathrm{SO}(2^{n+1}-1)\big)$ (which we will denote by the same symbols) are chosen in such a way, that the following theorem holds.

\begin{tm}[Rao]
\label{rao}
For $1\leq i\leq2^n-1$ and $1\leq t\leq 2^{k_i}$, where $k_i$ are given by~{\rm(}\ref{ki}{\rm)}, there exist elements $\gamma_t(\beta_{2i-1})\in\mathrm P(n)_*^{\mathrm{top}}\big(\mathrm{SO}(2^{n+1}-1)\big)$ which lift the corresponding elements of $\mathrm H_*(\mathrm{SO}(2^{n+1}-1);\,\mathbb F_2)$ described in Subsection~\ref{top-morava}. Moreover the following statements hold.
\begin{enumerate}
\item
The ordered set
$$
M=\{\gamma_{2^s}(\beta_{2i-1})\mid1\leq i\leq2^n-1,\ 0\leq s\leq k_i\}
$$
{\rm(}with an arbitrary choice of order{\rm)} simply generates $\mathrm P(n)_*^{\mathrm{top}}\big(\mathrm{SO}(2^{n+1}-1)\big)$. 
By this we mean that the monomials $g_1g_2\ldots g_k$, for $g_i<g_{i+1}\in M$, $k\geq0$, form a basis of $\mathrm P(n)_*^{\mathrm{top}}\big(\mathrm{SO}(2^{n+1}-1)\big)$ over $\F_2[v_n,v_{n+1},\ldots]$. 
\item
For $1\leq i\leq2^{n}-1$, $1\leq t\leq2^{k_i}$ we have
$$
\Delta(\gamma_t(\beta_{2i-1}))=\sum_{s=0}^t\gamma_s(\beta_{2i-1})\otimes\gamma_{t-s}(\beta_{2i-1}).
$$
\item
For $1\leq i\leq2^{n-1}$, $1\leq s,t< 2^{k_i}$ we have
$$
\gamma_s(\beta_{2i-1})\cdot\gamma_t(\beta_{2i-1})=\frac{(s+t)!}{s!\,t!}\gamma_{s+t}(\beta_{2i-1}).
$$
\item
For $1\leq i\leq 2^{n-1}$ and $j=(2i-1)2^{k_i-1}-2^{n-1}$ we have
$$
\gamma_{2^{k_i}}(\beta_{2i-1})^2=[\gamma_{2^{k_i}}(\beta_{2i-1}),\,\beta_{2i-1}]\cdot\gamma_{2^{k_i}-1}(\beta_{2i-1})+v_n\gamma_2(\beta_{2j-1}).
$$
\item
The quotient algebra of $\mathrm P(n)_*^{\mathrm{top}}\big(\mathrm{SO}(2^{n+1}-1)\big)$ modulo the ideal generated by $\beta_{2i-1}$, $1\leq i\leq2^n-1$, is commutative.
\end{enumerate}
\end{tm}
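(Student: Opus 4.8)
The plan is to derive the statement from Rao's results in~\cite{R89} and~\cite{R97}, the task being to repackage them in the normalization of generators fixed above. The backbone is the filtration of $\mathrm P(n)_*^{\mathrm{top}}\big(\mathrm{SO}(2^{n+1}-1)\big)$ produced in~\cite[Theorem~1.1]{R89}: it respects both the product and the coproduct, and its associated graded ring is $\mathrm H_*\big(\mathrm{SO}(2^{n+1}-1);\,\mathbb F_2[v_n,v_{n+1},\ldots]\big)$, which as an algebra is the tensor product $\bigotimes_i\Gamma_{k_i+1}(\beta_{2i-1})$ of divided-power (equivalently, exterior) algebras recalled in Subsection~\ref{top-morava}. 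Consequently, every identity in parts~(2), (3) and~(5) already holds modulo lower filtration for free, so the real content is to choose the lifts $\gamma_t(\beta_{2i-1})$ so that these identities hold exactly; parts~(1) and~(4) are the only places where the filtration genuinely jumps.

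For part~(1) I would note that, modulo lower filtration, the ordered set $M$ maps onto a simple system of multiplicative generators of $\bigotimes_i\Gamma_{k_i+1}(\beta_{2i-1})$ --- each $\Gamma_{k_i+1}(\beta_{2i-1})$ is exterior on $\gamma_1(\beta_{2i-1}),\gamma_2(\beta_{2i-1}),\ldots,\gamma_{2^{k_i}}(\beta_{2i-1})$ --- so the ordered monomials in $M$ form a basis of the associated graded over $\mathbb F_2[v_n,v_{n+1},\ldots]$; since a filtered module over $\mathbb F_2[v_n,v_{n+1},\ldots]$ whose associated graded is free is itself free on any lift of a homogeneous basis, this promotes to a basis of $\mathrm P(n)_*^{\mathrm{top}}$ itself. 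For parts~(2) and~(3) I would induct on the filtration degree: assuming the lifts have been arranged so that the divided-power coproduct and product relations hold through filtration $<N$, the obstruction to correcting them in filtration $N$ lies in a subquotient of $\mathrm H_*$ which either vanishes for degree or (co)primitivity reasons, or is killed by modifying a lift by a term of lower filtration --- precisely the bookkeeping carried out in~\cite{R97}. Part~(5) reflects the fact, established in the same analysis, that every hidden commutator in $\mathrm P(n)_*^{\mathrm{top}}\big(\mathrm{SO}(2^{n+1}-1)\big)$ lies in the two-sided ideal generated by the $\beta_{2i-1}$ (as the commutator appearing in part~(4) already illustrates), so modulo that ideal the ring is commutative; the product relations of part~(3) then describe this commutative quotient.

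The main obstacle is part~(4), the exact value of $\gamma_{2^{k_i}}(\beta_{2i-1})^2$. In the associated graded this square vanishes, since the divided power $\gamma_{2^{k_i+1}}(\beta_{2i-1})$ lies beyond the truncation $\mathbb F_2[x_{2i-1}]/x_{2i-1}^{2^{k_i+1}}$; hence the true value sits in strictly lower filtration and must be pinned down directly. As $2^{n+1}-1\leq2^{n+1}$, the Atiyah--Hirzebruch spectral sequence for $\mathrm{SO}(2^{n+1}-1)$ collapses additively (cf.~\cite[Theorems~2.4 and~3.2]{Nis}), so what is at stake is a \emph{hidden} multiplicative extension in that filtration. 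I would compute it as Rao does in~\cite{R97}: present $\mathrm{SO}(2^{n+1}-1)$ by the tower of sphere bundles $\mathrm{SO}(\ell)\to\mathrm{SO}(\ell+1)\to S^\ell$, propagate the computation of $\mathrm P(n)_*^{\mathrm{top}}$ up the tower through the associated Serre spectral sequences, and track the $v_n$-linear hidden extensions, which are detected by the Milnor primitive $Q_n$. The commutator summand $[\gamma_{2^{k_i}}(\beta_{2i-1}),\,\beta_{2i-1}]\cdot\gamma_{2^{k_i}-1}(\beta_{2i-1})$ records the failure of $\mathrm P(n)^{\mathrm{top}}$ to be homotopy-commutative (cf.~\cite[Corollary~1.6]{R93}), whereas the genuinely new summand $v_n\,\gamma_2(\beta_{2j-1})$, with $j=(2i-1)2^{k_i-1}-2^{n-1}$, is the contribution of $Q_n$; identifying the index $j$ and verifying that its coefficient is exactly $v_n$ is the delicate step, for which I would rely on~\cite{R97}. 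Finally I would check that the grading conventions of~\cite{R97} (topological, doubling the degree) match those used here, so that the formulas above coincide with Rao's.
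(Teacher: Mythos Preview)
Your proposal is correct and takes essentially the same approach as the paper: both defer to Rao's work~\cite{R97}, the paper by giving precise pointers (Propositions~3.1, 2.4, 5.8, Lemmas~3.2--3.4, 5.7, Corollary~5.3, Propositions~5.4--5.5) while you supply an expository sketch of how those arguments run. Since the theorem is explicitly attributed to Rao, the paper treats it as a citation rather than something to be reproved, so your additional exposition of the filtration and hidden-extension mechanism is more detailed than what the paper provides but not substantively different.
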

\begin{proof}
For~(1) see~\cite[Proposition~3.1]{R97}, for~(2) see~\cite[Proposition~2.4 and Lemma~3.4]{R97}, for~(3) see~\cite[Lemma~3.4]{R97}, for~(4) see~\cite[Proposition~5.8]{R97}. To obtain~(5) it suffices to show that the commutators of all generators are $0$ modulo $\beta_{2i-1}$. This is shown in~\cite[Lemmas~3.2--3.4, Corollary~5.3, Propositions~5.4--5.5, Lemma~5.7]{R97}.
\end{proof}

\begin{rk}
In fact, in~\cite{R97} the rest of the algebra and co-algebra structures of $\mathrm P(n)_*^{\mathrm{top}}\big(\mathrm{SO}(2^{n+1}-1)\big)$ is also computed, but we will not need it in this paper.
\end{rk}

It will be more convenient for us to work with $\mathrm{CK}(n)_{\mathrm{top}}$ rather than $\mathrm P(n)_{\mathrm{top}}$. Recall that 
$$
\mathrm{CK}(n)_*^{\mathrm{top}}(\mathrm{SO}(2^{n+1}-1))\cong\mathrm P(n)_*^{\mathrm{top}}(\mathrm{SO}(2^{n+1}-1))\otimes_{\mathbb F_2[v_n,v_{n+1},\ldots]}\mathbb F_2[v_n],
$$
and
$$
\mathrm{CK}(n)^*_{\mathrm{top}}(\mathrm{SO}(2^{n+1}-1))\cong\mathrm{CK}(n)_*^{\mathrm{top}}(\mathrm{SO}(2^{n+1}-1))^\vee
$$
by Proposition~\ref{ahss-homological}.

\subsection{Algebraic part of $\mathrm{CK}(n)_*^{\mathrm{top}}\big(\mathrm{SO}(2^{n+1}-1)\big)$}

We recall that for a field extension 
$k\subseteq L$,
the restriction map 
$\mathrm{res}^L_k$ induces an isomorphism on $A^*(\mathrm{SO}_m)$ for any free theory $A^*$. Therefore, instead of an arbitrary field of characteristic zero we can take $k=\mathbb Q$ or $k=\mathbb C$. The latter choice allows us to apply Yagita's conjecture.

Recall that the natural map from $\mathrm{Ch}^*(\mathrm{SO}_{m})$ to $\mathrm{H}^*(\mathrm{SO}(m);\,\mathbb F_2)$ is injective (see Subsection~\ref{yagita-conjecture}), and its image coincides with the set of squares in $\mathrm{H}^*(\mathrm{SO}(m);\,\mathbb F_2)$ (cf.~\cite[Remark after Theorem~6]{Kac}). In Proposition~\ref{quotient-ch} below we will dualize the above statement to determine the ``algebraic part'' of $\mathrm H_*(\mathrm{SO}_m;\,\mathbb F_2)$. 

\begin{prop}
\label{quotient-ch}
In the notation of Subsection~\ref{top-morava}, $\mathrm{Ch}^*(\mathrm{SO}_{2^{n+1}-1})^\vee$ is isomorphic as a Hopf algebra to the quotient of $\mathrm{H}_*(\mathrm{SO}(2^{n+1}-1);\,\mathbb F_2)$ modulo the ideal generated by $\beta_{2i-1}$ for all $1\leq i\leq 2^n-1$.
\end{prop}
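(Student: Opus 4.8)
The plan is to dualize the injection $\mathrm{Ch}^*(\mathrm{SO}_{2^{n+1}-1})\hookrightarrow \mathrm H^*(\mathrm{SO}(2^{n+1}-1);\mathbb F_2)$ whose image is the subring of squares, and to identify the kernel of the dual surjection explicitly. Recall from Subsection~\ref{top-morava} that $\mathrm H^*\big(\mathrm{SO}(2^{n+1}-1);\mathbb F_2\big)\cong\bigotimes_{i=1}^{2^n-1}\mathbb F_2[x_{2i-1}]/x_{2i-1}^{2^{(k_i+1)}}$ with each $x_{2i-1}$ primitive; hence the subring of squares is $\bigotimes_{i=1}^{2^n-1}\mathbb F_2[x_{2i-1}^2]/x_{2i-1}^{2^{(k_i+1)}}\cong\bigotimes_{i=1}^{2^n-1}\mathbb F_2[y_{2i-1}]/y_{2i-1}^{2^{k_i}}$ with $y_{2i-1}=x_{2i-1}^2$. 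By the compatibility of the $J$-invariant description (this is exactly the identification $\mathrm{Ch}^*(\mathrm{SO}_m)=\mathbb F_2[e_1,\ldots,e_s]/(e_i^2=e_{2i})$ recalled before Lemma~\ref{recall-lpss}, which matches the square subring under $e_{2^{j}(2i-1)}\mapsto x_{2i-1}^{2^{j+1}}$), this subring is precisely $\mathrm{Ch}^*(\mathrm{SO}_{2^{n+1}-1})$ sitting inside cohomology.

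Next I would apply $(-)^\vee=\mathrm{Hom}_{\mathbb F_2}(-,\mathbb F_2)$, which is exact on finite-dimensional graded vector spaces and turns the inclusion of Hopf subalgebras $\iota\colon\mathrm{Ch}^*\hookrightarrow\mathrm H^*$ into a surjection of Hopf algebras $\iota^\vee\colon\mathrm H_* \twoheadrightarrow(\mathrm{Ch}^*)^\vee$. The kernel of $\iota^\vee$ is the annihilator $(\mathrm{im}\,\iota)^{\perp}$, i.e.\ the set of homology classes pairing to zero with every square. In the tensor-factor picture $\mathrm H_*\cong\bigotimes_i\Gamma_{k_i+1}(\beta_{2i-1})$ with dual basis $\langle x_{2i-1}^t,\gamma_t(\beta_{2i-1})\rangle=\delta_{\bullet}$, an element of a single factor annihilates all powers $x_{2i-1}^{2r}$ iff it is a linear combination of the odd-indexed basis vectors $\gamma_{2r+1}(\beta_{2i-1})$; the lowest such is $\beta_{2i-1}=\gamma_1(\beta_{2i-1})$. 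So the kernel of the factor-wise dual map $\Gamma_{k_i+1}(\beta_{2i-1})\twoheadrightarrow\Gamma_{k_i}(y_{2i-1})^{\vee}$ is exactly the ideal generated by $\beta_{2i-1}$ (using the divided-power multiplication $\gamma_a\gamma_b=\binom{a+b}{a}\gamma_{a+b}$ and Lucas' theorem mod $2$ to see $\beta_{2i-1}\cdot\gamma_{2r}=\gamma_{2r+1}$, so the ideal $(\beta_{2i-1})$ is spanned by the odd $\gamma$'s). Tensoring over $i$, the kernel of $\iota^\vee$ is the ideal generated by $\{\beta_{2i-1}\mid 1\le i\le 2^n-1\}$, which gives the claimed Hopf-algebra isomorphism.

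The step that requires the most care is making the identification of $\mathrm{Ch}^*(\mathrm{SO}_{2^{n+1}-1})$ with the square subring fully precise, including the grading and the matching of generators $e_k$ (which live in Chow degree, so half the cohomological degree) with the $x_{2i-1}^{2^{j+1}}$. One must check that the $J$-invariant presentation's relations $e_i^2=e_{2i}$ translate correctly under this dictionary — i.e.\ that $e_{2^j(2i-1)}\leftrightarrow x_{2i-1}^{2^{j+1}}$ is a ring isomorphism onto the squares and that $e_k=0$ for $k>s$ corresponds to $x_{2i-1}^{2^{k_i+1}}=0$ — but this is exactly the content of~\cite[Remark after Theorem~6]{Kac} combined with Lemma~\ref{recall-lpss}(1), so I would cite those rather than redo it. The remaining points — exactness of dualization, that $\iota^\vee$ is a map of Hopf algebras, and the annihilator computation via Lucas' theorem — are routine, and the fact that a quotient Hopf algebra is determined by a Hopf ideal makes the final assembly immediate.
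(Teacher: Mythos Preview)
Your proof is correct and follows the same overall strategy as the paper: dualize the Kac inclusion $\mathrm{Ch}^*\hookrightarrow\mathrm H^*$ onto the subring of squares, then identify the kernel of the resulting surjection $\mathrm H_*\twoheadrightarrow(\mathrm{Ch}^*)^\vee$ with the ideal generated by the $\beta_{2i-1}$. The execution differs slightly. The paper introduces an auxiliary lemma (Lemma~\ref{square-dual}) about the dual of the squaring map $H\otimes H^\tau\to H$, uses primitivity of $\beta_{2i-1}$ to see that $\Delta(\beta_{2i-1})=(\mathrm{id}+\tau)(1\otimes\beta_{2i-1})$ vanishes in the $\tau$-coinvariants, concludes that each $\beta_{2i-1}$ lies in the kernel, passes to the ideal they generate (since the dual map is an algebra map), and then finishes with a dimension count. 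You instead exploit the tensor-product splitting directly: on each factor $\Gamma_{k_i+1}(\beta_{2i-1})$ you identify the annihilator of the even powers $x_{2i-1}^{2r}$ as the span of odd-indexed $\gamma$'s and verify via $\gamma_1\cdot\gamma_{2r}=\binom{2r+1}{1}\gamma_{2r+1}=\gamma_{2r+1}$ (and $\gamma_1\cdot\gamma_{2r+1}=0$) that this span is exactly the principal ideal $(\beta_{2i-1})$; tensoring the factor-wise kernels gives the claimed ideal, so no separate dimension comparison is needed. Your route is a bit more elementary and self-contained; the paper's is slightly more conceptual but needs the extra lemma and the dimension argument. Both reach the same conclusion with comparable effort.
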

We recall the proof in the Appendix~\ref{app-alg-part} for the convenience of the reader. 

\begin{cl}
\label{cor-ch}
In the notation of Proposition~\ref{quotient-ch}, denote by $\gamma_{t}(\alpha_{2i-1})$ the images of $\gamma_{2t}(\beta_{2i-1})$ in $\mathrm{Ch}^*(\mathrm{SO}_{2^{n+1}-1})^\vee$ for $1\leq i\leq2^{n-1}$, $1\leq t<2^{k_i}$. Then 
$$
\mathrm{Ch}^*(\mathrm{SO}_{2^{n+1}-1})^\vee\cong\bigotimes_{i=1}^{2^{n-1}}\Gamma_{k_i}(\alpha_{2i-1})
$$ 
as a Hopf algebra.
\end{cl}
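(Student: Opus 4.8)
The plan is to deduce this from Proposition~\ref{quotient-ch} together with the explicit Hopf-algebra description~(\ref{homology-so}) of $\mathrm H_*\big(\mathrm{SO}(2^{n+1}-1);\,\mathbb F_2\big)$. By that proposition, $\mathrm{Ch}^*(\mathrm{SO}_{2^{n+1}-1})^\vee$ is the quotient of $\bigotimes_{i=1}^{2^n-1}\Gamma_{k_i+1}(\beta_{2i-1})$ by the ideal generated by the elements $\beta_{2i-1}=\gamma_1(\beta_{2i-1})$, $1\le i\le 2^n-1$. Each $\beta_{2i-1}$ is primitive, so this is a Hopf ideal and the quotient inherits a Hopf algebra structure; moreover, since each $\beta_{2i-1}$ lies in a single tensor factor, the quotient splits as $\bigotimes_{i=1}^{2^n-1}\big(\Gamma_{k_i+1}(\beta_{2i-1})/(\beta_{2i-1})\big)$ as Hopf algebras. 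Thus it suffices to identify each factor separately.

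For $i>2^{n-1}$ one has $k_i=0$, so $\Gamma_{k_i+1}(\beta_{2i-1})=\mathbb F_2[\beta_{2i-1}]/\beta_{2i-1}^2$ and its quotient by $\beta_{2i-1}$ is $\mathbb F_2$; these factors disappear, reducing us to $1\le i\le 2^{n-1}$. There the claim becomes the single-factor statement: for $k\ge 1$,
$$
\Gamma_{k+1}(\beta)/(\gamma_1(\beta))\cong\Gamma_k(\alpha)\quad\text{as Hopf algebras},
$$
where $\alpha$ is the image of $\gamma_2(\beta)$ and $\gamma_t(\alpha)$ is the image of $\gamma_{2t}(\beta)$. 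To prove this I would define $f\colon\Gamma_{k+1}(\beta)\to\Gamma_k(\alpha)$ by $f(\gamma_{2t}(\beta))=\gamma_t(\alpha)$ and $f(\gamma_{2t+1}(\beta))=0$, and check it is a morphism of Hopf algebras. The multiplicative compatibility uses the congruence $\binom{2s+2t}{2s}\equiv\binom{s+t}{s}\pmod 2$ (Lucas) together with $\binom{2j+1}{1}\equiv 1\pmod 2$, the latter showing $\gamma_{2j+1}(\beta)=\gamma_1(\beta)\,\gamma_{2j}(\beta)$ in $\Gamma_{k+1}(\beta)$ — so every odd-indexed divided power lies in the ideal $(\gamma_1(\beta))$. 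The comultiplicative compatibility is immediate from $\Delta(\gamma_m)=\sum_{r=0}^m\gamma_r\otimes\gamma_{m-r}$: applying $f\otimes f$ kills all terms with an odd index, leaving exactly the divided-power coproduct of $\Gamma_k(\alpha)$. Since $f$ is surjective, $\gamma_1(\beta)\in\ker f$, and the quotient $\Gamma_{k+1}(\beta)/(\gamma_1(\beta))$ is spanned by the classes of the $2^k$ even-indexed divided powers, a dimension count forces the induced map $\Gamma_{k+1}(\beta)/(\gamma_1(\beta))\to\Gamma_k(\alpha)$ to be an isomorphism.

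Assembling the factors yields $\mathrm{Ch}^*(\mathrm{SO}_{2^{n+1}-1})^\vee\cong\bigotimes_{i=1}^{2^{n-1}}\Gamma_{k_i}(\alpha_{2i-1})$ as a Hopf algebra, with $\gamma_t(\alpha_{2i-1})$ the image of $\gamma_{2t}(\beta_{2i-1})$, as claimed. The only mildly delicate point is the bookkeeping with binomial coefficients modulo $2$, which must simultaneously show that all odd-indexed divided powers vanish in the quotient and that the surviving even-indexed ones reproduce precisely the multiplication and comultiplication of a divided-power algebra of height lowered by one; the rest is formal manipulation of tensor products of (co)commutative Hopf algebras over $\mathbb F_2$.
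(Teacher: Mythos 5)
Your proof is correct and is essentially the argument the paper has in mind (the paper states Corollary~\ref{cor-ch} without proof, treating it as immediate from Proposition~\ref{quotient-ch}). The reduction to a single divided-power factor, the observation that $\Gamma_{k+1}(\beta)/(\gamma_1(\beta))$ retains exactly the even-indexed divided powers, and the Lucas-type binomial checks of the algebra and co-algebra compatibility are precisely the bookkeeping one needs to carry out, and you have done so correctly.
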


Now we will prove analogues of these results for the connective Morava K-theory. As above, let $\mathrm{CK}(n)^*(\mathrm{SO}_m)^\vee$ denote the Hopf algebra over $\mathbb F_2[v_n]$ dual to $\mathrm{CK}(n)^*(\mathrm{SO}_m)$. Recall that we denote $\gamma_t(\beta_{2i-1})$ in homology and their lifts to $\mathrm{CK}(n)^{\mathrm{top}}_*(\mathrm{SO}_m)$ by the same symbols by some abuse of notation.
\begin{prop}
\label{quotient}
In the notation of Theorem~\ref{rao}, $\mathrm{CK}(n)^*(\mathrm{SO}_{2^{n+1}-1})^\vee$ is isomorphic to the quotient of $\mathrm{CK}(n)_*^{\mathrm{top}}(\mathrm{SO}(2^{n+1}-1))$ modulo the ideal generated by $\beta_{2i-1}$ for all $1\leq i\leq 2^n-1$ as an algebra and a co-algebra.
\end{prop}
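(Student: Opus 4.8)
The plan is to realize $\mathrm{CK}(n)^*(\mathrm{SO}_{2^{n+1}-1})^\vee$ as a quotient of $A:=\mathrm{CK}(n)_*^{\mathrm{top}}\big(\mathrm{SO}(2^{n+1}-1)\big)$ by dualizing the injection furnished by the Yagita conjecture, and then to identify the kernel of that quotient with the ideal generated by the classes $\beta_{2i-1}$ by reducing everything modulo $v_n$, where the analogous statement is Proposition~\ref{quotient-ch}. Write $K=\mathrm{SO}(2^{n+1}-1)$ and $G=K_{\mathbb C}=\mathrm{SO}_{2^{n+1}-1}$, so $\mathrm{CK}(n)^*(G)=\mathrm{CK}(n)^*(\mathrm{SO}_{2^{n+1}-1})$. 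Since $2^{n+1}-1\leq2^{n+1}$, the relevant Atiyah--Hirzebruch spectral sequences for $K$ collapse (Nishimoto, as in the proof of Theorem~\ref{injectivity-theorem}), so Propositions~\ref{ahss-cohomological} and~\ref{ahss-homological} apply: $A$ is a finite free $\mathbb F_2[v_n]$-module, $\mathrm{CK}(n)^*_{\mathrm{top}}(K)\cong A^\vee:=\mathrm{Hom}_{\mathbb F_2[v_n]}(A,\mathbb F_2[v_n])$, and modulo $v_n$ one has $\mathrm{CK}(n)^*_{\mathrm{top}}(K)\otimes_{\mathbb F_2[v_n]}\mathbb F_2\cong\mathrm H^*(K;\mathbb F_2)$ and $A\otimes_{\mathbb F_2[v_n]}\mathbb F_2\cong\mathrm H_*(K;\mathbb F_2)$.

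First I would construct the surjection. By Theorem~\ref{injectivity-theorem} the natural map $\iota\colon\mathrm{CK}(n)^*(G)\to\mathrm{CK}(n)^*_{\mathrm{top}}(K)$ is injective, and by construction it respects the algebra structure and (via the Iwasawa identification $G(\mathbb C)\simeq K$ of $H$-spaces) the co-algebra structure. For $m=2^{n+1}-1$ we have $s=\lfloor\frac{m-1}{2}\rfloor=2^{n}-1<2^{n}$, so Theorem~\ref{ck-algebra} gives $\mathrm{CK}(n)^*(G)\cong\mathbb F_2[v_n]\otimes_{\mathbb F_2}\mathrm{Ch}^*(G)$, a \emph{free} $\mathbb F_2[v_n]$-module with $\mathrm{CK}(n)^*(G)\otimes_{\mathbb F_2[v_n]}\mathbb F_2\cong\mathrm{Ch}^*(G)$. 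Reducing $\iota$ modulo $v_n$ recovers the injection $\mathrm{Ch}^*(G)\hookrightarrow\mathrm H^*(K;\mathbb F_2)$ (cf. the square in the proof of Lemma~\ref{ahss-injective}), which is injective; hence the cokernel of $\iota$ has no $v_n$-torsion, and being graded and finitely generated over $\mathbb F_2[v_n]$ it is free. The short exact sequence therefore splits, and dualizing it (using $A\cong A^{\vee\vee}$) produces a surjection of algebras and co-algebras
$$
\phi\colon A\twoheadrightarrow\mathrm{CK}(n)^*(G)^\vee .
$$

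Next I would show $\ker\phi=I$, where $I\subseteq A$ is the two-sided ideal generated by the $\beta_{2i-1}$, $1\leq i\leq2^{n}-1$. For $I\subseteq\ker\phi$: by Proposition~\ref{yagita-restatement} the image of $\iota$ is $\pi^*\big(\mathrm{CK}(n)^*_{\mathrm{top}}(G/B)\big)$, and $G/B$ has a cellular (Bruhat) decomposition by affine spaces, so this image is concentrated in even topological degrees; since $\beta_{2i-1}$ is homogeneous of odd degree $2i-1$ while $\mathbb F_2[v_n]$ is concentrated in even degrees, $\beta_{2i-1}$ pairs trivially with all of $\mathrm{CK}(n)^*(G)$, i.e. $\phi(\beta_{2i-1})=0$; as $\ker\phi$ is an ideal, $I\subseteq\ker\phi$ and there is an induced surjection $A/I\twoheadrightarrow\mathrm{CK}(n)^*(G)^\vee$. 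For the reverse inclusion I would tensor the short exact sequence $0\to\ker\phi/I\to A/I\to\mathrm{CK}(n)^*(G)^\vee\to0$ with $\mathbb F_2=\mathbb F_2[v_n]/(v_n)$; since $\mathrm{CK}(n)^*(G)^\vee$ is free the sequence remains exact on the left, and by Proposition~\ref{ahss-homological}, Theorem~\ref{ck-algebra} and the identifications above it becomes
$$
0\longrightarrow(\ker\phi/I)\otimes_{\mathbb F_2[v_n]}\mathbb F_2\longrightarrow\mathrm H_*(K;\mathbb F_2)/(\beta_{2i-1})\longrightarrow\mathrm{Ch}^*(G)^\vee\longrightarrow0,
$$
whose right-hand arrow is exactly the isomorphism of Proposition~\ref{quotient-ch}. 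Thus $(\ker\phi/I)\otimes_{\mathbb F_2[v_n]}\mathbb F_2=0$, and as $\ker\phi/I$ is finitely generated and graded over $\mathbb F_2[v_n]$, the graded Nakayama lemma gives $\ker\phi=I$. Hence $A/I=A/\ker\phi\cong\mathrm{CK}(n)^*(G)^\vee$ as algebras and co-algebras (and, a posteriori, $I$ is an ideal and a co-ideal of $A$).

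The main obstacle is the first step: producing $\phi$ as a surjection that respects both structures. This rests on the collapse of the Atiyah--Hirzebruch spectral sequence (so that homology and cohomology are genuinely dual over $\mathbb F_2[v_n]$), on the Yagita conjecture of Theorem~\ref{injectivity-theorem} (so that $\iota$ is injective), and --- crucially --- on the numerical coincidence $s=2^{n}-1<2^{n}$ for $m=2^{n+1}-1$, which makes $\mathrm{CK}(n)^*(G)$ free over $\mathbb F_2[v_n]$ and hence forces the cokernel of $\iota$ to be free; without this, dualizing would only yield a map \emph{into}, not \emph{onto}, $\mathrm{CK}(n)^*(G)^\vee$. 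After that the argument is a formal reduction-modulo-$v_n$ bootstrap from the Chow case; in particular the detailed relations of Rao's Theorem~\ref{rao} are not needed for this proposition, but are precisely what one will feed in next to deduce the co-multiplication of $\mathrm K(n)^*(\mathrm{SO}_m)$ (and hence to prove Theorem~\ref{co-mult}).
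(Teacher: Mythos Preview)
Your proposal is correct and takes a genuinely different route from the paper. Both proofs begin by dualizing the injection $\iota\colon\mathrm{CK}(n)^*(G)\hookrightarrow\mathrm{CK}(n)^*_{\mathrm{top}}(K)$ to obtain the surjection $\phi\colon A\twoheadrightarrow\mathrm{CK}(n)^*(G)^\vee$; the difference lies in how one proves $\phi(\beta_{2i-1})=0$. The paper first uses Proposition~\ref{quotient-ch} to see that each $\overline\beta_{2i-1}$ is divisible by $v_n$, writes $\overline\beta_{2i-1}=v_n^aP+v_n^{a+1}R$ with $P$ nonzero modulo $v_n$, argues that $P$ is primitive modulo $v_n$ and hence congruent to a single $\overline\gamma_2(\beta_{2j-1})$ for some $j\ge i$ with $k_j=1$, and then feeds in Rao's squaring relations (Theorem~\ref{rao}(3),(4)) to reach a contradiction, treating the boundary case $i=2^{n-1}$ separately. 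You instead observe directly that the image of $\iota$ equals $\pi^*\big(\mathrm{CK}(n)^*_{\mathrm{top}}(K/T)\big)$ (Proposition~\ref{yagita-restatement}), hence lies entirely in even topological degree, so each $\beta_{2i-1}$ of odd degree pairs to zero against it; this kills $\phi(\beta_{2i-1})$ in one stroke, and then your Nakayama reduction to Proposition~\ref{quotient-ch} identifies the kernel with $I$. Your argument is shorter and, as you note, does not touch Rao's multiplicative relations at all; the paper's computation, by contrast, already manipulates those relations and thereby previews exactly the identities that will be used in Proposition~\ref{answer-dual} to determine the comultiplication. One small remark: for the Nakayama step you should point out that $\ker\phi/I$ is bounded below in homological degree (it is a subquotient of the finite free module $A$), so that $(\ker\phi/I)\otimes_{\mathbb F_2[v_n]}\mathbb F_2=0$ really forces $\ker\phi/I=0$.
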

\begin{proof}
Consider the map
\begin{align}
\label{themap}
\xymatrix{
\mathrm{CK}(n)^*(\mathrm{SO}_{2^{n+1}-1})^\vee&&\mathrm{CK}(n)^{\mathrm{top}}_*(\mathrm{SO}({2^{n+1}-1}))\ar@{->>}[ll]_{\varphi}
}
\end{align}
dual to the natural inclusion $\mathrm{CK}(n)^*(\mathrm{SO}_{2^{n+1}-1})\hookrightarrow\mathrm{CK}(n)_{\mathrm{top}}^*(\mathrm{SO}({2^{n+1}-1}))$, and denote 
$$
\overline\gamma_t(\beta_{2i-1})=\varphi\big(\gamma_t(\beta_{2i-1})\big).
$$
Using Propositions~\ref{ahss-homological} and~\ref{quotient-ch} we conclude that $\mathrm{CK}(n)^*(\mathrm{SO}_{2^{n+1}-1})^\vee$ is generated over $\mathbb F_2[v_n]$ by the monomials $g_1\ldots g_k$, where $g_i$ lie in 
$$
\{\overline\gamma_{2^s}(\beta_{2i-1})\mid1\leq i\leq2^{n-1},\ 1\leq s\leq k_i\}.
$$
In fact, comparing the ranks, we conclude that these monomials form a basis. Moreover, by Proposition~\ref{quotient-ch} 
we conclude that $\overline\beta_{2i-1}$ are divisible by $v_n$. Let 
$$
\overline\beta_{2i-1}=v_n^aP+v_n^{a+1}R
$$
in $\mathrm{CK}(n)^*(\mathrm{SO}_{2^{n+1}-1})^\vee$, where $P$ is not divisible by $v_n$, i.e., the image of $P$ in $\mathrm{Ch}^*(\mathrm{SO}_{2^{n+1}-1})^\vee$ is non-zero. Since~(\ref{themap}) is a morphism of algebras and co-algebras, and $\beta_{2i-1}$ is primitive by~Theorem~\ref{rao}, we conclude that $P$ is primitive modulo $v_n$. This implies that $P$ coincides with a sum of $\overline\gamma_2(\beta_{2j-1})$ for some $1\leq j\leq 2^{n-1}$ modulo $v_n$ by Corollary~\ref{cor-ch}. Comparing the gradings 
$$
2i-1=2(2j-1)-a(2^n-1)
$$
we conclude that $a=1$ and $P$ is congruent just to $\overline\gamma_2(\beta_{2j-1})$ modulo $v_n$ (rather than to a sum of several $\overline\gamma_2(\beta_{2j-1})$). Moreover $2j-1\geq2^{n-1}-1$ (under this condition $k_j=1$), $i\leq j$ (in particular, $i\leq2^{n-1}$), and $i=j$ implies that $i=2^{n-1}$. 

Take the largest $i$ such that $\overline\beta_{2i-1}\neq0$ and let
$$
\overline\beta_{2i-1}=v_n\overline\gamma_2(\beta_{2j-1})+v_n^{2}R.
$$
Since $k_j=1$ we conclude that 
$$
\gamma_2(\beta_{2j-1})^2=[\gamma_2(\beta_{2j-1}),\,\beta_{2j-1}]\cdot\beta_{2j-1}+v_n\gamma_2(\beta_{2i-1})
$$
by Theorem~\ref{rao}\,(4). On the other hand,
$$
\beta_{2i-1}^2=0
$$
by Theorem~\ref{rao}\,(3). We do not know yet that $\mathrm{CK}(n)^*(\mathrm{SO}_{2^{n+1}-1})^\vee$ is commutative, however, all commutators are divisible by $v_n$ in $\mathrm{CK}(n)^*(\mathrm{SO}_{2^{n+1}-1})^\vee$, and we obtain
$$
0=\overline\beta_{2i-1}^{\,2}=v_n^2\,\overline\gamma_2(\beta_{2j-1})^2+v_n^4R'.
$$
For $i<2^{n-1}$ we have $\overline\beta_{2j-1}=0$ (by the choice of $i$), and therefore 
$$
0=v_n^3\,\overline\gamma_2(\beta_{2i-1})+v_n^4R',
$$
but the image of $\gamma_2(\beta_{2i-1})$ in $\mathrm{Ch}^*(\mathrm{SO}_{2^{n+1}-1})^\vee$ is non-zero. This provides a contradiction.

Finally, let $i=2^{n-1}$. Then
\begin{multline*}
0=\overline\beta_{2^n-1}^{\,2}=v_n^2\,\overline\gamma_2(\beta_{2^n-1})^2+v_n^4R'=\\
=v_n^3\,\overline\gamma_2(\beta_{2^n-1})+v_n^2\,[\overline\gamma_2(\beta_{2^n-1}),\,\overline\beta_{2^n-1}]\cdot\overline\beta_{2^n-1}+v_n^4R',
\end{multline*}
and using again the fact that all commutators are divisible by $v_n$ and $\overline\beta_{2^n-1}=v_n\overline\gamma_2(\beta_{2^n-1})+v_n^2R$, we obtain
$$
0=v_n^3\,\overline\gamma_2(\beta_{2^n-1})+v_n^4R'',
$$
which again provides a contradiction.

Therefore all $\overline\beta_{2i-1}$ are equal to zero, and $\varphi$ factors through the quotient modulo the ideal generated by $\beta_{2i-1}$. Comparing the ranks we obtain the claim.
\end{proof}

\begin{cl}
\label{commutative}
The Hopf algebra $\mathrm{CK}(n)^*(\mathrm{SO}_{2^{n+1}-1})^\vee$ is commutative.
\end{cl}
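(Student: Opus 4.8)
The plan is to reduce the statement to Rao's Theorem~\ref{rao}\,(5) by a base-change argument. By Proposition~\ref{quotient}, the $\mathbb F_2[v_n]$-algebra $\mathrm{CK}(n)^*(\mathrm{SO}_{2^{n+1}-1})^\vee$ is isomorphic, as an algebra, to the quotient of $\mathrm{CK}(n)_*^{\mathrm{top}}(\mathrm{SO}(2^{n+1}-1))$ by the ideal $I$ generated by the classes $\beta_{2i-1}$, $1\le i\le 2^n-1$. Hence it suffices to show that this quotient is commutative.

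Next, I would invoke the isomorphism
$$
\mathrm{CK}(n)_*^{\mathrm{top}}(\mathrm{SO}(2^{n+1}-1))\cong\mathrm P(n)_*^{\mathrm{top}}(\mathrm{SO}(2^{n+1}-1))\otimes_{\mathbb F_2[v_n,v_{n+1},\ldots]}\mathbb F_2[v_n]
$$
recalled above (Proposition~\ref{ahss-homological}\,(7)), under which the classes $\beta_{2i-1}$ on the left are precisely the images of the classes with the same name on the right. Since passing to the quotient of an algebra by the ideal generated by a prescribed family of elements commutes with an arbitrary base change $-\otimes_R S$ --- this is nothing more than right-exactness of the tensor product --- the quotient $\mathrm{CK}(n)_*^{\mathrm{top}}(\mathrm{SO}(2^{n+1}-1))/I$ is identified with
$$
\Bigl(\mathrm P(n)_*^{\mathrm{top}}(\mathrm{SO}(2^{n+1}-1))\big/(\beta_{2i-1}\mid 1\le i\le 2^n-1)\Bigr)\otimes_{\mathbb F_2[v_n,v_{n+1},\ldots]}\mathbb F_2[v_n].
$$

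Finally, by Rao's Theorem~\ref{rao}\,(5) the algebra $\mathrm P(n)_*^{\mathrm{top}}(\mathrm{SO}(2^{n+1}-1))\big/(\beta_{2i-1})$ is commutative, and a base change of a commutative ring is again commutative; putting the three steps together shows that $\mathrm{CK}(n)^*(\mathrm{SO}_{2^{n+1}-1})^\vee$ is commutative. I do not anticipate any genuine obstacle here: the only point that needs a little (entirely routine) care is the compatibility of the quotient with the base change along $\mathbb F_2[v_n,v_{n+1},\ldots]\to\mathbb F_2[v_n]$, together with the bookkeeping that the two families of classes denoted $\beta_{2i-1}$ match under that map --- both of which are immediate. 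One could alternatively extract commutativity directly from the computation carried out in the proof of Proposition~\ref{quotient} (which already shows that all the $\overline\beta_{2i-1}$ vanish and identifies the algebra with the above quotient), but the base-change formulation seems cleanest.
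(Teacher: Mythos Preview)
Your proposal is correct and is essentially the paper's own argument: the paper's proof is the single line ``This follows from Theorem~\ref{rao}\,(5)'', which implicitly uses Proposition~\ref{quotient} and the base change $\mathrm P(n)_*^{\mathrm{top}}\otimes_{\mathbb F_2[v_n,v_{n+1},\ldots]}\mathbb F_2[v_n]\cong\mathrm{CK}(n)_*^{\mathrm{top}}$ that you have simply written out in full.
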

\begin{proof}
This follows from Theorem~\ref{rao}\,(5).
\end{proof}

We are going to reuse the notation for $\gamma_t(\alpha_{2i-1})$ in $\mathrm{Ch}^*(\mathrm{SO}_{2^{n+1}-1})^\vee$ also for their lifts to $\mathrm{CK}(n)^*(\mathrm{SO}_{2^{n+1}-1})^\vee$. In the notation of the proof of Proposition~\ref{quotient}, denote 
$$
\gamma_{t}(\alpha_{2i-1})=\overline\gamma_{2t}(\beta_{2i-1})
$$ 
in $\mathrm{CK}(n)^*(\mathrm{SO}_m)^\vee$ for $1\leq i\leq2^{n-1}$, $1\leq t\leq2^{k_i-1}$, and denote additionally
$$
\gamma_{2^{k_i-1}+t}(\alpha_i)=\gamma_{2^{k_i-1}}(\alpha_i)\gamma_{t}(\alpha_i)
$$
for $t<2^{k_i-1}$. We also reuse below the notation $\Gamma_{k_i}(\alpha_{2i-1})$ for the algebra of divided differences over $\mathbb F_2[v_n]$ (rather than $\mathbb F_2$).

\begin{prop}
\label{answer-dual}
In the notation above, 
$$
\mathrm{CK}(n)^*(\mathrm{SO}_{2^{n+1}-1})^\vee\cong\bigotimes_{i=1}^{2^{n-1}}\Gamma_{k_i}(\alpha_{2i-1})
$$ 
is an isomorphism of co-algebras, and the multiplication table of $\mathrm{CK}(n)^*(\mathrm{SO}_{2^{n+1}-1})^\vee$ can be deduced from the identities
\begin{align}
\label{raz-d}
\gamma_s(\alpha_{2i-1})\cdot\gamma_t(\alpha_{2i-1})=\frac{(s+t)!}{s!\,t!}\gamma_{s+t}(\alpha_{2i-1})
\end{align}
for $1\leq i\leq2^{n-1}$, $1\leq s,t< 2^{k_i-1}$, and
\begin{align}
\label{dvas-d}
\gamma_{2^{k_i-1}}(\alpha_{2i-1})^2=v_n\alpha_{2j-1}
\end{align}
for $1\leq i\leq2^{n-1}$, $2j-1=(2i-1)2^{k_i}-2^{n}+1$.
\end{prop}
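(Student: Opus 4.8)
The plan is to read off every assertion by transporting Rao's Theorem~\ref{rao} through the surjection $\varphi$ of~(\ref{themap}) and through the renaming $\gamma_t(\alpha_{2i-1})=\overline\gamma_{2t}(\beta_{2i-1})$. Recall that by Proposition~\ref{quotient} the map $\varphi$ is a morphism of algebras and of co-algebras identifying $\mathrm{CK}(n)^*(\mathrm{SO}_{2^{n+1}-1})^\vee$ with the quotient of $\mathrm{CK}(n)_*^{\mathrm{top}}(\mathrm{SO}(2^{n+1}-1))$ by the ideal generated by all $\beta_{2i-1}$, and that the proof of that proposition (via Proposition~\ref{ahss-homological} and Corollary~\ref{cor-ch}) already exhibits the squarefree monomials in the elements $\gamma_{2^{s-1}}(\alpha_{2i-1})=\varphi\big(\gamma_{2^{s}}(\beta_{2i-1})\big)$, $1\le i\le 2^{n-1}$, $1\le s\le k_i$, as an $\mathbb F_2[v_n]$-basis. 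Throughout I will use the mod $2$ identity $\binom{2a}{2b}\equiv\binom{a}{b}$, a special case of Lucas' theorem. The first observation to record is that $\varphi$ annihilates $\gamma_t(\beta_{2i-1})$ whenever $t$ is odd: specializing Theorem~\ref{rao}\,(3) to the first argument $1$ gives $\beta_{2i-1}\cdot\gamma_{t-1}(\beta_{2i-1})=t\,\gamma_t(\beta_{2i-1})$, so for odd $t$ the element $\gamma_t(\beta_{2i-1})$ lies in the ideal generated by $\beta_{2i-1}$.

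For the co-algebra structure I would apply $\varphi\otimes\varphi$ to Theorem~\ref{rao}\,(2) for the generators $\gamma_{2^{s}}(\beta_{2i-1})$, $1\le s\le k_i$. Since all odd-index terms on the right-hand side are killed by the previous step, this yields
\[
\Delta\big(\gamma_{2^{s-1}}(\alpha_{2i-1})\big)=\sum_{r=0}^{2^{s-1}}\gamma_r(\alpha_{2i-1})\otimes\gamma_{2^{s-1}-r}(\alpha_{2i-1}),
\]
i.e.\ each generator co-multiplies exactly as the corresponding divided power of $\Gamma_{k_i}(\alpha_{2i-1})$. Since $\mathrm{CK}(n)^*(\mathrm{SO}_{2^{n+1}-1})^\vee$ is a genuine Hopf algebra over $\mathbb F_2[v_n]$ (being the dual of the Hopf algebra $\mathrm{CK}(n)^*(\mathrm{SO}_{2^{n+1}-1})$), its co-multiplication is a ring homomorphism; together with the algebra identification below this determines $\Delta$ on all basis monomials and shows that the linear bijection carrying each $\gamma_t(\alpha_{2i-1})$ to $\gamma_t(\alpha_{2i-1})$ is an isomorphism of co-algebras $\bigotimes_{i=1}^{2^{n-1}}\Gamma_{k_i}(\alpha_{2i-1})\cong\mathrm{CK}(n)^*(\mathrm{SO}_{2^{n+1}-1})^\vee$.

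For the multiplication, applying $\varphi$ to Theorem~\ref{rao}\,(3) with the even arguments $2s,2t$ (admissible, since $s,t<2^{k_i-1}$ forces $2s,2t<2^{k_i}$) and invoking $\binom{2(s+t)}{2s}\equiv\binom{s+t}{s}$ gives~(\ref{raz-d}); in particular $\gamma_{2^{s}}(\alpha_{2i-1})^{2}=0$ for $s<k_i-1$, consistent with the basis. Commutativity is Corollary~\ref{commutative}. The remaining relation comes from Theorem~\ref{rao}\,(4): its commutator term is annihilated by $\varphi$ because $\overline\beta_{2i-1}=0$ (equivalently, because $2^{k_i}-1$ is odd, so $\varphi(\gamma_{2^{k_i}-1}(\beta_{2i-1}))=0$), leaving $\gamma_{2^{k_i-1}}(\alpha_{2i-1})^2=v_n\,\overline\gamma_2(\beta_{2j-1})=v_n\,\alpha_{2j-1}$. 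The index $j$ is then pinned down by a degree count: equating the topological degree $2^{k_i+1}(2i-1)$ of the left-hand side with $2(2^n-1)+2(2j-1)$ forces $2j-1=(2i-1)2^{k_i}-2^n+1$, which is~(\ref{dvas-d}). Finally, from $k_i=\lfloor\log_2((2^{n+1}-2)/(2i-1))\rfloor$ one gets $2^n\le(2i-1)2^{k_i}\le 2^{n+1}-2$, hence $1\le 2j-1\le 2^n-1$, so $1\le j\le 2^{n-1}$ and $k_j\ge 1$; thus $\alpha_{2j-1}$ is one of the listed generators and the right-hand side of~(\ref{dvas-d}) makes sense. Together with~(\ref{raz-d}), Corollary~\ref{commutative}, and the definition of $\gamma_{2^{k_i-1}+t}(\alpha_{2i-1})$ in the upper range, the above determines the full multiplication table.

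The delicate step is this last one: one has to check that the non-commutative correction in Theorem~\ref{rao}\,(4) genuinely disappears in the quotient, to identify the twist index $j$ (most safely via the degree count above rather than by reading it off a normalization-dependent expression), and to verify that $2j-1$ always lands in the admissible range $\{1,\dots,2^n-1\}$. A secondary but genuine subtlety, reflected in the order above, is that one cannot transfer the Hopf identity ``$\Delta$ is multiplicative'' to $\bigotimes_i\Gamma_{k_i}(\alpha_{2i-1})$ before the algebra structures on the two sides have been matched, so the multiplicative and co-multiplicative parts of the argument are interdependent.
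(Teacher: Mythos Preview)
Your proof is correct and follows the same route as the paper: both deduce~(\ref{raz-d}) and the co-multiplication formula for $t\le 2^{k_i-1}$ directly from Theorem~\ref{rao}\,(2)--(3), extend the co-multiplication to the upper range using that $\Delta$ is multiplicative in the Hopf algebra $\mathrm{CK}(n)^*(\mathrm{SO}_{2^{n+1}-1})^\vee$, and obtain~(\ref{dvas-d}) from Theorem~\ref{rao}\,(4) together with Corollary~\ref{commutative}. You are more explicit than the paper in several places (the vanishing of $\varphi(\gamma_t(\beta_{2i-1}))$ for odd $t$, the Lucas identity $\binom{2a}{2b}\equiv\binom{a}{b}$, the degree computation pinning down $j$, and the range check $1\le j\le 2^{n-1}$), all of which are genuinely used but left implicit in the paper's one-paragraph argument.
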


\begin{proof}
The identity~(\ref{raz-d}) and the formula
\begin{align}
\label{tris}
\Delta(\gamma_t(\alpha_{2i-1}))=\sum_{s=0}^t\gamma_s(\alpha_{2i-1})\otimes\gamma_{t-s}(\alpha_{2i-1})
\end{align}
for $1\leq t\leq 2^{k_i-1}$ follow from Theorem~\ref{rao}. Using this and the fact that $\mathrm{CK}(n)^*(\mathrm{SO}_{2^{n+1}-1})^\vee$ is a Hopf algebra we can prove~(\ref{tris}) for all $1\leq t<2^{k_i}$, and obtain the first claim. Finally,~(\ref{dvas-d}) follows from Theorem~\ref{rao}\,(4) and Corollary~\ref{commutative}.
\end{proof}

\subsection{The co-algebra structure of $\mathrm{CK}(n)^*(\mathrm{SO}_m)$}

Dualizing the above result, we obtain the following 
\begin{prop}
\label{prop-answer}
The algebraic Morava K-theory $\mathrm{CK}(n)^*(\mathrm{SO}_{2^{n+1}-1})$ can be identified as an algebra with 
$$
\mathbb F_2[v_n][e_1,e_3\ldots,e_{2^n-1}]/(e_{2i-1}^{2^{k_i}})
$$
where $k_i$ are given by~{\rm(}\ref{ki}{\rm)}. Denote additionally $e_{2k}=e_k^2$. The reduced co-multiplication $\widetilde\Delta(x)=\Delta(x)-x\otimes1-1\otimes x$ of $\mathrm{CK}(n)^*(\mathrm{SO}_{2^{n+1}-1})$ under the above identification is given by
\begin{align}
\label{hereitis}
\widetilde\Delta(e_{\langle 2k\rangle})=\sum_{i=0}^{\nu_2(k)}v_n^{i+1}\,e_{\langle k/2^{i}\rangle}\otimes e_{\langle k/2^{i}\rangle}\,\prod_{j=0}^{i-1}\left(e_{\langle k/2^{j}\rangle}\otimes1+1\otimes e_{\langle k/2^{j}\rangle}\right)
\end{align}
where $\langle t\rangle$ stands for $2^n-1-t$, $k>0$, and $\nu_2(k)$ is the $2$-adic valuation of $k$, i.e., $k/2^{\,\nu_2(k)}$ is an odd integer, and
$$
\widetilde\Delta(e_{2^n-1})=v_n\,e_{2^{n}-1}\otimes e_{2^{n}-1}.
$$
\end{prop}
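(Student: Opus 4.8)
The plan is to obtain Proposition~\ref{prop-answer} as the linear-algebraic dual of Proposition~\ref{answer-dual}, using the fact (established in Proposition~\ref{ahss-homological} and the discussion preceding Subsection~\ref{top-morava}) that $\mathrm{CK}(n)^*(\mathrm{SO}_{2^{n+1}-1})$ and $\mathrm{CK}(n)^*(\mathrm{SO}_{2^{n+1}-1})^\vee$ are finite free $\mathbb F_2[v_n]$-modules in each degree, so that dualizing is an involution that swaps multiplication and co-multiplication. First I would fix the dual basis: if $\{\,\prod_i \gamma_{t_i}(\alpha_{2i-1})\,\}$ is the monomial basis of $\mathrm{CK}(n)^*(\mathrm{SO}_{2^{n+1}-1})^\vee$ coming from Proposition~\ref{answer-dual}, I define $e_{2i-1}$ in $\mathrm{CK}(n)^*(\mathrm{SO}_{2^{n+1}-1})$ to be the element dual to $\gamma_1(\alpha_{2i-1})$, and then set $e_{2k}:=e_k^2$ as in the statement. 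The identity~(\ref{tris}) (co-multiplication of divided powers) dualizes to a \emph{product} formula: $e_{2i-1}^t$ pairs with $\gamma_t(\alpha_{2i-1})$ up to the multinomial coefficient $t!/\prod(\text{digits})!$, which in characteristic $2$ is $1$ exactly when $t$ is a power of $2$ and is handled as usual via Lucas' theorem. Dually, the relation $\gamma_{2^{k_i-1}}(\alpha_{2i-1})^2 = v_n\,\alpha_{2j-1}$ from~(\ref{dvas-d}) translates into the assertion that $e_{2i-1}^{2^{k_i}} = 0$ in $\mathrm{CK}(n)^*$ (there is no element of the appropriate degree left for it to hit, since $\gamma_{2^{k_i}}(\alpha_{2i-1})$ is not a basis element) while simultaneously forcing the $v_n$-term in the co-multiplication of $e_{\langle\cdot\rangle}$. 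This gives the claimed algebra presentation $\mathbb F_2[v_n][e_1,e_3,\ldots,e_{2^n-1}]/(e_{2i-1}^{2^{k_i}})$.

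Next I would compute $\widetilde\Delta(e_{\langle 2k\rangle})$. Writing $\langle t\rangle = 2^n-1-t$, the element $e_{\langle 2k\rangle}$ is, by the index bookkeeping in Example~\ref{primitive} and the translation above, a product/power of the generators $e_{2i-1}$; its co-product is determined by the co-products of those generators together with the fact that $\Delta$ is an algebra map \emph{modulo the commutator ideal}, and on the quotient (Corollary~\ref{commutative}) it is genuinely multiplicative. So I would first record $\widetilde\Delta(e_1)$ and more generally $\widetilde\Delta(e_{2i-1})$ for odd indices — these are primitive-up-to-a-single $v_n$-term as in the second Example, because the dual statement~(\ref{tris}) says $\gamma_{2^{k_i-1}}(\alpha_{2i-1})$ has $\widetilde\Delta$ equal to a sum of $\gamma_s\otimes\gamma_{t-s}$ with both $s,t-s<2^{k_i-1}$, and dually only the "middle square" survives. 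Then, for $e_{\langle 2k\rangle} = e_{\langle k\rangle}^2$ (when $k$ is even one iterates), I would use $\Delta(x^2)=\Delta(x)^2 = x^2\otimes 1 + 1\otimes x^2 + \widetilde\Delta(x)^2$ together with the explicit squaring rule $e_j^2 = e_{2j}$ and $e_j^{2^n}=0$; squaring the recursive expression for $\widetilde\Delta(e_{\langle k\rangle})$ and using that squaring is additive and that $\left(e_{\langle k/2^j\rangle}\otimes 1 + 1\otimes e_{\langle k/2^j\rangle}\right)^2 = e_{\langle k/2^{j-1}\rangle}\otimes 1 + 1\otimes e_{\langle k/2^{j-1}\rangle}$ (since the index halves under doubling, $\langle\cdot\rangle$ being affine-linear in the argument only up to the shift by $2^n-1$, which I would check carefully) produces exactly the telescoping product $\prod_{j=0}^{i-1}(\cdots)$ with one extra factor and the exponent on $v_n$ increased by one. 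An induction on $\nu_2(k)$ then yields~(\ref{hereitis}), and the terminal case $k$ odd gives $\widetilde\Delta(e_{2^n-1}) = v_n\,e_{2^n-1}\otimes e_{2^n-1}$ as the base of the recursion.

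The main obstacle is \textbf{the index arithmetic with the bracket $\langle t\rangle = 2^n-1-t$ under squaring}: one must verify that $e_{\langle 2k\rangle} = e_{\langle k\rangle}^2$ is actually \emph{true} in $\mathrm{CK}(n)^*(\mathrm{SO}_{2^{n+1}-1})$ (i.e. that the generator indexed by $\langle 2k\rangle$ is the square of the one indexed by $\langle k\rangle$, not merely that $e_j^2 = e_{2j}$ for $j\le s$), and to keep track of which of the $e_{\langle k/2^j\rangle}$ are genuinely nonzero versus forced to $0$ by $e_{2i-1}^{2^{k_i}}=0$ — the cutoffs $k_i$ govern exactly when a summand in~(\ref{hereitis}) drops out. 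I would handle this by first proving the proposition for $e_1$ (where everything is explicit via $\Delta(e_1)=\mathrm{FGL}_{\mathrm K(n)}(e_1\otimes 1, 1\otimes e_1)$ and $e_1^{2^n}=0$), then bootstrapping to all odd indices using Proposition~\ref{answer-dual}\,(\ref{tris})--(\ref{dvas-d}) directly, and only then running the squaring induction; the commutativity from Corollary~\ref{commutative} is what makes $\Delta(x^2)=\Delta(x)^2$ legitimate on the nose rather than just modulo commutators. A secondary, purely bookkeeping point is checking that the multinomial coefficients $(s+t)!/(s!\,t!)$ in~(\ref{raz-d}) dualize to the correct Frobenius-type relations; this is standard (Lucas' theorem mod $2$) and I would not belabor it.
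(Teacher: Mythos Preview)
Your overall plan of dualizing Proposition~\ref{answer-dual} is correct and is exactly what the paper does: one defines $e_{2i-1}$ as the element dual to $\alpha_{2i-1}$, and the algebra presentation follows immediately. The problem is in how you propose to compute $\widetilde\Delta$.

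Your recursive scheme rests on the identity $e_{\langle 2k\rangle}=e_{\langle k\rangle}^{2}$, and this identity is \emph{false}. Since $\langle t\rangle=2^{n}-1-t$, the index $\langle 2k\rangle=2^{n}-1-2k$ is always odd, so $e_{\langle 2k\rangle}$ is one of the odd-indexed generators, never a square. In the other direction, $e_{\langle k\rangle}^{2}=e_{2\langle k\rangle}=e_{2^{n+1}-2-2k}$, and for every $k$ in the relevant range $1\le k\le 2^{n-1}-1$ this index exceeds $2^{n}-1$, so $e_{\langle k\rangle}^{2}=0$. The same arithmetic kills your auxiliary identity $\bigl(e_{\langle k/2^{j}\rangle}\otimes 1+1\otimes e_{\langle k/2^{j}\rangle}\bigr)^{2}=e_{\langle k/2^{j-1}\rangle}\otimes 1+1\otimes e_{\langle k/2^{j-1}\rangle}$: squaring doubles the subscript, it does not apply $t\mapsto\langle 2\,\langle t\rangle^{-1}\rangle$. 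You flagged this as the main obstacle, and indeed it is fatal to the squaring induction.

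What the paper actually does is compute $\Delta(e_{2j-1})$ by the pairing
\[
\Big\langle \Delta(e_{2j-1}),\,A\otimes B\Big\rangle=\Big\langle e_{2j-1},\,A\cdot B\Big\rangle,
\]
running over the monomial basis $A,B$ of $\mathrm{CK}(n)^{*}(\mathrm{SO}_{2^{n+1}-1})^{\vee}$, and asking which products $A\cdot B$ have a nonzero $\alpha_{2j-1}$-coefficient. The first contribution beyond the primitive part comes from~(\ref{dvas-d}): $\gamma_{2^{k_{i}-1}}(\alpha_{2i-1})^{2}=v_{n}\,\alpha_{2j-1}$ with $2j-1=(2i-1)2^{k_{i}}-2^{n}+1$, which in the bracket notation reads $\alpha_{\langle k\rangle}^{2}=v_{n}\,\alpha_{\langle 2k\rangle}$ when $k_{i}=1$. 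When $j$ is even this relation iterates (now with $i$ playing the role of $j$), and each iteration contributes an extra factor $(e_{\langle k/2^{j}\rangle}\otimes 1+1\otimes e_{\langle k/2^{j}\rangle})$ and an extra power of $v_{n}$; the recursion terminates at $\nu_{2}(k)$ because that is where the index becomes odd and $k_{i}>1$. So the telescoping product in~(\ref{hereitis}) records a chain of squarings in the \emph{dual}, not a squaring of $e_{\langle 2k\rangle}$ itself.
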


\begin{proof}
We will denote by $\Big\langle\ ,\ \Big\rangle$ the natural pairing of $\mathrm{CK}(n)^*(\mathrm{SO}_{2^{n+1}-1})$ with $\mathrm{CK}(n)^*(\mathrm{SO}_{2^{n+1}-1})^\vee$.

Let $e_{2i-1}$ denote the dual elements to $\alpha_{2i-1}$. Then the first claim follows from Proposition~\ref{answer-dual}. To compute $\Delta(e_{2^n-1})$ observe that
$$
\Big\langle\Delta(e_{2^n-1}),\,\gamma_{t_1}(\alpha_1)\ldots\gamma_{t_{2^n-1}}(\alpha_{2^n-1})\otimes\gamma_{s_1}(\alpha_1)\ldots\gamma_{s_{2^n-1}}(\alpha_{2^n-1})\Big\rangle
$$
can only be non-zero for the basis elements $1\otimes\alpha_{2^n-1}$, $\alpha_{2^n-1}\otimes1$ and $\alpha_{2^n-1}\otimes\alpha_{2^n-1}$ in the right hand side of the pairing $\Big\langle\ ,\,\Big\rangle$ by Proposition~\ref{answer-dual}. Similarly, for odd $j$ we see that $2^{n-1}-1+j$ is even, i.e., if $2j-1=(2i-1)2^{k_i}-2^{n}+1$ then $k_i>1$. This also implies that 
$$
\Big\langle\Delta(e_{2j-1}),\,\gamma_{t_1}(\alpha_1)\ldots\gamma_{t_{2^n-1}}(\alpha_{2^n-1})\otimes\gamma_{s_1}(\alpha_1)\ldots\gamma_{s_{2^n-1}}(\alpha_{2^n-1})\Big\rangle
$$
can only be non-zero for the basis elements $1\otimes\alpha_{2j-1}$, $\alpha_{2j-1}\otimes1$ and $\gamma_{2^{k_i-1}}(\alpha_{2i-1})\otimes\gamma_{2^{k_i-1}}(\alpha_{2i-1})$, where  $2j-1=(2i-1)2^{k_i}-2^{n}+1$ by Proposition~\ref{answer-dual}. 

However, if $j$ is even, then $k_i=1$, i.e., $\alpha_{2i-1}^2=v_n\alpha_{2j-1}$, and 
\begin{multline*}
\Big\langle\Delta(e_{2j-1}),\,\alpha_{2i-1}\gamma_{2^{k_h-1}}(\alpha_{2h-1})\otimes\gamma_{2^{k_h-1}}(\alpha_{2h-1})\Big\rangle=v_n^2=\\
=\Big\langle\Delta(e_{2j-1}),\,\gamma_{2^{k_h-1}}(\alpha_{2h-1})\otimes\gamma_{2^{k_h-1}}(\alpha_{2h-1})\alpha_{2i-1}\Big\rangle
\end{multline*}
for $2i-1=(2h-1)2^{k_h}-2^{n}+1$. Let us denote $2j-1=2^n-1-2k$. Then we have $2i-1=2^{n}-1-k$, and $2h-1=2^{n}-1-\frac k2$. Moreover, if $k$ is divisible by $4$, we see that $h$ is even and we can continue this process by induction. 

Denoting $\langle t\rangle=2^n-1-t$, we obtain formula~(\ref{hereitis}).
\end{proof}

We can now compute $\mathrm{CK}(n)^*(\mathrm{SO}_m)$ for $m\leq2^{n+1}$ as a corollary of the above proposition.

\begin{prop}
\label{co-mult-ck}
For $m\leq2^{n+1}$ the algebra structure of $\mathrm{CK}(n)^*(\mathrm{SO}_m)$ is given by
$$
\mathbb F_2[v_n][e_1,e_2,\ldots,e_s]/(e_i^2-e_{2i})
$$
where $s=\lfloor\frac{m-1}{2}\rfloor$ and $e_{k}$ stands for $0$ if $k>s$. The reduced co-multiplication $\widetilde\Delta(x)=\Delta(x)-x\otimes1-1\otimes x$ is given by
$$
\widetilde\Delta(e_{\langle 2k\rangle})=\sum_{i=0}^{\nu_2(k)}v_n^{i+1}\,e_{\langle k/2^{i}\rangle}\otimes e_{\langle k/2^{i}\rangle}\,\prod_{j=0}^{i-1}\left(e_{\langle k/2^{j}\rangle}\otimes1+1\otimes e_{\langle k/2^{j}\rangle}\right)
$$
where $\langle t\rangle$ stands for $2^n-1-t$, $k>0$, and $\nu_2(k)$ is the $2$-adic valuation of $k$, i.e., $k/2^{\,\nu_2(k)}$ is an odd integer, and
$$
\widetilde\Delta(e_{2^n-1})=v_n\,e_{2^{n}-1}\otimes e_{2^{n}-1}.
$$
Moreover, $e_1$ coincides with the first Chern class $c_1^{\mathrm{CK}(n)}(e_1^{\mathrm{CH}})$ of the generator $e_1^{\mathrm{CH}}$ of $\mathrm{CH}^1(\mathrm{SO}_m;\mathbb Z)$.
\end{prop}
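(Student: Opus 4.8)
The plan is to reduce everything to the case $m=2^{n+1}-1$ (Proposition~\ref{prop-answer}) and then descend in $m$ along the standard embeddings $\mathrm{SO}_m\hookrightarrow\mathrm{SO}_{m+2}$. For the algebra structure I would observe that for $m\le2^{n+1}$ one has $s=\lfloor\frac{m-1}{2}\rfloor\le2^n-1<2^n$, so the relations ``$v_ne_i=0$ for $i\ge2^n$'' of Theorem~\ref{ck-algebra} are vacuous; hence $\mathrm{CK}(n)^*(\mathrm{SO}_m)\cong\mathbb F_2[v_n][e_1,\ldots,e_s]/(e_i^2-e_{2i})$, which is $v_n$-torsion free and free of rank $\dim_{\mathbb F_2}\mathrm{Ch}^*(\mathrm{SO}_m)$ over $\mathbb F_2[v_n]$. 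For the claim about $e_1$: the generator $e_1^{\mathrm{CH}}$ of $\mathrm{CH}^1(\mathrm{SO}_m;\mathbb Z)=\mathrm{Pic}(\mathrm{SO}_m)\cong\mathbb Z/2$ has a $\mathrm{CK}(n)$-theoretic first Chern class which reduces to the Chow generator $e_1$ modulo $v_n$ by naturality of $c_1$ under $\mathrm{CK}(n)^*\to\mathrm{Ch}^*$; as this line bundle is $2$-torsion and the $2$-series of the connective Morava formal group law equals $v_nx^{2^n}$, one gets $v_n\,c_1^{\mathrm{CK}(n)}(e_1^{\mathrm{CH}})^{2^n}=0$, hence $c_1^{\mathrm{CK}(n)}(e_1^{\mathrm{CH}})^{2^n}=0$ by $v_n$-torsion freeness, and these two facts together allow one to take $c_1^{\mathrm{CK}(n)}(e_1^{\mathrm{CH}})$ for the generator $e_1$, exactly as in the Chow-theoretic statement.

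The co-multiplication has two base cases. For $m=2^{n+1}-1$ the two presentations coincide — in $\mathbb F_2[v_n][e_1,\ldots,e_{2^n-1}]/(e_i^2-e_{2i})$ one has $e_{2i}=e_i^2$ and the relation $e_{2i-1}^{2^{k_i}}=0$ is exactly ``$e_j=0$ for $j>2^n-1$'' — so Proposition~\ref{prop-answer} already gives the statement. For $m=2^{n+1}$ I would consider the restriction of Hopf algebras $r\colon\mathrm{CK}(n)^*(\mathrm{SO}_{2^{n+1}})\to\mathrm{CK}(n)^*(\mathrm{SO}_{2^{n+1}-1})$ along $\mathrm{SO}_{2^{n+1}-1}\hookrightarrow\mathrm{SO}_{2^{n+1}}$. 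Modulo $v_n$ it is the map $\mathrm{Ch}^*(\mathrm{SO}_{2^{n+1}})\to\mathrm{Ch}^*(\mathrm{SO}_{2^{n+1}-1})$, which is an isomorphism: both rings are the subring of squares in $\mathrm H^*(\mathrm{SO}(-);\mathbb F_2)$, and since $2^{n+1}-1$ is odd the embedding adds only the extra exterior generator in degree $2^{n+1}-1$, which squares to zero and so contributes nothing to the squares. As both $\mathrm{CK}(n)$-rings are finitely generated free graded $\mathbb F_2[v_n]$-modules of the same rank, $r$ is then an isomorphism (injective since $r\bmod v_n$ is and the source is $v_n$-torsion free; surjective since its cokernel is a graded torsion finitely generated $\mathbb F_2[v_n]$-module — hence $v_n$-power torsion — equal to its own $v_n$-multiple). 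Transporting Proposition~\ref{prop-answer} along $r^{-1}$ settles $m=2^{n+1}$, and the formula is literally the same there since $s=2^n-1$ in both cases.

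For $m\le2^{n+1}-2$ I would use downward induction from these two base cases. The standard embedding $\mathrm{SO}_m\hookrightarrow\mathrm{SO}_{m+2}$ induces a Hopf algebra homomorphism which, by Lemma~\ref{recall-lpss}\,(3), is surjective with kernel $(x)$, where $x$ reduces modulo $v_n$ to $e_{s(m)+1}$. After a compatible choice of generators — normalized by $e_1=c_1^{\mathrm{CK}(n)}(e_1^{\mathrm{CH}})$, which restricts compatibly because $\mathrm{Pic}$ does in the stable range, the remaining odd generators being lifted inductively through these surjections — this map sends $e_i\mapsto e_i$ for $i\le s(m)$ and $e_{s(m)+1}\mapsto0$. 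Applying it to formula~(\ref{hereitis}) for $\mathrm{SO}_{m+2}$, with the convention $e_j=0$ for $j>s(m)$, yields formula~(\ref{hereitis}) for $\mathrm{SO}_m$; the one point to check is that the right-hand side of~(\ref{hereitis}) is stable under this truncation, which follows at once from the observation that the indices $\langle k/2^i\rangle=2^n-1-k/2^i$ increase with $i$, so a summand survives the quotient exactly when its outermost factor $e_{\langle k/2^i\rangle}$ does. The step I expect to be the main obstacle is precisely this last layer of bookkeeping: arranging the non-canonical isomorphism of Theorem~\ref{ck-algebra}, the isomorphism $r$, and the quotient maps of Lemma~\ref{recall-lpss}\,(3) so that one single family $e_1,\ldots,e_s$ — pinned down by the Chern-class normalization of $e_1$ — carries the stated co-multiplication uniformly in $m$; the remaining ingredients (the classical comparison of $\mathrm H^*(\mathrm{SO}(2^{n+1}))$ with $\mathrm H^*(\mathrm{SO}(2^{n+1}-1))$, the $2$-series computation, and the truncation-stability of~(\ref{hereitis})) are routine.
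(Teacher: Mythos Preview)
Your overall strategy—reduce to $m=2^{n+1}-1$ via Proposition~\ref{prop-answer}, then descend along the restrictions $\mathrm{CK}(n)^*(\mathrm{SO}_{m+2})\to\mathrm{CK}(n)^*(\mathrm{SO}_m)$—is in spirit the same as the paper's. The paper does this in one step (from $m=2^{n+1}-1$ directly to any odd $m$, then handles even $m$ via the isomorphism $\mathrm{CK}(n)^*(\mathrm{SO}_m)\cong\mathrm{CK}(n)^*(\mathrm{SO}_{m-1})$ exactly as you do for $m=2^{n+1}$); you propose to descend by twos. Structurally these are equivalent. The genuine gap is what you yourself flag at the end as ``bookkeeping'', and it is not bookkeeping.

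Concretely: you have two families of generators in play. One is the family from Proposition~\ref{prop-answer}, defined as the dual basis to the $\alpha_{2i-1}$; for \emph{these} the co-multiplication formula holds. The other is the family implicit in Theorem~\ref{ck-algebra} and Lemma~\ref{recall-lpss}\,(3), built by upward lifting so that by design the top generator is the class $x$ and hence generates the kernel of each restriction; for \emph{these} you know $e_{s(m)+1}\mapsto 0$. Your induction needs both properties for one single family, and nothing you have written forces the two families to coincide. Lemma~\ref{recall-lpss}\,(3) only gives the kernel as $(x)$ with $x\equiv e_{s(m)+1}\bmod v_n$, so the image of the Proposition~\ref{prop-answer} generator $e_{s(m)+1}$ in $\mathrm{CK}(n)^*(\mathrm{SO}_m)$ is a priori only divisible by $v_n$, not zero; until you rule out a nonzero $v_n$-multiple the truncation step does not go through. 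The same circularity hits the Chern-class claim: you show $c_1^{\mathrm{CK}(n)}(e_1^{\mathrm{CH}})$ is \emph{a} valid degree-$1$ generator, but not that it equals \emph{the} $e_1$ of Proposition~\ref{prop-answer} for which~(\ref{hereitis}) was established.

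The paper closes exactly this gap with a degree/primitivity argument: writing $\overline e_k=v_n^aP+v_n^{a+1}R$ (respectively $e_1'-e_1=v_n^aP+\dots$), one applies $\Delta$ and uses~(\ref{hereitis}) to see that $P\bmod v_n$ must be primitive in $\mathrm{Ch}^*(\mathrm{SO}_m)$, hence a sum of $e_{2h-1}^{2^{d_h}}$; the resulting numerical constraint $k=a(1-2^n)+2^{d_h}(2h-1)$ with $a\ge1$ and $2^{d_h}(2h-1)\le\frac{m-1}{2}\le 2^n-1$ is then impossible. That argument is short, but it is the content of the proof; without it your downward induction does not start.
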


\begin{proof}
First, assume that $m$ is odd. We know that the natural map
$$
\mathrm{Ch}^*(\mathrm{SO}_{2^{n+1}-1})\rightarrow\mathrm{Ch}^*(\mathrm{SO}_{m})
$$
coincides with taking a quotient modulo $e_k$ for $k>\lfloor\frac{m-1}{2}\rfloor$. Therefore the corresponding map on the connective Morava K-theories
$$
\mathrm{CK}(n)^*(\mathrm{SO}_{2^{n+1}-1})\rightarrow\mathrm{CK}(n)^*(\mathrm{SO}_{m})
$$
is surjective by Nakayama's Lemma (see~\cite[Lemma~5.5]{LPSS}), and the images of $e_k$ become divisible by $v_n$ for $k>\lfloor\frac{m-1}{2}\rfloor$. 

Denote the image of $e_i$ in $\mathrm{CK}(n)^*(\mathrm{SO}_{m})$ by $\overline e_i$ and let
$$
\overline e_k=v_n^aP+v_n^{a+1}R
$$
for some $P$ not divisible by $v_n$. We will show that such an equation is impossible by formal reasons, therefore in fact $\overline e_k=0$ for $k>\lfloor\frac{m-1}{2}\rfloor$.

Let $k=2^t(2^n-1-2j)$ and let $\langle t\rangle$ denote $2^n-1-t$. Then 
$$
\Delta(e_k)=\Delta(e_{\langle 2j\rangle}^{2^t})=e_{\langle 2j\rangle}^{2^t}\otimes1+1\otimes e_{\langle 2j\rangle}^{2^t}+\sum_i v_n^{2^t(i+1)}\,e_{\langle j/2^i\rangle}^{2^t}\otimes e_{\langle j/2^i\rangle}^{2^t}\,S_i,
$$ 
and therefore
\begin{align}
\label{strashno}
\overline e_k\otimes1+1\otimes\overline e_k+\sum_iv_n^{2^t(i+1)}\,\overline e_{\langle j/2^i\rangle}^{\,2^t}\otimes\overline e_{\langle j/2^i\rangle}^{\,2^t}\,\overline S_i=v_n^a\Delta(P)+v_n^{a+1}\Delta(R).
\end{align}
For $k=2^n-1$ we obtain 
$$
\overline e_{2^n-1}\otimes1+1\otimes\overline e_{2^n-1}+v_n^{}\,\overline e_{2^n-1}\otimes\overline e_{2^n-1}=v_n^a(P\otimes1+1\otimes P)+v_n^{a+1}S,
$$
which implies that the image of $P$ in $\mathrm{Ch}^*(\mathrm{SO}_{m})$ is primitive and therefore coincides with $\sum e_{2h-1}^{2^{d_h}}$ for some $0\leq d_h<\lfloor\log_2\left(\frac{m-1}{2h-1}\right)\rfloor$ (cf.~\cite[Lemma~6.1\,a)]{LPSS}). Then 
$$
\overline e_{2^n-1}=v_n^a\sum\overline e_{2h-1}^{\,2^{d_h}}+v_n^{a+1}R',
$$
and comparing the gradings we have
$$
2^n-1=a(1-2^n)+2^{d_h}(2h-1)
$$
for each $h$. However, $a\geq1$ and $2^{d_h}(2h-1)\leq \frac{m-1}{2}$, therefore the above identity is impossible. This shows that $\overline e_{2^n-1}=0$. 

Take the largest $k$ such that 
$$
\overline e_k=\overline e_{[2j]}^{2^t}\neq0.
$$ 
Then by~(\ref{strashno}) we conclude that
$$
\overline e_k\otimes1+1\otimes\overline e_k=v_n^a\Delta(P)+v_n^{a+1}\Delta(R),
$$
and the image of $P$ in $\mathrm{Ch}^*(\mathrm{SO}_{m})$ is primitive. Then
$$
\overline e_{k}=v_n^a\sum\overline e_{2h-1}^{\,2^{d_h}}+v_n^{a+1}R'
$$
as above, and
$$
k=a(1-2^n)+2^{d_h}(2h-1)
$$
for each $h$. But again $a\geq1$, $k>\lfloor\frac{m-1}{2}\rfloor$ and $2^{d_h}(2h-1)\leq \frac{m-1}{2}$, therefore the above identity is impossible. 

This shows that all $\overline e_k=0$ for $k>\lfloor\frac{m-1}{2}\rfloor$. Comparing the ranks we obtain the claim for odd $m$ (cf.~\cite[Theorem~6.10]{LPSS}).

Now assume that $m$ is even. As we already remarked, we can assume that $k=\mathbb C$, in particular, we can consider the natural map $\mathrm{SO}_{m-1}\rightarrow\mathrm{SO}_m$ induces a surjection on Chow rings
$$
\mathrm{CH}^*(\mathrm{SO}_m)\rightarrow\mathrm{CH}^*(\mathrm{SO}_{m-1})
$$
(in fact, an isomorphism), cf., e.g.,~\cite[Section~2]{Pit}, and therefore a surjection on $\mathrm{CK}(n)^*$ by Nakayama's Lemma~\cite[Lemma~5.5]{LPSS}. Comparing the ranks we conclude that
$$
\mathrm{CK}(n)^*(\mathrm{SO}_m)\rightarrow\mathrm{CK}(n)^*(\mathrm{SO}_{m-1})
$$
is an isomorphism of Hopf algebras (cf.~\cite[Theorem~6.10]{LPSS}).

Finally, we will show that $e_1$ coincides with the first Chern class of a certain bundle. Let $e_1^{\mathrm{CH}}$ denote a generator of $\mathrm{CH}^1(\mathrm{SO}_m;\,\mathbb Z)$, and $e_1'=c_1^{\mathrm{CK}(n)}(e_1^{\mathrm{CH}})$. The images of $e_1$ and $e_1'$ coincide in $\mathrm{Ch}^*(\mathrm{SO}_m)$, and
$$
\Delta(e_1')=\mathrm{FGL}_{\mathrm{CK}(n)}(e_1'\otimes1,\,1\otimes e_1'),
$$
see the proof of~\cite[Lemma~6.12]{LPSS}. Since $e_{2i-1}^{2^n}=0$ in $\mathrm{CK}(n)^*(\mathrm{SO}_m)$ for all $i$, we conclude that $(e_1')^{2^n}=0$, and therefore
$$
\widetilde\Delta(e_1')=v_n\,(e_1')^{2^{n-1}}\otimes(e_1')^{2^{n-1}}
$$
by, e.g.,\cite[Subsection~2.3]{PSlog}. Let 
$$
e_1'-e_1=v_n^aP+v_n^{a+1}R,
$$
for some $P$ not divisible by $v_n$. Again, we will show that the above equation is impossible by formal reasons.

Indeed, one has
\begin{multline*}
\Delta(e_1')=(e_1+v_n^aP+v_n^{a+1}R)\otimes1+1\otimes(e_1+v_n^aP+v_n^{a+1}R)+\\
+v_n(e_1+v_n^aP+v_n^{a+1}R)^{2^{n-1}}\otimes(e_1+v_n^aP+v_n^{a+1}R)^{2^{n-1}}=\\
=\Delta(e_1)+v_n^a(P\otimes1+1\otimes P)+v_n^{a+1}S.
\end{multline*}
On the other hand,
$$
\Delta(e_1')=\Delta(e_1+v_n^aP+v_n^{a+1}R)=\Delta(e_1)+v_n^a\,\Delta(P)+v_n^{a+1}\Delta(R).
$$
This implies that $P$ becomes primitive modulo $v_n$, i.e., 
$$
e_1'-e_1=v_n^a\sum e_{2h-1}^{\,2^{d_h}}+v_n^{a+1}R',
$$
and
$$
1=a(1-2^n)+2^{d_h}(2h-1),
$$
where $a\geq1$ and $2^{d_h}(2h-1)\leq \frac{m-1}{2}\leq2^n-1$. Therefore the above identity is impossible and $e_1'=e_1$.
\end{proof}

Finally, we will compute the co-multiplication in $\mathrm{CK}(n)^*(\mathrm{SO}_m)$ for $m>2^{n+1}$.
\begin{tm}
\label{co-mult-ck-big}
For all $m$, $n\in\mathbb N\setminus0$ the algebra structure of $\mathrm{CK}(n)^*(\mathrm{SO}_m)$ is given by
$$
\mathbb F_2[v_n][e_1,e_2,\ldots,e_s]/(e_i^2=e_{2i}\ \forall i, v_ne_i=0\text{ for }i\geq2^n),
$$
where $s=\lfloor\frac{m-1}{2}\rfloor$ and $e_{k}$ stands for $0$ if $k>s$. The reduced co-multiplication $\widetilde\Delta(x)=\Delta(x)-x\otimes1-1\otimes x$ is given by
$$
\widetilde\Delta(e_{\langle 2k\rangle})=\sum_{i=0}^{\nu_2(k)}v_n^{i+1}\,e_{\langle k/2^{i}\rangle}\otimes e_{\langle k/2^{i}\rangle}\,\prod_{j=0}^{i-1}\left(e_{\langle k/2^{j}\rangle}\otimes1+1\otimes e_{\langle k/2^{j}\rangle}\right)
$$
where $\langle t\rangle$ stands for $2^n-1-t$, $0<k<2^{n-1}$, and $\nu_2(k)$ is the $2$-adic valuation of $k$, i.e., $k/2^{\,\nu_2(k)}$ is an odd integer, 
$$
\widetilde\Delta(e_{2^n-1})=v_n\,e_{2^{n}-1}\otimes e_{2^{n}-1},
$$
and
$$
\widetilde\Delta(e_{2k-1})=0
$$
for $k>2^{n-1}$. 
Moreover, $e_1$ coincides with the first Chern class $c_1^{\mathrm{CK}(n)}(e_1^{\mathrm{CH}})$ of the generator $e_1^{\mathrm{CH}}$ of $\mathrm{CH}^1(\mathrm{SO}_m;\mathbb Z)$.
\end{tm}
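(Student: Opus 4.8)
The case $m\le 2^{n+1}$ is Proposition~\ref{co-mult-ck}, so the plan is to treat $m>2^{n+1}$, where $s=\lfloor\frac{m-1}{2}\rfloor\ge 2^n$ and the algebra structure is already given by Theorem~\ref{ck-algebra}. For the co-multiplication I would transport the known answer for $\mathrm{SO}_{2^{n+1}-1}$ (Proposition~\ref{prop-answer}) along the restriction homomorphism $r\colon A:=\mathrm{CK}(n)^*(\mathrm{SO}_m)\to B:=\mathrm{CK}(n)^*(\mathrm{SO}_{2^{n+1}-1})$ induced by the subgroup inclusion $\mathrm{SO}_{2^{n+1}-1}\hookrightarrow\mathrm{SO}_m$, exploiting repeatedly that $v_ne_i=0$ for $i\ge 2^n$.

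The one purely formal ingredient is this: by Theorem~\ref{ck-algebra}, $A$ is a direct sum of a free $\mathbb{F}_2[v_n]$-module and a module annihilated by $v_n$, hence so is $A\otimes_{\mathbb{F}_2[v_n]}A$, and in any $\mathbb{F}_2[v_n]$-module of this shape an element which is simultaneously killed by $v_n$ and divisible by $v_n$ must vanish. I would first apply this to the new generators: for $2^n\le j\le s$ the element $\widetilde\Delta(e_j)$ is killed by $v_n$ (because $v_ne_j=0$ and $\widetilde\Delta$ is $\mathbb{F}_2[v_n]$-linear) and divisible by $v_n$ (because $e_j$ is primitive in $\mathrm{Ch}^*(\mathrm{SO}_m)$, which is the $v_{n'}\to 0$ specialization of Proposition~\ref{co-mult-ck} for $n'$ large, cf. also~\cite{Kac}); hence $\widetilde\Delta(e_j)=0$ for all $j\ge 2^n$. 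Consequently the ideal $I=(e_j\mid j\ge 2^n)$ is generated by primitive elements, so it is a Hopf ideal.

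Next I would identify the quotient. Each $r(e_j)$ with $j\ge 2^n$ is $v_n$-torsion and $B$ is $v_n$-torsion free (Proposition~\ref{prop-answer}), so $r(e_j)=0$; thus $r$ factors through $A/I$, and the induced map $A/I\to B$ is a surjection of free $\mathbb{F}_2[v_n]$-modules of the same rank $2^{2^n-1}$, hence an isomorphism of Hopf algebras. In particular $\ker r=I$, the kernel of $r\otimes r$ is killed by $v_n$, and (after choosing generators of $A$ compatibly) $r(e_j)=e_j$ for $j\le 2^n-1$. Now for odd $j\le 2^n-1$ write $\Phi_j\in A\otimes A$ for the claimed right-hand side: all of its indices lie in $\{1,\ldots,2^n-2\}$, so $\Phi_j$ makes sense in $A\otimes A$ and $r\otimes r$ sends it to the corresponding element of $B\otimes B$. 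By Proposition~\ref{prop-answer}, $(r\otimes r)\big(\widetilde\Delta(e_j)-\Phi_j\big)=0$, so $\widetilde\Delta(e_j)-\Phi_j$ is killed by $v_n$; it is also divisible by $v_n$, because $e_j$ is primitive modulo $v_n$ while every monomial of $\Phi_j$ carries a positive power of $v_n$; by the observation above it vanishes. This gives the displayed formula, and likewise $\widetilde\Delta(e_{2^n-1})=v_ne_{2^n-1}\otimes e_{2^n-1}$. Since the odd $e_j$ together with $v_n$ generate $A$, the co-multiplication is now determined on all of $A$; in particular $\widetilde\Delta(e_{2k-1})=0$ for $k>2^{n-1}$.

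It remains to see that $e_1=c_1^{\mathrm{CK}(n)}(e_1^{\mathrm{CH}})$, which I would do as in Proposition~\ref{co-mult-ck}. Writing $e_1'=c_1^{\mathrm{CK}(n)}(e_1^{\mathrm{CH}})$ one has $\Delta(e_1')=\mathrm{FGL}_{\mathrm{CK}(n)}(e_1'\otimes 1,\,1\otimes e_1')$; the $v_n^k$-correction term of the Honda formal group law for $k\ge 2$ involves only monomials $(e_1')^i\otimes(e_1')^j$ with $i+j\ge 2^{n+1}-1$, hence with $i\ge 2^n$ or $j\ge 2^n$, and is therefore annihilated in $A\otimes A$ since $v_n(e_1')^l=0$ for $l\ge 2^n$; so $\widetilde\Delta(e_1')=v_n(e_1')^{2^{n-1}}\otimes(e_1')^{2^{n-1}}$, and comparing with $\widetilde\Delta(e_1)$ and using that the weight-one part of $A$ reduces injectively modulo $v_n$ (for $n\ge 2$; $n=1$ directly) forces $e_1'=e_1$. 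The step I expect to require the most care is not conceptual but bookkeeping over $\mathbb{F}_2[v_n]$: checking that $\ker r=I$ and $\ker(r\otimes r)$ are annihilated by $v_n$, and that the generators $e_j$ may be chosen so that $r$ matches them up; once Theorem~\ref{ck-algebra} and the case $m=2^{n+1}-1$ are in hand, the co-multiplication is essentially forced.
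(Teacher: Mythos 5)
Your proof is correct, and the overall strategy is the same as the paper's: reduce to $\mathrm{SO}_{2^{n+1}-1}$ via the restriction map, and then kill the error term by a formal argument exploiting that the new generators $e_j$, $j\ge 2^n$, are annihilated by $v_n$. The one genuine difference is the formal lemma used. The paper works with the ideal $I=(e_{2^n},\dots,e_s)$ and a degree argument: since $I$ is $v_n$-torsion and concentrated in degrees $\ge 2^n$, the module $H\otimes I+I\otimes H$ has no nonzero elements of degree $<2^n$, so the error term (which has degree $<2^n$) vanishes; for $\widetilde\Delta(e_{2k-1})$ with $k>2^{n-1}$ the paper runs a separate degree argument with an ideal $I'$ depending on $k$. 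You instead observe that $A$ (hence $A\otimes A$) splits as free $\oplus$ ($v_n$-killed), so that any element which is both killed by $v_n$ and divisible by $v_n$ vanishes; this one observation handles both the primitivity of $e_j$ for $j\ge 2^n$ and the identification of $\widetilde\Delta(e_j)$ for $j<2^n$ uniformly. Both versions rest on the same structural input (the $v_n$-torsion description from Theorem~\ref{ck-algebra} and primitivity mod $v_n$), and either is fine; yours avoids the case-by-case ideals $I'$. One small point worth spelling out in a written version: that the surjection $A/I\to B$ is between \emph{free} $\mathbb F_2[v_n]$-modules of equal rank $2^{2^n-1}$ uses that the torsion submodule of $A$ is exactly $I$ (so that $A/I$ is free), which you should extract explicitly from the presentation of Theorem~\ref{ck-algebra}; the paper sidesteps this by citing the quotient description of Lemma~\ref{recall-lpss}(3).
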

\begin{proof}
For $m\leq2^{n+1}$ the claim follows from Proposition~\ref{co-mult-ck}. Assume that $m>2^{n+1}$ and denote $m_0=2^{n+1}-1$ for $m$ odd, and $m_0=2^{n+1}$ for $m$ even. 

The claim about the algebra structure is proven in Theorem~\ref{ck-algebra}. Moreover, from the proof of Theorem~\ref{ck-algebra} it follows that one can take any lift of $e_1\in\mathrm{CK}(n)^*(\mathrm{SO}_{m_0})$ as a generator $e_1\in\mathrm{CK}(n)^*(\mathrm{SO}_{m})$. Therefore we can assume that $e_1=c_1^{\mathrm{CK}(n)}(e_1^{\mathrm{CH}})$ is the first Chern class of the generator $e_1^{\mathrm{CH}}$ of $\mathrm{CH}^1(\mathrm{SO}_m;\mathbb Z)$.

Denote $H=\mathrm{CK}(n)^*(\mathrm{SO}_{m})$, $I=(e_{2^n},e_{2^n+1},\ldots,e_s)\trianglelefteq H$, and 
$$
H_0=\mathrm{CK}(n)^*(\mathrm{SO}_{m_0})=H/I.
$$
Then one also has
$$
H\otimes_{\mathbb F_2[v_n]}H/(H\otimes_{\mathbb F_2[v_n]}I+I\otimes_{\mathbb F_2[v_n]}H)=H_0\otimes_{\mathbb F_2[v_n]}H_0.
$$
Observe that all generators of $I$ have degree at least $2^n$ and are annihilated by $v_n$. This implies that $H\otimes_{\mathbb F_2[v_n]}I+I\otimes_{\mathbb F_2[v_n]}H$ does not have non-zero elements of degree less than $2^n$.

For $0<k<2^{n-1}$ we conclude by Proposition~\ref{prop-answer} that
$$
\widetilde\Delta(e_{\langle 2k\rangle})-\sum_{i=0}^{\nu_2(k)}v_n^{i+1}\,e_{\langle k/2^{i}\rangle}\otimes e_{\langle k/2^{i}\rangle}\,\prod_{j=0}^{i-1}\left(e_{\langle k/2^{j}\rangle}\otimes1+1\otimes e_{\langle k/2^{j}\rangle}\right)
$$
is a (homogeneous) element of $H\otimes_{\mathbb F_2[v_n]}I+I\otimes_{\mathbb F_2[v_n]}H$ of degree $\langle 2k\rangle<2^n$. Therefore this element is $0$. Similarly, $\widetilde\Delta(e_{2^n-1})-v_n\,e_{2^{n}-1}\otimes e_{2^{n}-1}=0$.

It remains to compute $\Delta(e_{2k-1})$ for $k>2^{n-1}$. Let $I'=(e_{2k-1},e_{2k},\ldots,e_s)$ and $H':=H/I'=\mathrm{CK}(n)^*(\mathrm{SO}_{m'})$ for $m'=4k-3$ for $m$ odd or $m'=4k-2$ for $m$ even. Observe that $\Delta(e_{2k-1})$ lies in the kernel of the map
$$
H\otimes_{\mathbb F_2[v_n]}H\rightarrow H'\otimes_{\mathbb F_2[v_n]}H',
$$
and the latter kernel coincides with $H\otimes_{\mathbb F_2[v_n]}I'+I'\otimes_{\mathbb F_2[v_n]}H$. However, $\Delta(e_{2k-1})$ is homogeneous of degree $2k-1$, and the only non-zero element of $I'$ of degree less than or equal to $2k-1$ is $e_{2k-1}$ itself. This implies that $\Delta(e_{2k-1})$ is a linear combination of $1\otimes e_{2k-1}$ and $e_{2k-1}\otimes1$ which finishes the proof.
\end{proof}

Now Theorem~\ref{co-mult} also follows.

\section{Morava $J$-invariant for orthogonal groups}
\label{section-j}

\subsection{A generalized $J$-invariant}

Let $G=\mathrm{SO}_{m}$ denote a (split) special orthogonal group and let $E\in\mathrm H^1(k,\,G)$ be a torsor corresponding to a quadratic form $q\colon V\rightarrow k$ (with trivial discriminant), see~\cite[(29.29)]{KMRT}. For a free theory $A^*$ the extension of scalars $\overline k/k$ induces a natural map $A^*(E)\rightarrow A^*(G)$. Following~\cite[Definition~4.6]{PShopf} we give the following definition.
\begin{df}
\label{j-inv-def}
In the notation above, we call
$$
H^*_A(E)=H^*_A(q)=A^*(G)\otimes_{A^*(E)}A^*(\mathrm{pt})
$$
the (generalized) {\it $J$-invariant} of $E$ (or $q$) corresponding to the theory $A^*$.
\end{df}
By~\cite[Lemma~4.5]{PShopf}, $H^*_A(E)$ is a quotient bi-algebra of $A^*(G)$. Therefore after classification of bi-ideals in $\mathrm{CK}(n)^*(G)$ obtained in Subsection~\ref{sat-bi-id} we will be able to prove the following result.

\begin{tm}
\label{j-ch-ck}
For all $n$, $m\in\mathbb N\setminus0$ and $E\in\mathrm H^1(k,\,\mathrm{SO}_m)$ one has
$$
H^*_{\mathrm{Ch}}(E)=H^*_{\mathrm{CK}(n)}(E)/v_n,\quad H^*_{\mathrm{CK}(n)}(E)=H^*_{\mathrm{Ch}}(E)\otimes_{\mathbb F_2}\mathbb F_2[v_n]/(v_n\overline e_i=0\text{ for }i\geq2^n)
$$
where $\overline e_i$ denotes the image of $e_i\in\mathrm{CK}(n)^*(\mathrm{SO}_m)$ in $H^*_{\mathrm{CK}(n)}(E)$ {\rm(}see Theorem~\ref{co-mult-ck-big}{\rm)}. 
\end{tm}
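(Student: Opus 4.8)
The plan is to deduce both identities from Definition~\ref{j-inv-def}, from the fact that $H^*_{\mathrm{CK}(n)}(E)$ is a quotient bi-algebra of $\mathrm{CK}(n)^*(\mathrm{SO}_m)$ by~\cite[Lemma~4.5]{PShopf}, and from the classification of bi-ideals of $\mathrm{CK}(n)^*(\mathrm{SO}_m)$ in Subsection~\ref{sat-bi-id}, together with a base-change comparison with Chow theory. First I would establish $H^*_{\mathrm{Ch}}(E)=H^*_{\mathrm{CK}(n)}(E)/v_n$ by a formal argument. Since $\mathrm{Ch}^*$ and $\mathrm{CK}(n)^*$ are the free theories with coefficient rings $\mathbb F_2$ and $\mathbb F_2[v_n]$, for every $Y\in\Sm$ one has $\mathrm{Ch}^*(Y)=\mathrm{CK}(n)^*(Y)\otimes_{\mathbb F_2[v_n]}\mathbb F_2$, naturally in $Y$ and compatibly with pullbacks, hence with the structure maps ($A^*(E)\to A^*(G)$ and the augmentation $A^*(G)\to A^*(\mathrm{pt})$) entering Definition~\ref{j-inv-def}. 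As $-\otimes_{\mathbb F_2[v_n]}\mathbb F_2$ commutes with the tensor product defining $H^*_A(E)$, applying it to $H^*_{\mathrm{CK}(n)}(E)=\mathrm{CK}(n)^*(G)\otimes_{\mathrm{CK}(n)^*(E)}\mathrm{CK}(n)^*(\mathrm{pt})$ gives $\mathrm{Ch}^*(G)\otimes_{\mathrm{Ch}^*(E)}\mathrm{Ch}^*(\mathrm{pt})=H^*_{\mathrm{Ch}}(E)$, which is the first identity; by~\cite{PShopf} this $H^*_{\mathrm{Ch}}(E)$ is the classical Chow-theoretic $J$-invariant of $q$, say $\mathrm{Ch}^*(\mathrm{SO}_m)/\bar{\mathfrak a}$.

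For the second identity, write $H^*_{\mathrm{CK}(n)}(E)=\mathrm{CK}(n)^*(\mathrm{SO}_m)/\mathfrak a$ with $\mathfrak a$ a bi-ideal; by the first identity the image of $\mathfrak a$ in $\mathrm{Ch}^*(\mathrm{SO}_m)$ is $\bar{\mathfrak a}$. I would then invoke the classification of Subsection~\ref{sat-bi-id}, which to every bi-ideal $\bar{\mathfrak b}\trianglelefteq\mathrm{Ch}^*(\mathrm{SO}_m)$ associates a ``saturated'' bi-ideal of $\mathrm{CK}(n)^*(\mathrm{SO}_m)$ reducing to $\bar{\mathfrak b}$ modulo $v_n$ --- namely the one generated by the lifts $e_l$ of a set of generators $\overline e_l$ of $\bar{\mathfrak b}$ together with the relations $v_ne_i=0$ ($i\ge 2^n$) already present in $\mathrm{CK}(n)^*(\mathrm{SO}_m)$ --- and which shows that a saturated bi-ideal is determined by its reduction modulo $v_n$. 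Granting that $\mathfrak a$ is saturated, it must therefore be the one attached to $\bar{\mathfrak a}$, and dividing out gives exactly $H^*_{\mathrm{Ch}}(E)\otimes_{\mathbb F_2}\mathbb F_2[v_n]/(v_n\overline e_i=0,\ i\ge 2^n)$, the asserted formula; note that $v_n\overline e_i$ is primitive there (all $e_i$ are primitive modulo $v_n$ by Theorem~\ref{co-mult-ck-big}), so this quotient is indeed again a bi-algebra.

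The main obstacle will be checking that $\mathfrak a$ is saturated, i.e.\ that it has no ``exotic'' $v_n$-divisible part. A priori restriction from $\mathrm{CK}(n)^*(E)$ --- using surjectivity of $\mathrm{CK}(n)^*(E)\to\mathrm{Ch}^*(E)$ --- only produces elements of $\mathfrak a$ of the form $e_l+v_n(\cdots)$ for $e_l$ a generator of $\bar{\mathfrak a}$, plus possibly further $v_n$-multiples coming from $\ker(\mathrm{CK}(n)^*(E)\to\mathrm{Ch}^*(E))$, and one has to show these do not enlarge $\mathfrak a$ beyond the standard bi-ideal. I would carry this out in the style of the proofs of Theorem~\ref{ck-algebra} and Proposition~\ref{co-mult-ck}: since $\mathrm{CK}(n)^*(\mathrm{SO}_m)/\mathfrak a$ is a bi-algebra, a hypothetical extra homogeneous generator of the form $v_n^aP+v_n^{a+1}R$ with $P$ not divisible by $v_n$ and $a\ge 1$ must be compatible with the co-multiplication of Theorem~\ref{co-mult-ck-big}, which forces the Chow reduction of $P$ to be primitive, hence an $\mathbb F_2$-linear combination of the $2$-power truncations $e_{2h-1}^{2^{d}}$ of the odd generators; a comparison of $v_n$-valuations and degrees then rules this out. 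Once $\mathfrak a$ is known to be saturated, a rank count over $\mathbb F_2[v_n]$ as in Theorem~\ref{ck-algebra} completes the proof of the second identity; the rest is formal.
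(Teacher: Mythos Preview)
Your argument for the first identity is fine and essentially formal, as you say. The overall strategy for the second identity---show that the bi-ideal $\mathfrak a$ is determined by its reduction modulo $v_n$---is also the paper's strategy. However, your execution has two genuine gaps.

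First, for $m>2^{n+1}$ the ideal $\mathfrak a\trianglelefteq H=\mathrm{CK}(n)^*(\mathrm{SO}_m)$ is \emph{not} saturated in general. Indeed, $H$ itself has $v_n$-torsion: $v_ne_i=0$ for $i\ge 2^n$ by Theorem~\ref{ck-algebra}, so saturatedness would force every such $e_i$ into $\mathfrak a$, which contradicts the very formula you are trying to prove (the images $\overline e_i$ for $i\ge2^n$ are supposed to survive in $H^*_{\mathrm{CK}(n)}(E)$ as $v_n$-torsion, not vanish). The paper avoids this by passing to the image $I_0$ of $\mathfrak a$ in the \emph{free} algebra $H_0=\mathrm{CK}(n)^*(\mathrm{SO}_{m_0})$, proving $I_0$ is saturated (Lemma~\ref{j-is-saturated}), and then separately classifying all bi-ideals of $H$ whose image in $H_0$ is saturated (Theorem~\ref{j-classification}); this last step is an induction on $m$ that handles the torsion generators $e_s$ one at a time.

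Second, even in the range $m\le 2^{n+1}$ your co-multiplication argument for saturatedness is circular. From $v_n^aP+v_n^{a+1}R\in\mathfrak a$ you get $v_n^a\widetilde\Delta(P)+v_n^{a+1}(\cdots)\in\mathfrak a\otimes H+H\otimes\mathfrak a$, but to cancel $v_n^a$ and conclude that $\overline P$ is primitive modulo $\overline{\mathfrak a}$ you would need $\mathfrak a\otimes H+H\otimes\mathfrak a$ to be saturated---which by Lemma~\ref{sublemma} requires $\mathfrak a$ itself to be saturated. (Non-saturated bi-ideals do exist, e.g.\ $(v_ne_1)$, so the bi-ideal property alone cannot suffice.) The paper's Lemma~\ref{j-is-saturated} instead first takes the saturation $J_0$ of $I_0$, identifies it via Proposition~\ref{hopf-ck} as $(e_1^{2^{a_1}},\ldots,e_{2r-1}^{2^{a_r}})$, and then shows each $e_{2i-1}^{2^{a_i}}$ already lies in $I_0$ by a degree argument that genuinely uses the torsor $E$: in any expression $v_n^te_{2i-1}^{2^{a_i}}=\sum x_kh_k$ with $x_k$ coming from $\mathrm{CK}(n)^*(E)^+$, some factor must have negative degree and hence, by Levine--Morel's theorem~\cite[Theorem~1.2.14]{LM}, be divisible by $v_n$, allowing one to lower $t$.
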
 
In other words, the $J$-invariants for the Chow theory $\mathrm{Ch}^*$ and  for the connective Morava K-theory $\mathrm{CK}(n)^*$ carry the same information. As a consequence we clearly obtain the following corollary.

\begin{cl}
\label{j-ch-k}
For $n$, $m\in\mathbb N\setminus0$ and $E\in\mathrm H^1(k,\,\mathrm{SO}_m)$ one has
$$
H^*_{\mathrm{K}(n)}(E)=H^*_{\mathrm{Ch}}(E)\otimes_{\mathbb F_2}\mathbb F_2[v_n^{\pm1}]/(\overline e_i=0\text{ for }i\geq2^n)
$$
where $\overline e_i$ denotes the image of $e_i\in\mathrm{K}(n)^*(\mathrm{SO}_m)$ in $H^*_{\mathrm{K}(n)}(E)$ {\rm(}see Theorem~\ref{co-mult}{\rm)}.
\end{cl}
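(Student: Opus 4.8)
The plan is to obtain Corollary~\ref{j-ch-k} from Theorem~\ref{j-ch-ck} by inverting $v_n$. The starting point is the identification, valid for any $X\in\mathcal S\mathsf m_k$,
$$
\mathrm K(n)^*(X)\cong\mathrm{CK}(n)^*(X)\otimes_{\mathbb F_2[v_n]}\mathbb F_2[v_n^{\pm1}],
$$
which holds because both sides are canonically $\Omega^*(X)\otimes_{\mathbb L}\mathbb F_2[v_n^{\pm1}]$ (see Example~\ref{algebraic-theories}). Applying this simultaneously to $X=\mathrm{SO}_m$, $X=E$ and $X=\mathrm{pt}$, and noting that these identifications are compatible with the pullbacks along $\mathrm{pt}\to E\to\mathrm{SO}_m$ (hence with the $\mathrm{CK}(n)^*(E)$-module and ring structures appearing in Definition~\ref{j-inv-def}), I would first show that the $J$-invariant commutes with this localization.

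Concretely, set $S=\mathbb F_2[v_n]$, $S'=\mathbb F_2[v_n^{\pm1}]$, $R=\mathrm{CK}(n)^*(E)$, $M=\mathrm{CK}(n)^*(\mathrm{SO}_m)$ and $N=\mathrm{CK}(n)^*(\mathrm{pt})$, so that the $\mathrm K(n)$-theory of the same varieties is $R\otimes_S S'$, $M\otimes_S S'$ and $N\otimes_S S'$. Unwinding Definition~\ref{j-inv-def} for $A=\mathrm K(n)$, the assertion $H^*_{\mathrm K(n)}(E)\cong H^*_{\mathrm{CK}(n)}(E)\otimes_S S'$ is exactly the base-change identity
$$
(M\otimes_S S')\otimes_{R\otimes_S S'}(N\otimes_S S')\;\cong\;(M\otimes_R N)\otimes_S S',
$$
which holds because $S\to S'$ is flat (it is a localization) and all the structure maps in sight are $S$-linear. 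Since $H^*_A(E)$ is a quotient bi-algebra of $A^*(\mathrm{SO}_m)$ by~\cite[Lemma~4.5]{PShopf}, one also checks that this isomorphism respects the bi-algebra structures, although for the statement only the isomorphism of algebras is needed.

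It then remains to substitute the description of $H^*_{\mathrm{CK}(n)}(E)$ from Theorem~\ref{j-ch-ck},
$$
H^*_{\mathrm{CK}(n)}(E)=H^*_{\mathrm{Ch}}(E)\otimes_{\mathbb F_2}\mathbb F_2[v_n]/(v_n\overline e_i=0\text{ for }i\geq2^n),
$$
and apply $-\otimes_S S'$: the underlying ring becomes $H^*_{\mathrm{Ch}}(E)\otimes_{\mathbb F_2}\mathbb F_2[v_n^{\pm1}]$, and the ideal generated by the elements $v_n\overline e_i$ becomes, since $v_n$ is now invertible, the ideal generated by the $\overline e_i$ with $i\geq2^n$. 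This yields precisely
$$
H^*_{\mathrm K(n)}(E)=H^*_{\mathrm{Ch}}(E)\otimes_{\mathbb F_2}\mathbb F_2[v_n^{\pm1}]/(\overline e_i=0\text{ for }i\geq2^n),
$$
with $\overline e_i$ the image of $e_i\in\mathrm K(n)^*(\mathrm{SO}_m)$, which corresponds under localization to the image of $e_i\in\mathrm{CK}(n)^*(\mathrm{SO}_m)$. The argument is essentially formal; the only point deserving a little care is verifying that the three localization identifications above are compatible with the maps defining the $J$-invariant, and I do not expect a genuine obstacle there.
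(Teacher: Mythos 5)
Your argument is correct and follows exactly the route the paper has in mind: the paper gives no proof, merely remarking that the corollary is ``clearly'' a consequence of Theorem~\ref{j-ch-ck}, and you supply the two straightforward details left implicit, namely that the $J$-invariant of Definition~\ref{j-inv-def} commutes with the flat base change $\mathbb F_2[v_n]\to\mathbb F_2[v_n^{\pm1}]$ (since $\mathrm K(n)^*=\mathrm{CK}(n)^*\otimes_{\mathbb F_2[v_n]}\mathbb F_2[v_n^{\pm1}]$ by construction), and that inverting $v_n$ turns the relations $v_n\overline e_i=0$ of Theorem~\ref{j-ch-ck} into $\overline e_i=0$.
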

In other words, the $J$-invariant for the periodic Morava K-theory can be obtained from the usual $J$-invariant for $\mathrm{Ch}^*$ by an appropriate truncation.

\subsection{Saturated bi-ideals in $\mathrm{CK}(n)^*(\mathrm{SO}_m)$}
\label{sat-bi-id}

We call a submodule $N$ in a $\mathbb F_2[v_n]$-module $M$ {\it saturated}, if $v_nx\in N\Rightarrow x\in N$ for any $x\in M$. We will need the following technical result.

\begin{lm}
\label{sublemma}
1{\rm)} Let $M$ be a graded, finitely generated $\mathbb F_2[v_n]$-module, and $N\leq M$ be its {\rm(}graded{\rm)} submodule. Then $N$ is saturated if and only if $M/N$ is free.\\
2{\rm)} Let $H$ be a graded, finitely generated $\mathbb F_2[v_n]$-algebra, and $I$ its {\rm(}homogeneous{\rm)}  saturated ideal. Then $I\otimes H+H\otimes I$ is a saturated ideal of $H\otimes H$.
\end{lm}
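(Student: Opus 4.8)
The two parts are essentially linear algebra over the PID $\mathbb F_2[v_n]$, so I would treat them separately.

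\textbf{Part 1.} The plan is to use the structure theorem for finitely generated graded modules over the graded PID $\mathbb F_2[v_n]$: such a module $M/N$ decomposes as a finite direct sum of shifts of $\mathbb F_2[v_n]$ and of cyclic torsion modules $\mathbb F_2[v_n]/(v_n^k)$. For the ``if'' direction, if $M/N$ is free and $v_nx\in N$, then the image $\bar x\in M/N$ satisfies $v_n\bar x=0$; since a free module has no $v_n$-torsion, $\bar x=0$, i.e. $x\in N$. For the ``only if'' direction, suppose $N$ is saturated but $M/N$ has a nonzero torsion summand, so there is $\bar x\neq 0$ in $M/N$ with $v_n\bar x=0$; lifting $\bar x$ to $x\in M$ gives $v_nx\in N$ but $x\notin N$, contradicting saturation. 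I would phrase this directly without invoking the full classification: saturation of $N$ is literally the statement that $M/N$ has no $v_n$-torsion, and a finitely generated torsion-free module over $\mathbb F_2[v_n]$ is free. (The grading causes no trouble since $v_n$ is homogeneous; one works componentwise or with graded free modules.)

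\textbf{Part 2.} Here I would first observe that $H\otimes_{\mathbb F_2[v_n]} H/(I\otimes H+H\otimes I)\cong (H/I)\otimes_{\mathbb F_2[v_n]}(H/I)$, which is the standard fact that tensor product is right exact and sends a pair of quotients to the quotient by the sum of the two pushed-forward ideals. By Part~1, since $I$ is saturated, $H/I$ is a free $\mathbb F_2[v_n]$-module; hence $(H/I)\otimes_{\mathbb F_2[v_n]}(H/I)$ is again free (a tensor product of free modules is free). Applying Part~1 in the reverse direction to $M=H\otimes_{\mathbb F_2[v_n]}H$ and $N=I\otimes H+H\otimes I$, freeness of the quotient gives that $N$ is saturated. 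It is an ideal because it is the kernel of the algebra surjection $H\otimes H\twoheadrightarrow (H/I)\otimes(H/I)$ (or directly: a sum of ideals of $H\otimes H$).

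\textbf{Main obstacle.} There is no deep obstacle; the only points requiring care are (a) making sure the grading is handled correctly — since everything ($v_n$, $I$, the multiplication) is homogeneous, one may work with graded free modules and the classification over the graded PID $\mathbb F_2[v_n]$ applies verbatim — and (b) the identification of the quotient of the tensor product: one should note $I\otimes_{\mathbb F_2[v_n]} H + H\otimes_{\mathbb F_2[v_n]} I$ is exactly the kernel of $H\otimes H\to (H/I)\otimes(H/I)$, which follows from right-exactness of $-\otimes_{\mathbb F_2[v_n]} H$ applied twice. I expect the write-up to be short, with Part~1 doing all the real work and Part~2 being a two-line consequence once freeness of $H/I$ and of its self-tensor product are in hand.
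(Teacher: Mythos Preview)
Your proposal is correct and follows the same approach as the paper: for Part~1 you unpack what the paper calls ``obvious'' (saturation of $N$ is precisely torsion-freeness of $M/N$, and a finitely generated torsion-free module over the PID $\mathbb F_2[v_n]$ is free), and for Part~2 you use exactly the paper's argument via the identification $(H\otimes H)/(I\otimes H+H\otimes I)\cong (H/I)\otimes(H/I)$ together with Part~1. One minor remark: applying Part~1 in the reverse direction to $M=H\otimes H$ formally needs $H\otimes H$ to be a finitely generated \emph{module}, but only the ``if'' direction (free quotient $\Rightarrow$ saturated) is used there, and that direction does not require finite generation.
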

\begin{proof}
The first claim is obvious. To get 2), one only has to observe that
$$
H\otimes H/(I\otimes H+H\otimes I)\cong (H/I)\otimes(H/I)
$$
is free, and use 1).
\end{proof}

We will also need the following well-known corollary of the Milnor--Moore Theorem~\cite[Theorem~7.16]{MM}. We provide a proof in Section~\ref{app-milnor-moore} for the convenience of the reader.

\begin{lm}
\label{hopf-chow}
Let $H=\mathbb F_{2}[e_1,\,e_3,\ldots,e_{2r-1}]/(e_{1}^{2^{k_1}},\,e_{3}^{2^{k_2}},\ldots,\,e_{2r-1}^{2^{k_r}})$ be a commutative, graded Hopf algebra with $e_{2i-1}$ primitive of degree $2i-1$. Then any {\rm(}homogeneous{\rm)}  bi-ideal $I$ of $H$ coincides with the ideal
$$
\big(e_1^{2^{a_1}},\,e_3^{2^{a_2}},\ldots,e_{2r-1}^{2^{a_{r}}}\big)
$$
for some $0\leq a_{i}\leq k_i$.
\end{lm}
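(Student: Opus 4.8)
The plan is to prove the lemma by induction on $\dim_{\mathbb F_2}H=\prod_i2^{k_i}$, after two preliminary observations about the Hopf structure of $H$. First I would record the primitives. Writing $H=\bigotimes_{i=1}^r\mathbb F_2[e_{2i-1}]/(e_{2i-1}^{2^{k_i}})$ with each $e_{2i-1}$ primitive, the Frobenius $x\mapsto x^2$ shows $e_{2i-1}^{2^j}$ is primitive whenever it is nonzero, i.e.\ for $0\le j\le k_i-1$; an elementary computation with binomial coefficients modulo $2$ (equivalently, Milnor--Moore for the monogenic factors) shows these span the primitives of the $i$-th factor, and since primitives of a tensor product of connected graded Hopf algebras add up, $P(H)=\bigoplus_i\bigoplus_{0\le j\le k_i-1}\mathbb F_2\cdot e_{2i-1}^{2^j}$. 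The crucial feature is that the degrees $(2i-1)2^j$ occurring here are pairwise distinct: if $(2i-1)2^j=(2i'-1)2^{j'}$ with $j\le j'$, then $2i-1=(2i'-1)2^{j'-j}$ is odd, forcing $j=j'$ and $i=i'$. Hence every homogeneous primitive element of $H$ is either $0$ or equals $e_{2i-1}^{2^j}$ for a uniquely determined pair $(i,j)$.

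Next I would analyze the bottom layer of a bi-ideal. Let $I\ne0$ be a homogeneous bi-ideal and let $f\in I$ be a nonzero homogeneous element of minimal degree $d\ge1$. Being a coideal, $I$ satisfies $\widetilde\Delta(f)\in I\otimes H+H\otimes I$; but $\widetilde\Delta(f)$ lies in $\bigoplus_{a=1}^{d-1}H_a\otimes H_{d-a}$, and in each such bidegree $I\otimes H+H\otimes I$ vanishes because $I$ has no nonzero homogeneous elements of degree $<d$. Therefore $\widetilde\Delta(f)=0$, so $f$ is primitive, and by the previous paragraph $f=e_{2i_0-1}^{2^{j_0}}$ for some $i_0$ and some $0\le j_0\le k_{i_0}-1$.

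Now the induction. If $I=0$, then $I=(e_1^{2^{k_1}},\ldots,e_{2r-1}^{2^{k_r}})$ since $e_{2i-1}^{2^{k_i}}=0$ in $H$, and we are done; so assume $I\ne0$ and pick $f=e_{2i_0-1}^{2^{j_0}}$ as above. By Frobenius, $\Delta(e_{2i_0-1}^{2^{j_0}})=e_{2i_0-1}^{2^{j_0}}\otimes1+1\otimes e_{2i_0-1}^{2^{j_0}}$, so $(f)$ is a bi-ideal and $\overline H:=H/(f)$ is a Hopf algebra of the same shape, with the exponent $k_{i_0}$ replaced by $j_0<k_{i_0}$ and all others unchanged (call the new exponents $k_i'$); in particular $\dim\overline H<\dim H$. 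The image $\overline I:=I/(f)$ is a homogeneous bi-ideal of $\overline H$, so by the inductive hypothesis $\overline I=(e_1^{2^{a_1}},\ldots,e_{2r-1}^{2^{a_r}})$ with $0\le a_i\le k_i'$. Since $(f)\subseteq I$, each generator $e_{2i-1}^{2^{a_i}}$ already lies in $I$, hence $J:=(e_1^{2^{a_1}},\ldots,e_{2r-1}^{2^{a_r}})\subseteq I$; and because $a_i\le k_i'\le k_i$ one has $\dim H/J=\prod_i2^{a_i}=\dim\overline H/\overline I=\dim H/I$, whence $I=J$. This closes the induction, and $a_i\le k_i'\le k_i$ gives the required bound.

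The only genuinely delicate step is the middle one: although a bi-ideal need not be generated by primitives, its lowest-degree layer consists of primitives, and the distinctness of the degrees $(2i-1)2^j$ then pins this layer down to a single $2$-power of a polynomial generator. Everything afterwards — that such a power spans a bi-ideal (again Frobenius), and that passing to the quotient preserves the shape of $H$ — is routine bookkeeping with dimensions; if preferred, the first two paragraphs can be repackaged as the assertion that the only sub-Hopf-algebras of $H^\vee$ are the evident ones, which is the form in which this appears as a standard consequence of the Milnor--Moore theorem.
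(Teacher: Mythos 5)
Your proof is correct, and it follows a genuinely different route from the paper's. The paper applies the Milnor--Moore structure theorem directly to the quotient $H/I$, obtaining a presentation $H/I\cong\mathbb F_2[\overline y_1,\ldots,\overline y_s]/(\overline y_i^{2^{l_i}})$ by primitives; it then observes that each $\overline e_{2k-1}$, being primitive and of odd degree, is a sum of $\overline y_{i_j}$ of the same degree, sets $a_k=\max(l_{i_j})$, and finally shows the induced surjection $H/J\twoheadrightarrow H/I$ is injective by checking injectivity on primitives and quoting~\cite[Lemmas~6.1\,a) and~6.8]{LPSS}. You instead compute $P(H)$ explicitly and exploit the key observation that the degrees $(2i-1)2^j$ are pairwise distinct (because $2i-1$ is odd), so each degree carries at most a one-dimensional space of primitives; this forces the bottom-degree layer of a nonzero bi-ideal to be a single $e_{2i_0-1}^{2^{j_0}}$, after which you mod it out and induct on $\dim_{\mathbb F_2}H$, closing the induction by a dimension count. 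Your argument is more elementary and self-contained --- it avoids invoking the Milnor--Moore theorem and the two auxiliary lemmas from~\cite{LPSS} --- while the paper's is shorter once those black boxes are in place. Both approaches are valid.
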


For $m\leq 2^{n+1}$ we will give a description of {\it saturated}\, bi-ideals in $\mathrm{CK}(n)^*(\mathrm{SO}_m)$ analogues to Lemma~\ref{hopf-chow}. First, we will need the following technical result (cf.~\cite[Lemma~6.11]{LPSS}).

\begin{lm}
\label{impossible-equation}
Let  $n$, $m$, $j$, $k$ be positive integers such that $m\leq2^{n+1}$, and $j,\,k\leq \left\lfloor\frac{m+1}{4}\right\rfloor$. Then the equation
$$
(1-2^n)x+(2j-1)2^y=(2k-1)2^z
$$
has no integral solutions $x>0$, $0\leq y<\left\lfloor\mathrm{log}_2\left(\frac{m-1}{2j-1}\right)\right\rfloor$, $z\geq0$. 
\end{lm}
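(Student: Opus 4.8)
The plan is to prove this by a pure size estimate, in the spirit of \cite[Lemma~6.11]{LPSS}. The point is that the hypotheses force the summand $(2j-1)2^y$ on the left-hand side to be quite small --- no larger than $2^n-1$ --- whereas the other summand $(1-2^n)x=-(2^n-1)x$ already has absolute value at least $2^n-1$ once $x\ge 1$; moving it to the right leaves a sum of two positive integers that cannot be as small as $2^n-1$, which is the desired contradiction.

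First I would record the estimate on $(2j-1)2^y$. If $\lfloor\log_2(\tfrac{m-1}{2j-1})\rfloor\le 0$ there is no admissible $y$ and the statement is vacuous; note that the hypothesis $j\le\lfloor\tfrac{m+1}{4}\rfloor$ is equivalent (for integral $j$) to $2j-1\le\tfrac{m-1}{2}$ and is exactly what guarantees that the admissible range for $y$ is otherwise non-empty. Assuming $\lfloor\log_2(\tfrac{m-1}{2j-1})\rfloor\ge 1$, integrality of $y$ together with $y<\lfloor\log_2(\tfrac{m-1}{2j-1})\rfloor$ gives $y+1\le\lfloor\log_2(\tfrac{m-1}{2j-1})\rfloor$, hence $2^{y+1}\le\tfrac{m-1}{2j-1}$, that is $(2j-1)2^{y+1}\le m-1$, and therefore $(2j-1)2^y\le\tfrac{m-1}{2}$. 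Using $m\le 2^{n+1}$ this yields $(2j-1)2^y\le\tfrac{2^{n+1}-1}{2}<2^n$, and since $(2j-1)2^y$ is a positive integer we conclude $(2j-1)2^y\le 2^n-1$.

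Now suppose, for contradiction, that $x,y,z$ form a solution with $x>0$. Rewriting the equation as
\[
(2k-1)2^z+(2^n-1)x=(2j-1)2^y,
\]
the right-hand side is at most $2^n-1$ by the previous paragraph. On the other hand $x\ge 1$ gives $(2^n-1)x\ge 2^n-1$, and $k\ge 1$, $z\ge 0$ give $(2k-1)2^z\ge 1$, so the left-hand side is at least $2^n$. This contradiction completes the argument. (Note that only $k\ge 1$ is used, not the full bound on $k$.)

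There is no genuine obstacle here: the argument is elementary. The only points requiring care are the bookkeeping of the two nested floor functions when passing from the constraint on $y$ to $(2j-1)2^y\le\tfrac{m-1}{2}$, and the trivial case check that legitimizes the passage from $\tfrac{m-1}{2}<2^n$ to the sharp integral bound $(2j-1)2^y\le 2^n-1$ in the extremal case $m=2^{n+1}$.
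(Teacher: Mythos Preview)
Your proof is correct and follows essentially the same approach as the paper: both rewrite the equation as $(2j-1)2^y=(2k-1)2^z+(2^n-1)x$ and derive a contradiction from the size estimate $(2j-1)2^y\le\tfrac{m-1}{2}<2^n$ versus the right-hand side being at least $2^n$. The paper runs the inequality in the opposite direction (deducing $2^{y+1}>\tfrac{m-1}{2j-1}$ from the equation and then contradicting the bound on $y$), but this is the same argument reorganized.
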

\begin{proof}
Since 
$
(2j-1)2^y=(2k-1)2^z+(2^n-1)x\geq 2^n,
$ 
we have 
$$
2^{y+1}\geq\frac{2^{n+1}}{2j-1}\geq\frac{m}{2j-1}>\frac{m-1}{2j-1}.
$$
In other words $y+1>\left\lfloor\mathrm{log}_2\left(\frac{m-1}{2j-1}\right)\right\rfloor$ contradicting the assumptions.
\end{proof}

\begin{prop}
\label{hopf-ck}
Let $$H=\mathrm{CK}(n)(\mathrm{SO}_m)\cong\mathbb F_{2}[v_n][e_1,\,e_3,\ldots,e_{2r-1}]/(e_{1}^{2^{k_1}},e_{3}^{2^{k_2}},\ldots,e_{2r-1}^{2^{k_r}})$$ for $m\leq2^{n+1}$, $r=\lfloor\frac{m+1}{4}\rfloor$, $k_i=\left\lfloor\mathrm{log}_2\left(\frac{m-1}{2i-1}\right)\right\rfloor$ {\rm(}see Proposition~\ref{co-mult-ck}{\rm)}. Then any homogeneous {\it saturated} bi-ideal $I$ of $H$ coincides with the ideal
$$
\big(e_1^{2^{a_1}},\,e_3^{2^{a_2}},\ldots,e_{2r-1}^{2^{a_r}}\big)
$$
for some $0\leq a_i\leq k_i$.
\end{prop}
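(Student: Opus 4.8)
The natural strategy is to reduce Proposition~\ref{hopf-ck} to the already-established classification over $\mathbb F_2$ (Lemma~\ref{hopf-chow}) by passing to the quotient modulo $v_n$. Let $I\trianglelefteq H$ be a homogeneous saturated bi-ideal, and let $\overline I$ denote its image in $H/v_n\cong\mathrm{Ch}^*(\mathrm{SO}_m)=\mathbb F_2[e_1,e_3,\ldots,e_{2r-1}]/(e_{2i-1}^{2^{k_i}})$. Since the quotient map $H\to H/v_n$ is a morphism of Hopf algebras, $\overline I$ is a homogeneous bi-ideal of $\mathrm{Ch}^*(\mathrm{SO}_m)$, so by Lemma~\ref{hopf-chow} we get $\overline I=(e_1^{2^{a_1}},e_3^{2^{a_2}},\ldots,e_{2r-1}^{2^{a_r}})$ for some $0\le a_i\le k_i$. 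The bulk of the work is then to upgrade this to the claimed equality of ideals in $H$ itself, i.e.\ to show $I=(e_1^{2^{a_1}},\ldots,e_{2r-1}^{2^{a_r}})=:I_0$.

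**Key steps.** First I would show $I_0\subseteq I$. Pick $x\in I$ homogeneous whose image in $\mathrm{Ch}^*(\mathrm{SO}_m)$ is $e_{2i-1}^{2^{a_i}}$; such $x$ exists since $\overline I$ is generated by these elements and $I$ is graded. Write $x=e_{2i-1}^{2^{a_i}}+v_n y$. Because $I$ is saturated and $v_n y=x-e_{2i-1}^{2^{a_i}}$ need not lie in $I$, one argues instead via the bi-ideal condition: $\widetilde\Delta(x)\in H\otimes I+I\otimes H$, and using the explicit co-multiplication formula of Proposition~\ref{co-mult-ck} for $e_{2i-1}^{2^{a_i}}$ (a primitive element when $2\cdot 2^{a_i}(2i-1)\geq 2^n$, and otherwise with the controlled divided-power-type correction terms) one extracts constraints on $y$. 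Combined with a degree/grading count this forces $e_{2i-1}^{2^{a_i}}\in I$ after correcting $x$ by elements already known to be in $I$ — here Lemma~\ref{impossible-equation} is exactly the tool that rules out the ``bad'' homogeneous components: any would-be nonzero term $v_n^x e_{2k-1}^{2^z}$ appearing in a correction must satisfy a relation $(1-2^n)x+(2j-1)2^y=(2k-1)2^z$ with the stated ranges, which has no solutions. This is essentially the same formal mechanism used repeatedly in the proofs of Theorems~\ref{ck-algebra} and~\ref{co-mult-ck}. For the reverse inclusion $I\subseteq I_0$, observe $I/I_0$ maps to zero in $H/v_n$ (since $\overline I=\overline{I_0}$), hence $I\subseteq I_0+v_n H$; an element $x\in I$ thus satisfies $x\in I_0$ modulo $v_n$, so $x=x_0+v_n z$ with $x_0\in I_0$, giving $v_n z\in I$; saturatedness of $I$ yields $z\in I$, and induction on the $v_n$-adic filtration (which terminates since the relevant graded pieces are finite-dimensional) gives $x\in I_0$.

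**Main obstacle.** The delicate point is the first inclusion: extracting, from the bi-ideal condition on a homogeneous $x$ with prescribed image in $\mathrm{Ch}^*$, that the correction term $v_n y$ can be absorbed into $I$. One must argue that $\widetilde\Delta$ applied to the ``error'' $v_n y$ cannot conspire with $\widetilde\Delta(e_{2i-1}^{2^{a_i}})$ to produce something in $H\otimes I+I\otimes H$ unless $y$ itself (or a suitable truncation of it) already lies in $I$; this is where the primitivity analysis of Example~\ref{primitive}, the explicit form of $\widetilde\Delta(e_{\langle 2k\rangle})$, and the non-solvability Lemma~\ref{impossible-equation} all have to be combined carefully, keeping track of which monomials $e_{2h-1}^{2^{d_h}}$ can appear in a primitive-modulo-$v_n$ element. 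I expect the rest — the downward induction on generators (treating $e_{2^n-1}$, resp.\ the top degree generator, as the base case exactly as in Proposition~\ref{co-mult-ck}) and the ranks/saturatedness bookkeeping via Lemma~\ref{sublemma} — to be routine.
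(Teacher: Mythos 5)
Your overall plan is the same as the paper's: reduce modulo $v_n$ and invoke Lemma~\ref{hopf-chow} to pin down $\overline I$, then lift each generator $e_{2i-1}^{2^{a_i}}$ into $I$ by a $v_n$-adic improvement scheme driven by the bi-ideal condition, with Lemma~\ref{sublemma} to cancel powers of $v_n$ and Lemma~\ref{impossible-equation} to kill primitive correction terms. For the case $a_i>0$ (so $e_{2i-1}^{2^{a_i}}$ is a square, hence primitive) your sketch is essentially the paper's Case~1 and would go through.

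The gap is in the case $a_i=0$, which you dismiss as ``routine.'' Here $e_{2i-1}$ is \emph{not} primitive, so $\widetilde\Delta(e_{2i-1})$ is itself a nontrivial obstruction: you cannot subtract it off and extract constraints on the correction $P$ until you know $\widetilde\Delta(e_{2i-1})\in I\otimes H+H\otimes I$. The paper's Subcase~2.2 handles this by a chain reaction that your proposal does not articulate: with $2i-1=\langle 2j\rangle$, the leading term $v_n\,e_{\langle j\rangle}\otimes e_{\langle j\rangle}$ of $\widetilde\Delta(e_{\langle 2j\rangle})$ forces $\overline e_{\langle j\rangle}\in\overline I$; since $\overline e_{\langle j\rangle}$ is then either a nontrivial power $\overline e_{2t-1}^{\,2^{a_t+e}}$ with $a_t+e>0$ (handled by the square case) or equals some $\overline e_{2t-1}$ with $a_t=0$ (handled by a downward induction on the index, starting from the largest such), one upgrades this to $e_{\langle j\rangle}\in I$; only then does $\widetilde\Delta(e_{\langle 2j\rangle})\in I\otimes H+H\otimes I$ hold by the formula of Theorem~\ref{co-mult-ck-big}, and the primitivity-plus-Lemma~\ref{impossible-equation} argument can be run on $P$. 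Also, the base case $\langle 2j\rangle=2^n-1$ (Subcase~2.1) needs its own small twist, using $E_{2^n-1}\otimes E_{2^n-1}\in I\otimes I$ rather than the chain reaction. Without spelling this out, the proposal as written would stall precisely at the non-primitive generators; the ``after correcting $x$ by elements already known to be in $I$'' step has no content until one knows which elements those are and why they lie in $I$. As a minor point, your parenthetical criterion ``$2\cdot 2^{a_i}(2i-1)\ge 2^n$'' for primitivity is not the right one; the relevant dichotomy is simply $a_i>0$ (square, always primitive) versus $a_i=0$. The reverse inclusion $I\subseteq I_0$ is fine, though the paper replaces your $v_n$-adic induction by a one-line rank comparison of the free modules $H/I$ and $H/I_0$.
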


\begin{proof}
Let $\overline{(\ )}$ denote the image modulo $v_n$. Since $\overline I$ is also a bi-ideal, we conclude by Lemma~\ref{hopf-chow} that 
$$
\overline I=\big(\overline e_1^{2^{a_1}},\,\overline e_3^{2^{a_2}},\ldots,\overline e_{2r-1}^{2^{a_r}}\big)
$$
for some $0\leq a_i\leq k_i$. We claim that 
$$
I=\big(e_1^{2^{a_1}},\,e_3^{2^{a_2}},\ldots,e_{2r-1}^{2^{a_r}}\big)
$$
with the same indices $a_k$. In fact, it is enough to show that $e_k^{2^{a_k}}\in I$ for all $k$, and comparing dimensions we will get the claim.

Assume that $e_k^{2^{a_k}}\not\in I$ for some $k$, fix an arbitrary lift $E_{2k-1}$ of $\overline e_{2k-1}^{2^{a_{k}}}$ to an element of $I$, and write
\begin{align}
\label{E-formula}
E_{2k-1}=e_{2k-1}^{2^{a_k}}+v_n^aP
\end{align}
for $\overline P\neq0\in\overline H$. We will show that it is possible to find another lift $E_{2k-1}'\in I$ of $\overline e_{2k-1}^{2^{a_k}}$ with greater $a$ in the decomposition~(\ref{E-formula}). Continuing this process we will eventually show that $P=0$ by the degree reasons.

{\bf Case~1.} First, we will show that if $\overline e_{2k-1}^{\,2^e}\in\overline I$ for some $e>0$, then in fact $e_{2k-1}^{\,2^e}\in I$. 

Let $E_{2k-1}$ denote a pre-image of $\overline e_{2k-1}^{2^{e}}$ in $I$, and write
\begin{align}
\label{E-formula-even}
E_{2k-1}=e_{2k-1}^{2^{e}}+v_n^aP
\end{align}
for $\overline P\neq0\in\overline H$. Observe that $e_{2k-1}^{2^{e}}$ is primitive in $H$ (see Example~\ref{primitive}), then, applying $\widetilde\Delta$ to~(\ref{E-formula-even}) we get
$$
v_n^a\widetilde\Delta(P)=\widetilde\Delta(E_{2k-1})
\in I\otimes H+H\otimes I
$$
since $I$ is a bi-ideal. By Lemma~\ref{sublemma} we conclude that
$$
\widetilde\Delta(P)
\in I\otimes H+H\otimes I,
$$
i.e., the class of $\overline P$ is primitive in $\overline H/\,\overline I$, and therefore $\overline P$ is congruent modulo $\overline I$ to $\sum_{i\in\mathcal{I}} \overline e_{2i-1}^{2^{s_i}}$ for some $0\leq s_i<a_i$ and some index set 
$\mathcal{I}\subseteq\{1,\ldots,r\}$ (see~\cite[Lemma~6.1\,a)]{LPSS}). Comparing the codimensions we obtain an equation
$$
(2k-1)2^{e}=(1-2^n)a+(2i-1)2^{s_i}.
$$
However, this equation has no solutions by Lemma~\ref{impossible-equation}. Therefore, $\mathcal{I}=\emptyset$ and, thus, $\overline P\in\overline I$. 

Take a pre-image $Q\in I$ of $\overline P$, then $P-Q$ is divisible by $v_n$, i.e., there exist $P'\in H$ such that $P-Q=v_nP'$. Therefore
$$
E_{2k-1}':=E_{2k-1}-v_n^aQ=e_{2k-1}^{2^{e}}+v_n^aP-v_n^aQ=e_{2k-1}^{2^{e}}+v_n^{a+1}P'\in I,
$$
i.e., we obtain a pre-image of $\overline e_{2k-1}^{\,2^{e}}$ in $I$ with a greater $a$ than in~(\ref{E-formula-even}). Continuing this process, we will eventually show that $e_{2k-1}^{2^{e}}\in I$.

{\bf Case~2.} By Case~1 we can assume that $a_k=0$. We use the notation $\langle t\rangle=2^n-1-t$ from Proposition~\ref{co-mult-ck}, and denote $\langle 2j\rangle=2k-1$. We can assume, moreover, that $k$ is the largest possible (i.e., $j$ is the least possible) such that $a_k=0$ and $e_{2k-1}\not\in I$. 

Our aim is to prove that $\overline P\in\overline I\otimes\overline H+\overline H\otimes\overline I$ and use the same argument as in Case~1 to find a presentation as~\eqref{E-formula} with a greater $a$.

Applying $\widetilde\Delta$ to~(\ref{E-formula}), we obtain
$$
v_n\,e_{\langle j\rangle}\otimes e_{\langle j\rangle}+v_n^2R+v_n^a\widetilde\Delta(P)
\in I\otimes H+H\otimes I,
$$
where $v_n^2R=\widetilde\Delta(e_{\langle 2j\rangle})-v_ne_{\langle j\rangle}\otimes e_{\langle j\rangle}$ is determined by Theorem~\ref{co-mult-ck-big}. 

{\bf Subcase~2.1.} For $j=0$ (i.e., $2k-1=2^n-1$) observe that $R=0$. Using that $E_{2^n-1}\otimes E_{2^n-1}\in I\otimes I$ we obtain by~\eqref{E-formula} that
$$ 
v_n^a\widetilde\Delta(P)+v_n^{a+1}R'\in I\otimes H+H\otimes I
$$
for $R'=e_{2^n-1}\otimes P+P\otimes e_{2^n-1}+v_n^{a}P\otimes P$. As above, this implies that $\overline P$ becomes primitive modulo $\overline I$, however, $\overline H/\,\overline I$ does not have primitive elements of the required degree by Lemma~\ref{impossible-equation}. This implies that $\overline P\in\overline I$, and therefore we can find a new lift $E_{2^n-1}'$ for $\overline e_{2^n-1}$ with greater $a$ in the decomposition~(\ref{E-formula}). Continuing this process we will eventually prove that $e_{2^n-1}\in I$.

{\bf Subcase~2.2.} Now consider the case $j>0$. The proof in this case is divided in three steps.

{\it Step~1.} First, we show that that $\overline e_{\langle j\rangle}\in\overline I$. 
Indeed, if $a>1$, we obtain that $\overline e_{\langle j\rangle}\otimes\overline e_{\langle j\rangle}\in\overline I\otimes\overline H+\overline H\otimes\overline I$ by Lemma~\ref{sublemma},
and therefore $\overline e_{\langle j\rangle}\in\overline I$. 
If $a=1$, we similarly obtain that $\overline e_{\langle j\rangle}\otimes\overline e_{\langle j\rangle}+\widetilde\Delta(\overline P)\in\overline I\otimes\overline H+\overline H\otimes\overline I$. 

Consider the natural basis of $\overline H$ consisting of the monomials in $\overline e_{2i-1}$. Then the part of this basis divisible by $\overline e_{2i-1}^{\,2^{a_i}}$ for some $i$ is a basis of $\overline I$, and the tensor product $\overline H\otimes\overline H$ has a natural basis consisting of decomposable tensors of basis elements. 

Observe that $\widetilde\Delta(\overline P)$ cannot have a monomial $\overline e_{\langle j\rangle}\otimes\overline e_{\langle j\rangle}$ in its decomposition as a sum of basis elements (this follows from the facts that $\overline e_{2i-1}$ are primitive, and ${2n\choose n}\equiv0\mod2$). Therefore we obtain that $\overline e_{\langle j\rangle}\otimes\overline e_{\langle j\rangle}\in\overline I\otimes\overline H+\overline H\otimes\overline I$, and $\overline e_{\langle j\rangle}\in\overline I$.

{\it Step~2.} Now we will show that in fact $e_{\langle j\rangle}\in I$. 
Recall that we chose $k$ in such a way that $2k-1=\langle2j\rangle$ is the largest possible number such that $a_k=0$ and $e_{2k-1}\not\in I$. Since $\overline e_{\langle j\rangle}\in\overline I$ we conclude that $\overline e_{\langle j\rangle}$ is equal to $\overline e_{2i-1}^{\,2^{a_i+e}}$ for some $i$ and $e\geq0$. However, if $a_i+e>0$ we already proved that $e_{2i-1}^{\,2^{a_i+e}}\in I$ (see Case~1). But if $a_i=0=e$ we know that $e_{2i-1}\in I$ by the choice of $k$. In both cases, we obtain that $e_{\langle j\rangle}\in I$.

{\it Step~3.} Finally, we show that $e_{\langle 2j\rangle}\in I$. Since $e_{\langle j\rangle}\in I$ by Step~2, we conclude that 
$$
\widetilde\Delta(e_{\langle2j\rangle})\in e_{\langle j\rangle}\otimes H+H\otimes e_{\langle j\rangle}\subseteq I\otimes H+H\otimes I
$$
by Theorem~\ref{co-mult-ck-big}. Consequently, applying $\widetilde\Delta$ to~(\ref{E-formula}), we have
$$ 
v_n^a\widetilde\Delta(P)\in I\otimes H+H\otimes I,
$$
and therefore $\widetilde\Delta(P)\in I\otimes H+H\otimes I$ by Lemma~\ref{sublemma}. As above, this implies that $\overline P$ becomes primitive modulo $\overline I$, however, $\overline H/\,\overline I$ does not have primitive elements of the required degree by Lemma~\ref{impossible-equation}. This implies that $\overline P\in\overline I$, and therefore we can find a new lift $E_{2k-1}'$ for $\overline e_{2k-1}$ with greater $a$ in the decomposition~(\ref{E-formula}). Continuing this process we will eventually prove that $e_{2k-1}=e_{\langle 2j\rangle}\in I$. This finishes the proof of Subcase~2.2.

We proved that $e_{2i-1}^{2^{a_i}}\in I$, and it remains to observe that they clearly generate $I$ since it is true modulo $v_n$ and $I$ is saturated.
\end{proof}

\subsection{The $J$-invariant for the connective Morava K-theory}

Let $n$, $m\in\mathbb N\setminus0$. For $m\leq 2^{n+1}$ we put $m_0=m$, and for $m>2^{n+1}$ let $m_0=2^{n+1}-1$ for an odd $m$ and $m_0=2^{n+1}$ for an even $m$. Denote 
$$
H=\mathrm{CK}(n)^*(\mathrm{SO}_m)\quad\text{ and }\quad H_0=\mathrm{CK}(n)^*(\mathrm{SO}_{m_0}).
$$ 

For a smooth irreducible variety $X$ and a free theory $A^*$ the $A^*(\mathrm{pt})$-algebra $A^*(X)$ is naturally augmented via
$$
A^*(X)\xrightarrow{\eta^A}A^*(\mathrm{Spec}\,k(X))\xrightarrow{\cong}A^*(\mathrm{pt})
$$
where $\eta$ denotes the generic point of $X$ (see~\cite[Remark~1.2.12]{LM}). We denote by $A^*(X)^+$ the augmentation ideal.

For $E\in\mathrm H^1(k,\,\mathrm{SO}_m)$ consider the natural map $\mathrm{CK}(n)^*(E)\rightarrow\mathrm{CK}(n)^*(\mathrm{SO}_m)$ induced by the restriction of scalars. Denote by $I$ the ideal of $H$ generated by the image of the map
$$
\mathrm{CK}(n)^*(E)^+\rightarrow H,
$$
and denote by $I_0$ the ideal of $H_0$ generated by the image of 
$$
\mathrm{CK}(n)^*(E)^+\rightarrow H\rightarrow H_0
$$
where the map $H\rightarrow H_0$ is the pullback along the natural closed embedding $\mathrm{SO}_{m_0}\hookrightarrow\mathrm{SO}_{m}$. In other words, $H/I$ is the generalized $J$-invariant of $E$ in the sense of Definition~\ref{j-inv-def}, and $I$ is a bi-ideal of $H$ by~\cite[Lemma~4.5]{PShopf}. 
\begin{lm}
\label{j-is-saturated}
In the notation above, $I_0$ is a saturated bi-ideal of $H_0$.
\end{lm}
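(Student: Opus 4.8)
Since the conclusion combines two claims, I would address them in turn, and reduce the ``saturated'' part to freeness.

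\emph{$I_0$ is a bi-ideal, and a first reduction.} The closed embedding $\mathrm{SO}_{m_0}\hookrightarrow\mathrm{SO}_m$ (as a block summand of an orthogonal sum $q\cong q_0\perp q_1$) is a homomorphism of algebraic groups, so the pullback $\pi\colon H=\mathrm{CK}(n)^*(\mathrm{SO}_m)\to\mathrm{CK}(n)^*(\mathrm{SO}_{m_0})=H_0$ is a morphism of graded $\mathbb F_2[v_n]$-Hopf algebras; it is surjective by Nakayama's Lemma \cite[Lemma~5.5]{LPSS}, being the surjection $\mathrm{Ch}^*(\mathrm{SO}_m)\twoheadrightarrow\mathrm{Ch}^*(\mathrm{SO}_{m_0})$ modulo $v_n$. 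By construction $I_0=\pi(I)$, where $I\trianglelefteq H$ is the bi-ideal of Definition~\ref{j-inv-def} (see \cite[Lemma~4.5]{PShopf}), and the image of a homogeneous bi-ideal under a surjection of graded Hopf algebras is again a homogeneous bi-ideal; hence $I_0$ is a bi-ideal of $H_0$. By Lemma~\ref{sublemma}(1) it now remains to prove that $H_0/I_0$ is a free $\mathbb F_2[v_n]$-module.

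\emph{Saturation trick.} Let $\widehat I_0\trianglelefteq H_0$ be the saturation of $I_0$, i.e. the preimage of the $v_n$-torsion submodule of $H_0/I_0$; it is the smallest saturated $\mathbb F_2[v_n]$-submodule containing $I_0$, and $H_0/\widehat I_0$ is free. It is again a homogeneous bi-ideal: it is visibly a graded ideal, and a coideal because, for $x\in\widehat I_0$ with $v_n^kx\in I_0$, one has $v_n^k\widetilde\Delta(x)=\widetilde\Delta(v_n^kx)\in I_0\otimes H_0+H_0\otimes I_0\subseteq\widehat I_0\otimes H_0+H_0\otimes\widehat I_0$, and the last submodule is saturated in $H_0\otimes H_0$ by Lemma~\ref{sublemma}(2), so $\widetilde\Delta(x)$ lies in it. Since $m_0\le2^{n+1}$, Proposition~\ref{hopf-ck} applies to $\widehat I_0$, giving $\widehat I_0=(e_1^{2^{a_1}},e_3^{2^{a_2}},\dots,e_{2r-1}^{2^{a_r}})$ for some $0\le a_i\le k_i$. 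Writing $\overline{(\ )}$ for the reduction modulo $v_n$, it then suffices to prove the equality $\overline{I_0}=\overline{\widehat I_0}$ of (homogeneous) bi-ideals of $\mathrm{Ch}^*(\mathrm{SO}_{m_0})$: this gives $\widehat I_0\subseteq I_0+v_nH_0$, hence for $x\in\widehat I_0$, writing $x=j+v_nh$ with $j\in I_0$ forces $v_nh=x-j\in\widehat I_0$, so $h\in\widehat I_0$; thus $\widehat I_0=I_0+v_n\widehat I_0$, and graded Nakayama \cite[Lemma~5.5]{LPSS} yields $\widehat I_0=I_0$, i.e. $I_0$ is saturated.

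\emph{The equality $\overline{I_0}=\overline{\widehat I_0}$, and the main obstacle.} Only $\overline{\widehat I_0}\subseteq\overline{I_0}$ is at issue, and since $\mathrm{Ch}^*(\mathrm{SO}_{m_0})/\overline{I_0}$ surjects onto $\mathrm{Ch}^*(\mathrm{SO}_{m_0})/\overline{\widehat I_0}$, this is equivalent to the equality of $\mathbb F_2$-dimensions $\dim_{\mathbb F_2}\mathrm{Ch}^*(\mathrm{SO}_{m_0})/\overline{I_0}=\dim_{\mathbb F_2}\mathrm{Ch}^*(\mathrm{SO}_{m_0})/\overline{\widehat I_0}$; the right-hand side is the $\mathbb F_2[v_n]$-rank of the free module $H_0/\widehat I_0$, hence equals $\dim_{\mathbb F_2[v_n^{\pm1}]}(H_0/I_0)[v_n^{-1}]$, while the left-hand side is $\dim_{\mathbb F_2}(H_0/I_0)/v_n$. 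So everything comes down to comparing the mod-$v_n$ reduction and the $v_n$-localization of $H_0/I_0$. On the reduction side, $(H_0/I_0)/v_n=\mathrm{Ch}^*(\mathrm{SO}_{m_0})/\overline{I_0}$, where $\overline{I_0}$ is the image in $\mathrm{Ch}^*(\mathrm{SO}_{m_0})$ of the ideal generated by the image of $\mathrm{CK}(n)^*(E)^+$ in $\mathrm{Ch}^*(\mathrm{SO}_m)$; I would identify this with (the $\mathrm{SO}_{m_0}$-truncation of) the classical Chow $J$-invariant bi-ideal of $q$ of \cite[Definition~5.11]{V-gras}, which reduces to showing that the natural map $\mathrm{CK}(n)^*(E)\to\mathrm{Ch}^*(E)$ is surjective onto the classes representing the $J$-invariant (for instance by exhibiting those classes as polynomials in Chern classes, which lift to $\mathrm{CK}(n)^*$). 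On the localization side, for $m\le2^{n+1}$ there is nothing to do since $m_0=m$, while for $m>2^{n+1}$ one computes $(H_0/I_0)[v_n^{-1}]$ using the stabilization isomorphism $\mathrm{K}(n)^*(\mathrm{SO}_m)\cong\mathrm{K}(n)^*(\mathrm{SO}_{m_0})$ of \cite[Theorem~5.1]{LPSS}. I expect the surjectivity of $\mathrm{CK}(n)^*(E)\to\mathrm{Ch}^*(E)$ on $J$-invariant classes, together with the resulting bookkeeping showing that these two dimensions coincide — equivalently, that in the range $m_0\le2^{n+1}$ the (connective) Morava $K$-theory of the torsor carries no more ``splitting information'' than the Chow theory does — to be the main obstacle; the remaining steps are formal manipulations with Hopf algebras over $\mathbb F_2[v_n]$ and graded Nakayama.
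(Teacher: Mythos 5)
Your reduction is correct up to a point: showing $I_0$ is a bi-ideal, passing to the saturation $\widehat I_0$, showing it is a bi-ideal via Lemma~\ref{sublemma}, invoking Proposition~\ref{hopf-ck} to pin down $\widehat I_0=(e_1^{2^{a_1}},\ldots,e_{2r-1}^{2^{a_r}})$, and observing that $\overline{I_0}=\overline{\widehat I_0}$ would finish via graded Nakayama — all of this is valid and close in structure to what the paper does. The gap is precisely in the step you flag as ``the main obstacle'', and your proposed route for closing it would not work. The surjectivity of $\mathrm{CK}(n)^*(E)\to\mathrm{Ch}^*(E)$ is automatic (it is the base change along $\mathbb F_2[v_n]\twoheadrightarrow\mathbb F_2$), so it gives no new information; what it buys you is only the identification of $\overline{I_0}$ with the Chow $J$-invariant ideal, which is already clear. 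What you actually need is the equality $\overline{I_0}=\overline{\widehat I_0}$, i.e.\ that the mod-$v_n$ reduction and the $v_n$-localization of $H_0/I_0$ have the same rank — and this is essentially the content of the lemma itself, so the route risks circularity. Your suggestion to use the stabilization $\mathrm K(n)^*(\mathrm{SO}_m)\cong\mathrm K(n)^*(\mathrm{SO}_{m_0})$ similarly only rearranges the problem.

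The key idea you are missing is the positivity theorem of Levine--Morel (\cite[Theorem~1.2.14]{LM}): $\mathrm{CK}(n)^*(E)$ and $H_0$ are generated over $\mathbb F_2[v_n]$ by elements of non-negative degree, while $v_n$ has degree $1-2^n<0$. Because $m_0\le 2^{n+1}$, the nonzero generators $e_{2i-1}^{2^{a_i}}$ of $\widehat I_0$ all have degree $(2i-1)2^{a_i}\le 2^n-1$; hence for $t\ge 1$ the element $v_n^te_{2i-1}^{2^{a_i}}$ sits in degree $\le 0$. Writing $v_n^te_{2i-1}^{2^{a_i}}=\sum x_kh_k$ with homogeneous $x_k\in\mathrm{Im}\big(\mathrm{CK}(n)^*(E)^+\to H_0\big)$ and $h_k\in H_0$, one can arrange that for each $k$ one of $x_k$, $h_k$ has negative degree (using $e_{2i-1}\in H_0^+$ in the degree-zero case), hence is divisible by $v_n$; pulling out a $v_n$ and using that $H_0$ is $v_n$-torsion free (it is a free $\mathbb F_2[v_n]$-module) gives $v_n^{t-1}e_{2i-1}^{2^{a_i}}\in I_0$. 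Iterating shows $e_{2i-1}^{2^{a_i}}\in I_0$, which is exactly the missing input that closes your argument. This degree-count plus positivity is the engine of the paper's proof and is the concrete ingredient your sketch lacks.
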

\begin{proof}
It is easy to see that $I_0$ is also a bi-ideal of $H_0$ as an image of $I$.

Let $J_0$ denote the saturation of $I_0$, i.e., $$J_0=\{x\in H\mid\exists\,i\geq0,\  v_n^ix\in I_0\}.$$ For $x\in J_0$ one has
$$
v_n^i\Delta(x)=\Delta(v_n^ix)\in I_0\otimes H+H\otimes I_0\subseteq J_0\otimes H+H\otimes J_0,
$$ 
and therefore $J_0$ is a bi-ideal by Lemma~\ref{sublemma}. Then, by Proposition~\ref{hopf-ck}, $$J_0=\big(e_1^{2^{a_1}},\,e_3^{2^{a_2}},\ldots,e_{2r-1}^{2^{a_r}}\big).
$$
We will show that in fact $e_{2i-1}^{2^{a_i}}\in I_0$.

Since $v_n^te_{2i-1}^{2^{a_i}}\in I_0$ there exist homogeneous elements $h_k\in H_0$ and
$$
x_k\in\mathrm{Im}\big(\mathrm{CK}(n)(E)^+\rightarrow H\rightarrow H_0\big)
$$
such that $v_n^te_{2i-1}^{2^{a_i}}=\sum x_kh_k$. However, since the codimension of $e_{2i-1}^{2^{a_i}}$ is less than or equal to $2^n-1$, we can assume that for each $k$ either $x_k$ or $h_k$ has degree less than $0$ (if the degree of $x_kh_k$ is $0$ use additionally that $e_{2i-1}\in H^+_0$). By a theorem of Levine--Morel the algebraic cobordism is generated by the elements of non-negative degrees~\cite[Theorem~1.2.14]{LM}. This implies that either $x_k$ or $h_k$ is divisible by $v_n$. In other words, there exist $h_k'\in H_0$ and $x_k'\in\mathrm{Im}\big(\mathrm{CK}(n)(E)^+\rightarrow H\rightarrow H_0\big)$ such that 
$$
v_n^te_{2i-1}^{2^{a_i}}=\sum v_nx_k'h_k'.
$$
Since $H_0$ does not have $v_n$-torsion, we conclude that
$
v_n^{t-1}e_{2i-1}^{2^{a_i}}\in I_0.
$ 
Continuing this process we obtain the claim.
\end{proof}

Next we prove the following theorem.

\begin{tm}
\label{j-classification}
For $H$ and $H_0$ as above let $I$ be a {\rm(}homogeneous{\rm)} bi-ideal of $H$ such that its image $I_0$ in $H_0$ is saturated. Then 
$$
I=(e_1^{2^{a_1}},\ldots,e_{2r-1}^{2^{a_{r}}})
$$ 
for some $0\leq a_i\leq k_i$. In particular, $I$ is determined by its image modulo $v_n$.
\end{tm}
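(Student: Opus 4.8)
The plan is to bootstrap from the classification of saturated bi-ideals in $H_0$ (Proposition~\ref{hopf-ck}) to the corresponding classification in $H$, using the surjection $H\twoheadrightarrow H_0$ and the structural description of $H$ from Theorem~\ref{co-mult-ck-big}. First I would note that for $m\leq2^{n+1}$ there is nothing to prove: then $H=H_0$, $I=I_0$ is already saturated, and the claim is exactly Proposition~\ref{hopf-ck}. So assume $m>2^{n+1}$ and set $m_0=2^{n+1}-1$ or $2^{n+1}$ accordingly, $r=\lfloor\frac{m_0+1}{4}\rfloor=2^{n-1}$. Since $I_0\trianglelefteq H_0$ is a saturated bi-ideal, Proposition~\ref{hopf-ck} gives $I_0=(e_1^{2^{a_1}},e_3^{2^{a_2}},\ldots,e_{2r-1}^{2^{a_r}})$ for some $0\leq a_i\leq k_i$, with $k_i=\lfloor\log_2(\frac{m_0-1}{2i-1})\rfloor$. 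The goal is to show $I=(e_1^{2^{a_1}},\ldots,e_{2r-1}^{2^{a_r}})$ inside $H$ (where now $e_{2i-1}$ denotes a chosen lift, as in Theorem~\ref{co-mult-ck-big}, and $e_{2i-1}^{2^{k_i}}=e_{(2i-1)2^{k_i}}$ which may be nonzero in $H$ but lies in the ``stabilized tail'' $I_{\mathrm{big}}=(e_{2^n},\ldots,e_s)$).

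The key structural input is the ideal $I_{\mathrm{big}}=(e_{2^n},e_{2^n+1},\ldots,e_s)\trianglelefteq H$ with $H/I_{\mathrm{big}}=H_0$, already used in the proof of Theorem~\ref{co-mult-ck-big}: all generators of $I_{\mathrm{big}}$ have degree $\geq2^n$ and are $v_n$-torsion, so $I_{\mathrm{big}}$ contains no nonzero element of degree $<2^n$, and by Lemma~\ref{sublemma}(2) the kernel of $H\otimes H\to H_0\otimes H_0$, namely $H\otimes I_{\mathrm{big}}+I_{\mathrm{big}}\otimes H$, is saturated and again concentrated in degrees $\geq2^n$. The first step is then: given the bi-ideal $I\trianglelefteq H$ with image $I_0$, show that each generator $e_{2i-1}^{2^{a_i}}$ (which has codimension $(2i-1)2^{a_i}\leq(2i-1)2^{k_i}\leq 2^n-1<2^n$) actually lies in $I$. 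Pick a lift $E_{2i-1}\in I$ of $e_{2i-1}^{2^{a_i}}\in I_0$; then $E_{2i-1}-e_{2i-1}^{2^{a_i}}\in I_{\mathrm{big}}$, but both $E_{2i-1}$ and $e_{2i-1}^{2^{a_i}}$ are homogeneous of degree $<2^n$, while $I_{\mathrm{big}}$ vanishes in that degree — hence $E_{2i-1}=e_{2i-1}^{2^{a_i}}$ and $e_{2i-1}^{2^{a_i}}\in I$. This gives the inclusion $(e_1^{2^{a_1}},\ldots,e_{2r-1}^{2^{a_r}})\subseteq I$.

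For the reverse inclusion, let $J=(e_1^{2^{a_1}},\ldots,e_{2r-1}^{2^{a_r}})\subseteq I$ and consider $\overline I,\overline J\trianglelefteq \overline H=H/v_n=\mathrm{Ch}^*(\mathrm{SO}_m)$. The image of $I$ in $H_0$ is $I_0$, and the image of $I_0$ modulo $v_n$ is $\overline I_0=(\overline e_1^{2^{a_1}},\ldots,\overline e_{2r-1}^{2^{a_r}})$ by the Chow-level classification (Lemma~\ref{hopf-chow}) — since a bi-ideal of $\overline H_0$ is determined by those data. But in $\overline H=\mathrm{Ch}^*(\mathrm{SO}_m)$ the generators $\overline e_{2i-1}$ for $i>2^{n-1}=r$ are primitive of degree $\geq 2^n$ (Example~\ref{primitive}), so any bi-ideal of $\overline H$ — in particular $\overline I$ — is of the form $(\overline e_1^{2^{b_1}},\ldots,\overline e_{2s'-1}^{2^{b_{s'}}})$ by the Milnor–Moore argument, and comparing with its image $\overline I_0$ in $\overline H_0$ forces $b_i=a_i$ for $i\leq r$ and $b_i=0$ for $i>r$; but $\overline e_{2i-1}\in\overline J$ already for $i>r$ since $a_r=k_r$ forces $e_{(2i-1)}$ with $i>r$ into the span of the already-listed generators... so $\overline I=\overline J$. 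Finally, $I$ is determined by $\overline I$: if $x\in I$ is homogeneous with $\overline x\in\overline J$, lift to $y\in J$ with $x-y\in v_nH$, write $x-y=v_nz$; then $v_nz=x-y\in I$, and using that $I_0$ is saturated together with the argument of Lemma~\ref{j-is-saturated} (the cobordism-ring has no negative-degree generators, so low-degree multiples of $e$'s divisible by $v_n$ force the cofactors to be divisible by $v_n$, and $H_0$ is $v_n$-torsion-free) one descends to conclude $z\in I$, hence by downward induction on the power of $v_n$ that $x\in J$. Therefore $I=J=(e_1^{2^{a_1}},\ldots,e_{2r-1}^{2^{a_r}})$, determined by $I\bmod v_n$.

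The main obstacle I expect is the very last descent step — showing $I$ is saturated "enough" to be recovered from $\overline I$ — because $H$ itself has $v_n$-torsion (all $e_i$ with $i\geq2^n$), so $I$ need not be literally saturated; one genuinely has to pass through $H_0$, exploit that $I_0$ \emph{is} saturated (Lemma~\ref{j-is-saturated}), that $H_0$ is $v_n$-torsion-free, and that the relevant elements $e_{2i-1}^{2^{a_i}}$ sit in degrees $<2^n$ where $H$ and $H_0$ agree. Keeping careful track of which computations happen in $H$ versus $H_0$ versus $\overline H$, and in which degree range, is where the care is needed; the algebra is otherwise routine once the degree bound $(2i-1)2^{k_i}\leq2^n-1$ and Lemma~\ref{impossible-equation} are in hand.
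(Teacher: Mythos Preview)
Your first step—lifting the small-index generators $e_{2i-1}^{2^{a_i}}$ ($i\le2^{n-1}$) from $I_0$ to $I$ by the degree argument—works (modulo a slip in the bound: one only has $(2i-1)2^{k_i}\le m_0-1$, not $\le 2^n-1$; but for the nonzero generators $a_i<k_i$, whence $(2i-1)2^{a_i}\le 2^n-1$ as needed).

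The real gap is in the reverse inclusion. The $r$ in the theorem is $\lfloor\frac{m+1}{4}\rfloor$ for the full $m$, which for $m>2^{n+1}$ strictly exceeds your $r=2^{n-1}$. The ideal $I$ in general involves generators $e_{2i-1}^{2^{a_i}}$ with $2^{n-1}<i\le\lfloor\frac{m+1}{4}\rfloor$; these are $v_n$-torsion and map to zero in $H_0$, so $I_0$ cannot see them. Your assertion that comparing $\overline I$ with $\overline I_0$ ``forces $b_i=0$ for $i>r$'' and that ``$\overline e_{2i-1}\in\overline J$ already for $i>r$'' is unjustified and false: take $I=0$, a bi-ideal with $I_0=0$ saturated, yet $\overline e_{2i-1}\ne 0$ in $\overline H$ and is not in $\overline J=0$. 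The big-index exponents $a_i$ can be anything in $[0,k_i]$ and must be read off $\overline I$ via Lemma~\ref{hopf-chow}; showing the corresponding $e_{2i-1}^{2^{a_i}}$ lie in $I$ (not just $\overline I$) is the substance of the theorem. Your final descent step does not close this: the argument of Lemma~\ref{j-is-saturated} relies on the generators coming from $\mathrm{CK}(n)^*(E)^+$, where Levine--Morel's theorem forces negative-degree elements to be divisible by $v_n$; an arbitrary bi-ideal has no such provenance.

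The paper proceeds differently. Rather than jumping from $H$ to $H_0$ in one step, it inducts on $m$ via the quotient $H/e_s\cong\mathrm{CK}(n)^*(\mathrm{SO}_{m-2})$ (Theorem~\ref{ck-algebra}). At each step only the single top generator $e_s$ is removed; one lifts the inductively known generators of $I/e_s$ back to $I$ by explicit manipulations using $e_s^2=0$, $v_ne_s=0$, and the known shape of $\overline I$ from Lemma~\ref{hopf-chow}, with a case split on whether $e_s$ itself is one of the required generators. This one-at-a-time induction is precisely what retains control over the big generators that your direct passage to $H_0$ discards.
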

\begin{proof}
For $m\leq2^{n+1}$ the claim is proven in Proposition~\ref{hopf-ck}. We assume that $m>2^{n+1}$ and argue by induction.

We denote the reduction modulo $v_n$ by $\,\overline{\phantom{x}}$. Let 
$
\overline I=(\overline e_1^{\,2^{a_1}},\ldots,\overline e_{2r-1}^{\,2^{a_{r}}})
$ 
according to Lemma~\ref{hopf-chow}. Let $s=\lfloor\frac{m-1}{2}\rfloor$.\\
{\bf Case~1.} Assume that $e_s\neq e_{2i-1}^{2^{a_{i}}}$ for all $i$. Since $I/e_s$ is a bi-ideal in $H/e_s$ such that $(I/e_s)_0$ is saturated, using the induction we have
$$
I/e_s=(e_1^{2^{a_1}},\ldots,e_{2r-1}^{2^{a_r}}).
$$
Then there exist $E_1,\ldots,E_r\in I$ such that $E_i-e_{2i-1}^{2^{a_i}}=e_s\cdot x_i$. Since $e_s^2=0$ and $v_ne_s=0$ we can assume that $x_i$ are polynomials with $\mathbb F_2$-coefficients in $e_1,\ldots,e_{s-1}$. However 
$$
\overline e_s\cdot\overline x_i=\overline e_{2i-1}^{\,2^{a_i}}-\overline E_i\in\overline I,
$$
and this can only happen if $\overline x_i\in\overline I$. Then
$
x_i-\sum e_{2j-1}^{2^{a_j}}\cdot y_{ij} = v_nz_i
$ 
for some $y_{ij}$, $z_i\in H$. Then one has
\begin{multline*}
e_{2i-1}^{2^{a_i}}=E_i-e_s\left(v_nz+\sum e_{2j-1}^{2^{a_j}}\cdot y_{ij}\right)=\\=E_i-e_s\left(\sum (E_j-e_sx_j)\cdot y_{ij}\right)=E_i-e_s\left(\sum E_j\cdot y_{ij}\right)\in I.
\end{multline*}
It remains to prove that $I\subseteq(e_1^{2^{a_1}},\ldots,e_{2r-1}^{2^{a_r}})$. Take $x\in I$. Then $x-\sum e_{2i-1}^{2^{a_i}}\, x_{i}=e_sz$ for some $x_i$, $z\in H$. Again, we can assume that $z$ is a polynomial with $\mathbb F_2$-coefficients in $e_1,\ldots,e_{s-1}$. But $\overline e_s\,\overline z\in\overline I$, therefore $\overline z\in \overline I$, i.e., $z-\sum e_{2j-1}^{2^{a_j}}\, y_{j}=v_nw$ for some $y_j$, $w\in H$. Therefore
$$
x=\sum e_{2i-1}^{2^{a_i}}\, x_{i}+e_s\left(\sum e_{2j-1}^{2^{a_j}}\, y_{j}\right)
$$
which proves the claim.\\
{\bf Case~2.} Assume that $e_s= e_{2i_0-1}^{2^{a_{i_0}}}$ for some $i_0$. Again, $I/e_s$ is a bi-ideal in $H/e_s$ such that $(I/e_s)_0$ is saturated. Then using the induction we have
$$
I/e_s=(e_1^{2^{a_1}},\ldots,\widehat e_s,\ldots,e_{2r-1}^{2^{a_r}})
$$
where $\widehat e_s$ means that we excluded $e_s=e_{2i_0-1}^{2^{a_{i_0}}}$ from $e_1^{2^{a_1}},\ldots,e_{2r-1}^{2^{a_r}}$. As above, we can find $E_i\in I$ such that $E_i-e_{2i-1}^{2^{a_i}}=e_sx_i$ for $i\neq i_0$ and some $x_i\in H$, and $e_s+v_ny\in I$ for some $y\in H$. Since the image of $e_s+v_ny$ coincides with the image of $v_ny$ in $I_0$, and the latter ideal is saturated, we conclude that the image of $y$ lies in $I_0$. Then there exist $y_i\in H$ such that the image of $Y=\sum e_{2i-1}^{2^{a_i}}\,y_i$ in $H_0$ coincides with the image of $y$. Obviously, the image of $Y'=\sum E_i\,y_i$ also coincides with the image of $Y$. But the kernel of $H\rightarrow H_0$ is annihilated by $v_n$, in other words,
$$
v_n(y-Y')=0\in H.
$$
But then $e_s=(e_s+v_ny)-v_nY'\in I$. Therefore, $e_{2i-1}^{2^{a_i}}=E_i-e_s\,x_i\in I$. 

Finally, it remains to show that $I\subseteq(e_1^{2^{a_1}},\ldots,e_{2r-1}^{2^{a_r}})$. Let $x\in I$. Then 
$$
x-\sum e_{2i-1}^{2^{a_i}}\,z_i=e_s\,w
$$ for some $z_i$, $w\in H$ by induction. The claim follows.
\end{proof}

Now the proof of Theorem~\ref{j-ch-ck} follows from Lemma~\ref{j-is-saturated} and Theorem~\ref{j-classification}.

\section{Motivic decomposition of maximal orthogonal Grassmannians}
\label{section-grass}

\subsection{Statement of results}

For a non-degenerate quadratic form $q\colon V\rightarrow k$ with trivial discriminant let $\mathrm{OGr}(q)$ denote the variety of maximal totally isotropic subspaces of $V$. If $q$ is odd dimensional, denote $X=\mathrm{OGr}(q)$, and if $q$ is even-dimensional denote by $X$ one of the (isomorphic) connected components of $\mathrm{OGr}(q)$ (see~\cite[Section~86]{EKM}). Since $X$ is generically split, the decomposition of its Morava motive can be described in terms of the Hopf algebra structure of $\mathrm K(n)^*(\mathrm{SO}_m)$ by~\cite[Theorem~5.7]{PShopf}, cf.~\cite[Corollary~6.14]{LPSS}. In particular, for a generic quadratic form $q$ we will determine the $\mathrm K(n)$-motivic decomposition of $X$.

More generally, if $\mathrm{dim}\,q=2l+1$ or $\mathrm{dim}\,q=2l$, following~\cite[Definition~5.11]{V-gras} we define $J(q)$ as a subset of $\{1,\ldots,l\}$ such that $i\in J(q)$ iff  the image of $e_i$ in $H^*_{\mathrm{Ch}}(q)$ is $0$ (see Definition~\ref{j-inv-def}). By~\cite[Main Theorem~5.8]{V-gras}, one can recover 
$$
H^*_{\mathrm{Ch}}(q)=\mathrm{Ch}^*(\mathrm{SO}_{\mathrm{dim}\,q})/(e_i\mid i\in J(q)).
$$

We will need the theorem of the first and the third authors~\cite[Theorem~5.7]{PShopf} in the case $A^*=\mathrm K(n)^*$, $G=\mathrm{SO}_m$ (cf. also~\cite[Corollary~6.14]{LPSS}).

\begin{tm}[Theorem~5.7~\cite{PShopf}]
\label{two-layers}
For $n\in\mathbb N\setminus0$ let $q$ be a quadratic form of dimension $m$ with trivial discriminant. Let $X$ denote the connected component of its maximal orthogonal Grassmannian. Then there exists a $\mathrm K(n)$-motive $\mathcal R$ such that the $\mathrm K(n)$-motive of $X$ decomposes as:
\begin{equation}\label{decompGB}
\mathcal{M}_{\mathrm K(n)}(X)\simeq\bigoplus_{i\in\mathcal{I}}\mathcal{R}\{i\}
\end{equation}
for some multiset of non-negative integers $\mathcal{I}$. Moreover, the rank of $\mathcal R$ equals the rank of $H^*_{\mathrm K(n)}(q)$.
\end{tm}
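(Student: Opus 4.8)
The plan is to produce a complete orthogonal system of idempotents in $\mathrm{End}\big(\mathcal M_{\mathrm K(n)}(X)\big)$ by lifting suitable idempotents from the split form, where the $\mathrm K(n)$-motive is a sum of Tate motives, and then to see that the resulting summands are all Tate twists of one motive $\mathcal R$ whose rank is prescribed by the $J$-invariant.

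First I would fix the geometric picture. Here $X={}_E(\mathrm{SO}_m/P)$ for the maximal parabolic $P$ associated with the chosen connected component of the maximal orthogonal Grassmannian of $q$, and over $\bar k$ it is the cellular variety $X_{\bar k}\cong\mathrm{SO}_m/P$. A maximal totally isotropic subspace of $q$ becomes tautologically available over the function field $k(X)$, so $X$ is generically split, and therefore the Rost nilpotence principle holds for $\mathcal M_{\mathrm K(n)}(X)$: the kernel of the base-change homomorphism $\mathrm{End}\big(\mathcal M_{\mathrm K(n)}(X)\big)\to\mathrm{End}\big(\mathcal M_{\mathrm K(n)}(X_{\bar k})\big)$ is a nilpotent ideal. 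Hence idempotents lift along this map (and conjugate idempotents lift to conjugate ones), so decomposing $\mathcal M_{\mathrm K(n)}(X)$ amounts to decomposing unity into orthogonal idempotents inside the image subring $R:=\mathrm{Im}\big(\mathrm{End}(\mathcal M_{\mathrm K(n)}(X))\to\mathrm K(n)^*(X_{\bar k}\times X_{\bar k})\big)$.

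Next I would analyze $R$ over $\bar k$. Cellularity of $X_{\bar k}$ yields the Künneth isomorphism $\mathrm K(n)^*(X_{\bar k}\times X_{\bar k})\cong\mathrm K(n)^*(X_{\bar k})\otimes_{\mathrm K(n)^*(\mathrm{pt})}\mathrm K(n)^*(X_{\bar k})$, under which composition of correspondences becomes the convolution product determined by the comultiplication dual to the diagonal. Writing $\bar I$ for the image of the restriction $\mathrm K(n)^*(X)\to\mathrm K(n)^*(X_{\bar k})$, a correspondence computation expresses $R$ in terms of $\bar I$: one gets that $\mathrm K(n)^*(X_{\bar k})$ is a module over $\bar I$ on which $R$ acts $\bar I$-linearly, and the essential structural input is that this module is \emph{free}. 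It is here that the Hopf-algebra description of $\mathrm K(n)^*(\mathrm{SO}_m)$ from Theorem~\ref{co-mult} is used, through the pullback along the projection $\mathrm{SO}_m\to\mathrm{SO}_m/P$ and through the identification of $\bar I$ with the $J$-invariant quotient $H^*_{\mathrm K(n)}(q)$ of Definition~\ref{j-inv-def} (cf.~\cite[Lemma~4.5]{PShopf}). Granting freeness, fix a homogeneous $\bar I$-basis $\{b_i\}_{i\in\mathcal I}$ of $\mathrm K(n)^*(X_{\bar k})$ with $b_0=1$, and let $\mathcal I$ be the multiset of codimensions of the $b_i$. The structure of $R$ then forces a decomposition of unity in $R$ into pairwise orthogonal idempotents $\pi_i$, one for each free summand $\bar I\cdot b_i$, which over $\bar k$ become pairwise conjugate up to the Tate shift by $\mathrm{codim}(b_i)$. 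Lifting the $\pi_i$ to orthogonal idempotents $\widetilde\pi_i\in\mathrm{End}\big(\mathcal M_{\mathrm K(n)}(X)\big)$ and setting $\mathcal R:=\widetilde\pi_0\,\mathcal M_{\mathrm K(n)}(X)$, one obtains $\mathcal M_{\mathrm K(n)}(X)\simeq\bigoplus_{i\in\mathcal I}\mathcal R\{i\}$, and by construction $\mathrm{rank}\,\mathcal R=\mathrm{rank}_{\mathrm K(n)^*(\mathrm{pt})}\bar I=\mathrm{rank}\,H^*_{\mathrm K(n)}(q)$.

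The step I expect to be the main obstacle is the control of $R$ together with the $\bar I$-freeness of $\mathrm K(n)^*(X_{\bar k})$: one must simultaneously establish freeness and rule out correspondences in the image beyond those accounted for by $\bar I$ on the two factors, and both reduce to pinning down the image of $\mathrm K(n)^*(X\times X)$ in $\mathrm K(n)^*(X_{\bar k}\times X_{\bar k})$ and ultimately to the comodule structure of $\mathrm K(n)^*(\mathrm{SO}_m/P)$ over the Hopf algebra $\mathrm K(n)^*(\mathrm{SO}_m)$ computed in Theorem~\ref{co-mult}. The remaining ingredients — the cellular Künneth formula and Poincaré duality, the convolution formalism for correspondences, Rost nilpotence for generically split varieties, and the identification $\bar I\cong H^*_{\mathrm K(n)}(q)$ — are standard for the oriented theory $\mathrm K(n)^*$ given the earlier sections and \cite{PShopf}.
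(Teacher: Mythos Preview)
The paper does not prove this statement; it is quoted verbatim as Theorem~5.7 of \cite{PShopf} and used as a black box. Your outline is broadly the strategy of \cite{PShopf}: generically split $X$, Rost nilpotence, K\"unneth for the cellular split form, and lifting idempotents from a description of the rational image ring in terms of the $H^*_{\mathrm K(n)}(q)$-comodule structure of $\mathrm K(n)^*(X_{\bar k})$.

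There is, however, one genuine misattribution. You write that the freeness of $\mathrm K(n)^*(X_{\bar k})$ over $\bar I$ ``is here that the Hopf-algebra description of $\mathrm K(n)^*(\mathrm{SO}_m)$ from Theorem~\ref{co-mult} is used''. It is not: Theorem~5.7 of \cite{PShopf} predates the present paper and holds for arbitrary generically split twisted flag varieties and arbitrary free theories, with no knowledge of the specific comultiplication on $\mathrm K(n)^*(\mathrm{SO}_m)$. The freeness is a structural fact coming from the cellular fibration $G\to G/P$ (making $A^*(G/P)$ a free $A^*(G)$-comodule) together with general Hopf-algebra comodule theory; the explicit formula of Theorem~\ref{co-mult} plays no role at this stage. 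In the present paper Theorem~\ref{co-mult} enters only \emph{afterwards}, in Theorem~\ref{refined-layers} and Corollary~\ref{application}, to decide whether the resulting $\mathcal R$ is indecomposable. A smaller point: your $\bar I$ is the image of $\mathrm K(n)^*(X)$ in $\mathrm K(n)^*(X_{\bar k})$, whereas $H^*_{\mathrm K(n)}(q)$ is by definition a quotient of $\mathrm K(n)^*(G)$; they are not literally the same object, and what is actually used is that the comodule $\mathrm K(n)^*(X_{\bar k})$ is free of rank equal to $\mathrm{rank}\,H^*_{\mathrm K(n)}(q)$.
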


Observe that the the rank of $\mathcal R$ is determined by $J(q)$ by Corollary~\ref{j-ch-k}. Using the developed techniques we can now prove a more precise version of the above result.

\begin{tm}
\label{refined-layers}
In the notation of Theorem~\ref{two-layers}, if $m\leq2^{n+1}-2$ or $2^n-1\in J(q)$ then $\mathcal R$ is indecomposable. If $m\geq2^{n+1}-1$ and $2^n-1\not\in J(q)$ then $\mathcal R$ decomposes as a sum of two non-isomorphic indecomposable motives of the same rank.
\end{tm}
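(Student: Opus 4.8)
The plan is to reduce the refinement of Theorem~\ref{two-layers} to a computation of the group of group-like elements of the Hopf algebra $H:=H^*_{\mathrm K(n)}(q)$, whose structure is furnished by Theorem~\ref{co-mult} and Corollary~\ref{j-ch-k}. By Theorem~\ref{two-layers}, $\mathcal M_{\mathrm K(n)}(X)\cong\bigoplus_{i\in\mathcal I}\mathcal R\{i\}$ with $\operatorname{rk}\mathcal R=\operatorname{rk}H$, so the assertion is exactly a description of the indecomposable summands of $\mathcal R$. Since $X$ is generically split, Rost nilpotence (as used throughout~\cite{PShopf}) detects idempotents of $\End(\mathcal R)$ after base change to a splitting field; following~\cite{PShopf}, refined by the bi-ideal analysis of Section~\ref{section-j}, a complete set of primitive orthogonal idempotents of $\End(\mathcal R)$ is carried by the finite commutative ring $H^\vee$, the $\mathbb F_2[v_n^{\pm1}]$-linear dual Hopf algebra of $H$ --- commutative because $\mathrm K(n)^*(\mathrm{SO}_m)$ is co-commutative, each $\widetilde\Delta(e_{\langle 2k\rangle})$ of Theorem~\ref{co-mult}, as well as $\widetilde\Delta(e_{2^n-1})=v_n\,e_{2^n-1}\otimes e_{2^n-1}$, being manifestly invariant under the switch $\tau$. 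Commutativity of $\End(\mathcal R)$ in addition forces $\mathcal R$ to be multiplicity-free, so any two distinct indecomposable summands of $\mathcal R$ are automatically non-isomorphic. It therefore suffices to determine the connected components of $\Spec H^\vee$.

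Next I would compute the group $G(H)$ of group-like elements. Applying the counit to a group-like element $g$ shows that $g$ is homogeneous of degree $0$ (the counit is of degree zero while $v_n$ is not), hence $g=1+x$ with $x$ in the degree-$0$ part of the augmentation ideal and $\widetilde\Delta(x)=x\otimes x$. The crucial element is $f:=1+v_n\,\overline e_{2^n-1}$: it is group-like since $\widetilde\Delta(\overline e_{2^n-1})=v_n\,\overline e_{2^n-1}\otimes\overline e_{2^n-1}$, and $f^2=1$ since $\overline e_{2^n-1}^{\,2}=\overline e_{2^{n+1}-2}=0$ in $H$ --- the index $2^{n+1}-2$ exceeds $2^n-1$ and is thus truncated in $\mathrm K(n)$, which is precisely the feature distinguishing $\mathrm K(n)^*$ from $\mathrm{Ch}^*$. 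By Corollary~\ref{j-ch-k}, $\overline e_{2^n-1}\neq0$ in $H$ exactly when $m\geq2^{n+1}-1$ and $2^n-1\notin J(q)$, so $f\neq1$ precisely in that case. I would then prove that $f$ generates $G(H)$: writing $x=v_n\overline y$ (forced, as $\overline x$ is group-like in $\mathrm{Ch}^*$, hence trivial) the condition becomes $\widetilde\Delta(\overline y)=v_n(\overline y\otimes\overline y)$ with $\deg\overline y=2^n-1$, and a valuation-and-degree bookkeeping entirely in the spirit of the proofs of Propositions~\ref{hopf-ck} and~\ref{prop-answer} and of Lemma~\ref{impossible-equation} shows that the $v_n$-free leading part of $\overline y$ is a primitive of degree $2^n-1$ in $\mathrm{Ch}^*$, hence a multiple of $\overline e_{2^n-1}$, and that every successive $v_n$-correction is killed because it would sit in degree $\geq2^n$ and be truncated in $\mathrm K(n)$. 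Thus $\overline y=\overline e_{2^n-1}$, $x=v_n\overline e_{2^n-1}$, and $G(H)=\{1,f\}\cong\mathbb Z/2$ when $\overline e_{2^n-1}\neq0$, while $G(H)=\{1\}$ otherwise.

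To pass from $G(H)$ to $\Spec H^\vee$ I would use the explicit form of $H$. When $\overline e_{2^n-1}\neq0$, the sub-$\mathbb F_2[v_n^{\pm1}]$-algebra generated by $\overline e_{2^n-1}$ is a sub-Hopf-algebra isomorphic to $\mathbb F_2[v_n^{\pm1}][f]/(f^2-1)=\mathbb F_2[v_n^{\pm1}][\mathbb Z/2]$, and since the reduced co-products of the remaining generators $\overline e_{2i-1}$ never involve $\overline e_{2^n-1}$ (by Theorem~\ref{co-mult}) one gets a Hopf algebra tensor decomposition $H\cong\mathbb F_2[v_n^{\pm1}][\mathbb Z/2]\otimes_{\mathbb F_2[v_n^{\pm1}]}H'$ with $G(H')=\{1\}$; dually $H^\vee\cong\big(\mathbb F_2[v_n^{\pm1}]\times\mathbb F_2[v_n^{\pm1}]\big)\otimes H'^\vee$ with $H'^\vee$ connected, so $\Spec H^\vee$ has exactly two connected components, of equal rank $\tfrac12\operatorname{rk}\mathcal R$. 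When $\overline e_{2^n-1}=0$ --- i.e. $m\le2^{n+1}-2$ or $2^n-1\in J(q)$ --- there is no $\mathbb Z/2$-factor, $\Spec H^\vee$ is connected, and $\mathcal R$ is indecomposable. Combining: $\mathcal R$ is indecomposable in the first case, and $\mathcal R=\mathcal R_1\oplus\mathcal R_2$ with the $\mathcal R_i$ indecomposable of rank $\tfrac12\operatorname{rk}\mathcal R$ and $\mathcal R_1\not\cong\mathcal R_2$ in the second.

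I expect the main obstacle to be the first reduction: making precise the dictionary between the idempotent decomposition of $\mathcal R$ and the idempotents of $H^\vee$, i.e. that $\Spec H^\vee$ governs the splitting of $\mathcal R$. This is exactly the step for which the full co-algebra structure of $\mathrm K(n)^*(\mathrm{SO}_m)$ --- computed only now in Theorem~\ref{co-mult}, and not recoverable from the algebra structure of~\cite{LPSS} alone --- is indispensable, and where one must combine the motivic machinery of~\cite{PShopf} with the bi-ideal results of Section~\ref{section-j}. The group-like computation itself, while a bit intricate, is a routine extension of the valuation arguments already developed in Sections~\ref{topology} and~\ref{section-j}.
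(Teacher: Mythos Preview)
Your approach via group-like elements of $H=H^*_{\mathrm K(n)}(q)$ is dual to, and essentially equivalent with, the paper's approach via idempotents of $H^\vee$. Over the graded field $\mathbb F_2[v_n^{\pm1}]$, group-likes of $H$ are precisely the $\mathbb F_2[v_n^{\pm1}]$-points of $\Spec H^\vee$, and since $H^\vee$ is a finite commutative algebra these are in bijection with primitive idempotents. Your distinguished group-like $f=1+v_n\,\overline e_{2^n-1}$ is exactly dual to the idempotent $v_n^{-1}\alpha_{2^n-1}$ the paper exhibits in Proposition~\ref{idem-answer}, and your Hopf tensor splitting $H\cong\mathbb F_2[v_n^{\pm1}][\mathbb Z/2]\otimes H'$ is the dual of the observation that $\alpha_{2^n-1}$ generates a two-idempotent subalgebra. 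The paper's route is somewhat more direct: rather than bounding $G(H)$ by a valuation argument, it computes squares of the monomial basis of $H^\vee$ explicitly (using the divided-power description of Proposition~\ref{answer-dual} and the relation $\gamma_{2^{k_i-1}}(\alpha_{2i-1})^2=v_n\alpha_{2j-1}$), and reads off that any idempotent must be an $\mathbb F_2[v_n^{\pm1}]$-combination of $1$ and $\alpha_{2^n-1}$. This replaces your sketched ``bookkeeping in the spirit of Propositions~\ref{hopf-ck} and~\ref{prop-answer}'' by a two-line squaring computation; in particular it handles $G(H')=\{1\}$ (equivalently, connectedness of $H'^\vee$) without any induction on $v_n$-valuation.

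There is one genuine gap in your argument for non-isomorphism of the two summands. You assert that $\End(\mathcal R)$ is commutative and conclude multiplicity-freeness; but what~\cite[Theorem~5.7]{PShopf} gives is a bijection between \emph{decompositions} of $\mathcal R$ and of $H^\vee$ as a module over itself, not an identification of $\End(\mathcal R)$ with $H^\vee$. The paper closes this gap differently: it invokes~\cite[Theorem~4.14]{PShopf}, which says that any motivic isomorphism $\mathcal R_1\cong\mathcal R_2$ induces an isomorphism of $H^\vee$-modules between the corresponding summands $H_1=v_n^{-1}\alpha_{2^n-1}\cdot H^\vee$ and $H_2=(1-v_n^{-1}\alpha_{2^n-1})\cdot H^\vee$, and then observes that $\alpha_{2^n-1}$ acts as $v_n$ on $H_1$ and as $0$ on $H_2$, so no such isomorphism exists. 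You should either supply this step or justify the commutativity of $\End(\mathcal R)$ from the references.
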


As an immediate corollary, we obtain the following result.

\begin{cl}
\label{application}
In the notation of Theorem~\ref{two-layers}, assume additionally that
$$
J(q)\cap\{1,\ldots,2^n-1\}=\emptyset
$$
{\rm(}e.g., a generic quadratic form{\rm)}. Then the following holds.
\begin{enumerate}
\item
The $\mathrm K(n)$-motive $\mathcal M_{\mathrm K(n)}(X)$ of $X$ is indecomposable for $m\leq2^{n+1}-2$.
\item
The $\mathrm K(n)$-motive $\mathcal M_{\mathrm K(n)}(X)$ of $X$ has $2^{\lfloor\frac{m-1}{2}\rfloor-2^n+2}$ indecomposable summands of rank $2^{2^n-2}$ for $m\geq2^{n+1}-1$.
\end{enumerate}
\end{cl}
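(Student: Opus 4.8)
The plan is to obtain the statement as a direct consequence of Theorem~\ref{refined-layers}, feeding in the computation of the Morava $J$-invariant from Corollary~\ref{j-ch-k} together with an elementary rank count over $\mathbb F_2[v_n^{\pm1}]$.

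\emph{Step 1: identify $H^*_{\mathrm K(n)}(q)$.} Under the hypothesis $J(q)\cap\{1,\ldots,2^n-1\}=\emptyset$, every index in $J(q)$ is $\geq 2^n$. By Theorem~\ref{co-mult} the algebra $\mathrm K(n)^*(\mathrm{SO}_m)$ is generated by $e_1,\ldots,e_s$ with $s=\min(\lfloor\tfrac{m-1}{2}\rfloor,\,2^n-1)\leq 2^n-1$, so the classes $e_i$ with $i>s$, and in particular those with $i\geq 2^n$, vanish in $\mathrm K(n)^*(\mathrm{SO}_m)$. Since by \cite[Main Theorem~5.8]{V-gras} and Corollary~\ref{j-ch-k} the ideal cutting out $H^*_{\mathrm K(n)}(q)$ inside $\mathrm K(n)^*(\mathrm{SO}_m)$ is generated by the images of $e_i$ for $i\in J(q)$, this ideal is zero, so $H^*_{\mathrm K(n)}(q)=\mathrm K(n)^*(\mathrm{SO}_m)$. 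Hence, by Theorem~\ref{two-layers}, the motive $\mathcal R$ in the decomposition $\mathcal M_{\mathrm K(n)}(X)\simeq\bigoplus_{i\in\mathcal I}\mathcal R\{i\}$ of~\eqref{decompGB} has rank $\operatorname{rk}\mathrm K(n)^*(\mathrm{SO}_m)$.

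\emph{Step 2: rank bookkeeping.} A direct check shows that the $\mathbb F_2[v_n^{\pm1}]$-algebra $\mathbb F_2[v_n^{\pm1}][e_1,\ldots,e_s]/(e_i^2=e_{2i})$ is free of rank $2^s$, so $\operatorname{rk}\mathrm K(n)^*(\mathrm{SO}_m)=2^{\lfloor\frac{m-1}{2}\rfloor}$ for $m\leq 2^{n+1}-2$ and $\operatorname{rk}\mathrm K(n)^*(\mathrm{SO}_m)=2^{2^n-1}$ for $m\geq 2^{n+1}-1$. On the other hand $\bar X$ is a split spinor variety, hence cellular, so $\operatorname{rk}\mathcal M_{\mathrm K(n)}(X)=\operatorname{rk}\mathrm K(n)^*(\bar X)=\dim_{\mathbb F_2}\mathrm{Ch}^*(\bar X)=2^{\lfloor\frac{m-1}{2}\rfloor}$. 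Comparing ranks in $\mathcal M_{\mathrm K(n)}(X)\simeq\bigoplus_{i\in\mathcal I}\mathcal R\{i\}$ then gives $|\mathcal I|=1$ when $m\leq 2^{n+1}-2$ and $|\mathcal I|=2^{\lfloor\frac{m-1}{2}\rfloor-2^n+1}$ when $m\geq 2^{n+1}-1$.

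\emph{Step 3: conclude via Theorem~\ref{refined-layers}.} For $m\leq 2^{n+1}-2$ that theorem says $\mathcal R$ is indecomposable, and since $|\mathcal I|=1$ we get $\mathcal M_{\mathrm K(n)}(X)\simeq\mathcal R\{i_0\}$, proving (1). For $m\geq 2^{n+1}-1$ the hypothesis forces $2^n-1\notin J(q)$, so Theorem~\ref{refined-layers} splits $\mathcal R$ into two non-isomorphic indecomposable motives, necessarily each of rank $\tfrac12\operatorname{rk}\mathcal R=2^{2^n-2}$; thus $\mathcal M_{\mathrm K(n)}(X)$ is a sum of $2|\mathcal I|=2^{\lfloor\frac{m-1}{2}\rfloor-2^n+2}$ indecomposable summands of rank $2^{2^n-2}$, proving (2). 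The only point I would treat with care is that the number of indecomposable summands is a well-defined invariant, i.e.\ the Krull--Schmidt property for $\mathrm K(n)$-motives of geometrically split, geometrically cellular varieties, which legitimizes the rank bookkeeping above; this is available by the same argument used in the proof of Theorem~\ref{refined-layers}. Beyond assembling the pieces correctly, I do not expect a genuine obstacle here, as the statement is a direct corollary.
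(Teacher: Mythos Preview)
Your proposal is correct and follows essentially the same route as the paper: identify $H^*_{\mathrm K(n)}(q)=\mathrm K(n)^*(\mathrm{SO}_m)$ via Corollary~\ref{j-ch-k}, compare ranks with $\mathrm K(n)^*(\overline X)$ to determine $|\mathcal I|$, and then invoke Theorem~\ref{refined-layers}. The only cosmetic difference is that the paper splits the rank computation at $m\le 2^{n+1}$ versus $m>2^{n+1}$ (citing \cite[Theorem~6.13]{LPSS}) rather than at $m\le 2^{n+1}-2$ versus $m\ge 2^{n+1}-1$, but since $2^{\lfloor\frac{m-1}{2}\rfloor}=2^{2^n-1}$ for $m=2^{n+1}-1,\,2^{n+1}$ the two partitions give identical answers.
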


\begin{rk}
Compare the above corollary with~\cite[Corollary~6.14]{LPSS}. We proved there that the motive of $X$ is decomposable for $m\geq2^{n+1}+1$. We could not prove at that time, however, that for $m=2^{n+1}-1,2^{n+1}$, the motive of $X$ is also decomposable. 

In the case of generic $q$ we also did not claim that the summands found in~\cite[Corollary~6.14]{LPSS} are indecomposable. In fact they decompose as a sum of two motives of the same rank.
\end{rk}

\begin{proof}[Proof of Corollary~\ref{application}]

According to Theorem~\ref{two-layers} there exists a $\mathrm K(n)$-motive $\mathcal R$ such that the $\mathrm K(n)$-motive of $X$ decomposes as:
\begin{equation*}
\mathcal{M}_{\mathrm K(n)}(X)\simeq\bigoplus_{i\in\mathcal{I}}\mathcal{R}\{i\}
\end{equation*}
for some multiset of integers $\mathcal{I}$, moreover, the rank of $R$ equals the rank of $H_{\mathrm K(n)}(q)$.

By Corollary~\ref{j-ch-k} we conclude that 
$$
H_{\mathrm K(n)}(q)=H_{\mathrm{Ch}}(q)[v_n^{\pm1}]/(\overline e_i\mid i\geq 2^n)=\mathrm K(n)(\mathrm{SO}_m),
$$
where the last equality follows from the assumption on the $J$-invariant of $q$. 

We can now compare the ranks of $\mathrm K(n)(\mathrm{SO}_m)$ and $\mathrm K(n)(\overline X)$ to determine the cardinality of $\mathcal I$. By~\cite[Theorem~6.13]{LPSS}, the rank of $\mathrm K(n)(\mathrm{SO}_m)$ equals $2^{\lfloor\frac{m-1}{2}\rfloor}$ for $m\leq 2^{n+1}$, and $2^{2^n-1}$ for $m>2^{n+1}$. On the other hand, the rank of $\mathrm K(n)(\overline X)$ always equals $2^{\lfloor\frac{m-1}{2}\rfloor}$. This implies that $|\mathcal I|=1$ for $m\leq 2^{n+1}$, and $|\mathcal I|=2^{\lfloor\frac{m-1}{2}\rfloor-2^n+1}$ for $m>2^{n+1}$.

It remains to use that $\mathcal R$ is indecomposable for $m\leq 2^{n+1}-2$, and that $\mathcal R$ has two indecomposable summands of the same rank for $m>2^{n+1}-2$ by Theorem~\ref{refined-layers}.
\end{proof}

The next section will be devoted to the proof Theorem~\ref{refined-layers}.

\subsection{Idempotents in $\mathrm K(n)^*(\mathrm{SO}_m)^{\vee}$}

Dualizing Proposition~\ref{co-mult-ck}, we obtain the following description of the Hopf algebra $\mathrm{CK}(n)^*(\mathrm{SO}_m)^{\vee}$ for $m\leq 2^{n+1}$.

\begin{prop}
\label{idem-small}
In the notation of Proposition~\ref{answer-dual}, for an odd $m\leq 2^{n+1}-1$ the natural map 
$$
\mathrm{CK}(n)^*(\mathrm{SO}_{m})^\vee\rightarrow\mathrm{CK}(n)^*(\mathrm{SO}_{2^{n+1}-1})^\vee
$$
coincides with the inclusion of a sub-algebra generated by $\gamma_t(\alpha_{2i-1})$ for $1\leq i\leq r$ and $1\leq t<2^{d_i}$, where $r=\lfloor\frac{m+1}{4}\rfloor$ and $d_i=\left\lfloor\log_2\left(\frac{m-1}{2i-1}\right)\right\rfloor$. In particular, 
$$
\mathrm{CK}(n)^*(\mathrm{SO}_{m})^\vee\cong\bigotimes_{i=1}^{r}\Gamma_{d_i}(\alpha_{2i-1})
$$ 
is an isomorphism of co-algebras, and the multiplication table of $\mathrm{CK}(n)^*(\mathrm{SO}_{m})^\vee$ can be deduced from the identities~{\rm(}\ref{raz-d}{\rm)} and~{\rm(}\ref{dvas-d}{\rm)} of Proposition~\ref{answer-dual}. For $m$ even there is an isomorphism of Hopf algebras
$$
\mathrm{CK}(n)^*(\mathrm{SO}_{m-1})^\vee\cong\mathrm{CK}(n)^*(\mathrm{SO}_{m})^\vee.
$$
\end{prop}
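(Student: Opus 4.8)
The plan is to derive both assertions by dualizing Proposition~\ref{co-mult-ck} over $\mathbb F_2[v_n]$. For $m$ even the claim is immediate: the proof of Proposition~\ref{co-mult-ck} produces an isomorphism of Hopf algebras $\mathrm{CK}(n)^*(\mathrm{SO}_m)\cong\mathrm{CK}(n)^*(\mathrm{SO}_{m-1})$ (pullback along the natural closed embedding), whose transpose is the last displayed isomorphism. So I will assume $m$ odd, and put $S=\frac{m-1}{2}$, $r=\lfloor\frac{m+1}{4}\rfloor$ and $d_i=\lfloor\log_2\frac{m-1}{2i-1}\rfloor$. By Proposition~\ref{prop-answer} the algebra $\mathrm{CK}(n)^*(\mathrm{SO}_{2^{n+1}-1})$ equals $\bigotimes_{i=1}^{2^{n-1}}A_i$ with $A_i=\mathbb F_2[v_n][e_{2i-1}]/(e_{2i-1}^{2^{k_i}})$; recall $e_{2k}=e_k^2$, so that $e_k=e_{2i-1}^{2^{\nu_2(k)}}$ where $2i-1$ is the odd part of $k$.

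First I would record that the natural closed embedding $\iota\colon\mathrm{SO}_m\hookrightarrow\mathrm{SO}_{2^{n+1}-1}$ is a homomorphism of algebraic groups, so that the pullback $\iota^*\colon\mathrm{CK}(n)^*(\mathrm{SO}_{2^{n+1}-1})\to\mathrm{CK}(n)^*(\mathrm{SO}_m)$ is a morphism of Hopf algebras: it is a ring homomorphism, and it intertwines the coproducts because these are induced by the group multiplications and $\iota$ is multiplicative. By the proof of Proposition~\ref{co-mult-ck} together with Nakayama's lemma~\cite[Lemma~5.5]{LPSS}, $\iota^*$ is surjective and its kernel $K$ is the ideal generated by the classes $e_k$ with $k>S$.

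The next step is to rewrite $K$ through the tensor factors $A_i$. Using the unique decomposition $k=(2i-1)2^{\nu_2(k)}$ and the inequalities $2^{d_i}\le\frac{m-1}{2i-1}<2^{d_i+1}$ (the arithmetic behind Lemma~\ref{impossible-equation}), one checks that a nonzero $e_k$ lies in $K$ precisely when either $i>r$, or $i\le r$ and $\nu_2(k)\ge d_i$. Hence $K=\sum_i A_1\otimes\cdots\otimes J_i\otimes\cdots\otimes A_{2^{n-1}}$ with $J_i=(e_{2i-1})$ for $i>r$ and $J_i=(e_{2i-1}^{2^{d_i}})$ for $i\le r$, and therefore $\mathrm{CK}(n)^*(\mathrm{SO}_m)=\bigotimes_{i\le r}A_i/J_i$ (recovering the algebra structure of Proposition~\ref{co-mult-ck}). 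Now I would dualize: all modules here are graded, free over $\mathbb F_2[v_n]$ and of finite rank in each degree, so $\mathbb F_2[v_n]$-linear dualization is exact and monoidal, and it turns $\iota^*$ into an injective morphism of Hopf algebras $\mathrm{CK}(n)^*(\mathrm{SO}_m)^\vee\hookrightarrow\mathrm{CK}(n)^*(\mathrm{SO}_{2^{n+1}-1})^\vee$ with image the annihilator $K^\perp=\bigotimes_{i\le r}(A_i/J_i)^\vee$. By Proposition~\ref{answer-dual}, $A_i^\vee=\Gamma_{k_i}(\alpha_{2i-1})$, and $(A_i/J_i)^\vee$ is the sub-coalgebra $\Gamma_{d_i}(\alpha_{2i-1})$ spanned by $\gamma_t(\alpha_{2i-1})$, $0\le t<2^{d_i}$. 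Thus the natural map identifies $\mathrm{CK}(n)^*(\mathrm{SO}_m)^\vee$ with the subalgebra generated by $\gamma_t(\alpha_{2i-1})$, $1\le i\le r$, $1\le t<2^{d_i}$, this identification is an isomorphism of co-algebras onto $\bigotimes_{i=1}^r\Gamma_{d_i}(\alpha_{2i-1})$, and the multiplication is the restriction of the one on $\mathrm{CK}(n)^*(\mathrm{SO}_{2^{n+1}-1})^\vee$, hence comes from~(\ref{raz-d}) and~(\ref{dvas-d}).

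The only ingredient with real content is the description of $\ker\iota^*$, which I borrow from the proof of Proposition~\ref{co-mult-ck}. Granting that, the step I expect to require the most care is the purely combinatorial one of the third paragraph: translating ``$e_k$ with $k>S$'' into the tensor-factor ideals $J_i$ --- equivalently, checking that $\sum_{i\le r}d_i=S$ so that the ranks match. The remaining ingredients (exactness and monoidality of finite-rank graded $\mathbb F_2[v_n]$-duality, the Hopf-algebra property of pullback along a group homomorphism, and the identification of each dual factor with a truncated divided-power algebra) are formal, so I do not anticipate a genuine obstacle beyond this index bookkeeping.
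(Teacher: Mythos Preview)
Your proposal is correct and follows exactly the approach the paper indicates: the paper introduces Proposition~\ref{idem-small} with the single sentence ``Dualizing Proposition~\ref{co-mult-ck}, we obtain the following description\ldots'' and gives no further argument, so your write-up supplies precisely the details the authors left implicit (identifying $\ker\iota^*$ via the proof of Proposition~\ref{co-mult-ck}, translating it into the tensor-factor ideals $J_i$, and dualizing). The combinatorial verification that $(2i-1)2^{d_i}>S\ge(2i-1)2^{d_i-1}$ is straightforward from the definition of $d_i$, so the step you flagged as ``requiring the most care'' is routine.
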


As a corollary we obtain an analogous result for the periodic algebraic Morava K-theories $\mathrm{K}(n)^*(\mathrm{SO}_m)^{\vee}$ for $m\leq 2^{n+1}$. Therefore, to determine the idempotents of $\mathrm{K}(n)^*(\mathrm{SO}_m)^{\vee}$ for all $m$ it is enough to consider only the case $m=2^{n+1}-1$.

\begin{prop}
\label{idem-answer}
In the notation of Proposition~\ref{answer-dual}, the only non-trivial idempotents in $\mathrm{K}(n)^*(\mathrm{SO}_{2^{n+1}-1})^{\vee}$ are 
$$
v_n^{-1}\alpha_{2^n-1}\ \text{ and }\  1-v_n^{-1}\alpha_{2^n-1}.
$$
\end{prop}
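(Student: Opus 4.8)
The plan is to study the commutative, finite free $R$-algebra $H:=\mathrm K(n)^*(\mathrm{SO}_{2^{n+1}-1})^\vee$ over $R:=\mathbb F_2[v_n^{\pm1}]$, which by Proposition~\ref{answer-dual} (localized at $v_n$) is, as a coalgebra, $\bigotimes_{i=1}^{2^{n-1}}\Gamma_{k_i}(\alpha_{2i-1})$ with multiplication generated by the identities (\ref{raz-d}) and (\ref{dvas-d}), and which is commutative by Corollary~\ref{commutative}. First I would observe that $u:=v_n^{-1}\alpha_{2^n-1}$ is a non-trivial idempotent: taking $i=2^{n-1}$ in (\ref{dvas-d}) gives $2i-1=2^n-1$, $k_i=1$, and $2j-1=(2^n-1)\cdot2-2^n+1=2^n-1$, hence $\alpha_{2^n-1}^2=v_n\alpha_{2^n-1}$ and therefore $u^2=u$; likewise $(1-u)^2=1-u$. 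For $n=1$ one has $H\cong R[t]/(t^2-v_nt)\cong R\times R$ directly, so I assume $n\ge2$ from now on.

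Next I would split off the tensor factor $\Gamma_{k_{2^{n-1}}}(\alpha_{2^n-1})=R\{1,\alpha_{2^n-1}\}$. Set $B:=\bigotimes_{i=1}^{2^{n-1}-1}\Gamma_{k_i}(\alpha_{2i-1})\subseteq H$. This $B$ is closed under multiplication: by (\ref{raz-d}) products within a single factor stay there, and in (\ref{dvas-d}) with $i<2^{n-1}$ one has $(2i-1)2^{k_i}\le2^{n+1}-2$, with equality only when $i=2^{n-1}$ (otherwise $\frac{2^n-1}{2i-1}$ would be an odd power of $2$, hence $1$), so that the element $\alpha_{2j-1}$ produced has $1\le j\le2^{n-1}-1$ and lies in $B$. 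The $R$-algebra homomorphism $B\otimes_R R[t]/(t^2-v_nt)\to H$ determined by $t\mapsto\alpha_{2^n-1}$ is well defined by the relation of the previous paragraph, multiplicative by commutativity, and an isomorphism on underlying $R$-modules because the coalgebra basis of $H$ is exactly $\{(\text{monomial in }B)\cdot\alpha_{2^n-1}^{\varepsilon}:\varepsilon\in\{0,1\}\}$. Since $v_n\in R^{\times}$, the Chinese Remainder Theorem gives $R[t]/(t(t-v_n))\cong R\times R$ with $t\mapsto(0,v_n)$, hence $H\cong B\times B$ under an isomorphism carrying $u$ to $(0,1)$ and $1-u$ to $(1,0)$.

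The crux is then to show that $B_K:=B\otimes_R\mathbb F_2(v_n)$ is a local ring, so that its only idempotents are $0$ and $1$. Its augmentation ideal is generated over $\mathbb F_2(v_n)$ by the elements $\gamma_{2^s}(\alpha_{2i-1})$ with $0\le s\le k_i-1$, $1\le i\le2^{n-1}-1$, and I would check that each of these is nilpotent. For $s<k_i-1$, (\ref{raz-d}) gives $\gamma_{2^s}(\alpha_{2i-1})^2=\binom{2^{s+1}}{2^s}\gamma_{2^{s+1}}(\alpha_{2i-1})=0$. For $s=k_i-1$, (\ref{dvas-d}) gives $\gamma_{2^{k_i-1}}(\alpha_{2i-1})^2=v_n\alpha_{2j-1}$ with $2j-1=(2i-1)2^{k_i}-2^n+1$; the formula $k_i=\lfloor\log_2\frac{2^{n+1}-2}{2i-1}\rfloor$ forces $1\le j\le2^{n-1}-1$, and in the one remaining case $k_j=1$ one further checks that $j<i$, so iterating (\ref{dvas-d}) along this strictly descending chain of indices (landing after finitely many steps on an index $l$ with $k_l\ge2$, for which $\gamma_1(\alpha_{2l-1})^2=0$) shows that $\gamma_{2^{k_i-1}}(\alpha_{2i-1})$ is nilpotent; any $\gamma_t(\alpha_{2i-1})$ with $t>2^{k_i-1}$ is a product of such elements and hence also nilpotent. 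Thus the augmentation ideal of the Artinian ring $B_K$ is a finitely generated nil ideal, hence nilpotent, so $B_K$ is local.

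Finally, $H\otimes_R\mathbb F_2(v_n)\cong B_K\times B_K$ has exactly the four idempotents $(0,0)$, $(1,1)$, $(0,1)$, $(1,0)$, which under the isomorphism above are $0$, $1$, $v_n^{-1}\alpha_{2^n-1}$, $1-v_n^{-1}\alpha_{2^n-1}$; since $H$ is free, hence $R$-torsion-free, the canonical map $H\hookrightarrow H\otimes_R\mathbb F_2(v_n)$ is injective, so every idempotent of $H$ lies among these four, and as all four already lie in $H$ and are pairwise distinct ($v_n^{-1}\alpha_{2^n-1}$ is neither $0$ nor $1$ for degree reasons), this is the complete list, leaving $v_n^{-1}\alpha_{2^n-1}$ and $1-v_n^{-1}\alpha_{2^n-1}$ as the only non-trivial idempotents. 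The main obstacle I foresee is the nilpotence step, namely controlling the chain $i\mapsto j$ defined by $2j-1=(2i-1)2^{k_i}-2^n+1$ in terms of the arithmetic of $k_i$, together with the careful (if routine) verification that the splitting $H\cong B\times B$ respects the ring structure.
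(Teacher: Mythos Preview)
Your structural approach---splitting $H\cong B\times B$ via the idempotent $v_n^{-1}\alpha_{2^n-1}$ and then showing $B_K$ is local---is different from the paper's, which works directly: it expands an arbitrary idempotent $e$ in the monomial basis and uses $e^2=e$ together with the squaring relations to successively eliminate all monomials except $1$ and $\alpha_{2^n-1}$, finishing with a lexicographic argument on products of $\alpha$'s. Your route is more conceptual and yields the explicit algebra decomposition $H\cong B\times B$ as a bonus; the paper's is more hands-on but avoids having to verify that $B$ is closed under multiplication and that the tensor splitting respects the ring structure.

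There is, however, a genuine error in your nilpotence step. The claim that ``in the one remaining case $k_j=1$ one further checks that $j<i$'' is false. Take $n=3$ and $i=2$: then $k_2=\lfloor\log_2(14/3)\rfloor=2$, and (\ref{dvas-d}) gives $\gamma_2(\alpha_3)^2=v_n\alpha_{2j-1}$ with $2j-1=3\cdot4-8+1=5$, so $j=3>i$; and indeed $k_3=\lfloor\log_2(14/5)\rfloor=1$. So the induction on $i$ that you set up does not close. The fix is simple and is essentially the descent the paper uses: do not compare $j$ to the original $i$, but observe that once you reach an index $j$ with $k_j=1$ (so $2^{n-2}<j<2^{n-1}$ inside $B$), applying (\ref{dvas-d}) again yields $\alpha_{2j-1}^2=v_n\alpha_{2j'-1}$ with $j'=2j-2^{n-1}$, and $j'<j$ since $j<2^{n-1}$. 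Thus the chain of indices within the $k=1$ regime is strictly decreasing and must reach some $l\le2^{n-2}$, where $k_l\ge2$ and hence $\alpha_{2l-1}^2=0$ by (\ref{raz-d}). With this correction your argument goes through.
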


\begin{proof}
Let $e$ be an idempotent in $\mathrm{K}(n)^*(\mathrm{SO}_{2^{n+1}-1})^{\vee}$. Decompose it in the basis of monomials
$$
\gamma_{t_1}(\alpha_{1})\gamma_{t_3}(\alpha_{3})\ldots\gamma_{t_{2^{n-1}}}(\alpha_{2^n-1})
$$
where $0\leq t_i<2^{k_i}$, and observe that
$$
\big(\gamma_{t_1}(\alpha_{1})\gamma_{t_3}(\alpha_{3})\ldots\gamma_{t_{2^{n-1}}}(\alpha_{2^n-1})\big)^2=
\begin{cases}
v_n^k\,\alpha_{2h_1-1}\ldots\alpha_{2h_k-1},&\text{if}\ t_i\in\{0,2^{k_i-1}\}\ \forall\,i,\\
0,&\text{elsewhere}.
\end{cases}
$$
Since $e^2=e$, we conclude that $e$ can contain only monomials of the form
$$
\alpha_{2h_1-1}\ldots\alpha_{2h_k-1}
$$
(with some coefficients from $\mathbb F_2[v_n^{\pm1}]$) in its decomposition. However,
$$
\big(\alpha_{2h_1-1}\ldots\alpha_{2h_k-1}\big)^2=
\begin{cases}
v_n^k\,\alpha_{2j_1-1}\ldots\alpha_{2j_k-1},&\text{if}\ k_{h_i}=1\ \forall\,i,\\
0,&\text{elsewhere}.
\end{cases}
$$
The condition $k_{h_i}=1$ is equivalent to $2^{n-2}<h_i\leq 2^{n-1}$, and the fact $e^2=e$ implies that $e$ contains only monomials $\alpha_{2h_1-1}\ldots\alpha_{2h_k-1}$ with $2^{n-2}<h_i\leq 2^{n-1}$ in its decomposition. Recall that
$$
\alpha_{2h_i-1}^2=v_n\alpha_{2j_i-1}
$$
for $2j_i-1=2(2h_i-1)-2^n+1$, and $2j_i-1$ cannot be greater than $2h_i-1$ for $2^{n-2}<h_i\leq 2^{n-1}$. Moreover, $2j_i-1=2h_i-1$ can only happen for $2h_i-1=2^n-1$. Then assume that $e$ contains a monomial $\alpha_{2h_1-1}\ldots\alpha_{2h_k-1}$ in its decomposition distinct from $1$ and $\alpha_{2^n-1}$, and take the greatest such monomial in the lexicographical order. Then $e^2$ cannot contain this monomial, which provides a contradiction.

Then $e$ is a linear combination of $1$ and $\alpha_{2^n-1}$, and it remains to observe that $e^2=e$ implies that $1$ can only have the coefficient $0$ or $1$, and $\alpha_{2^n-1}$ can only have the coefficient $0$ or $v_n^{-1}$ in the decomposition of $e$.
\end{proof}

As a corollary of Propositions~\ref{idem-small} and~\ref{idem-answer}, we obtain the following result.

\begin{cl}
\label{cl-idem-answer}
The ring $\mathrm{K}(n)^*(\mathrm{SO}_{m})^{\vee}$ does not contain non-trivial idempotents for $m\leq2^{n+1}-2$.
\end{cl}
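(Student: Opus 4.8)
The plan is to derive the statement from Propositions~\ref{idem-small} and~\ref{idem-answer}. First I would dispose of even $m$: for even $m\le 2^{n+1}-2$ the last assertion of Proposition~\ref{idem-small} gives, after inverting $v_n$, an isomorphism of Hopf algebras $\mathrm K(n)^*(\mathrm{SO}_m)^\vee\cong\mathrm K(n)^*(\mathrm{SO}_{m-1})^\vee$ with $m-1$ odd, so it suffices to treat odd $m$. For odd $m\le 2^{n+1}-2$ one has in fact $m\le 2^{n+1}-3$, whence $r=\lfloor\frac{m+1}{4}\rfloor\le 2^{n-1}-1$; this inequality $r<2^{n-1}$ is the arithmetic heart of the argument.

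Next, by Proposition~\ref{idem-small} the natural map identifies $S:=\mathrm K(n)^*(\mathrm{SO}_m)^\vee$ with the subalgebra of $B:=\mathrm K(n)^*(\mathrm{SO}_{2^{n+1}-1})^\vee\cong\bigotimes_{i=1}^{2^{n-1}}\Gamma_{k_i}(\alpha_{2i-1})[v_n^{\pm1}]$ generated by the elements $\gamma_t(\alpha_{2i-1})$ with $1\le i\le r$. I would show that, in the standard monomial $\mathbb F_2[v_n^{\pm1}]$-basis of $B$, the submodule $S$ is spanned by those basis monomials supported on the first $r$ tensor factors. Indeed, by the multiplication rules~(\ref{raz-d}) and~(\ref{dvas-d}), a product of generators of $S$ is either again such a monomial, or zero, or equal to $v_n\alpha_{2j-1}$ arising from a square $\gamma_{2^{d_i-1}}(\alpha_{2i-1})^2$ with $2j-1=(2i-1)2^{d_i}-2^n+1$ and $d_i=\lfloor\log_2\frac{m-1}{2i-1}\rfloor$; but then $(2i-1)2^{d_i}\le m-1<2^{n+1}-2$ forces $2j-1<2^n-1$ and, by a one-line computation using $m\le 2^{n+1}-1$, also $j\le r$, so $\alpha_{2j-1}$ is once more a generator of $S$. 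Consequently $\alpha_{2^n-1}$, which is the basis monomial $\gamma_1$ of the $(2^{n-1})$-th tensor factor and satisfies $2^{n-1}>r$, does not belong to $S$, and therefore neither does $v_n^{-1}\alpha_{2^n-1}$ nor $1-v_n^{-1}\alpha_{2^n-1}$.

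Finally, any nontrivial idempotent $e\in S$ is in particular a nontrivial idempotent of $B$, so by Proposition~\ref{idem-answer} it equals $v_n^{-1}\alpha_{2^n-1}$ or $1-v_n^{-1}\alpha_{2^n-1}$, contradicting the previous step; hence $\mathrm K(n)^*(\mathrm{SO}_m)^\vee$ contains no nontrivial idempotents. The one point requiring care is the bookkeeping in the middle paragraph, namely checking that relation~(\ref{dvas-d}) never escapes the first $r$ tensor factors; but this is controlled entirely by the inequality $(2i-1)2^{d_i}\le m-1$, which is immediate from the definition of $d_i$, and one only has to keep track of the finitely many (strictly decreasing) iterations of~(\ref{dvas-d}) and to note that when $d_i<k_i$ the relevant square simply vanishes, so I do not anticipate any genuine obstacle.
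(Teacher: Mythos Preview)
Your approach is correct and is exactly the one the paper has in mind: embed $S=\mathrm K(n)^*(\mathrm{SO}_m)^\vee$ into $B=\mathrm K(n)^*(\mathrm{SO}_{2^{n+1}-1})^\vee$ via Proposition~\ref{idem-small}, observe that any idempotent of $S$ is one of $B$, and rule out the two nontrivial idempotents of $B$ supplied by Proposition~\ref{idem-answer}.

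The middle paragraph, however, is doing more work than necessary and is slightly garbled: you invoke~(\ref{dvas-d}) with $d_i$ in place of $k_i$, which is only valid when $d_i=k_i$, and the case split you defer to the end should really organize the argument. A cleaner way to see that $v_n^{-1}\alpha_{2^n-1}\notin S$ avoids multiplication rules entirely. The inclusion $S\hookrightarrow B$ is dual to the surjection $\mathrm K(n)^*(\mathrm{SO}_{2^{n+1}-1})\twoheadrightarrow\mathrm K(n)^*(\mathrm{SO}_m)$, so the image of $S$ in $B$ is precisely the annihilator of the kernel of that surjection. For $m\le 2^{n+1}-2$ one has $\lfloor\frac{m-1}{2}\rfloor\le 2^n-2$, hence $e_{2^n-1}$ lies in this kernel; since $\langle\alpha_{2^n-1},e_{2^n-1}\rangle=1$, it follows at once that $\alpha_{2^n-1}\notin S$, and therefore neither $v_n^{-1}\alpha_{2^n-1}$ nor $1-v_n^{-1}\alpha_{2^n-1}$ belongs to $S$. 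This replaces your closure-under-products verification and sidesteps the $d_i$ versus $k_i$ bookkeeping.
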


For a quadratic form $q$ of dimension $m$ with trivial discriminant, it follows from Corollary~\ref{j-ch-k}, that 
$$
H^*_{\mathrm K(n)}(q)=\mathbb F_2[v_n^{\pm1}][e_1,e_3,\ldots,e_{2r-1}]/(e_1^{2^{a_1}},\ldots,e_{2r-1}^{2^{a_r}})
$$
for $0\leq a_i\leq k_i$ determined by $J(q)$, and $r=\mathrm{min}\left(\left\lfloor\frac{m+1}{4}\right\rfloor,\,2^{n-1}\right)$. Dually, $H^*_{\mathrm K(n)}(q)^\vee$ can be identified with a subalgebra $\bigotimes_{i=1}^{r}\Gamma_{a_i}(\alpha_{2i-1})$ in $\mathrm K(n)(\mathrm{SO}_m)^\vee$. In particular, we get the following corollary.

\begin{cl}
\label{h-idem}
In the notation above, if $m\leq2^{n+1}-2$ or $2^n-1\in J(q)$ then $H^*_{\mathrm K(n)}(q)^\vee$ does not contain non-trivial idempotents. If $m\geq2^{n+1}-1$ and $2^n-1\not\in J(q)$ then the only non-trivial idempotents in $\mathrm{K}(n)^*(\mathrm{SO}_{2^{n+1}-1})^{\vee}$ are 
$v_n^{-1}\alpha_{2^n-1}$  and  $1-v_n^{-1}\alpha_{2^n-1}$.
\end{cl}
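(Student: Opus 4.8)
The plan is to reduce everything to the two facts already available: $\mathrm{K}(n)^*(\mathrm{SO}_m)^{\vee}$ has no non-trivial idempotents for $m\le 2^{n+1}-2$ (Corollary~\ref{cl-idem-answer}), and for $m=2^{n+1}-1$ its only non-trivial idempotents are $v_n^{-1}\alpha_{2^n-1}$ and $1-v_n^{-1}\alpha_{2^n-1}$ (Proposition~\ref{idem-answer}). The key observation is that $H^*_{\mathrm K(n)}(q)^{\vee}$ sits inside $\mathrm{K}(n)^*(\mathrm{SO}_m)^{\vee}$ as the \emph{unital} subalgebra $\bigotimes_{i=1}^{r}\Gamma_{a_i}(\alpha_{2i-1})$ described just before the statement (its unit $\gamma_0(\alpha_1)\gamma_0(\alpha_3)\cdots\gamma_0(\alpha_{2r-1})$ is the unit of the ambient ring); hence any $e$ with $e^2=e$ in $H^*_{\mathrm K(n)}(q)^{\vee}$ is an idempotent of $\mathrm{K}(n)^*(\mathrm{SO}_m)^{\vee}$, and the whole problem becomes deciding which of the (very few) idempotents of the ambient ring actually lie in the subalgebra.

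First I would handle the range $m\le 2^{n+1}-2$. By Corollary~\ref{cl-idem-answer} the ring $\mathrm{K}(n)^*(\mathrm{SO}_m)^{\vee}$ has no non-trivial idempotents, so by the observation above neither does its unital subalgebra $H^*_{\mathrm K(n)}(q)^{\vee}$. This already gives the first assertion of the corollary in this range.

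Next, for $m\ge 2^{n+1}-1$, I would invoke the stabilization of the Morava K-theory of $\mathrm{SO}_m$: by Theorem~\ref{co-mult} the Hopf algebra $\mathrm{K}(n)^*(\mathrm{SO}_m)$ depends on $m$ only through $s=\min(\lfloor\tfrac{m-1}{2}\rfloor,2^n-1)$, which equals $2^n-1$ throughout this range, so $\mathrm{K}(n)^*(\mathrm{SO}_m)^{\vee}\cong\mathrm{K}(n)^*(\mathrm{SO}_{2^{n+1}-1})^{\vee}$, and the latter has exactly the two non-trivial idempotents $\iota=v_n^{-1}\alpha_{2^n-1}$ and $1-\iota$ by Proposition~\ref{idem-answer} (this is also the second, literal, assertion of the corollary). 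Since $1\in H^*_{\mathrm K(n)}(q)^{\vee}$, one has $\iota\in H^*_{\mathrm K(n)}(q)^{\vee}\iff 1-\iota\in H^*_{\mathrm K(n)}(q)^{\vee}\iff \alpha_{2^n-1}\in H^*_{\mathrm K(n)}(q)^{\vee}$, so the corollary follows once I check that $\alpha_{2^n-1}\in H^*_{\mathrm K(n)}(q)^{\vee}$ precisely when $2^n-1\notin J(q)$. For this, note that in the range $m\ge 2^{n+1}-1$ one has $r=\min(\lfloor\tfrac{m+1}{4}\rfloor,2^{n-1})=2^{n-1}$ and $k_{2^{n-1}}=1$ (directly from~\eqref{ki} when $m=2^{n+1}-1$, and for $m>2^{n+1}$ because $e_{2^n-1}^2=e_{2^{n+1}-2}=0$ in $\mathrm{K}(n)^*(\mathrm{SO}_m)$ by Theorem~\ref{co-mult}); hence the factor $\Gamma_{a_{2^{n-1}}}(\alpha_{2^n-1})$ of $H^*_{\mathrm K(n)}(q)^{\vee}$ has $a_{2^{n-1}}\in\{0,1\}$, and $\alpha_{2^n-1}=\gamma_1(\alpha_{2^n-1})$ occurs among the monomial basis elements $\prod_i\gamma_{t_i}(\alpha_{2i-1})$, $0\le t_i<2^{a_i}$, of $H^*_{\mathrm K(n)}(q)^{\vee}$ if and only if $a_{2^{n-1}}=1$, i.e. if and only if $e_{2^n-1}\ne 0$ in $H^*_{\mathrm K(n)}(q)$, i.e. (by Corollary~\ref{j-ch-k}) if and only if $2^n-1\notin J(q)$. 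Thus for $2^n-1\notin J(q)$ both $\iota$ and $1-\iota$ lie in $H^*_{\mathrm K(n)}(q)^{\vee}$ and are its only non-trivial idempotents, while for $2^n-1\in J(q)$ neither does, so $H^*_{\mathrm K(n)}(q)^{\vee}$ has no non-trivial idempotents; together with the previous paragraph this proves the corollary.

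The reduction to the ambient ring is immediate from the subalgebra picture, and the translation into $J(q)$ is Corollary~\ref{j-ch-k}, so the one place that needs care — and that I would regard as the main technical point — is the bookkeeping in the last paragraph: confirming that throughout $m\ge 2^{n+1}-1$ the single relevant exponent $k_{2^{n-1}}$ is exactly $1$ (so that $a_{2^{n-1}}$ cannot reach $2$), and that the dictionary ``$\alpha_{2^n-1}$ survives in the dual $\iff a_{2^{n-1}}\ge 1\iff 2^n-1\notin J(q)$'' holds on the nose. Both assertions follow from the explicit descriptions in Theorem~\ref{co-mult} and Proposition~\ref{answer-dual}, but getting the underlying inequalities precisely right is where the argument could slip.
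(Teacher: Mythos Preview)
Your proof is correct and follows essentially the same route as the paper: view $H^*_{\mathrm K(n)}(q)^\vee$ as a unital subalgebra of $\mathrm K(n)^*(\mathrm{SO}_{2^{n+1}-1})^\vee$, invoke Proposition~\ref{idem-answer} to list the idempotents of the ambient ring, and then check that $\alpha_{2^n-1}$ lies in the subalgebra exactly when $2^n-1\notin J(q)$. The paper's proof is more terse in that it does not split off the range $m\le 2^{n+1}-2$ via Corollary~\ref{cl-idem-answer} but treats all $m$ uniformly through Proposition~\ref{idem-answer} (your case split is harmless but redundant, since Corollary~\ref{cl-idem-answer} is itself derived from Proposition~\ref{idem-answer} via Proposition~\ref{idem-small}).
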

\begin{proof}
By Proposition~\ref{idem-answer}, $H^*_{\mathrm K(n)}(q)^\vee$ contains non-trivial idempotents if and only if $\alpha_{2^n-1}\in H^*_{\mathrm K(n)}(q)^\vee$, i.e., $e_{2^n-1}$ does not map to zero in $H^*_{\mathrm K(n)}$. The latter exactly means that $2^n-1\not\in J(q)$.
\end{proof}

Now we are ready to finish the proof of Theorem~\ref{refined-layers}. 

\begin{proof}[Proof of Theorem~\ref{refined-layers}]

According to~\cite[Theorem~5.7]{PShopf} there is a one-to-one correspondence between motivic decompositions of $\mathcal R$ in the category of $\mathrm K(n)$-motives and direct sum decompositions of $H^*_{\mathrm K(n)}(q)^{\vee}$ as a module over itself. However, $H^*_{\mathrm K(n)}(q)^{\vee}$ is indecomposable as a module over itself for $m\leq 2^{n+1}-2$ or $2^n-1\in J(q)$ by Corollary~\ref{h-idem}, therefore $\mathcal R$ is also indecomposable in these cases.

If $m\geq2^{n+1}-1$ and $2^n-1\not\in J(q)$ then the only non-trivial idempotents in $H^*_{\mathrm K(n)}(q)^{\vee}$ are 
$v_n^{-1}\alpha_{2^n-1}$  and  $1-v_n^{-1}\alpha_{2^n-1}$. The corresponding modules $H_1=v_n^{-1}\alpha_{2^n-1}H^*_{\mathrm K(n)}(q)^{\vee}$  and  $H_2=(1-v_n^{-1}\alpha_{2^n-1})H^*_{\mathrm K(n)}(q)^{\vee}$ have equal ranks, but they are not isomorphic as $H^*_{\mathrm K(n)}(q)^{\vee}$-modules ($\alpha_{2^n-1}$ acts as the identity on the first one, and as zero on the second one).

Therefore, the motive $\mathcal R$ also decomposes as a sum $\mathcal R\cong\mathcal R_1\oplus\mathcal R_2$ for indecomposable $\mathcal R_i$, $i=1,2$. It remains to observe that these summands cannot be isomorphic. Indeed, the realisation $\alpha_\star$ of the isomorphism $\alpha\in\mathrm K(n)^*(X\times X)$ between $\mathcal R_i$ defines an isomorphism of $H_{\mathrm K(n)}(q)^{\vee}$-modules between $H_i$ by~\cite[Theorem~4.14]{PShopf}, which is impossible.
\end{proof}

\begin{rk}
We remark that for $m\geq2^{n+1}-1$, the $\mathrm K(n)$-motive of $X$ detects whether $2^n-1\in J(q)$ in the following way. If all indecomposable summands of $\mathcal M_{\mathrm K(n)}(X)$ are isomorphic to each other up to a shift, then $2^n-1\in J(q)$. However, if there are two indecomposable summands of $\mathcal M_{\mathrm K(n)}(X)$ which are not isomorphic to each other up to a shift, then $2^n-1\not\in J(q)$.
\end{rk}

\section*{Appendices}
\renewcommand{\thesection}{A1}

\renewcommand{\thesection}{A1}
\section{Algebraic part of $\mathrm{H}_*\big(\mathrm{SO}(2^{n+1}-1);\,\mathbb F_2\big)$}
\label{app-alg-part}

In this appendix we will give a proof of Proposition~\ref{quotient-ch}. By~\cite[Remark after Theorem~6]{Kac}, the natural map 
$$
\mathrm{Ch}^*(\mathrm{SO}_{m})\rightarrow\mathrm{H}^*(\mathrm{SO}(m);\,\mathbb F_2)
$$
is injective and its image coincides with the set of squares in $\mathrm{H}^*(\mathrm{SO}(m);\,\mathbb F_2)$. In Proposition~\ref{quotient-ch} we dualized this statement. To give a detailed proof we will use the following result.

\begin{lm}
\label{square-dual}
For an $\mathbb F_2$-algebra $H$ consider the switch map $\tau\colon H\otimes H\rightarrow H\otimes H$ sending $a\otimes b$ to $b\otimes a$. Let $H\otimes H^\tau$ denote the submodule of $\tau$-invariant elements, and $H\otimes H_\tau$ the module of $\tau$-coinvariants, i.e., the kernel and the cokernel of the map $\mathrm{id}+\tau$. Consider the map
$$
\mathrm{sq}\colon H\otimes H^\tau\rightarrow H
$$
given as a composition of the natural inclusion $H\otimes H^\tau\hookrightarrow H\otimes H$ followed by the multiplication $\mathrm m\colon H\otimes H\rightarrow H$. The image of $\mathrm{sq}$ coincides with the submodule of $H$ generated by squares.

Then the dual map to a switch $\tau^\vee$ coincides with the switch on $H^\vee\otimes H^\vee$, and the dual map to $\mathrm{sq}$,
$$
\mathrm{sq}^\vee\colon H^\vee\rightarrow H^\vee\otimes H^\vee_\tau,
$$
is given by the class of the co-multiplication $\Delta=\mathrm m^\vee$.
\end{lm}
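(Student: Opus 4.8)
The plan is to reduce the whole statement to standard linear-algebra duality, using that $H$ is finite-dimensional over $\mathbb F_2$ (equivalently, graded of finite type — the only case we use, e.g. $H=\mathrm H^*(\mathrm{SO}(m);\mathbb F_2)$), so that $V\mapsto V^\vee$ is exact, involutive, and monoidal: $(V\otimes W)^\vee=V^\vee\otimes W^\vee$, the inclusion of a subspace dualizes to the restriction surjection with kernel the annihilator, and a quotient dualizes to the inclusion of an annihilator. The first step is to compute $\tau^\vee$ by hand: for $\phi,\psi\in H^\vee$ and $a,b\in H$,
$$
\langle\tau^\vee(\phi\otimes\psi),\,a\otimes b\rangle=\langle\phi\otimes\psi,\,b\otimes a\rangle=\phi(b)\psi(a)=\langle\psi\otimes\phi,\,a\otimes b\rangle,
$$
so $\tau^\vee$ is the switch on $H^\vee\otimes H^\vee$; in particular it is again an involution, and since $\mathrm{char}=2$ we have $\mathrm{id}+\tau=\mathrm{id}-\tau$ and $(\mathrm{id}+\tau)^2=0$ both on $H\otimes H$ and on $H^\vee\otimes H^\vee$.

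Next I would identify the duals of the (co)invariants. By definition $H\otimes H^\tau=\ker(\mathrm{id}+\tau)$ and $H\otimes H_\tau=\mathrm{coker}(\mathrm{id}+\tau)$ on $H\otimes H$. Dualizing the inclusion $\iota\colon H\otimes H^\tau\hookrightarrow H\otimes H$ yields a surjection $\iota^\vee\colon H^\vee\otimes H^\vee\twoheadrightarrow(H\otimes H^\tau)^\vee$ with kernel $\ker(\mathrm{id}+\tau)^\perp=\mathrm{im}\big((\mathrm{id}+\tau)^\vee\big)=\mathrm{im}(\mathrm{id}+\tau^\vee)$, which by the first step is the image of $\mathrm{id}+(\text{switch})$, i.e.\ precisely the kernel of the canonical projection $H^\vee\otimes H^\vee\twoheadrightarrow H^\vee\otimes H^\vee_\tau$. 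Hence $(H\otimes H^\tau)^\vee\cong H^\vee\otimes H^\vee_\tau$ and, crucially, under this identification $\iota^\vee$ is exactly the projection onto $\tau$-coinvariants. (The symmetric computation, with $\ker$ and $\mathrm{im}$ interchanged, gives $(H\otimes H_\tau)^\vee\cong H^\vee\otimes H^{\vee\tau}$, which explains why the target of $\mathrm{sq}^\vee$ in the statement is the coinvariants $H^\vee\otimes H^\vee_\tau$.) Since $\mathrm{sq}=\mathrm m\circ\iota$, we get $\mathrm{sq}^\vee=\iota^\vee\circ\mathrm m^\vee=\iota^\vee\circ\Delta$, so $\mathrm{sq}^\vee$ is $\Delta$ followed by passage to $\tau$-coinvariants — the class of the co-multiplication, as asserted. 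The claim that $\mathrm{im}(\mathrm{sq})$ is the submodule generated by squares is then an immediate verification: $\mathrm{sq}(a\otimes a)=a^2$ gives one inclusion, and a $\tau$-invariant tensor is an $\mathbb F_2$-combination of elements $a\otimes a$ and $a\otimes b+b\otimes a$ with $\mathrm m(a\otimes b+b\otimes a)=ab+ba$, giving the other (and for a commutative $H$, as in all cases relevant here, $ab+ba=0$, so the image is precisely the Frobenius image).

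I expect the only genuine subtlety to be the bookkeeping of invariants versus coinvariants under dualization — keeping straight that the dual of a space of $\tau$-invariants is a space of $\tau$-coinvariants and vice versa — together with verifying that the identification $\iota^\vee=(\text{projection to coinvariants})$ is the correct one rather than its symmetric counterpart; the characteristic-$2$ coincidence $\mathrm{id}+\tau=\mathrm{id}-\tau$ is what makes all signs disappear. Once the finite-dimensionality hypothesis is in place so that $(-)^\vee$ is exact and monoidal, everything else is a routine diagram chase.
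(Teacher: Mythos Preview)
Your proof is correct and follows essentially the same approach as the paper's: both compute $\tau^\vee$ directly and then identify $(H\otimes H^\tau)^\vee$ with the $\tau$-coinvariants $H^\vee\otimes H^\vee_\tau$ via standard duality, from which $\mathrm{sq}^\vee=\iota^\vee\circ\Delta$ is immediate. Your version is in fact more carefully spelled out than the paper's very terse proof; the only point you leave implicit is why $ab+ba$ lies in the span of squares in the non-commutative case, which follows from $(a+b)^2=a^2+ab+ba+b^2$ in characteristic $2$.
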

\begin{proof}
Take an additive basis $\{e_i\}$ of $H$, and let $\{f_i\}$ denote the dual basis of $H^\vee$. Then
$$
\langle\tau^\vee(f_i\otimes f_j),\,e_h\otimes e_k\rangle=\langle f_i\otimes f_j,\,e_k\otimes e_h\rangle,
$$
therefore $\tau^\vee(f_i\otimes f_j)=f_j\otimes f_i$, in particular, 
$$
(H\otimes H^\tau)^\vee=\mathrm{Coker}\,\tau^\vee=H^\vee\otimes H^\vee_\tau.
$$
The last claim is clear.
\end{proof}

Let $\mathrm{Ch}^*(\mathrm{SO}_m)^\vee$ denote the Hopf algebra dual to $\mathrm{Ch}^*(\mathrm{SO}_m)$. 

\begin{prop}[Proposition~\ref{quotient-ch}]
In the notation of Subsection~\ref{top-morava}, the Hopf algebra $\mathrm{Ch}^*(\mathrm{SO}_{2^{n+1}-1})^\vee$ is isomorphic to the quotient of $\mathrm{H}_*(\mathrm{SO}(2^{n+1}-1);\,\mathbb F_2)$ modulo the ideal generated by $\beta_{2i-1}$ for all $1\leq i\leq 2^n-1$.
\end{prop}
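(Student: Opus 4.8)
The plan is to dualize the inclusion $\mathrm{Ch}^*(\mathrm{SO}_{2^{n+1}-1})\hookrightarrow H:=\mathrm H^*(\mathrm{SO}(2^{n+1}-1);\,\mathbb F_2)$, whose image is by Kac's theorem the submodule $S$ generated by squares, and to identify the kernel of the resulting surjection $H^\vee\twoheadrightarrow S^\vee$ with the ideal generated by the $\beta_{2i-1}$. First I would observe that $S$, being generated by the squares $x_{2i-1}^2$ of the primitive generators $x_{2i-1}$ (which are again primitive since we work over $\mathbb F_2$), is a sub-Hopf-algebra of $H$; hence $H^\vee\to S^\vee$ is a surjection of Hopf algebras, its kernel is a Hopf ideal, and it is enough to prove the corresponding statement about graded submodules, with the Hopf structure then coming for free.

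Next I would make this kernel explicit. Writing $H=\bigotimes_{i=1}^{2^n-1}\mathbb F_2[x_{2i-1}]/\bigl(x_{2i-1}^{2^{k_i+1}}\bigr)$, so that $S=\bigotimes_i\mathbb F_2[x_{2i-1}^2]/\bigl(x_{2i-1}^{2^{k_i+1}}\bigr)$ and $H^\vee\cong\bigotimes_i\Gamma_{k_i+1}(\beta_{2i-1})$ as in~(\ref{homology-so}), the monomials $\prod_i\gamma_{t_i}(\beta_{2i-1})$ form the basis of $H^\vee$ dual to the monomials $\prod_i x_{2i-1}^{t_i}$. Since $S$ is spanned by those $\prod_i x_{2i-1}^{t_i}$ with all $t_i$ even, the dual surjection $H^\vee\to S^\vee$ sends $\prod_i\gamma_{t_i}(\beta_{2i-1})$ to $\prod_i\gamma_{t_i/2}(\alpha_{2i-1})$ if all $t_i$ are even and to $0$ otherwise; thus $\ker(H^\vee\to S^\vee)$ is the $\mathbb F_2[v_n]$-... (rather, $\mathbb F_2$-)span of the monomials having at least one odd exponent. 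Equivalently, one may phrase this via Lemma~\ref{square-dual}: as $S=\mathrm{im}(\mathrm{sq})$ one gets $\ker(H^\vee\to S^\vee)=(H/S)^\vee=\ker(\mathrm{sq}^\vee)$, and since $\mathrm{sq}^\vee$ is the composite of the co-multiplication $\Delta$ with the projection onto the $\tau$-coinvariants, co-commutativity of $H^\vee$ shows that the only term of $\Delta\bigl(\prod_i\gamma_{t_i}(\beta_{2i-1})\bigr)$ surviving this projection is the ``diagonal'' one, which is present precisely when all $t_i$ are even — yielding the same description of the kernel.

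It then remains to check that this span equals the ideal $J:=(\beta_1,\beta_3,\ldots,\beta_{2^n-1})$. Here one uses the divided-power multiplication $\gamma_s(\beta)\gamma_t(\beta)=\binom{s+t}{s}\gamma_{s+t}(\beta)$, which gives $\beta_{2i-1}\cdot\gamma_t(\beta_{2i-1})=(t+1)\gamma_{t+1}(\beta_{2i-1})$, equal to $\gamma_{t+1}(\beta_{2i-1})$ for $t$ even and to $0$ for $t$ odd. So multiplying a basis monomial by $\beta_{2i-1}$ either annihilates it or changes its $i$-th exponent from an even value to an odd one; reading this in both directions shows that $J$ is exactly the span of the monomials with some odd exponent. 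Combined with the previous step this yields a Hopf-algebra isomorphism $H^\vee/J\xrightarrow{\ \sim\ }S^\vee=\mathrm{Ch}^*(\mathrm{SO}_{2^{n+1}-1})^\vee$ (the ideal $J$ being a Hopf ideal because the $\beta_{2i-1}$ are primitive). The only point requiring a little care is the bookkeeping in this last step — verifying that every monomial with an odd exponent actually lies in $J$, by exhibiting it as $\beta_{2i-1}$ times the monomial with the $i$-th exponent lowered by one — but this is completely elementary, and I do not expect any genuine obstacle.
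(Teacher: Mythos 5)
Your proof is correct and follows essentially the same strategy as the paper's: use Kac's theorem to identify $\mathrm{Ch}^*(\mathrm{SO}_{2^{n+1}-1})$ with the subalgebra $S$ of squares in $H=\mathrm H^*(\mathrm{SO}(2^{n+1}-1);\mathbb F_2)$, and dualize. The paper's proof proceeds by showing the $\beta_{2i-1}$ lie in $\ker(\mathrm{sq}^\vee)$ (via Lemma~\ref{square-dual} and primitivity), deducing that the whole ideal $J=(\beta_1,\ldots,\beta_{2^n-1})$ lies in the kernel of $H^\vee\twoheadrightarrow\mathrm{Ch}^*(\mathrm{SO}_{2^{n+1}-1})^\vee$, and then comparing dimensions to conclude. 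You make the dimension count unnecessary by identifying both the kernel of $H^\vee\to S^\vee$ and the ideal $J$ explicitly in terms of the monomial basis: both are precisely the span of monomials $\prod_i\gamma_{t_i}(\beta_{2i-1})$ with at least one $t_i$ odd, the identification of $J$ resting on the divided-power relation $\beta_{2i-1}\gamma_t(\beta_{2i-1})=(t+1)\gamma_{t+1}(\beta_{2i-1})$. This is a mild gain in transparency — it makes the equality (rather than just containment) manifest — and it also cleanly exhibits the quotient as $\bigotimes_i\Gamma_{k_i}(\alpha_{2i-1})$, which the paper records separately as Corollary~\ref{cor-ch}. Both arguments are sound; yours is slightly more self-contained at the cost of a bit more bookkeeping.
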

\begin{proof}
Denote $H=\mathrm{H}^*(\mathrm{SO}(2^{n+1}-1);\,\mathbb F_2)$ and $H^\vee=\mathrm{H}_*(\mathrm{SO}(2^{n+1}-1);\,\mathbb F_2)$ its dual.

Consider the right exact sequence
$$
\xymatrix{
H\otimes H^\tau\ar[rr]^{\ \ \ \mathrm{sq}}&&H\ar[rr]^{\mathrm{coker(sq)}}&&C\ar[rr]&&0
}
$$
where $\mathrm{sq}$ is the map defined in Lemma~\ref{square-dual}. Then $C^\vee$ is the kernel of the map $\mathrm{sq}^\vee$. 

By Lemma~\ref{square-dual} we easily see that $\beta_{2i-1}$ lies in the kernel of $\mathrm{sq}^\vee$ for all $i$, $1\leq i\leq 2^n-1$: in fact, $\beta_{2i-1}$ are primitive, and 
$$
\Delta(\beta_{2i-1})=(\mathrm{id}+\tau)(1\otimes\beta_{2i-1})
$$ 
vanishes in $H^\vee\otimes H^\vee_\tau$.

Since $\mathrm{Ch}^*(\mathrm{SO}_{2^{n+1}-1})$ coincides with $\mathrm{Im}(\mathrm{sq})$, we conclude that 
$$
\xymatrix{
\mathrm{Ch}^*(\mathrm{SO}_{2^{n+1}-1})^\vee=\big(\mathrm{Im}(\mathrm{sq})\big)^\vee&&H^\vee\ar@{->>}[ll]&&\ \ C^\vee\ar@{>->}[ll]_{\mathrm{coker(sq)}^\vee}
}
$$
is exact, in particular, the kernel of the map $H^\vee\twoheadrightarrow\mathrm{Ch}^*(\mathrm{SO}_{2^{n+1}-1})^\vee$ contains  $\beta_{2i-1}$, $1\leq i\leq 2^n-1$.

However, the natural map $\mathrm{Ch}^*(\mathrm{SO}_{2^{n+1}-1})\hookrightarrow H$ is a morphism of algebras and co-algebras, therefore the dual map is also a morphism of algebras and co-algebras. Therefore all multiples of $\beta_{2i-1}$ are also mapped to zero in $\mathrm{Ch}^*(\mathrm{SO}_{2^{n+1}-1})^\vee$.

Comparing the dimensions we obtain the claim. More precisely, the set of monomials
$$
\{\gamma_{2t_1}(\beta_{1})\gamma_{2t_3}(\beta_{3})\ldots\gamma_{2t_{2^{n-1}}}(\beta_{2^n-1})\mid0\leq t_i<2^{k_i}\}
$$
(where $k_i$ are given by~(\ref{ki}) and $\gamma_0(\beta_{2i-1})$ stands for $1$) has the same cardinality as the dimension of $\mathrm{Ch}^*(\mathrm{SO}_{2^{n+1}-1})$, and the images of these monomials generate $\mathrm{Ch}^*(\mathrm{SO}_{2^{n+1}-1})^\vee$. 
\end{proof}

\renewcommand{\thesection}{A2}
\section{Bi-ideals in $\mathrm{Ch}^*(\mathrm{SO}_m)$}
\label{app-milnor-moore}

In this appendix we give a proof of Lemma~\ref{hopf-chow}. This result is well-known and included only for the convenience of the reader. We recall the following classical theorem.

\begin{tm}[Milnor--Moore]
\label{milnor-moore-thm}
If $H$ is a non-negatively graded, primitively generated, commutative, finite dimensional bi-algebra over $\mathbb F_p$, with $H_0 = \mathbb F_p$, then there is an isomorphism of bi-algebras
$$
H\cong\mathbb F_p[y_1,\ldots,y_t]/(y_1^{p^{l_1}}, \ldots,y_t^{p^{l_t}})
$$ 
for some $l_i$, where $y_i$ are primitive.
\end{tm}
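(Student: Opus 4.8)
The plan is to run the classical Borel-type induction on $\dim_{\mathbb{F}_p} H$, at each step stripping off a sub-Hopf-algebra generated by a single primitive element. First I would fix the generators. Since $H$ is connected and graded, a homogeneous lift of any basis of the indecomposables $Q(H)=H^+/(H^+)^2$ generates $H$ as an algebra (graded Nakayama, by induction on degree), and since $H$ is primitively generated the natural map $P(H)\to Q(H)$ is surjective, so the lift $y_1,\dots,y_t$ can be chosen inside $P(H)$. This already produces a surjection of bialgebras $\mathbb{F}_p[y_1,\dots,y_t]\twoheadrightarrow H$ with the $y_i$ primitive and mapping to a basis of $Q(H)$, where $\mathbb{F}_p[-]$ carries the Hopf structure with its generators primitive and the map is a coalgebra map because it is an algebra map between bialgebras agreeing with $\Delta$ on generators. (Recall that a connected graded bialgebra automatically has an antipode, so ``bialgebra'' and ``Hopf algebra'' are the same notion here.)

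Next I would identify the monogenic building block: for a primitive $y\in H^+$ the sub-bialgebra $B=\mathbb{F}_p\langle y\rangle$ it generates is isomorphic to $\mathbb{F}_p[y]/(y^{p^l})$ for some $l\geq1$. Indeed $y$ is nilpotent since $H$ is finite-dimensional, so $B\cong\mathbb{F}_p[y]/(y^r)$ with $r$ minimal such that $y^r=0$; then $(y^r)$ is a bi-ideal of $\mathbb{F}_p[y]$, which, expanding $\Delta(y^r)$ using the primitivity of $y$, forces $\binom{r}{j}\equiv 0\pmod p$ for all $0<j<r$, and by Lucas's theorem this holds exactly when $r$ is a power of $p$. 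Being connected and graded, $B$ is in fact a sub-Hopf-algebra of $H$.

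For the induction assume $\dim H>1$, pick a primitive $y$ of minimal positive degree, let $B\cong\mathbb{F}_p[y]/(y^{p^l})$ be the sub-Hopf-algebra it generates, and form the quotient Hopf algebra $\bar H=H/HB^+$. By the classical fact that a commutative connected graded Hopf algebra is a free module over any sub-Hopf-algebra, $H$ is $B$-free and $\dim H=\dim B\cdot\dim\bar H$, so $\dim\bar H<\dim H$. Moreover $\bar H$ is again connected, graded, commutative, finite-dimensional, and primitively generated (it is generated by the images of the $y_i$, which are primitive in $\bar H$), so by the inductive hypothesis $\bar H\cong\mathbb{F}_p[\bar y_2,\dots,\bar y_t]/(\bar y_i^{p^{l_i}})$ with the $\bar y_i$ primitive. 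I would then lift each $\bar y_i$ to a primitive element of $H$: starting from an arbitrary homogeneous lift $y_i'$, the defect $\Delta(y_i')-y_i'\otimes1-1\otimes y_i'$ lies in $\ker(H\otimes H\to\bar H\otimes\bar H)=HB^+\otimes H+H\otimes HB^+$ and is removed by correcting $y_i'$ by an element of $HB^+$, proceeding by induction on degree. Once the $y_i$ are primitive, the multiplication $B\otimes\bar H\to H$ is a Hopf-algebra map of free $B$-modules of equal rank inducing an isomorphism modulo $B^+$, hence an isomorphism; combined with the inductive presentation of $\bar H$ this gives $H\cong\mathbb{F}_p[y,y_2,\dots,y_t]/(y^{p^l},y_2^{p^{l_2}},\dots,y_t^{p^{l_t}})$ with all generators primitive.

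The step I expect to be the main obstacle is the splitting $H\cong B\otimes\bar H$, equivalently the construction of primitive lifts of the generators of $\bar H$: a general extension of Hopf algebras with commutative kernel and quotient need not split, and it is precisely the grading (lowest-degree elements of the augmentation ideal being automatically primitive) together with the commutativity that makes the relevant obstruction vanish in the degrees that actually occur. A secondary input I would simply quote rather than prove is the freeness of a commutative connected graded Hopf algebra over a sub-Hopf-algebra, which powers the dimension count that drives the induction.
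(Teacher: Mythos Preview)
The paper does not prove this statement at all: it simply records that ``a much more general theorem is proven in~\cite[Theorem~7.16]{MM}'' and uses it as a black box. Your proposal, by contrast, sketches the classical Borel--Milnor--Moore induction, which is indeed the content of the cited reference. So the comparison is between a citation and an attempted proof; your route is the standard one, and the overall architecture (choose primitive generators via $P(H)\twoheadrightarrow Q(H)$, strip off a monogenic sub-Hopf-algebra $B\cong\mathbb F_p[y]/(y^{p^l})$, use freeness over $B$, and induct) is correct.

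There is, however, a genuine gap at exactly the point you flag. You write that the multiplication map $B\otimes\bar H\to H$ is ``a Hopf-algebra map'' once the lifts $y_i$ are primitive. For this map even to be a well-defined \emph{algebra} map you need the relations $y_i^{p^{l_i}}=0$ to hold in $H$, whereas a priori you only know $y_i^{p^{l_i}}\in HB^+$. Your description of the primitive-lifting step (``the defect \ldots\ is removed by correcting $y_i'$ by an element of $HB^+$, proceeding by induction on degree'') does not explain why such a correction exists; an arbitrary element of $HB^+\otimes H+H\otimes HB^+$ need not be of the form $\widetilde\Delta(z)$ for some $z\in HB^+$. One clean way to close the gap is to avoid the section entirely: take the primitive generators $y_1,\ldots,y_t$ chosen at the outset, let $p^{l_i}$ be the height of $y_i$ \emph{in $H$} (not in $\bar H$), and consider the surjective bialgebra map $\mathbb F_p[Y_1,\ldots,Y_t]/(Y_i^{p^{l_i}})\twoheadrightarrow H$. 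Injectivity then follows from a dimension count, and it is \emph{here} that the freeness result you quote is used: iterating $\dim H=\dim B_1\cdot\dim(H/HB_1^+)$ shows $\dim H=\prod_i p^{l_i}$, but one must check that the height of $\bar y_i$ in the successive quotients does not drop --- this is where the choice of $y_1$ of minimal degree and the primitivity of $p$-th powers are actually used, and it deserves a sentence of justification.
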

In fact, a much more general theorem is proven in~\cite[Theorem~7.16]{MM}. Now we can give a proof of Lemma~\ref{hopf-chow}.

\begin{lm}[Lemma~\ref{hopf-chow}]
Let $H=\mathbb F_{2}[e_1,\,e_3,\ldots,e_{2r-1}]/(e_{1}^{2^{k_1}},e_{3}^{2^{k_2}},\ldots,e_{2r-1}^{2^{k_r}})$ be a commutative, graded Hopf algebra with $e_{2i-1}$ primitive of degree $2i-1$. Then any {\rm(}homogeneous{\rm)} bi-ideal $I$ of $H$ coincides with the ideal
$$
\big(e_1^{2^{a_1}},\,e_3^{2^{a_2}},\ldots,e_{2r-1}^{2^{a_{r}}}\big)
$$
for some $0\leq a_{i}\leq k_i$.
\end{lm}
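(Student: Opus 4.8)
The plan is to reduce everything to the Milnor–Moore structure theorem (Theorem~\ref{milnor-moore-thm}) by passing to the quotient Hopf algebra $H/I$. The key observation is that for a homogeneous bi-ideal $I\trianglelefteq H$, the quotient $H/I$ inherits the structure of a graded, connected, commutative, finite-dimensional Hopf algebra over $\mathbb F_2$, and it is primitively generated because $H$ is (the images of the primitive generators $e_{2i-1}$ generate $H/I$ and stay primitive). Thus Theorem~\ref{milnor-moore-thm} applies and gives
$$
H/I\cong\mathbb F_2[\bar y_1,\ldots,\bar y_t]/(\bar y_1^{2^{l_1}},\ldots,\bar y_t^{2^{l_t}})
$$
with the $\bar y_j$ primitive. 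The task is then to match this presentation against the generators $e_{2i-1}$ and to show that the kernel of $H\twoheadrightarrow H/I$ is generated by pure powers $e_{2i-1}^{2^{a_i}}$.

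Concretely, I would proceed as follows. First I would analyze the space of primitives $P(H)$: since $H$ is a tensor product of truncated polynomial algebras $\mathbb F_2[e_{2i-1}]/(e_{2i-1}^{2^{k_i}})$ on primitive generators, the primitives of $H$ are spanned (over $\mathbb F_2$) by the elements $e_{2i-1}^{2^{s}}$ for $0\le s< k_i$ — this uses that in characteristic $2$, $\Delta(x^2)=\Delta(x)^2$ so squares of primitives are primitive, together with a degree/grading argument to see there is nothing else (cf.~\cite[Lemma~6.1\,a)]{LPSS}, which the paper already invokes for exactly this). Second, the map $H\to H/I$ induces a surjection $P(H)\to P(H/I)$ on primitives (again a characteristic-$2$/connectedness argument), and since $H/I$ is also primitively generated of the Milnor–Moore form, its primitives are spanned by $\bar y_j^{2^s}$, $0\le s<l_j$. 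Third, comparing these two bases by degree: each generator $\bar y_j$ of $H/I$ must be the image of some $e_{2i-1}$ (a primitive of $H/I$ in odd degree $2i-1$ that is not a square), so after reindexing we may take $t=r$ and $\bar y_i$ the image of $e_{2i-1}$; then $l_i\le k_i$ is forced, set $a_i:=l_i$. The kernel of $H\to H/I$ therefore contains $e_{2i-1}^{2^{a_i}}$ for all $i$, i.e. $(e_1^{2^{a_1}},\ldots,e_{2r-1}^{2^{a_r}})\subseteq I$; and a dimension count ($\dim H/(e_1^{2^{a_1}},\ldots)=\prod 2^{a_i}=\dim H/I$) forces equality.

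The main obstacle, and the step deserving the most care, is the third one: showing that each Milnor–Moore generator $\bar y_j$ of $H/I$ can be taken to be the image of one of the original generators $e_{2i-1}$, rather than merely some primitive linear combination of the $e_{2i-1}^{2^s}$. This is where the odd grading of the $e_{2i-1}$ is used decisively: a new generator of $H/I$ must live in an odd degree and must not be a square in $H/I$, whereas every primitive of $H$ in a degree that is not of the form $2i-1$ (odd, $\le 2r-1$) is a square (being some $e_{2j-1}^{2^s}$ with $s\ge1$); one checks the indecomposable quotient $Q(H/I)$ is spanned by images of odd-degree $e_{2i-1}$'s, so $t\le r$, and surjectivity onto $Q(H/I)$ together with the bound on $\dim H/I$ pins down $t=r$ and the correspondence. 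Once the bookkeeping of gradings is set up this is routine, but it is the crux of the argument; everything else is formal Hopf-algebra manipulation in characteristic $2$.
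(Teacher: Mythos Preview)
Your approach is essentially the same as the paper's: both apply the Milnor--Moore theorem to the quotient $H/I$ and then match the resulting primitive generators $\bar y_j$ against the images $\bar e_{2i-1}$. The paper writes each $\bar e_{2k-1}$ as a sum of Milnor--Moore generators of the same degree, sets $a_k=\max(l_{i_j})$, and then shows the induced surjection $H/J\twoheadrightarrow H/I$ (with $J=(e_{2i-1}^{2^{a_i}})$) is an isomorphism by checking injectivity on primitives and invoking \cite[Lemma~6.8]{LPSS}. Your route is slightly more direct: since $Q(H/I)$ is a quotient of $Q(H)$ and hence at most one-dimensional in each odd degree, there is at most one $\bar y_j$ per degree, and it must equal $\bar e_{2i-1}$ (both being nonzero primitives of $H/I$ in that degree, and the primitive space there being one-dimensional); then a dimension count finishes.

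Two small corrections. First, you should not claim $t=r$: when some $e_{2i-1}\in I$ one has $t<r$, and for those indices you simply set $a_i=0$. Second, your step-2 assertion that $P(H)\to P(H/I)$ is surjective is not a general ``characteristic-$2$/connectedness'' fact and you do not justify it; fortunately you do not need it, since the $Q(H/I)$ argument you sketch in the final paragraph already identifies each $\bar y_j$ with some $\bar e_{2i-1}$ without appealing to lifting of primitives.
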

\begin{proof}
Since $H/I$ is still a non-negatively graded commutative co-commutative Hopf algebra with $(H/I)_0=\mathbb F_2$, it is isomorphic to
$$
H/I\cong\mathbb F_{2}[\overline y_1,\,\overline y_2,\ldots,\overline y_s]/(\overline y_{1}^{2^{l_1}},\overline y_{2}^{2^{l_2}},\ldots,\overline y_{s}^{2^{l_s}})
$$
for some primitive $\overline y_i$ and some $l_i$ by Theorem~\ref{milnor-moore-thm}. Take arbitrary pre-images $y_i\in H$ of $\overline y_i$. Since the images $\overline e_{2k-1}$ of $e_{2k-1}$ are primitive in $H/I$, they have the form
$$
\overline e_{2k-1}=\sum\overline y_{i_j}^{2^{b_j}}
$$  
for some $i_j$ and $b_j<l_{i_j}$ (see~\cite[Lemma~6.1\,a)]{LPSS}). Comparing the dimensions, we conclude that $b_i=0$ and $\mathrm{deg}(y_{i_j})=2k-1$. Let 
$$
a_{k}=\mathrm{max}(l_{i_j})
$$
for the above set of $i_j$. Then $e_{2k-1}-\sum y_{i_j}\in I$, and $y_{i_j}^{2^{a_{k}}}\in I$, therefore
$$
e_{2k-1}^{2^{a_{k}}}\in I.
$$
Now let 
$$
J=\big(e_1^{2^{a_1}},\,e_3^{2^{a_2}},\ldots,e_{2r-1}^{2^{a_r}}\big),
$$
and consider the map of Hopf algebras
$$
\varphi\colon H/J\twoheadrightarrow H/I.
$$
We will show that $\varphi$ is injective on the set of primitive elements. Indeed, primitive elements of $H/J$ have the form
$$
\sum e_{2k_h-1}^{2^{c_h}}+J
$$
for some set of $k_h$ and $c_h<a_{k_h}$. The image of $e_{2k_h-1}$ under $\varphi$ coincides with the sum $\sum\overline y_{i_j}$ where $\mathrm{deg}(y_{i_j})=2k_h-1$. Comparing degrees we conclude that the sets of $y_{i_j}$ for different $k_h$ have empty intersection. Therefore 
$$
\varphi\left(\sum e_{2k_h-1}^{2^{c_h}}+J\right)=0
$$
implies that $y_{i_j}^{2^{c_h}}=0$ for all $i_j=i_j(k_h)$ and all $k_h$. However, this implies that $c_h\geq\mathrm{max}(l_{i_j})=a_{k_h}$, which leads to a contradiction.

Finally, we conclude that $\varphi$ is injective by~\cite[Lemma~6.8]{LPSS}.
\end{proof}

\renewcommand{\thesection}{A3}
\section{Restrictions on $J$-invariant}
\label{restrictions}

The following statement combined with Theorem~\ref{j-ch-ck} provides certain restrictions on the Chow $J$-invariant of Vishik. However, these restrictions are not new.

\begin{prop}
\label{restrictions2}
In the notation of Proposition~\ref{co-mult-ck}, let $m\leq2^{n+1}$, and let $I$ be a homogeneous saturated bi-ideal of $\mathrm{CK}(n)^*(\mathrm{SO}_m)$. Assume that $e_{2^n-1-2k}\in I$. Then $e_{2^n-1-k}\in I$.
\end{prop}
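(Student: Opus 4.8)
The plan is to extract the required divisibility directly from the coproduct formula of Proposition~\ref{co-mult-ck} together with the saturation of $I$. Retaining the notation $\langle t\rangle=2^n-1-t$ and $s=\lfloor\frac{m-1}{2}\rfloor$, one may assume $0<k<2^{n-1}$, so that both $\langle 2k\rangle$ and $\langle k\rangle$ are honest indices in $\{1,\ldots,2^n-1\}$ (for $k=0$ the hypothesis and conclusion coincide, and outside this range the statement is vacuous). If $\langle k\rangle>s$ then $e_{\langle k\rangle}=0\in I$ and there is nothing to prove, so assume $\langle k\rangle\le s$; note then that $\langle 2k\rangle<\langle k\rangle\le s$. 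Since $I$ is a bi-ideal of $H:=\mathrm{CK}(n)^*(\mathrm{SO}_m)$ and $e_{\langle2k\rangle}\in I$, one has $\widetilde\Delta(e_{\langle2k\rangle})\in I\otimes H+H\otimes I$.

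Next I would use that every summand of the formula for $\widetilde\Delta(e_{\langle2k\rangle})$ in Proposition~\ref{co-mult-ck} carries a factor $v_n^{\,i+1}$ with $i\ge0$; thus $\widetilde\Delta(e_{\langle2k\rangle})=v_n\,W$ with
$$
W=\sum_{i=0}^{\nu_2(k)}v_n^{\,i}\,e_{\langle k/2^{i}\rangle}\otimes e_{\langle k/2^{i}\rangle}\prod_{j=0}^{i-1}\big(e_{\langle k/2^{j}\rangle}\otimes1+1\otimes e_{\langle k/2^{j}\rangle}\big)\in H\otimes H
$$
(replace each $v_n^{\,i+1}$ by $v_n^{\,i}$), a well-defined element since $H\otimes H$ is $v_n$-torsion-free. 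By Lemma~\ref{sublemma}\,(2) the ideal $I\otimes H+H\otimes I$ of $H\otimes H$ is saturated, so $v_n W\in I\otimes H+H\otimes I$ forces $W\in I\otimes H+H\otimes I$. Reducing modulo $v_n$ (write $\overline H=\mathrm{Ch}^*(\mathrm{SO}_m)=H/v_nH$ and $\overline I$ for the image of $I$) kills all summands of $W$ with $i\ge1$ and leaves $\overline e_{\langle k\rangle}\otimes\overline e_{\langle k\rangle}\in\overline I\otimes\overline H+\overline H\otimes\overline I$. Since $(\overline H\otimes\overline H)/(\overline I\otimes\overline H+\overline H\otimes\overline I)\cong(\overline H/\overline I)\otimes_{\mathbb F_2}(\overline H/\overline I)$ and $v\otimes v=0$ implies $v=0$ in any $\mathbb F_2$-vector space, I conclude $\overline e_{\langle k\rangle}\in\overline I$.

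It remains to lift this membership to $I$. Here I would invoke Proposition~\ref{hopf-ck}: since $m\le2^{n+1}$ and $I$ is a homogeneous saturated bi-ideal, $I=(e_1^{2^{a_1}},e_3^{2^{a_2}},\ldots,e_{2r-1}^{2^{a_r}})$ for suitable $0\le a_i\le k_i$, hence $\overline I=(\overline e_1^{2^{a_1}},\ldots,\overline e_{2r-1}^{2^{a_r}})$ is a monomial ideal for the basis $\{\prod_i\overline e_{2i-1}^{\,b_i}\mid0\le b_i<2^{k_i}\}$ of $\overline H$. Writing $\langle k\rangle=(2i_0-1)2^{c}$, one has $e_{\langle k\rangle}=e_{2i_0-1}^{2^{c}}$ in $H$; the inequality $\langle k\rangle\le s$ forces $2i_0-1\le s$ (so $e_{2i_0-1}$ is one of the generators $e_1,\ldots,e_{2r-1}$) and $c<k_{i_0}$, so that $\overline e_{2i_0-1}^{2^{c}}$ is a nonzero basis monomial involving only $\overline e_{2i_0-1}$. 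Its lying in the monomial ideal $\overline I$ forces $2^{c}\ge2^{a_{i_0}}$, whence
$$
e_{\langle k\rangle}=e_{2i_0-1}^{2^{c}}=e_{2i_0-1}^{2^{a_{i_0}}}\cdot e_{2i_0-1}^{\,2^{c}-2^{a_{i_0}}}\in\big(e_{2i_0-1}^{2^{a_{i_0}}}\big)\subseteq I,
$$
as required.

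The conceptual heart, and the main obstacle should one attempt a shortcut, is that the statement is genuinely about \emph{bi}-ideals: an ideal of the form $(e_1^{2^{a_1}},\ldots,e_{2r-1}^{2^{a_r}})$ is a bi-ideal only for special tuples $(a_i)$, and the admissible constraints among the $a_i$ are precisely what the cross terms $v_n\,e_{\langle k\rangle}\otimes e_{\langle k\rangle}$ in $\widetilde\Delta(e_{\langle2k\rangle})$ encode; so the coproduct formula of Proposition~\ref{co-mult-ck} is indispensable and cannot be replaced by the mere classification of ideals of that shape. The only genuinely technical steps are the saturation bookkeeping that lets one divide $\widetilde\Delta(e_{\langle2k\rangle})$ by $v_n$ while staying inside $I\otimes H+H\otimes I$ (Lemma~\ref{sublemma}), and the descent of $\overline e_{\langle k\rangle}\in\overline I$ to $e_{\langle k\rangle}\in I$, which requires the explicit shape of $I$ from Proposition~\ref{hopf-ck}; neither is difficult.
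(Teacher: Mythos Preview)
Your proof is correct and follows essentially the same route as the paper's: apply $\widetilde\Delta$ to $e_{\langle 2k\rangle}\in I$, use saturation of $I\otimes H+H\otimes I$ (Lemma~\ref{sublemma}) to strip off the factor $v_n$, reduce modulo $v_n$ to obtain $\overline e_{\langle k\rangle}\in\overline I$, and then lift back to $e_{\langle k\rangle}\in I$ via the explicit description of $I$ from Proposition~\ref{hopf-ck}. Your treatment of the edge cases and of the final lifting step (reading off $a_{i_0}\le c$ from the monomial ideal $\overline I$) is in fact more careful and self-contained than the paper's somewhat terse closing line.
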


\begin{proof}
Since 
$$
I=\big(e_1^{2^{j_1}},\,e_3^{2^{j_2}},\ldots,e_{2r-1}^{2^{j_r}}\big)\trianglelefteq\mathrm{CK}(n)^*(\mathrm{SO}_m).
$$
by Proposition~\ref{hopf-ck}, our assumption is equivalent to $j_{2^{n-1}-k}=0$. %Let $H=\mathrm{CK}(n)^*(\mathrm{SO}_m)$, and $I=J\cap H$. Then 
Since $I$ is a bi-ideal, one has
$$
\widetilde\Delta(e_{2^n-1-2k})=v_n\,e_{2^n-1-k}\otimes e_{2^n-1-k}+v_n^2Q\in I\otimes H+H\otimes I.
$$
This implies that
$$
\overline e_{2^n-1-k}\otimes\overline e_{2^n-1-k}\in\overline I\otimes\overline H+\overline H\otimes\overline I
$$
by Lemma~\ref{sublemma}, and therefore $\overline e_{2^n-1-k}\in\overline I$. However, since 
$$
\overline I=\big(\overline e_1^{2^{j_1}},\,\overline e_3^{2^{j_2}},\ldots,\overline e_{2r-1}^{2^{j_r}}\big)\trianglelefteq\overline H,
$$
we conclude that $j_t\leq d$ for $2^n-1-k=2^d(2t-1)$. Using Theorem~\ref{j-ch-ck} we conclude that $e_{2^n-1-k}\in J$.
\end{proof}

We will show now how to deduce the above restrictions from old results.

Let $J(q)$ denote Vishik's original $J$-invariant~\cite[Definition~5.11]{V-gras} of quadratic form $q$ of dimension $m$. By Theorem~\ref{j-ch-ck} we conclude $J(q)$ is a subset of $\{1,2,\ldots,\left\lfloor\frac{m-1}{2}\right\rfloor\}$ such that $i\in J(q)$ if and only if $e_{i}$ is rational in $\mathrm{CK}(n)(\mathrm{SO}_m)$.

Vishik proved in~\cite[Proposition~5.12]{V-gras} that if $i\in J(q)$ and $t\in\mathbb N\setminus0$ such that $i+t\leq \left\lfloor\frac{m-1}{2}\right\rfloor$ and ${i\choose t}\equiv1\mod2$, then $i+t\in J(q)$. In fact, he proved that $S^t(e_i)=e_{i+t}$ in this case, where $S^i$ denotes the $i$-th Steenrod operation, see~\cite[Chapter~XI and Chapter~XVI, Section~89]{EKM}.

\begin{prop}
In the notation above, let $k=2^{a_1}+2^{a_2}+\ldots+2^{a_t}$ where $a_1=\nu_2(k)$ and $a_{i+1}=\nu_2(k-2^{a_1}-\ldots-2^{a_i})$, $0\leq a_1<a_2<\ldots<a_t$. Then
$$
e_{2^n-1-k}=S^{2^{a_t}}(\ldots S^{2^{a_2}}(S^{2^{a_1}}(e_{2^n-1-2k}))\ldots)\in\mathrm{Ch}(\mathrm{SO}_m).
$$
\end{prop}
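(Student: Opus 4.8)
The plan is to peel off the Steenrod operations one at a time, using Vishik's formula $S^t(e_i)={i\choose t}e_{i+t}$ on $\mathrm{Ch}(\mathrm{SO}_m)$ (with the convention $e_j=0$ for $j>s=\lfloor\frac{m-1}{2}\rfloor$, cf.~\cite[Proposition~5.12]{V-gras}), and to reduce the whole assertion to a statement about binary digits via Lucas' theorem. Note first that $a_t\le n-2$, since $e_{2^n-1-2k}$ only makes sense for $2k\le2^n-2$; thus $2k=\sum_{i=1}^t2^{a_i+1}$ with all exponents in $\{1,\ldots,n-1\}$. Set $k_0=0$ and $k_j=2^{a_1}+\ldots+2^{a_j}$ for $1\le j\le t$ (so $k_t=k$), and put
$$
M_j=(2^n-1)-2k+k_{j-1}\qquad(1\le j\le t+1),
$$
so that $M_1=2^n-1-2k\ge1$, $M_{t+1}=2^n-1-k$, and $M_{j+1}=M_j+2^{a_j}$. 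Since $M_1\le M_2\le\ldots\le M_{t+1}$, it suffices to show that ${M_j\choose 2^{a_j}}\equiv1\bmod2$ for every $j$: then $S^{2^{a_j}}(e_{M_j})=e_{M_{j+1}}$ (this is the quoted case of Vishik's formula when $M_{j+1}\le s$; when $M_{j+1}>s$ one has $e_{M_{j+1}}=0$ and the same conclusion holds, the formula $S^t(e_i)={i\choose t}e_{i+t}$ being valid in $\mathrm{Ch}(\mathrm{SO}_m)$ for all $i$, $t$), and composing these identities for $j=1,\ldots,t$ yields exactly $S^{2^{a_t}}(\ldots S^{2^{a_1}}(e_{2^n-1-2k})\ldots)=e_{2^n-1-k}$.

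To prove the congruence I would use Lucas' theorem in the form: ${M\choose 2^a}\equiv1\bmod2$ precisely when the $a$-th binary digit of $M$ equals $1$. A one-line computation gives
$$
2k-k_{j-1}=\sum_{i=1}^{j-1}2^{a_i}+\sum_{i=j}^t2^{a_i+1},
$$
and the exponents appearing here, namely $a_1,\ldots,a_{j-1}$ together with $a_j+1,\ldots,a_t+1$, are pairwise distinct (because $a_1<\ldots<a_t$) and all lie in $\{0,\ldots,n-1\}$. Hence no borrow occurs when we subtract this number from $2^n-1=\sum_{p=0}^{n-1}2^p$, and the set of positions where $M_j=(2^n-1)-(2k-k_{j-1})$ has binary digit $1$ is precisely
$$
\{0,\ldots,n-1\}\setminus\bigl(\{a_1,\ldots,a_{j-1}\}\cup\{a_j+1,\ldots,a_t+1\}\bigr).
$$
Since $a_j\in\{0,\ldots,n-1\}$, $a_j>a_i$ for $i<j$, and $a_j<a_i+1$ for $i\ge j$, the $a_j$-th digit of $M_j$ equals $1$, which is what we wanted.

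The routine ingredients are the elementary identity for $2k-k_{j-1}$ and Lucas' theorem; the one place that demands a moment of care is the ``no borrow'' observation — that the listed exponents are distinct and bounded above by $n-1$, which is exactly where the bound $2k\le2^n-2$ (equivalently $a_t\le n-2$) enters — together with checking that the chain $M_1\le\ldots\le M_{t+1}$ stays in the range where Vishik's formula can be applied at every step. Everything else is a direct substitution.
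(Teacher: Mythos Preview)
Your proof is correct and follows essentially the same approach as the paper: both reduce the claim to showing that the binomial coefficients ${2^n-1-2k+2^{a_1}+\ldots+2^{a_{j-1}}\choose 2^{a_j}}$ are odd, and both verify this via Lucas' theorem by computing the binary expansion of $M_j$ and checking that the $a_j$-th digit is $1$. Your handling of the binary expansion---writing $M_j=(2^n-1)-(2k-k_{j-1})$ with $2k-k_{j-1}=k_{j-1}+2(k-k_{j-1})$ and noting that no borrow occurs---is a slightly cleaner variant of the paper's argument, which instead starts from the expansion of $2^n-1-2k$ and adds $k_{j-1}$.
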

\begin{proof}

Let $\pi\colon\mathbb N\cup0\rightarrow 2^{\mathbb N\cup0}$ be the injection given by the base $2$ expansions. For any $n\in\mathbb N\cup0$, the set $\pi(n)$ consists of all those $m\in\mathbb N\cup0$ such that the base $2$ expansion of $n$ has $1$ in the $m$-th position. For any $i,\, n\in\mathbb N\cup0$, the binomial coefficient ${n\choose i}$ is odd if and only if $\pi(i)\subseteq\pi(n)$ (see~\cite[Lemma~78.6]{EKM}).

It is enough to show that ${2^n-1-2k+2^{a_1}+\ldots+2^{a_i}\choose 2^{a_{i+1}}}\equiv1\mod2$. Obviously, 
$$
\pi(2^{a_1}+\ldots+2^{a_i})=\{a_1,\ldots,a_i\},
$$ 
in particular, $\pi(k)=\{a_1,\ldots,a_t\}$, $\pi(2k)=\{a_1+1,\ldots,a_t+1\}$, and, obviously, $\pi(2^n-1)=\{0,1,\ldots, n-1\}$. In other words, identifying $\pi$ with the base $2$ expansion, we conclude that $\pi(2^n-1-2k)$ has $0$-s in positions $a_i+1$ and $1$-s in all other positions from $0$ to $n-1$. Then it is easy to see that $\pi(2^n-1-2k+2^{a_1}+\ldots+2^{a_i})$ has $1$ in position $a_{i+1}$, i.e., $\pi(2^{a_{i+1}})\subseteq\pi(2^n-1-2k+2^{a_1}+\ldots+2^{a_i})$.
\end{proof}

\section*{Acknowledgement}
We would like to thank sincerely Markus Land for his valuable comments on the subject of the article.

The first author was supported by the DFG grant AN 1545/4-1. The work of the second author was partially performed at the St. Petersburg Leonhard Euler International Mathematical Institute and supported by the Ministry of Science and Higher Education of the Russian Federation (agreement no. 075–15–2022–287) and BASIS foundation grant ``Young Russia Mathematics'', and partially performed at the Bar-Ilan University and supported by ISF grant 1994/20. The third author was supported by the Foundation for the Advancement of Theoretical Physics and Mathematics ``BASIS'' and the grant of the Government of the Russian Federation for the state support of scientific research carried out under the supervision of leading scientists, agreement 14.W03.31.0030 dated 15.02.2018.

\end{document}